\numberwithin{equation}{section}
\title{\textbf{Cut and project sets with polytopal window II: linear repetitivity}}
\date{\today}
\author{Henna Koivusalo}
\address{School of Mathematics, Fry Building, Woodland Road, Bristol BS8 1UG, United Kingdom}
\email{henna.koivusalo@bristol.ac.uk }
\urladdr{https://research-information.bris.ac.uk/en/persons/henna-l-l-koivusalo}
\author{James J.\ Walton} 
\address{School of Mathematical Sciences, Mathematical Sciences Building, University Park, Nottingham, NG7 2RD, United Kingdom}
\email{James.Walton@nottingham.ac.uk}
\urladdr{https://www.nottingham.ac.uk/mathematics/people/james.walton}
\theoremstyle{plain}
\newtheorem{theorem}{Theorem}[section]
\newtheorem{definition}[theorem]{Definition}
\newtheorem{lemma}[theorem]{Lemma}
\newtheorem{proposition}[theorem]{Proposition}
\newtheorem{corollary}[theorem]{Corollary}
\newtheorem{Mainthm}{Theorem}
\theoremstyle{definition}
\newtheorem{remark}[theorem]{Remark}
\newtheorem{example}[theorem]{Example}
\newtheorem{notation}[theorem]{Notation}
\newcommand{\R}{\mathbb R}
\newcommand{\C}{\mathbb C}
\newcommand{\Z}{\mathbb Z}
\newcommand{\Q}{\mathbb Q}
\newcommand{\N}{\mathbb N}
\newcommand{\sH}{\mathscr{H}}
\newcommand{\sA}{\mathscr{A}}
\newcommand{\sC}{\mathscr{C}}
\newcommand{\sF}{\mathscr{F}}
\newcommand{\cV}{\mathcal{V}}
\newcommand{\E}{\mathbb E}
\newcommand{\spintl}{X_i}
\renewcommand{\epsilon}{\varepsilon}
\newcommand{\iW}{\mathring{W}}      % interior of window
\DeclareMathOperator{\rk}{rk}
\DeclareMathOperator{\conv}{\mathrm{conv}}
\DeclareMathOperator{\covol}{covol}
\DeclareMathOperator{\tot}{\E}      % total space
\DeclareMathOperator{\phy}{\E_\vee}      % physical = `real' space
\DeclareMathOperator{\intl}{\E_<}     % internal space
\DeclareMathOperator{\cps}{\Lambda}          % cut and project set
\subjclass[2010]{Primary: 52C23; Secondary: 52C45}
\keywords{Aperiodic order, cut and project, model sets, repetitivity, Diophantine approximation}
\thanks{We gratefully acknowledge the support of the London Mathematical Society, Scheme 2. The research of JW was partially support by EPSRC grant EP/R013691/1.}
\begin{document}

\begin{abstract}
This paper gives a complete classification of linear repetitivity ({\bf LR}) for a natural class of aperiodic Euclidean cut and project schemes with convex polytopal windows. Our results cover those cut and project schemes for which the lattice projects densely into the internal space and (possibly after translation) hits each supporting hyperplane of the polytopal window. Our main result is that {\bf LR} is satisfied if and only if the patterns are of low complexity (property {\bf C}), and the projected lattice satisfies a Diophantine condition (property {\bf D}). Property {\bf C} can be checked by computation of the ranks and dimensions of linear spans of the stabiliser subgroups of the supporting hyperplanes, as investigated in Part I to this article. To define the correct Diophantine condition {\bf D}, we establish new results on decomposing polytopal cut and project schemes to factors, developing concepts initiated in the work of Forrest, Hunton and Kellendonk. This means that, when {\bf C} is satisfied, the window splits into components which induce a compatible splitting of the lattice. Then property {\bf D} is the requirement that, for any suitable decomposition, these factors do not project close to the origin in the internal space, relative to the norm in the total space. On each factor, this corresponds to the usual notion from Diophantine Approximation of a system of linear forms being badly approximable. This extends previous work on cubical cut and project schemes to a very general class of cut and project schemes. We demonstrate our main theorem on several examples, and derive some further consequences of our main theorem, such as the equivalence {\bf LR}, positivity of weights and satisfying a subadditive ergodic theorem for this class of polytopal cut and project sets.
\end{abstract}

%===============================================

\maketitle

\section*{Introduction}
There are currently two main approaches to systematically constructing aperiodically ordered patterns: by \emph{tiling substitution} and by the \emph{cut and project method}. Whilst there is no single mathematical characterisation of what makes a pattern `aperiodically ordered', there are several important indicators of it. Some of the most natural attributes considered in this context are: \emph{diffraction}, \emph{complexity} and \emph{repetitivity}.

An aperiodic pattern can satisfy ideal qualitative properties with respect to these attributes. For example, a completely disordered pattern has no distinguishable features in its diffraction, but some aperiodic patterns can have \emph{pure point diffraction}, making the diffraction spectrum a countable sum of $\delta$-peaks. The study of spectral properties is in part motivated by the discovery of \emph{quasicrystals} \cite{SchBleGraCah84}, physical materials which have sharp peaks in their diffraction patterns, indicating order, but with rotational symmetry prohibiting them having periodically arranged structure. On measures of complexity, one may ask if the \emph{complexity function} $p(r)$, given by the number of distinct patches of radius $r$ up to translation equivalence, is `minimal'. For example, in the setting of polytopal cut and project sets (see Corollary \ref{cor: minimal complexity}) one always has $p(r) \gg r^d$, where $d$ is the dimension of the pattern; in fact, it is conjectured that this holds for any non-periodic pattern \cite[Conjecture 1.1]{LP03} (it is known to hold for all linearly repetitive patterns \cite{Len04}). Turning to repetitivity, it is of great interest to know when a pattern is \emph{linearly repetitive} ({\bf LR}), so that $\rho(r) \leq Cr$ for some $C > 0$ and sufficiently large $r$. Here, $\rho(r)$ is the \emph{repetitivity function}, defined by letting $\rho(r)$ be the smallest value of $R$ so that every $r$-patch which appears somewhere in the pattern in fact appears within radius $R$ of every point, up to translation. The work of Lagarias and Pleasants \cite{LP03} showed that linear repetitivity is a restrictive and distinctive property of aperiodic patterns which implies other indicators of high order, such as fast convergence to uniform patch frequencies. Furthermore, Lagarias and Pleasants showed that linear repetitivity is the lowest possible aperiodic repetitivity \cite[Theorem 8.1]{LP03}. 

It is easily shown that substitution tilings (with some standard restrictions, such as finite local complexity and primitivity) are {\bf LR}, from which it follows that their complexity is $p(r) \ll r^d$, where $d$ is the dimension of the pattern. However, the problem of identifying substitution tilings with pure point diffraction is extremely difficult. The \emph{Pisot Conjecture} states that $1$-dimensional irreducible, primitive, substitution tilings always have pure point diffraction. Although some cases have been resolved (for $\beta$-substitutions \cite{Bar18} and two-letter substitutions \cite{BarDia02,HolSol03,BMST16}), the full conjecture is currently still open, with the situation in higher dimensions seeming far out of reach.

For cut and project patterns, our understanding is in some sense the reverse. It is known for the very general class of \emph{regular} cut and project schemes (those whose windows have measure zero boundary) that the resulting patterns have pure point diffraction \cite{Sch00}. However, determining the nature of their complexity and repetitivity is more difficult. To make progress, it is natural to begin with the class of cut and project sets with convex polytopal windows, which includes a good number of examples of interest, such as the Ammann--Beenker \cite{Bee82} and the Penrose tilings \cite{Pen79,GruShe87}. We note that for the Penrose tilings, the window may be taken as the projection of the unit hypercube in $\R^5$, although in this case the scheme has non-dense projected lattice in the internal space. However, a more suitable model set scheme with dense projection is available \cite{AOI,BKSZ90}, for which the window may be taken as a union of pentagons, and this produces cut and project schemes which are mutually locally derivable (MLD, see \cite{AOI}) to those with a single decagon window. In Part I of the current work \cite{I} a general formula for the polynomial growth rate of the complexity function was given in terms of the cut and project data (see Theorem \ref{thm: generalised complexity} below), building on the work of \cite{Jul10}. In this paper we settle the problem of determining exactly which such polytopal cut and project sets, satisfying one extra mild restriction ({\it weak homogeneity}, see Definition \ref{def: weakly homogeneous}), satisfy linear repetitivity.

It has been known for some time in the $2$-to-$1$ canonical case that linear repetitivity is equivalent to the slope of the physical space being \emph{badly approximable} or, equivalently, having continued fraction expansion with bounded entries \cite{MH38, MH40}. Higher dimensional generalisations of this have been studied from the perspective of substitutions (see, for example \cite{BF11}). More generally, the case of `cubical cut and project sets', was solved in \cite{HaynKoivWalt2015a}. However, the class of patterns given by cubical windows is somewhat limited. Whilst the techniques of that paper could be pushed in an ad-hoc way to treat some non-cubical examples (see \cite[Section 7]{HaynKoivWalt2015a}), it remained unclear how to deal with general polytopal schemes.

In this paper we give a complete characterisation of linear repetitivity for all polytopal cut and project sets which also satisfy a property which we call \emph{weakly homogeneous}. The result is sharp, in the sense that it fails without assuming this property, see Section \ref{sec: codimension 1} (although in the same section we show that there are linearly repetitive examples which are not weakly homogeneous). Our characterisation is in terms of two properties {\bf C} (low complexity) and {\bf D} (Diophantine), which shall be explained in more detail shortly.

\begin{Mainthm} \label{thm: main}
For an aperiodic, polytopal, weakly homogeneous cut and project scheme, the following are equivalent:
\begin{enumerate}
	\item {\bf LR};
	\item {\bf C} and {\bf D}.
\end{enumerate}
\end{Mainthm}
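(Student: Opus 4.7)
My plan is to prove the two directions separately, with $(2) \Rightarrow (1)$ carrying the main content.

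For $(1) \Rightarrow (2)$, the implication $\textbf{LR} \Rightarrow \textbf{C}$ should follow by combining Lenz's theorem (cited in the introduction) that $\textbf{LR}$ forces $p(r) \ll r^d$ with the general complexity formula from Part I (Theorem \ref{thm: generalised complexity}), which characterises when the polynomial growth rate of $p(r)$ is precisely $d$ in terms of the cut and project data. For $\textbf{LR} \Rightarrow \textbf{D}$ I would argue by contrapositive. If $\textbf{D}$ fails then, after applying the decomposition alluded to in the abstract, some factor admits a sequence of lattice points whose internal projection is arbitrarily small relative to the physical norm. Standard cut and project formalism then shows that such a near-return produces an $r$-patch whose acceptance domain in the window has diameter much smaller than $1/r$; by the uniform density of $\intl(\cps)$ in the window, its nearest translational copy lies at distance superlinear in $r$, contradicting $\textbf{LR}$.

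For $(2) \Rightarrow (1)$, I would first invoke the window/lattice decomposition established under $\textbf{C}$, reducing the problem to the case of an irreducible factor. There the $r$-patches are in bijection with the cells of the hyperplane arrangement obtained by translating $\partial W$ by $\intl(\gamma)$ for $\gamma \in \cps$ of physical norm at most $r$, and the polytopal structure ensures that each cell has inradius bounded below by a constant times a power of $r^{-1}$, provided the arrangement does not degenerate near the origin. Property $\textbf{D}$ guarantees precisely this: the internal projections of short lattice vectors stay uniformly away from the origin relative to their physical length. A pigeonhole/density argument, using equidistribution of $\intl(\cps)$ in the window, then shows that every acceptance domain is hit by $\intl(\gamma)$ for some $\gamma$ with $|\phy(\gamma)| \leq Cr$, giving $\rho(r) \leq Cr$.

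The main obstacle I expect is the decomposition step: showing that, under $\textbf{C}$, one can canonically split the scheme into factors on which the correct Diophantine condition is the classical one from Diophantine approximation (bad approximability of a system of linear forms), and that $\textbf{D}$ on the original scheme matches this on each factor. The $2$-to-$1$ case is transparent because there is only one factor and a single slope to approximate, but in the general polytopal setting the facial structure of the window---encoded by the stabiliser subgroups and the rank/dimension invariants from Part I---must be correctly aligned with the lattice geometry before the right factors can be extracted. Once the decomposition is in hand, the passage from Diophantine estimates to lower bounds on acceptance domain inradii, and then to the repetitivity bound, should follow by the delicate but essentially standard estimates available for hyperplane arrangements cutting convex polytopes.
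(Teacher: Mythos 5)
Your outline matches the paper's high-level architecture, and the direction $\textbf{LR} \Rightarrow \textbf{C}$, $\textbf{D}$ is close to the paper's route (the paper passes through an intermediate condition, positivity of weights $\textbf{PW}$, but the contrapositive mechanism of building acceptance domains of volume $\ll \epsilon\,r^{-d}$ is the same idea). Two small caveats there: the result \cite{Len04} quoted in the introduction gives $p(r)\gg r^d$ for LR, not $\ll$; the fact that $\textbf{LR} \Rightarrow p(r)\ll r^d$ is elementary (each $r$-patch has a centre in every $Cr$-ball). And the jump from ``small acceptance domain'' to ``nearest translational copy superlinearly far'' should instead go via low frequency violating $\textbf{PW}$.

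The real gap is in $(2)\Rightarrow(1)$, specifically in your step ``the polytopal structure ensures that each cell has inradius bounded below ... provided the arrangement does not degenerate near the origin. Property $\textbf{D}$ guarantees precisely this.'' Property $\textbf{D}$ gives a lower bound on $\|\gamma_<\|$ for $\gamma\in\Gamma(r)$, i.e.\ the distance of a projected lattice point to the origin. But cut-region inradii are governed by distances between vertices of the arrangement, which are intersections of translates $H+\gamma_<$, $H'+\gamma'_<$, etc.; such a vertex is \emph{not} a priori a projected lattice point, and there is no direct way to read an inradius bound off $\textbf{D}$. The paper bridges this with a dedicated vertex-set analysis (Section~\ref{sec: vertices}): under $\textbf{C}$ and weak homogeneity, the full vertex set $\cV(\Gamma)$ of the arrangement lands inside a fixed rescaling $\frac{1}{M}\Gamma_<$ of the projected lattice (Corollary~\ref{cor: finite index vertices}), with a quantitative version controlling the norms of the lattice elements involved (Proposition~\ref{prop: vertex inclusions}, Corollary~\ref{cor: vertex displacements}). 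Only then can $\textbf{D}$ be applied to vertex displacements, which (via the geometric Lemma~\ref{lem: geometric lemma}) yields the inradius bound. This is precisely where weak homogeneity enters and is essential; your plan states weak homogeneity as a hypothesis but never actually uses it, and without it the vertex-lattice correspondence (and hence the whole $(2)\Rightarrow(1)$ argument) fails, as the codimension-$1$ counterexamples in Section~\ref{sec: codimension 1} show. Secondarily, ``equidistribution of $\intl(\cps)$ in the window'' is too soft: you need the quantitative transference statement (Theorem~\ref{thm: transference}, proved via Minkowski's second theorem) that a Diophantine projected lattice with $\eta(\gamma)\le r$ is $\ll r^{-\delta}$-dense, which exactly matches the scale of the inradii and makes the pigeonhole close.
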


Property {\bf C} is the requirement that the complexity function satisfies $p(r) \ll r^d$, where $d$ is the `physical' dimension of the patterns. As already mentioned, the complexity exponent may be determined from the cut and project data in the way described in Part I \cite{I}, from the hyperplane stabiliser ranks and the dimensions of their linear spans. We derive further important consequences of this condition in Section \ref{sec: minimal complexity}. In particular, we show that property {\bf C} implies a \emph{hyperplane spanning} condition of the pattern (see Definition \ref{def: hyperplane spanning} and Theorem \ref{thm: C => hyperplane spanning}).

The Diophantine condition {\bf D} (see Definition \ref{def: diophantine lattice}) is a higher dimensional analogue of the projected lattice being \emph{badly approximable}. Loosely speaking, it says that any lattice point $\gamma \in \Gamma$ in the total space does not get too close to the physical space $\phy$, relative to the point's distance $\|\gamma\|$ from the origin. Equivalently, any point of the projected lattice $\Gamma_<$ in the internal space must be reasonably far from the origin, relative to the norm of the lattice point it projected from.

It is well-known for general patterns (not just cut and project sets) that property {\bf C} is necessary for {\bf LR}. So our main theorem shows that property {\bf D} is the crux to establishing linear repetitivity for this class of patterns: if {\bf C} is assumed (as well as weak homogeneity) then it is surprisingly \emph{only the projected lattice} $\Gamma_<$, and none of the remaining data, which determines whether or not the scheme is {\bf LR}. So to find linearly repetitive schemes, one needs to look for Diophantine projections of lattices. The next step is to identify codimension $1$ subspaces in internal space which intersect $\Gamma_<$ with high rank. If there are enough such subspaces to constitute the boundary of a polytope, with the subspaces intersecting the projected lattice with sufficiently large ranks, then translates of these subspaces can be used as supporting hyperplanes of windows of {\bf LR} cut and project schemes.

\subsection{Further details on properties {\bf C} and {\bf D}}

To precisely quantify what is meant by a projected lattice point being `close' to the origin, one needs to use the correct Diophantine exponent. When the cut and project set has what is known as `constant hyperplane stabiliser ranks', property {\bf D} says the following:

\emph{There is some $c > 0$ so that for all $\gamma \in \Gamma \setminus \{0\}$ we have}
\[
\|\gamma_<\| \geq c \cdot \|\gamma\|^{-\delta}, \text{ where } \delta = \frac{d}{n}.
\]
In the above, $d$ is the `dimension' $\dim(\phy)$ and $n$ is the `codimension' $\dim(\intl)$. The notation of $\Gamma$ for the lattice in the total space $\tot$, $\phy$ for the physical space, $\intl$ for the internal space and $\gamma_<$ for the projection of $\gamma$ to $\intl$ follows the notation of Part I \cite{I}. The Ammann--Beenker example below is a cut and project set for which the condition of constant hyperplane stabiliser ranks holds. 

\begin{example}

\begin{figure}
    \centering
        \includegraphics[width=0.8\textwidth]{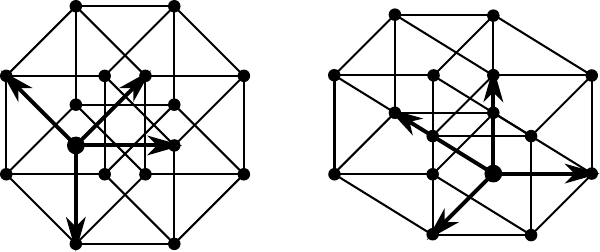}
    \caption{Windows for the Ammann--Beenker (left) and Golden Octagonal (right) cut and project schemes. Dots and edges are the projections to $\intl$ of the vertices and edges, respectively, of the unit hypercube in $\R^4$. The origin is indicated with a larger dot, along with the projections of the $4$ standard basis vectors.}
    \label{fig: octagonal}
\end{figure}

To illustrate briefly how the above theorem may be applied in practice, we consider the Ammann--Beenker cut and project scheme (which is known to satisfy {\bf LR} because it can also be generated through substitution). The octagonal window in the internal space is depicted in Figure \ref{fig: octagonal}, together with the origin and projections of the $4$ standard basis vectors of $\R^4$ in the total space, which generate the lattice $\Gamma$. There are $4$ distinct supporting hyperplanes up to translation, with each parallel subspace intersecting the projected lattice $\Gamma_<$ with rank $2$ (for example, the vertical subspace contains the projected lattice element pointing South, as well as the element given by the sum of those pointing NE and NW). Using the main result of \cite{I} (see Theorem \ref{thm: generalised complexity} below), or indeed the earlier result of Julien from \cite{Jul10}, it follows that these patterns satisfy {\bf C}, that is, $p(r) \ll r^2$. This also shows that the window has constant stabiliser rank, which also follows (Corollary \ref{cor: equal ranks}) from {\bf C} and the fact that the window is not indecomposable (Definition \ref{def: decomp}), which in codimension 2 simply means it is not a parallelogram (Example \ref{ex: decomp codim 2}).

Since the scheme has constant hyperplane rank (see Section \ref{sec: decompositions}), to establish {\bf D} one just needs to show that $\Gamma_<$ is Diophantine, as defined above. Given $\gamma_< \in \Gamma_<$, we have
\[
\gamma_< = \begin{pmatrix} (n_4-n_2) \sqrt{2} + n_1 \\ (n_2 +n_4)\sqrt{2} - n_3 \end{pmatrix},
\]
where the $n_i \in \Z$ are the coordinates of $\gamma \in \Gamma$ which projects to $\gamma_<$. Since $\sqrt{2}$ is badly approximable (it is a quadratic irrational, so has periodic and hence bounded continued fraction expansion), the above vector has norm at least $c/\|\gamma\|$, for $\gamma \neq 0$. This establishes that $\Gamma_<$ is Diophantine, so {\bf D} holds and the Ammann--Beenker tilings are linearly repetitive by Theorem \ref{thm: main}. More details of this computation, and similar ones such as for the \emph{golden octagonal tilings} \cite{BedFer} (with window on the right of Figure \ref{fig: octagonal}) are given in Section \ref{sec: octagonals}.
\end{example}

\subsection{Organisation of paper}
In Section \ref{sec: basic notions} we set out some important notation and recall the basic setup for polytopal cut and project schemes. We also introduce here some new conditions for a cut and project schemes important for our main results: being \emph{hyperplane spanning} (Definition \ref{def: hyperplane spanning}), \emph{homogeneous} (Definition \ref{def: homogeneous}) and \emph{weakly homogeneous} (Definition \ref{def: weakly homogeneous}). The weakly homogeneous condition is not necessary for one direction of our main theorem, in showing that {\bf LR} implies {\bf C} and {\bf D}, but it is needed in the other direction. The hyperplane spanning condition will be used extensively, especially when we come to analyse the shapes of cut regions in Section \ref{sec: vertices}. However, we are able to remove this assumption from our main theorem by establishing that it always holds for cut and project schemes satisfying {\bf C}, in Theorem \ref{thm: C => hyperplane spanning}. This is done in Section \ref{sec: minimal complexity}, which contains results on low complexity schemes.

Unlike in the Ammann--Beenker example above, cut and project sets can behave like sums of schemes, with factors of varying dimension and codimension. Because of such examples, finding the correct Diophantine property sometimes requires decomposing the scheme into factors. After decomposing enough, the window always splits to subsystems with constant stabiliser ranks, allowing us to derive the correct Diophantine exponents for them.  We develop a theory for cut and project scheme decompositions in Section \ref{sec: decompositions}. This extends work of Forrest, Hunton and Kellendonk \cite{FHK02}, who first identified one definition of such a decomposition. We provide a similar, equivalent definition, which does not require a choice of inner product (as well as defining the notion of a decomposition of more components). We also demonstrate that there is an equivalent geometric description: a decomposition corresponds to a direct sum decomposition of the window. We also give a simpler proof (using a different combinatorial graph associated to the flags) that when {\bf C} is satisfied, indecomposable windows have constant stabiliser rank (Corollary \ref{cor: equal ranks}). The geometric factorisation of the window is difficult to apply in isolation, since the dynamics under consideration rely on the interaction between the acting lattice and the window. Fortunately, when property {\bf C} is assumed and the window has a decomposition, the lattice must also split compatibly as a direct sum, up to finite index (Theorem \ref{thm: sum decomposition}). For a summary of these results, see Corollary \ref{cor: decompose}.

In Section \ref{sec: accs and cuts in subsystems} we show how properties of the cut and project scheme relate to those of the subsystems of a decomposition. In particular, we show how the cut regions and acceptance domains are related to products of those in the subsystems (Lemmas \ref{lem: cut region products} and \ref{lem: acceptance domain products}).

In Section \ref{sec: diophantine} we introduce the Diophantine condition {\bf D}. We develop a base-free approach for a Diophantine condition for a general, densely embedded lattice. It is analogous to the badly approximable condition for a real number or, more generally, a set of vectors in some Euclidean space (see Example \ref{exp: badly approximable}). Before applying the condition to cut and project schemes, we establish its basic properties and implications, such as that of `transference', that is, that Diophantine lattices fill space densely (Theorem \ref{thm: transference}).

In Section \ref{sec: LR => C and D} we prove that {\bf LR} implies {\bf C} and {\bf D}. This is the easier direction of the main theorem, in part because only consideration of patch frequencies (but not uniform spacing of patches) is required. We recall the \emph{positivity of weights} ({\bf PW}) condition (see \cite{BBL}) and show that, in fact, {\bf PW} is sufficient for {\bf C} and {\bf D}. This is done by constructing patches of low frequency if either of these conditions fails; in the case that {\bf C} fails, the growth rate of patches is too high for all patches to have low frequencies, and if {\bf D} fails then we use short projections of lattice vectors to construct small acceptance domains.

In Section \ref{sec: vertices} we establish a close connection between the set of possible locations of vertices of cut regions with the projected lattice, for (weakly) homogeneous schemes. This has the implication that if the projected lattice vectors cannot be too small, then neither are displacements between vertices of cut regions. Hence, weakly homogeneous cut and project schemes satisfying {\bf C} and {\bf D} have `large' acceptance domains. In Section \ref{sec: C and D => LR}, we combine this observation with transference from Section \ref{sec: diophantine}, to ensure that sufficiently large orbits are dense enough in internal space to hit all required acceptance domains, establishing that {\bf C} and {\bf D} together imply {\bf LR} for weakly homogeneous schemes.

Finally, in Section \ref{sec: further results and examples}, we consider some implications of our main theorem. We also demonstrate it on a range of examples.

\filbreak
\subsection{Summary of notation}
\begin{itemize}
	\item $\mathcal{S} = (\tot,\phy,\intl,\Gamma,W)$ : Euclidean cut and project scheme $\mathcal{S}$, with total space $\tot$ of dimension $k$, physical space $\phy$ of dimension $d$, complementary internal space $\intl$, lattice $\Gamma \leqslant \tot$ and window $W \subset \intl$ (always here a convex polytope).
	\item $\pi_\vee \colon \tot \to \phy$ : the projection from the total space to the physical space with respect to $\intl$. For $x \in \tot$ we let $x_{\vee} \coloneqq \pi_{\vee}(x)$. Analogous notation for $\intl$ in place of $\phy$.
	\item $y^\wedge$ : for $y = \pi_\vee(\gamma)$, with $\gamma \in \Gamma$, we let $y^\wedge \coloneqq \gamma$.
	\item $y^*$ : the star map applied to $y \in \Gamma_{\vee}$ is $y^* \coloneqq (y^\wedge)_<$.
	\item $\cps$ : the cut and project set associated to the scheme, $\cps \coloneqq \{y \in \Gamma_{\vee} \mid y^* \in W\}$.
	\item $A_P$ : the acceptance domain in $W$ associated to (pointed) patch $P$ of $\cps$.
	\item $\Gamma(r)$: those lattice points that are within radius $r$ from the origin. 
	\item $\sA(r)$ : the set of acceptance domains of patches of radius $r$.
	\item $\sH^+$ : the set of half spaces defining a polytopal window $W$, with associated collection of affine hyperplanes $\sH$, called the \emph{supporting hyperplanes} of the window.
	\item $V(H)$ : the linear subspace given by translating an affine hyperplane $H$ over the origin.
	\item $\sH_0$ : the supporting hyperplanes translated over the origin, that is, $\sH_0 = V(\sH)$.
	\item $\Gamma^H$ : the stabiliser of $H$ is the subgroup $\Gamma^H \coloneqq \{\gamma \in \Gamma \mid H = H + \gamma_<\}$ of $\Gamma$. More generally, given $I \subseteq \sH$, we have $\Gamma^I \coloneqq \{\gamma \in \Gamma \mid H = H + \gamma_< \text{ for all } H \in I\} = \bigcap_{H \in I} \Gamma^I$.		
	\item $(X_i, \Gamma_i, W_i)$: the scheme splits into $m$ subsystems $(X_i, \Gamma_i, W_i)$, where $\intl=X_1+\cdots+ X_m$, $W=W_1+\cdots+W_m$, $\Gamma_1+ \cdots + \Gamma_m$ is finite index in $\Gamma$ and each subsystem has constant hyperplane stabiliser rank. See Section \ref{sec: decompositions} and in particular Corollary \ref{cor: decompose}.
	\item $\mathcal{V}(G)$ : the set of `vertex' intersections of $G_<$ translates of hyperplanes, where $G \subseteq \Gamma$.
	\item $\mathcal{V}(G,f)$ : those vertices coming from a flag $f$ of hyperplanes. Similar notation $\mathcal{V}_i(G)$ and $\mathcal{V}_i(G,f)$ used for analogous sets in the subsystems coming from a decomposition. See Definition \ref{def: vertex sets}.
	\item $f \ll g$ : for two functions $f,g \colon \R_{>0} \to \R_{>0}$, we let $f \ll g$ mean that there exists some constant $C > 0$ for which $f(x) \leq C g(x)$ for sufficiently large $x$. That is, $f \in O(g)$.
	\item $f \asymp g$ : for two functions as above, $f \ll g$ and $g \ll f$.
\end{itemize}

\section{Basic notions for polytopal cut and project set schemes} \label{sec: basic notions}
A polytopal {\bf $k$-to-$d$ cut and project scheme} is a tuple
\[
\mathcal{S} = (\tot,\phy,\intl,\Gamma,W),
\]
of a {\bf total space}, {\bf physical space}, {\bf internal space}, {\bf lattice} and {\bf window}, respectively. The total space $\tot$ is a $k$-dimensional $\R$-vector space, with $\phy$ and $\intl$ subspaces of $\tot$ of dimensions $d$ and $n=k-d$, respectively. We assume that $d$, $n > 0$, and that $\phy$, $\intl$ are complementary, giving a direct sum decomposition $\tot = \phy + \intl$ of the total space. The lattice $\Gamma \leqslant \tot$ is a free Abelian group of rank $k$ which is co-compact in $\tot$. The window $W \subset \intl$ is chosen to be a compact, convex polytope, so it is given as the intersection of a uniquely defined irredundant set of half-spaces $\sH^+$. The set of associated codimension $1$ {\bf supporting hyperplanes} (the boundaries of the half spaces of $\sH^+$) is denoted by $\sH$.

By a {\bf hyperplane}, we mean a codimension $1$ affine hyperplane (and say `codimension $1$ subspace' if we wish to emphasise that the hyperplane contains the origin). Given a hyperplane $H$, we let $V(H)$ denote the codimension $1$ subspace given by shifting $H$ over the origin, that is $V(H) = H-H = H-h$ for any $h \in H$. By a slight abuse of notation, we sometimes use $V$ (and variations) as also the name of some codimension $1$ subspace; we use $H$ (and variations) for affine hyperplanes. We introduce the notation $\sH_0 = V(\sH)$ for the set of subspaces parallel to the supporting hyperplanes.

We recall the notation $\pi_\vee \colon \tot \to \phy$ for the projection to the physical space, with respect to the direct sum decomposition $\tot = \phy + \intl$. For a point $X \in \tot$ (or a subset $X \subseteq \tot$), we let $X_\vee \coloneqq \pi_\vee(X)$. Analogous notation is used for projections to the internal space. This notation is remembered by visualising a $2$-to-$1$ scheme in the first quadrant, where we project `down' $\vee$ to the physical space, and `left' $<$ to the internal space.

We assume that $\pi_\vee$ is injective on $\Gamma$ and that $\Gamma_<$ is dense in $\intl$. In addition, {\bf we assume throughout that $\mathcal{S}$ produces aperiodic patterns}. Equivalently, $\pi_<$ is injective on $\Gamma$, that is, $\rk(\Gamma_<) = k$. Given $x \in \Gamma_\vee$, we denote the lift of $x$ back to the lattice by $x^\wedge$, which is the unique point with $(x^\wedge)_\vee = x$. We have the {\bf star map} $x \mapsto x^*$, defined by $x^* \coloneqq (x^\wedge)_<$, mapping $\Gamma_\vee$ into $\intl$.

The cut and project set produced by $\mathcal{S}$ is denoted
\[
\cps \coloneqq \{x \in \Gamma_\vee \mid x^* \in W\}.
\]
The above is appropriate only when the scheme $\mathcal{S}$ is {\bf non-singular}, meaning that $x^* \notin \partial W$ for all $x \in \Gamma_\vee$ (equivalently, $\Gamma \cap (\phy + \partial W) = \emptyset$ or $(\Gamma_<) \cap \partial W = \emptyset$). We may always translate the window to make it non-singular, and any non-singular scheme will generate cut and project sets with the same sets of finite patches, by density of $\Gamma_<$. In particular, we need only consider the above choice of $\Lambda$ in determining whether or not the family of cut and project sets produced by $\mathcal{S}$ are linearly repetitive (this is the family of patterns given similarly using non-singular translates of the lattice, and singular patterns given as limits of non-singular ones).

It will be important to keep track of the subgroups of $\Gamma$ which stabilise various sets of supporting hyperplanes. Given $I \subseteq \sH$, we let
\[
\Gamma^I \coloneqq \{\gamma \in \Gamma \mid H + \gamma_< = H \text{ for all } H \in I\}.
\]
It is easy to see that each $\Gamma^I$ is a subgroup of $\Gamma$ and we denote the rank $\rk(\Gamma^I)$ by $\rk(I)$. For a singleton, we simplify notation to $\Gamma^H \coloneqq \Gamma^{\{H\}}$ and $\rk(H) \coloneqq \rk(\{H\})$. Clearly a translation stabilises a hyperplane $H$ if and only if it stabilises any other translate $H+x$, for example $V(H)$. So we may write
\[
\Gamma^I = \{\gamma \in \Gamma \mid V + \gamma_< = V \text{ for all } V \in V(I)\} = \{\gamma \in \Gamma \mid \gamma_< \in V \text{ for all } V \in V(I)\}.
\]
Hence, after projecting $\Gamma^I$ to the internal space (which determines $\Gamma^I$, by injectivity of $\pi_<$), we may alternatively describe it as the subset of $\Gamma_<$ contained in the subspace intersection of all $V \in V(I)$:

\begin{lemma} \label{lem: higher stabilisers}
For $I \subseteq \sH$, $\Gamma^I$ is a subgroup of $\Gamma$, whose projection to $\intl$ is $\Gamma^I_< = \Gamma_< \cap X_I$, for the subspace $X_I = \bigcap_{H \in I} V(H)$. \qed
\end{lemma}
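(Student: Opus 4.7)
The plan is to unpack the defining condition of $\Gamma^I$ in two stages: first reformulate it as a linear condition on $\gamma_<$, then use injectivity of $\pi_<$ on $\Gamma$ to transport the statement to the internal space. This is essentially a routine definitional check rather than a substantive argument, and there is no real obstacle — the only thing to be careful about is the elementary fact that an affine hyperplane is stabilised by a translation vector if and only if the vector lies in its associated linear subspace.

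First I would observe that for any affine hyperplane $H$ and any $v \in \intl$, we have $H + v = H$ precisely when $v \in V(H)$: indeed, picking any $h \in H$, the translate $H + v$ passes through $h+v$, and equals $H$ iff $h+v \in H$, iff $v \in H - h = V(H)$. Applying this to each $H \in I$, the defining condition $H + \gamma_< = H$ for all $H \in I$ becomes $\gamma_< \in V(H)$ for all $H \in I$, i.e.\ $\gamma_< \in X_I = \bigcap_{H \in I} V(H)$.

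To show $\Gamma^I$ is a subgroup, note that $X_I$ is a linear subspace of $\intl$ (an intersection of codimension $1$ subspaces), so $\pi_<^{-1}(X_I)$ is a subgroup of $\tot$, and $\Gamma^I = \Gamma \cap \pi_<^{-1}(X_I)$ is then a subgroup of $\Gamma$. Equivalently, if $\gamma, \gamma' \in \Gamma^I$, then $(\gamma - \gamma')_< = \gamma_< - \gamma'_< \in X_I$ by linearity of $\pi_<$, so $\gamma - \gamma' \in \Gamma^I$.

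Finally, to identify $\Gamma^I_<$ with $\Gamma_< \cap X_I$, I would argue by double inclusion. If $\gamma \in \Gamma^I$, then by the reformulation above $\gamma_< \in X_I$, and of course $\gamma_< \in \Gamma_<$, giving $\Gamma^I_< \subseteq \Gamma_< \cap X_I$. Conversely, given any $y \in \Gamma_< \cap X_I$, by injectivity of $\pi_<$ on $\Gamma$ there is a unique $\gamma \in \Gamma$ with $\gamma_< = y$; since $y \in X_I$, the reformulation shows $\gamma \in \Gamma^I$, so $y = \gamma_< \in \Gamma^I_<$. This establishes the claimed equality.
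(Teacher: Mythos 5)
Your proof is correct and follows essentially the same reasoning the paper gives informally in the paragraph preceding the lemma (which is why the lemma is stated with \qed and no formal proof): translating the stabiliser condition to $\gamma_< \in V(H)$, intersecting over $I$, and using injectivity of $\pi_<$ on $\Gamma$ to pass between $\Gamma^I$ and its projection. Nothing is missing; the subgroup check and the double inclusion are exactly the routine verifications the paper leaves to the reader.
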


\subsection{Patches, acceptance domains and cut regions} \label{sec: patches and acceptance domains}
The {\bf $r$-patch} centred at $y \in \Lambda$ is the subset $P(y,r) \coloneqq B_r(y) \cap \Lambda$, marked with the centre $y$. We typically consider $r$-patches up to translation equivalence. The {\bf complexity function} $p \colon \R_{>0} \to \N$ is defined by letting $p(r)$ be the number of $r$-patches, up to translation equivalence. Again, this does not depend on the choice $\cps$ of cut and project set produced by the scheme $\mathcal{S}$, as long as $\mathcal S$ is non-singular.

One of the main results of \cite{I} was that for any polytopal scheme, we have $p(r) \asymp r^\alpha$, for some $\alpha \in \N$ which can be determined by the ranks of the subgroups $\Gamma^H$, for $H \in \sH$ and the dimensions of their linear spans (in particular, we recall this as Theorem \ref{thm: generalised complexity} later). By \cite[Corollary 6.2]{I}, $\alpha \leq nd$ (where $n = k-d = \dim \intl$ denotes the `codimension'). We define the following property of minimal complexity:

\begin{definition}
We say that the scheme $\mathcal{S}$ satisfies property {\bf C}, of having {\bf minimal complexity}, if $p(r) \ll r^d$, that is, there exists some $C > 0$ for which $p(r) \leq C r^d$ for all $r\ge 1$. 
\end{definition}

We will see in Corollary \ref{cor: minimal complexity} that $\alpha \geq d$ for an aperiodic scheme $\mathcal{S}$, so we always have $p(r) \gg r^d$ and hence {\bf C} implies that $p(r) \asymp r^d$. We shall explore further useful consequences of this condition in Section \ref{sec: minimal complexity}.

Given some $r > 0$, we consider the lattice points $\gamma \in \Gamma$ for which $\|\gamma_\vee\|$, $\|\gamma_<\| \leq r$. Without loss of generality, we can choose a metric on $\tot$ so that, equivalently, $\|\gamma\| \leq r$ (and even if we do not, we have such an identification up to linear rescalings of $r$, which is all that is needed for our results here). Consider the elements of $\Gamma(r) \coloneqq B_r(0) \cap \Gamma$, and further, denote $\Gamma_<(r) \coloneqq \Gamma(r)_<$. 

Recall that to each $r$-patch $P = P(y,r)$, there is a subset $A_P \subseteq W$, called here an {\bf $r$-acceptance domain}, such that $P(y,r) = P(z,r) + (y-z)$ if and only if $z^* \in A_P$. In particular, the $r$-acceptance domain $A\subset W$ containing the point $z^*\in W$ (for $z^* \notin \partial W +\Gamma_<$) is the intersection of those $\Gamma_<(r)$ translates of the  interior $\iW$ and complement $W^c$ that contain $z^*$ \cite[Corollary 3.2]{I}. The set of $r$-acceptance domains is denoted $\sA(r)$.

We also have the {\bf cut regions}, defined as follows. Consider the set $\sH + \Gamma_<(r)$ of translates of the supporting hyperplanes. Removing them from the window splits it into convex connected components, called {\bf $r$-cut regions}. The set of $r$-cut regions is denoted $\sC(r)$. Even without further assumptions on the polytopal scheme, we have the following:

\begin{lemma}[Corollary 4.3 of \cite{I}] \label{lem: cuts refine acc}
For sufficiently large $r$, the $r$-cut regions refine the $r$-acceptance domains. That is, for all $A \in \sA(r)$, with $r$ sufficiently large, there exists some $C \in \sA(r)$ with $C \subseteq A$; equivalently, if the $r$-patches at $x$, $y \in \cps$ disagree then $x^*$ and $y^*$ are separated by a hyperplane $H+\gamma_<$, with $H \in \sH$ and $\gamma \in \Gamma \cap B_r(0)$.
\end{lemma}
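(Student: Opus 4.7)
My plan is to derive the lemma directly from Corollary 3.2 of \cite{I}, which the excerpt cites: the $r$-acceptance domain containing a generic $z^* \in W$ is exactly the intersection of those $\Gamma_<(r)$-translates $\iW + \gamma_<$ and $W^c + \gamma_<$ that contain $z^*$. Since the cut regions are defined as the connected components of $W$ with $\sH + \Gamma_<(r)$ removed, and the acceptance domains are described by the same family of half-space arrangements, the refinement should fall out essentially by topology.

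In more detail, I would first fix $r$ large enough for Corollary 3.2 of \cite{I} to apply. Let $C \in \sC(r)$ be a cut region. For each $\gamma \in \Gamma(r)$, observe that $\partial(W + \gamma_<) = \partial W + \gamma_< \subseteq \sH + \gamma_<$, which $C$ avoids by definition. Thus the open connected set $C$ lies in the union of the two disjoint open sets $\iW + \gamma_<$ and $W^c + \gamma_<$, so by connectedness it lies entirely in one of them. Picking any $z^* \in C$, I conclude that $C$ is contained in the intersection of every $\iW + \gamma_<$ and $W^c + \gamma_<$ containing $z^*$, i.e.\ in the acceptance domain $A$ associated to $z^*$ (provided $z^*$ is non-singular; this can be arranged inside $C$ since $C$ is open and $\partial W + \Gamma_<$ is a meagre set).

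For the other direction — that every $A \in \sA(r)$ contains some $C \in \sC(r)$ — I use that $A$ is open and nonempty, being a finite intersection of the open sets $\iW + \gamma_<$ and $W^c + \gamma_<$ for $\gamma \in \Gamma(r)$ (finite because $\Gamma(r) = B_r(0) \cap \Gamma$ is finite). The removed set $\sH + \Gamma_<(r)$ is then a finite union of affine hyperplanes restricted to a bounded region, hence of measure zero, so $A \setminus (\sH + \Gamma_<(r))$ is nonempty. Any point of this difference lies in a cut region $C$, and by the previous paragraph $C \subseteq A$. The equivalent formulation follows: if the $r$-patches at $x, y \in \cps$ disagree, then $x^*, y^*$ lie in different acceptance domains, hence in different cut regions, hence are separated by some $H + \gamma_<$ with $H \in \sH$ and $\gamma \in \Gamma(r)$.

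The main obstacle, and the reason the hypothesis ``for sufficiently large $r$'' appears, is ensuring that the collection of $\Gamma_<(r)$-translates used to define cut regions really does contain every translate needed to describe an $r$-acceptance domain. Under the chosen metric on $\tot$, a lattice point $\gamma$ can contribute to an $r$-patch only when $\|\gamma_\vee\| \leq r$, and the relevant $\gamma_<$ is then bounded in terms of $r$ and the diameter of $W$; once $r$ is large enough compared to this diameter, the two families of translates coincide (up to an inessential rescaling), which is exactly what Corollary 3.2 of \cite{I} packages for us. The rest of the argument is purely combinatorial/topological and should require no further effort.
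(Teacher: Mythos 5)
This lemma is quoted from Part I (\cite[Corollary 4.3]{I}) rather than reproved here, so there is no proof in the present paper to compare against; your argument is a correct self-contained derivation from the cited description of acceptance domains. The key steps are sound: since $\partial W \subseteq \bigcup_{H \in \sH} H$, a cut region $C$ avoids each $\partial W + \gamma_<$ for $\gamma \in \Gamma(r)$ and by connectedness lies wholly in $\iW + \gamma_<$ or $W^c + \gamma_<$, hence inside the acceptance domain of any of its non-singular points; and your diagnosis of where ``sufficiently large $r$'' enters (ensuring $\Gamma_<(r)$ captures every translate of $\iW$ and $W^c$ relevant to an $r$-patch, which holds once $r$ exceeds roughly $\operatorname{diam}(W)$) is the right one. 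One point worth making explicit in the final sentence: passing from ``$x^*$, $y^*$ lie in different cut regions'' to ``they are separated by some $H + \gamma_<$'' uses that cut regions are convex and that the segment $[x^*,y^*]$ stays in $W$, so it must cross a translated hyperplane, and since both endpoints are non-singular they lie strictly on opposite sides of it.
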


We will use the above in showing that conditions {\bf C} and {\bf D} together imply {\bf LR}: since cut regions refine acceptance domains, it will be sufficient to show that all cut regions are visited frequently to derive the same for the acceptance domains, and hence that all patches appear frequently enough in our patterns. To obtain a kind of converse to the above lemma (up to linear rescalings of $r$), so as to refine cut regions by acceptance domains, one needs to assume further conditions, a very general one being the \emph{quasicanonical} condition introduced in \cite{I}. For our purposes, however, this will not be necessary, and we work directly with the acceptance domains for the converse direction of our main theorem.

\subsection{Linear repetitivity}
For $r > 0$, we define $\rho(r)$ to be the smallest value of $R$ for which, for any $z \in \Lambda$ and any $r$-patch $P$ in $\Lambda$, we have that $P = P(y,r)$, up to translation, for some $y \in B_R(z)$. That is, all $r$-patches which appear somewhere in $\Lambda$, in fact appear within radius $\rho(r)$ of any point of $\Lambda$, up to translation.

We say that $\Lambda$ is {\bf linearly repetitive} if $\rho(r) \ll r$, that is, there exists some $C > 0$ such that $\rho(r) \leq Cr$ for sufficiently large $r$. If $\Lambda$ is linearly repetitive, then so is any other cut and project set in the family produced by the scheme $\mathcal{S}$, and we say that $\mathcal{S}$ has property {\bf LR}.

\subsection{New conditions}
We introduce here some new conditions on the scheme $\mathcal{S}$ which will be useful in establishing our main theorem. The following will be vital in Section \ref{sec: vertices} when we come to compare projected lattice points with vertices of acceptance domains, as well as calculating ranks of hyperplane stabiliser groups for low complexity schemes. It will be shown in Theorem \ref{thm: C => hyperplane spanning} that this condition always holds if $\mathcal{S}$ satisfies {\bf C}.

\begin{definition}\label{def: hyperplane spanning}
Call $\mathcal{S}$ {\bf hyperplane spanning} if, for each $H\in \sH$, the linear span of $\Gamma_<^H$ is of dimension $n-1$, that is, $\langle \Gamma^H_< \rangle_\R = V(H)$. 
\end{definition}

The next property asks that all supporting hyperplanes are well-positioned with respect to the lattice, in the sense that they may be translated by lattice elements over a common origin. The term \emph{homogeneous} is motivated by the codimension 1 case (see Subsection \ref{sec: codimension 1}), where the condition allows us to analyse sizes of acceptance domains via homogeneous (rather than inhomogeneous) Diophantine properties of the lattice.

\begin{definition} \label{def: homogeneous} Call $\mathcal{S}$ {\bf homogeneous} if there exists some `origin' $o \in \intl$ for which, for each $H \in \sH$, there exists some $\gamma_H \in \Gamma$ so that $o \in H+(\gamma_H)_<$. \end{definition}

Obviously the above property is not affected by translating the window. So an equivalent condition is that, for some translate of the window, $H \cap \Gamma_< \neq \emptyset$ for each $H \in \sH$, as seen by translating $W$ by $-o$, which re-centres at the origin. For convenience, we shall usually take $o$ to be the origin of $\intl$ when assuming homogeneity, which simplifies proofs. Whilst this technically makes the scheme singular with respect to the untranslated lattice, we ultimately only require homogeneity to derive affine conditions of the window, which do not depend on a choice of `origin', such as sizes of acceptance domains.

For the statement of our main theorem, we only need the following weaker version of homogeneity. As above, the property is invariant up to translations of the window, so we may always take $o$ as the origin after an appropriate translation of $W$.

\begin{definition} \label{def: weakly homogeneous}
Call $\mathcal{S}$ {\bf weakly homogeneous} if there exists some `origin' $o \in \intl$ for which, for each $H \in \sH$, there exists some $\gamma_H \in \Gamma$ and $n_H \in \N$ so that $o \in H+(1/n_H)\cdot(\gamma_H)_<$.
\end{definition}

\begin{lemma} \label{lem: hom vs weak hom}
The scheme $\mathcal{S}$ is weakly homogeneous if and only if there exists some $N \in \N$ so that the scheme $\mathcal{S}'$, given by replacing $\Gamma$ with $(1/N) \cdot \Gamma$, is homogeneous.
\end{lemma}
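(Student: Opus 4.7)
The plan is to simply unpack the definitions in both directions, using that $\sH$ is finite (since $W$ is a convex polytope) to clear denominators with a common $N$.

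For the forward direction, assume $\mathcal{S}$ is weakly homogeneous with origin $o \in \intl$ and witnessing data $\{(\gamma_H, n_H)\}_{H \in \sH}$. Since $\sH$ is finite, set $N \coloneqq \mathrm{lcm}\{n_H : H \in \sH\}$. For each $H \in \sH$, I would define $\gamma_H' \coloneqq (N/n_H)\gamma_H \in \Gamma$, so that the vector $(1/N)\gamma_H'$ lies in the rescaled lattice $(1/N)\Gamma$. A direct calculation gives
\[
\bigl((1/N)\gamma_H'\bigr)_< \;=\; (1/N)(N/n_H)(\gamma_H)_< \;=\; (1/n_H)(\gamma_H)_<,
\]
so the weak homogeneity condition $o \in H + (1/n_H)(\gamma_H)_<$ rewrites as $o \in H + ((1/N)\gamma_H')_<$. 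This is exactly homogeneity (Definition \ref{def: homogeneous}) for $\mathcal{S}'$ with the same origin $o$.

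For the reverse direction, suppose $\mathcal{S}'$ with lattice $(1/N)\Gamma$ is homogeneous via some origin $o \in \intl$. For each $H \in \sH$ there is $\gamma' \in (1/N)\Gamma$ with $o \in H + \gamma'_<$. Writing $\gamma' = (1/N)\gamma$ for $\gamma \in \Gamma$, we have $\gamma'_< = (1/N)\gamma_<$, and so setting $n_H \coloneqq N$ and $\gamma_H \coloneqq \gamma$ immediately witnesses weak homogeneity of $\mathcal{S}$ with origin $o$.

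There is no serious obstacle here; the lemma is essentially a dictionary translation, and the only point worth highlighting is the use of finiteness of $\sH$ to pass to a single common denominator $N$ in the forward direction. Note that replacing $\Gamma$ by $(1/N)\Gamma$ does not affect the internal space $\intl$, the window $W$, or the set $\sH$ of supporting hyperplanes, so the definitions are being applied to the same geometric data throughout.
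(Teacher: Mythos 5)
Your proof is correct and matches the paper's argument essentially verbatim: clear denominators in the forward direction by passing to a common multiple $N$ of the $n_H$ (the paper uses $N = \prod_H n_H$ rather than the lcm, which changes nothing), and in the reverse direction simply read off $n_H = N$. There is no substantive difference.
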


\begin{proof}
Suppose that $\mathcal{S}'$ is homogeneous, and take $H \in \sH$. Then there exists $(1/N)\cdot \gamma_H \in (1/N) \cdot \Gamma$ with $o \in H + ((1/N) \cdot \gamma_H)_< = H + (1/N)\cdot (\gamma_H)_<$. It follows that $\mathcal{S}'$ is weakly homogeneous, taking each $n_H = N$.

Conversely, suppose that $\mathcal{S}$ is weakly homogeneous, and define $N \coloneqq \prod_{H \in \sH} n_H$. For each $H \in \sH$, there exists some $\gamma_H \in \Gamma$ so that $o \in H+(1/n_H)\cdot(\gamma_H)_< = H + ((1/N) \cdot (N/n_H) \cdot \gamma_H)_<$. Since $(1/N) \cdot (N/n_H) \cdot \gamma_H \in (1/N) \cdot \Gamma$ for each $H \in \sH$, we see that $\mathcal{S}'$ is homogeneous.
\end{proof}

\section{Minimal complexity schemes} \label{sec: minimal complexity}
In this section we determine consequences of the condition {\bf C} of minimal complexity. In \cite{I} we determined the exponent of the complexity function for general polytopal cut and project schemes. We recall the following definition, followed by the result \cite[Theorem 7.1]{I} (where we simplify the original notation from $\alpha'$ and $\alpha_f'$ to $\alpha$ to $\alpha_f$):

\begin{definition} \label{def: flag} A set $f = \{H_1,\ldots,H_n\}$ of affine, codimension $1$ hyperplanes is called a {\bf flag} if their intersection is a singleton.
\end{definition}

\begin{theorem} \label{thm: generalised complexity} Consider the collection $\sF$ of flags of the supporting hyperplanes $\sH$ of a given polytopal cut and project scheme. Then $p(r) \asymp r^\alpha$ where
\[
\alpha \coloneqq \max_{f \in \sF} \alpha_f, \ \ \mathrm{for} \ \ \alpha_f \coloneqq \sum_{H \in f} (d - \rk(H) + \beta_H), \ \text{and} \ \ \beta_H \coloneqq \dim \langle \Gamma_<^H \rangle_\R.
\]
\end{theorem}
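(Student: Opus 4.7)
The plan is to bound $p(r)$ from above and below via a hyperplane arrangement analysis in the window $W$. Since $p(r) = |\sA(r)|$ by definition, and by Lemma \ref{lem: cuts refine acc} we have $|\sA(r)| \leq |\sC(r)|$ for $r$ sufficiently large, the upper bound reduces to counting $r$-cut regions. For the lower bound, one must match this count from below by the number of acceptance domains, showing that enough of the cut regions in a suitable flag arrangement are in fact separated into distinct patch classes.

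The core computation is, for each supporting direction $H \in \sH$, to count $N_H(r)$, the number of distinct affine translates $H + \gamma_<$ with $\gamma \in \Gamma(r)$ that meet $W$. Two lattice vectors produce the same translate precisely when they differ by an element of $\Gamma^H$, so $N_H(r)$ counts cosets $\gamma + \Gamma^H$ possessing a representative in $\Gamma(r)$ with $\phi(\gamma)$ in a fixed bounded interval $I$, where $\phi \colon \tot \to \intl/V(H) \cong \R$ is the quotient map. Choosing a transversal to $\Gamma^H$ in $\Gamma$ and projecting orthogonally to a complement of $\langle \Gamma^H \rangle_\R$ in $\tot$, a lattice-point count in a ball of radius $r$ intersected with a bounded slab should yield
\[
N_H(r) \asymp r^{d - \rk(H) + \beta_H}.
\]
The exponent tracks the effective number of free lattice directions remaining after modding out by $\Gamma^H$, with the $+\beta_H$ correcting for degeneracy when $\rk(H) > \beta_H$.

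With the per-direction count in hand, a hyperplane arrangement argument gives the upper bound: cells of the arrangement concentrate near flag vertices, with local multiplicity at a vertex arising from flag $f$ equal to $\prod_{H \in f} N_H(r) = r^{\alpha_f}$. Summing over finitely many flag types yields $|\sC(r)| \ll \max_f r^{\alpha_f} = r^\alpha$. For the matching lower bound, fix a flag $f$ achieving $\alpha_f = \alpha$ and, in a neighbourhood of a common vertex of the hyperplanes in $f$, exhibit $\asymp r^{\alpha_f}$ distinct acceptance domains: two cells separated by a cut hyperplane $H + \gamma_<$ host patches that disagree at the lattice point $-\gamma$, hence lie in different acceptance domains. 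Combining the bounds gives $p(r) \asymp r^\alpha$.

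The main obstacle is the per-direction count, especially in the degenerate regime $\beta_H < \rk(H)$ where $\Gamma_<^H$ fails to be discrete in its span. Here naive coset counting over-estimates $N_H(r)$, and one must exploit short stabiliser elements, that is, lattice vectors $s \in \Gamma^H$ with small internal projection but large physical component, to cut down the count appropriately. A robust approach is to split the counting along the decomposition of $\tot$ into $\langle \Gamma^H \rangle_\R$ and its orthogonal complement, and apply classical lattice point asymptotics on each piece, bookkeeping the degeneracy exactly into the correction $+\beta_H$. A secondary difficulty is showing that the flag-vertex cells in Step 3 actually yield distinct patches rather than collapse under the star map; this relies on choosing the flag generically inside $W$ so that the separating lattice points $\gamma \in \Gamma(r)$ fall inside the patch balls of the cell centres.
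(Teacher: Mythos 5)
The paper does not itself prove this statement; it is quoted verbatim from Part~I (\cite[Theorem 7.1]{I}), so there is no in-paper argument to compare against. Judged on its own terms, the proposal has a genuine gap in the central counting step. You define $N_H(r)$ to be the number of distinct translates $H+\gamma_<$, with $\gamma\in\Gamma(r)$, that meet $W$, and assert $N_H(r)\asymp r^{d-\rk(H)+\beta_H}$. That count is in fact $\asymp r^{k-\rk(H)-1}$: the cosets of $\Gamma^H$ with a representative in $\Gamma(r)$ are the points of the rank-$(k-\rk(H))$ lattice $\Gamma/\Gamma^H\leqslant\tot/\langle\Gamma^H\rangle_\R$ lying in a ball of radius $\asymp r$, and the condition that $H+\gamma_<$ meet the bounded window is a single codimension-one slab condition, costing one factor of $r$; there is no place for $\beta_H$ to enter. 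The two exponents coincide precisely when $\beta_H=n-1$ (hyperplane spanning), but the theorem and its $\beta_H$-term are designed to cover the general case. Since the cut-region count is governed by $\max_f\prod_{H\in f}N_H(r)$, you get $|\sC(r)|\asymp r^{\max_f\sum_{H\in f}(k-\rk(H)-1)}$, which exceeds $r^{\alpha}$ by the unbounded factor $r^{\sum_{H\in f}(n-1-\beta_H)}$ as soon as some $\beta_H<n-1$. So the reduction $p(r)\leq|\sC(r)|$ via Lemma~\ref{lem: cuts refine acc} cannot deliver the upper bound $p(r)\ll r^{\alpha}$.

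The exponent $d-\rk(H)+\beta_H$ is correct for a different and more refined quantity: the number of $\gamma\in\Gamma(r)$ for which the translated \emph{facet} $(W\cap H)+\gamma_<$ meets a fixed bounded region, say a small neighbourhood of a flag vertex. That is a codimension-$n$ condition $\gamma_<\in B$, giving $\asymp r^{d}$ candidates, and two of them produce the same cut only if they differ by some $\sigma\in\Gamma^H$ with $\|\sigma\|\ll r$ and $\sigma_<\in B-B$ bounded; equidistribution of the rank-$\rk(H)$ group $\Gamma^H$ in the $\beta_H$-dimensional span of its internal projection gives $\asymp r^{\rk(H)-\beta_H}$ such $\sigma$. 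That is the true provenance of $\beta_H$, and observe that the degeneracy driving the gap between cut regions and acceptance domains is $\beta_H<n-1$, which is independent of the condition $\beta_H<\rk(H)$ that you flag. Your lower bound runs into the mirror of the same obstruction: two points $x^*,y^*$ separated by $H+\gamma_<$ need not host disagreeing patches, since $y^*-\gamma_<\notin W$ follows from the separation but $x^*-\gamma_<\in W$ does not, unless the $H$-facet of $W+\gamma_<$ actually passes nearby. The paper's remark after Lemma~\ref{lem: cuts refine acc} --- that refining cut regions by acceptance domains requires extra (quasicanonical) hypotheses --- is exactly this point. A correct proof must count translated facets near a vertex for both bounds, rather than translated hyperplanes and cut regions.
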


Since $\Gamma_<^H \leqslant V(H)$, and the latter is a codimension $1$ subspace of $\intl$, we have that $\beta_H \leq n - 1$, with equality for each $H \in \sH$ if and only if $\mathcal{S}$ is hyperplane spanning. By the following lemma, $\alpha \geq d$ if the scheme is hyperplane spanning (recall that we always assume that $\mathcal{S}$ is aperiodic).

The main theorem of this section is that if $\mathcal{S}$ satisfies property {\bf C} (that is, $\alpha \leq d$) then the scheme is hyperplane spanning, and hence $\alpha = d$. This is to be expected, since a scheme is of low complexity when the stabilisers $\Gamma^H$ are `large', and one would hope therefore at least spanning. The difficulty is that a priori it is feasible that each $\Gamma^H$ could have large rank but possibly concentrated in lower dimensional subspaces. Indeed, schemes can be chosen so that this happens for several hyperplanes $H$. However, we will show (loosely speaking) that this cannot happen for enough hyperplanes as to constitute a flag. To establish this, we will need a few technical results.

\begin{lemma} \label{lem: minimum complexity}
Suppose that $\mathcal{S}$ is hyperplane spanning. Then $\alpha_f \geq d$ for each $f \in \sF$. In particular $\alpha \geq d$.
\end{lemma}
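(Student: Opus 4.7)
The plan is to turn the inequality $\alpha_f \geq d$ into a statement purely about ranks of stabiliser groups and then exploit the defining property of a flag, namely that the intersection of its hyperplanes is a singleton.

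First I would write out $\alpha_f$ under the hyperplane spanning assumption. Since hyperplane spanning gives $\beta_H = \dim V(H) = n-1$ for every $H \in \sH$, writing $r_i := \rk(\Gamma^{H_i})$ for the $n$ hyperplanes $H_1, \dots, H_n$ of a flag $f$, we get
\[
\alpha_f = \sum_{i=1}^n (d - r_i + (n-1)) = n(d + n - 1) - \sum_{i=1}^n r_i.
\]
So the claim $\alpha_f \geq d$ is equivalent to the rank bound $\sum_i r_i \leq (n-1)k$, where $k = n+d$.

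To prove this rank bound, I would use that $f$ is a flag, which means $\bigcap_{i=1}^n H_i$ is a singleton, hence $\bigcap_i V(H_i) = \{0\}$. Consider the homomorphism
\[
\Phi \colon \Gamma \longrightarrow \prod_{i=1}^n \Gamma / \Gamma^{H_i}, \qquad \gamma \mapsto (\gamma + \Gamma^{H_i})_i.
\]
Its kernel is $\bigcap_i \Gamma^{H_i} = \Gamma^f$, and by Lemma~\ref{lem: higher stabilisers} we have $\Gamma^f_< \subseteq \bigcap_i V(H_i) = \{0\}$. By injectivity of $\pi_<$ on $\Gamma$ (which follows from aperiodicity, via $\rk(\Gamma_<) = k$), this forces $\Gamma^f = 0$. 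Hence $\Phi$ is injective, and the standard fact that rank is monotone under embeddings and additive on finite products gives
\[
k = \rk(\Gamma) \leq \sum_{i=1}^n \rk(\Gamma/\Gamma^{H_i}) = \sum_{i=1}^n (k - r_i) = nk - \sum_i r_i,
\]
so $\sum_i r_i \leq (n-1)k$ as required. Substituting back, $\alpha_f \geq n(d+n-1) - (n-1)(n+d) = d$, and the final conclusion $\alpha = \max_f \alpha_f \geq d$ follows immediately.

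There is no serious obstacle: the only subtle point is verifying that the diagonal map $\Phi$ is genuinely injective, which requires both the flag condition (to kill the intersection of the $V(H_i)$) and the aperiodicity of $\mathcal{S}$ (to pass from $\Gamma^f_< = 0$ to $\Gamma^f = 0$). Note that the rank bound $\sum r_i \le (n-1)k$ holds without hyperplane spanning; that hypothesis is only needed to replace each $\beta_{H_i}$ by the maximum value $n-1$ when assembling $\alpha_f$, so that the bound on $\sum r_i$ becomes tight enough to yield $\alpha_f \geq d$.
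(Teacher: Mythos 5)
Your argument is correct and is essentially identical to the paper's: the key step is the injective diagonal map $\Gamma \hookrightarrow \bigoplus_{H \in f} \Gamma/\Gamma^H$, whose injectivity follows exactly as you say from the flag condition together with aperiodicity, and this yields $\sum_{H\in f}\rk(H) \leq (n-1)k$. The only cosmetic difference is that you rearrange the final algebra into an explicit rank bound before substituting, whereas the paper writes $\alpha_f = \rk(T) - n \geq k - n = d$ directly; the content is the same.
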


\begin{proof}
Let $f \subseteq \sH_0$ be any flag and consider the group
\[
T \coloneqq \bigoplus_{H \in f} (\Gamma / \Gamma^H).
\]
There is a natural homomorphism $\Gamma \to T$, given by the diagonal embedding followed by taking the quotients by $\Gamma^H$ in each component. This map is an injection. Indeed, suppose that $\gamma \in \Gamma$ maps to $0$ in each component, so that $\gamma \in \Gamma^H$ for all $H \in f$; equivalently $\gamma_< \in V$ for all $V \in V(f)$. Since $f$ is a flag, the only point in every $V \in V(f)$ is the origin, so $\gamma_< = 0$. By aperiodicity, we have that $\gamma = 0$.

It follows that $k = \rk(\Gamma) \leq \rk(T) = \sum_{H \in f} (k - \rk(H))$. By hyperplane spanning, each $\beta_H = n-1$. Hence
\[
\alpha_f = \sum_{H \in f} (k - \rk(H) - 1) = \left(\sum_{H \in f} (k - \rk(H))\right) - n = \rk(T)-n \geq k-n = d,
\]
as required. Since each $\alpha_f \geq d$, we have that $\alpha = \max_{f \in \sF} \alpha_f \geq d$.
\end{proof}

The next result provides a procedure with which to modify a scheme to make it hyperplane spanning without increasing the complexity exponent:

\begin{lemma} \label{lem: beta increase}
Suppose that $\mathcal{S}$ is not hyperplane spanning, so there exists some $H \in \sH$ with $\beta_H \neq n-1$. Then we may replace $H$ with some hyperplane $H'$, obtaining a new scheme $\mathcal{S}'$ with supporting hyperplanes $\sH' = (\sH \setminus \{H\}) \cup \{H'\}$, so that:
\begin{enumerate}
	\item $\Gamma^H < \Gamma^{H'}$;
	\item each $\alpha_f$ satisfies $\alpha_f\ge \alpha_{f'}$;
	\item $\beta_{H'}=\beta_H+1$.
\end{enumerate}
Here, for a flag $f\subset \sH$, the flag $f'\subset \sH'$ has the same hyperplanes as $f$ except that $H$ is replaced by $H'$. 
\end{lemma}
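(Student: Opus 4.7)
The strategy is to tilt $H$ to a new hyperplane $H'$ whose parallel subspace $V(H')$ contains $\langle \Gamma_<^H \rangle_\R$ together with one further $\R$-independent lattice direction, but no other lattice points. Since by hypothesis $\beta_H \le n - 2$, the span $\langle \Gamma_<^H \rangle_\R$ is a proper subspace of $\intl$, and as $\Gamma_<$ is dense in $\intl$ it $\R$-spans $\intl$, so there exists $\gamma \in \Gamma$ with $\gamma_< \notin \langle \Gamma_<^H \rangle_\R$. Such $\gamma$ further satisfies $\gamma_< \notin V(H)$: otherwise $\gamma \in \Gamma^H$, forcing $\gamma_< \in \Gamma_<^H \subseteq \langle \Gamma_<^H \rangle_\R$.

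Set $W' := \langle \Gamma_<^H \rangle_\R + \R\gamma_<$, of dimension $\beta_H + 1 \le n - 1$. I would then choose $V(H')$ to be a codimension-$1$ linear subspace of $\intl$ containing $W'$ and satisfying $\Gamma_< \cap V(H') = \Gamma_< \cap W'$. To see one exists, work in the quotient $q\colon \intl \to \intl/W'$, of dimension $n - \beta_H - 1 \ge 1$: such $V(H')$ correspond bijectively to codimension-$1$ subspaces of the quotient. If the quotient has dimension $1$, we must take $V(H') = W'$ and the property is automatic. Otherwise, for each nonzero point of the countable set $q(\Gamma_<)$ the codimension-$1$ subspaces containing it form a proper closed subset of the parameter Grassmannian, so their union is meagre and a generic choice works. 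Let $H'$ be any affine translate of $V(H')$, chosen close to $H$ so that the window remains polytopal.

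The three claims then follow by direct inspection. For (1), $\Gamma_<^H \subseteq W' \subseteq V(H')$ gives $\Gamma^H \le \Gamma^{H'}$, with strict inclusion because $\gamma \in \Gamma^{H'} \setminus \Gamma^H$. For (3), $\Gamma_<^{H'} = \Gamma_< \cap V(H') = \Gamma_< \cap W'$ has $\R$-span equal to $W'$ (it contains a spanning set and lies in $W'$), so $\beta_{H'} = \beta_H + 1$. For (2), flags $f \not\ni H$ give $f' = f$ and $\alpha_{f'} = \alpha_f$; flags $f \ni H$ satisfy, by (3),
\[
\alpha_f - \alpha_{f'} = \rk(H') - \rk(H) - 1,
\]
which is nonnegative because $\Gamma^{H'}$ contains $\Gamma^H + \Z\gamma$ and $\gamma$ is $\R$-linearly independent of $\Gamma^H$ in $\tot$ (any $\R$-linear relation would push down via $\pi_<$ to contradict $\gamma_< \notin \langle \Gamma_<^H \rangle_\R$), so $\rk(H') \ge \rk(\Gamma^H + \Z\gamma) = \rk(H) + 1$.

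The main obstacle is the generic-choice step: $V(H')$ must enlarge $V(H)$ just enough to capture $\gamma_<$ without sweeping in further lattice points, as this would overshoot the target $\beta_{H'} = \beta_H + 1$. A secondary technical point is flag preservation for $f \ni H$, where $f'$ remains a flag if and only if $V(H')$ does not contain the $1$-dimensional axis $U_f = \bigcap_{H_i \in f \setminus \{H\}} V(H_i)$; this imposes only finitely many further avoidance conditions that are comfortably compatible with the generic selection, after possibly perturbing $\gamma$ to ensure $U_f \not\subseteq W'$ for each such flag.
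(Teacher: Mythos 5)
Your proof is correct and takes essentially the same approach as the paper's. Both arguments tilt $V(H)$ to a new codimension-$1$ subspace $V(H')$ that contains $\langle \Gamma_<^H\rangle_\R$ together with a single additional projected lattice direction $\gamma_<$ and no further lattice points, then verify the three claims by the same bookkeeping (in particular, $\alpha_f - \alpha_{f'} = \rk(H') - \rk(H) - 1 \ge 0$ via $\rk(H') \ge \rk(H)+1$). The only difference is cosmetic: you parametrize the candidates for $V(H')$ by a Grassmannian in the quotient $\intl/W'$ and invoke genericity/meagreness, while the paper just picks the completing vectors $v_1,\dots,v_\ell$ from the complement of the countably many forbidden directions. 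One small caution: your statement ``Let $H'$ be any affine translate of $V(H')$, chosen close to $H$'' implicitly requires $V(H')$ itself to be close to $V(H)$ in the Grassmannian (so that $\sH'$ still bounds a polytope and flags persist); this is achievable — by choosing $\gamma_<$ with small component off $V(H)$ and the generic directions near $V(H)$ — and is the reason the paper stresses that $\gamma$ should be taken ``close to $V(H)$''. Once that closeness is imposed, flag preservation is automatic (being a flag is open), so the extra perturbation of $\gamma$ in your last paragraph to keep $U_f\not\subseteq W'$ is unnecessary, though harmless.
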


\begin{proof}
Let $X \coloneqq \langle \Gamma^H \rangle_\R$. By assumption, $\dim(X) = \beta_H < n-1$. Choose $x \in H$ and some $\gamma \in \Gamma_<$ not in $X$ which is `close' to $V(H) = H-x$ (which we make precise later). The subspace $Y \coloneqq \langle X, \gamma \rangle_\R$ is of dimension $\beta_H + 1$. Let $\Gamma_<^Y \coloneqq \Gamma_< \cap Y$. Then $\rk(\Gamma_<^Y) > \rk(\Gamma_<^H)$, since $\Gamma_<^Y$ contains all lattice elements of $\Gamma_<^H = \Gamma_< \cap (H-x)$, in addition to the ones coming from the inclusion of $\gamma$. Choose vectors $v_1$, $v_2$, \ldots, $v_\ell$, with $\ell = (n-1) - (\beta_H + 1)$, which complete $Y$ to a subspace
\[
V \coloneqq \langle Y, v_1, v_2, \ldots v_\ell \rangle_\R
\]
which is of dimension $n-1$ and contains no lattice points other than those already contained in $Y$ (since $\Gamma_<$ is countable, there is a dense set of choices for the $v_i$). We let $H' \coloneqq V+x$. Since $\Gamma_<$ is dense in $\intl$, we may choose $\gamma$, $v_1$, \ldots, $v_\ell$ so that the above conditions hold, and so that $H'$ is `close' to $H$, in the sense that $\sH'$ is still the set of supporting hyperplanes of some new window, and so that flags of $\sH$ are the same as flags in $\sH'$, up to swapping $H$ with $H'$ (since being a flag is an open condition on the set of hyperplanes).

By construction, $\Gamma^H < \Gamma^Y = \Gamma^{H'}$ and $\beta_{H'} = \beta_H + 1$. Moreover, since $\Gamma_<^{H'} \geqslant \langle \Gamma_<^H, \gamma \rangle_\R$, we see that $\rk(H') \geq \rk(H) + 1$. Hence, by the definition of $\alpha_f$ in Theorem \ref{thm: generalised complexity}, for any flag $f\subset \sH$ it is the case that $\alpha_f\ge \alpha_{f'}$, since any appearance of $H'$ which increases the corresponding $\beta_H$ by $1$ is at least offset by replacing $\rk(H)$ by $\rk(H')$.
\end{proof}

Together with Lemma \ref{lem: minimum complexity}, the above establishes the following:

\begin{corollary} \label{cor: minimal complexity}
Each $\alpha_f \geq d$ in Theorem \ref{thm: generalised complexity}. In particular, the complexity exponent $\alpha \geq d$.
\end{corollary}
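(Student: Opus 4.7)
The plan is to deduce the corollary directly from the two preceding lemmas by an induction on the total deficiency of the scheme from being hyperplane spanning. Fix an arbitrary flag $f \in \sF$ for $\mathcal{S}$; the goal is to show $\alpha_f \geq d$, which then immediately gives $\alpha = \max_f \alpha_f \geq d$.

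If $\mathcal{S}$ is already hyperplane spanning, I am done by Lemma \ref{lem: minimum complexity}. Otherwise, there is some $H \in \sH$ with $\beta_H < n-1$. Apply Lemma \ref{lem: beta increase} to replace $H$ by $H'$, producing a new scheme $\mathcal{S}'$ whose supporting hyperplanes are $\sH' = (\sH \setminus \{H\}) \cup \{H'\}$, and whose flag $f'$ (obtained from $f$ by swapping $H$ for $H'$, if $H \in f$, and leaving $f$ alone otherwise) satisfies $\alpha_f \geq \alpha_{f'}$ by part (2) of the lemma. Meanwhile, by part (3), $\beta_{H'} = \beta_H + 1$, while all other $\beta$-values are unchanged, so the quantity $\sum_{H \in \sH} (n - 1 - \beta_H) \in \Z_{\geq 0}$ strictly decreases.

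Iterating this procedure, the process must terminate, since at each step the above nonnegative integer strictly decreases. The terminal scheme $\mathcal{S}^*$ has $\beta_H = n-1$ for every supporting hyperplane, that is, it is hyperplane spanning. Its corresponding flag $f^*$ (tracked through the chain of single hyperplane swaps) satisfies $\alpha_{f^*} \geq d$ by Lemma \ref{lem: minimum complexity}. Telescoping the inequalities $\alpha_f \geq \alpha_{f'} \geq \cdots \geq \alpha_{f^*}$ from repeated applications of part (2) of Lemma \ref{lem: beta increase}, we obtain $\alpha_f \geq d$, as required.

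There is no substantive obstacle here beyond careful bookkeeping: Lemma \ref{lem: beta increase} is stated precisely so that flags of $\sH$ correspond bijectively to flags of $\sH'$ via single-hyperplane replacement, so the correspondence $f \mapsto f'$ is well defined at each step, and the $\alpha_f$-value is monotone along the chain. The only point to verify is termination, which follows from the integer bound $0 \leq \beta_H \leq n-1$ combined with the strict increase of $\beta_H$ at each step.
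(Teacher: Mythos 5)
Your proof is correct and follows essentially the same route as the paper's: iterate Lemma \ref{lem: beta increase} to reduce to a hyperplane spanning scheme and then invoke Lemma \ref{lem: minimum complexity}. The only difference is that you make the termination argument explicit by tracking the decreasing nonnegative integer $\sum_{H \in \sH}(n-1-\beta_H)$, a detail the paper leaves implicit.
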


\begin{proof}
Iteratively apply Lemma \ref{lem: beta increase} until $\beta_H = n-1$ for each hyperplane, which does not increase any $\alpha_f$. In this way we can always find a new scheme $\mathcal{S'}$ which is hyperplane spanning and has complexity exponents no higher than the original scheme. Then the result follows from Lemma \ref{lem: minimum complexity}.
\end{proof}

The following simple lemma will be particularly useful for low complexity schemes, when calculating the ranks of the $\Gamma^I$:

\begin{lemma} \label{lem: lattice ranks}
For $I$, $J \subseteq \sH$ we have that
\[
\rk(I \cup J) \geq \rk(I) + \rk(J) - \rk(I \cap J),
\]
with equality if and only if $\Gamma^I + \Gamma^J$ is a finite index subgroup of $\Gamma^{I \cap J}$.
\end{lemma}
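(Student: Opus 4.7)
The plan is to translate the claim about $\rk(\cdot)$ of index sets into the standard rank formula for sums and intersections of subgroups of the finitely generated free abelian group $\Gamma$. Two elementary identities do all the work.

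First, I would record two set-theoretic observations about the stabiliser subgroups. Directly from the definition, $\gamma$ stabilises every $H \in I \cup J$ iff it stabilises every $H \in I$ and every $H \in J$, so
\[
\Gamma^{I \cup J} = \Gamma^I \cap \Gamma^J.
\]
On the other hand, stabilising a larger collection of hyperplanes is more restrictive, so $\Gamma^I \subseteq \Gamma^{I \cap J}$ and $\Gamma^J \subseteq \Gamma^{I \cap J}$, and hence
\[
\Gamma^I + \Gamma^J \;\subseteq\; \Gamma^{I \cap J}. \tag{$\ast$}
\]

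Next, I would apply the standard rank identity for subgroups $A,B$ of a finitely generated free abelian group, namely $\rk(A+B) + \rk(A \cap B) = \rk(A) + \rk(B)$, to $A = \Gamma^I$ and $B = \Gamma^J$. Using the first observation, this gives
\[
\rk(\Gamma^I + \Gamma^J) \;=\; \rk(I) + \rk(J) - \rk(I \cup J).
\]
Combining with $(\ast)$ and using that rank is monotone on subgroups of $\Gamma$, we get $\rk(I) + \rk(J) - \rk(I \cup J) \leq \rk(I \cap J)$, which rearranges to the desired inequality.

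For the equality clause, equality in the inequality is exactly the statement $\rk(\Gamma^I + \Gamma^J) = \rk(\Gamma^{I \cap J})$. Since both are subgroups of the finitely generated free abelian group $\Gamma$, and $\Gamma^I + \Gamma^J \subseteq \Gamma^{I \cap J}$, equality of ranks is equivalent to the smaller being of finite index in the larger (a subgroup of a finitely generated abelian group has the same rank as the ambient group iff the quotient is torsion, hence finite). This concludes the proof. The whole argument is essentially bookkeeping; the only point requiring a moment's care is this last equivalence between equality of rank and finite index, which is where I would be explicit.
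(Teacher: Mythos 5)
Your proof is correct and follows essentially the same route as the paper: you identify $\Gamma^{I\cup J}=\Gamma^I\cap\Gamma^J$ and the inclusion $\Gamma^I+\Gamma^J\leqslant\Gamma^{I\cap J}$, then invoke the rank formula $\rk(A+B)+\rk(A\cap B)=\rk(A)+\rk(B)$ and the fact that equality of ranks for nested free abelian groups is equivalent to finite index. The only cosmetic difference is that the paper derives the rank identity from the short exact sequence $0\to\Gamma^I\cap\Gamma^J\to\Gamma^I\times\Gamma^J\to\Gamma^I+\Gamma^J\to0$, whereas you cite it as standard.
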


\begin{proof}
We have a short exact sequence
\[
0 \to \Gamma^I \cap \Gamma^J \to \Gamma^I \times \Gamma^J \to \Gamma^I + \Gamma^J \to 0,
\]
where the second arrow is the map $x \mapsto (x,-x)$ and the third is $(x,y) \mapsto x+y$. By Lemma \ref{lem: higher stabilisers} we have that
\[
\Gamma^I \cap \Gamma^J = (\Gamma \cap V^I) \cap (\Gamma \cap V^J) = \Gamma \cap (V^I \cap V^J) = \Gamma \cap V^{I \cup J} = \Gamma^{I \cup J}.
\]
Since $\rk(\Gamma^I \times \Gamma^J) = \rk(I) + \rk(J)$, it follows from the above short exact sequence that
\[
\rk(I \cup J) = \rk(I) + \rk(J) - \rk(\Gamma^I + \Gamma^J).
\]
Any $\gamma \in \Gamma^I + \Gamma^J$ certainly stabilises any hyperplane which is in both $I$ and $J$, that is $\Gamma^I + \Gamma^J \leqslant \Gamma^{I \cap J}$. In particular, $\rk(\Gamma^I + \Gamma^J) \leq \rk(I \cap J)$, with equality if and only if the inclusion is of a finite index subgroup, so the result follows.
\end{proof}

The following lemma shows that, for a hyperplane spanning scheme satisfying {\bf C}, the lattice $\Gamma$, up to finite index, splits as a sum of subgroups which project to $n$ complementary lines lying along the intersections of $n-1$ supporting subspaces of the window. The result below will also be useful later, so we note here that the hyperplane spanning condition may be dropped (since it follows from {\bf C}) once Theorem \ref{thm: C => hyperplane spanning} has been proven.

\begin{lemma} \label{lem: lattice spanned by lines}
Suppose that $\mathcal{S}$ is hyperplane spanning and satisfies {\bf C}. Choose any flag $f = \{V_1, V_2, \ldots, V_n\} \subseteq \sH_0$ and consider the subsets $\widehat{i} \subset f$ given by $\widehat{i} = f \setminus \{V_i\}$. Then $\Gamma^{\widehat{1}} + \Gamma^{\widehat{2}} + \cdots + \Gamma^{\widehat{n}}$ is a finite index subgroup of $\Gamma$. Each $\Gamma^{\widehat{i}}_<$ is contained in the $1$-dimensional subspace $L(i)$, given as the intersection of the $V \in \widehat{i}$; these lines give a direct sum decomposition of $\intl$.
\end{lemma}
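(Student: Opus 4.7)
The plan is to establish two facts. First, purely by linear algebra, that $\intl = L(1) \oplus \cdots \oplus L(n)$, which together with Lemma \ref{lem: higher stabilisers} and injectivity of $\pi_<$ on $\Gamma$ forces $\Gamma^{\widehat{1}} + \cdots + \Gamma^{\widehat{n}}$ to be an internal direct sum inside $\Gamma$, with rank $\sum_i \rk(\widehat{i})$. Second, using property {\bf C} and hyperplane spanning, that $\sum_i \rk(\widehat{i}) = k$. These combine to show the sum has rank $\rk(\Gamma) = k$; since $\Gamma$ is free abelian of rank $k$, this forces $\Gamma^{\widehat{1}} + \cdots + \Gamma^{\widehat{n}}$ to be a finite-index subgroup of $\Gamma$.

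For the linear-algebraic claim, the flag condition $\bigcap_j V_j = \{0\}$ combined with the $V_j$ being codimension $1$ forces, by induction on the number of subspaces intersected, each $L(i) = \bigcap_{j \neq i} V_j$ to be $1$-dimensional with $L(i) \cap V_i = \{0\}$. Linear independence of the $L(i)$'s then follows because the identity $-v_i = \sum_{j \neq i} v_j$ places $v_i$ simultaneously in $L(i)$ and in $V_i$ (using $L(j) \subseteq V_i$ for $j \neq i$), forcing $v_i = 0$. The $n$ resulting independent lines then span all of $\intl$.

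For the rank identity $\sum_i \rk(\widehat{i}) = k$, I would use the diagonal embedding $\Gamma \hookrightarrow T \coloneqq \bigoplus_{j=1}^n \Gamma/\Gamma^{V_j}$ from the proof of Lemma \ref{lem: minimum complexity}. Hyperplane spanning yields $\beta_{V_j} = n-1$, and combining {\bf C} with Lemma \ref{lem: minimum complexity} gives $\alpha_f = d$; rearranging the expression from Theorem \ref{thm: generalised complexity}, this yields $\sum_j (k - \rk(V_j)) = k$, that is, $\rk(T) = k = \rk(\Gamma)$. The injection $\Gamma \hookrightarrow T$ then has image $\bar\Gamma$ of some finite index $N$ in $T$. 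Moreover, the image of $\Gamma^{\widehat{i}}$ in $T$ is exactly $\bar\Gamma \cap (\Gamma/\Gamma^{V_i})$, since $\gamma \in \Gamma^{V_j}$ for all $j \neq i$ means precisely that the $j$-th components in $T$ vanish. Finite index produces $N \cdot (\Gamma/\Gamma^{V_i}) \subseteq \bar\Gamma \cap (\Gamma/\Gamma^{V_i})$, yielding $\rk(\widehat{i}) \geq k - \rk(V_i)$; the matching upper bound is immediate from Lemma \ref{lem: lattice ranks} applied with $I = \widehat{i}$ and $J = \{V_i\}$, since then $I \cup J = f$ has $\rk(f) = 0$ and $I \cap J = \emptyset$ has $\rk(\emptyset) = k$. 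Summing $\rk(\widehat{i}) = k - \rk(V_i)$ over $i$ yields $\sum_i \rk(\widehat{i}) = nk - (n-1)k = k$. The most delicate step is the lower bound: one needs to be careful that torsion in $T$ does not undermine the finite-index argument, but the observation $Nt \in \bar\Gamma$ for every $t \in T$ handles this uniformly.
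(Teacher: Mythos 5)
Your proof is correct. The linear-algebraic part (identifying the $L(i)$ as complementary lines) is identical to the paper's. Where you diverge is in the key rank inequality $\rk(\Gamma^{\widehat{i}}) \geq k - \rk(V_i)$: the paper obtains this by iterating Lemma \ref{lem: lattice ranks} in a telescoping chain, successively merging $\{V_1\}, \{V_1,V_2\}, \ldots$ and keeping track of the error term $(n-2)k$; you instead exploit the diagonal embedding $\Gamma \hookrightarrow T = \bigoplus_{H\in f} \Gamma/\Gamma^H$ already introduced in Lemma \ref{lem: minimum complexity}. Noting that $\rk T = \sum (k-\rk(V_j)) = k$ follows from {\bf C}, hyperplane spanning, and Theorem \ref{thm: generalised complexity}, you observe that $\bar\Gamma$ has finite index $N$ in $T$, identify the image of $\Gamma^{\widehat{i}}$ as $\bar\Gamma \cap T_i$ (where $T_i$ is the $i$th summand), and then use $N\cdot T_i \subseteq \bar\Gamma \cap T_i$ to get the lower bound. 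This is a clean structural argument that gives the individual equalities $\rk(\widehat{i}) = k - \rk(V_i)$ for free (the paper only needs and proves the lower bound), at the cost of having to verify the finite-index step; your worry about torsion in $T$ is actually moot since each $\Gamma^{V_j}$ is pure in $\Gamma$ (because $V_j$ is a vector subspace and $\pi_<$ is linear), making each quotient free abelian — but your Lagrange-style argument $Nt \in \bar\Gamma$ for all $t \in T$ handles the general case anyway. The paper's approach has the advantage of being entirely elementary and of re-using Lemma \ref{lem: lattice ranks}, which it also applies in Theorem \ref{thm: finite index splitting}; yours has the advantage of re-using the embedding from Lemma \ref{lem: minimum complexity} and avoiding the chain of inequalities. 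Both are valid.
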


\begin{proof}
By Lemma \ref{lem: higher stabilisers}, we have that $\Gamma^{\widehat{i}}_< = \Gamma_< \cap L(i)$, where $L(i)$ is the intersection of subspaces in $\widehat{i}$. The intersection of any subspace $X$ with a codimension $1$ subspace has dimension $\dim(X)$ or $\dim(X)-1$, so we see that, as an intersection of $n-1$ such subspaces, $\dim(L(i)) \geq 1$. We must have that $\dim(L(i)) \leq 1$, or else $\dim(L(i) \cap V_i) > 0$, contradicting $f$ being a flag, so each $L(i)$ is $1$-dimensional. The lines $L(i)$ are complementary. Indeed, suppose that $v_1 + \cdots + v_n = 0$, where each $v_i \in L(i)$. Then any given $v_i$ may be expressed as a linear sum of the other $v_j$. But each such $v_j \in V_i \supseteq L(j)$, which implies that $v_i \in V_i$. Hence, $v_i \in V_i \cap L(i) = \{0\}$, since $f$ is a flag, so each $v_i = 0$, as required.

To see that the $\Gamma^{\widehat{i}}$ have sum which is finite rank in $\Gamma$, we calculate their ranks using the previous Lemma. We let $r(i)$ denote the rank of $\Gamma^{V_i}$ and $r(123 \cdots i)$ denote the rank of $\Gamma^I$, with $I = \{V_1,V_2,\ldots,V_i\}$. We calculate
\begin{align*}
r(1)+r(2)+\cdots+r(n) & \leq r(12)  + r(3) + r(4) + \cdots + r(n) + k      \\
                      & \leq r(123) + r(4)        + \cdots + r(n) + 2k     \\
                      & \cdots                                      \\
                      & \leq r(123\cdots (n-1))            + r(n) + (n-2)k.
\end{align*}
In each step of the above, we use the fact that
\[
r(123 \cdots i) + r(i+1) \leq r(123 \cdots (i+1)) + \rk(\emptyset) = r(123 \cdots (i+1)) + k
\]
by taking $I = \{V_1, V_2, \ldots, V_i\}$ and $J = \{V_{i+1}\}$ in Lemma \ref{lem: lattice ranks}. By definition, $r(123 \cdots (n-1)) = \rk(\Gamma^{\widehat{n}})$ and we see that
\[
\sum_{V \in f} \rk(V) \leq \rk(\Gamma^{\widehat{n}}) + \rk(V_n) + (n-2)k.
\]
Analogously, by leaving out any other $i$ rather than $n$ we see that
\begin{equation} \label{eq: line ranks}
\sum_{V \in f} \rk(V) \leq \rk(\Gamma^{\widehat{i}}) + \rk(V_i) + (n-2)k.
\end{equation}

By Theorem \ref{thm: generalised complexity}, property {\bf C}, Corollary \ref{cor: minimal complexity} and hyperplane spanning (each $\beta_H = n-1$),
\begin{equation} \label{eq: low complexity}
\sum_{V \in f} (d - \rk(V) + (n-1)) = d,
\end{equation}
that is,
\[
\sum_{V \in f} \rk(V) = n \cdot (d+n-1) - d = n \cdot k - n - d = (n-1) \cdot k.
\]
Applying this to Equation \ref{eq: line ranks}, we have
\[
(n-1) \cdot k \leq \rk(\Gamma^{\widehat{i}}) + \rk(V_i) + (n-2)\cdot k, \text{ that is,} \ \rk(\Gamma^{\widehat{i}}) \geq k - \rk(V_i).
\]
Summing over each $i$ and applying Equation \ref{eq: low complexity} we obtain
\[
\sum_{i=1}^n \rk(\Gamma^{\widehat{i}}) \geq \sum_{V \in f} (k - \rk(V)) = \left(\sum_{V \in f} d - \rk(V) + (n-1)\right) + n = d+n=k.
\]
The subgroups $\Gamma^{\widehat{i}}$ must be linearly independent in $\tot$ (since they map to complementary lines in $\intl$). Since their ranks sum to  at least $k = \rk(\Gamma)$, their sum must in fact have rank $k$ and be finite index in $\Gamma$.
\end{proof}

We conclude this section with its main theorem:

\begin{theorem} \label{thm: C => hyperplane spanning}
If $\mathcal{S}$ satisfies {\bf C} then it is hyperplane spanning.
\end{theorem}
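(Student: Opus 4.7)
The plan is to argue by contradiction, combining Lemma~\ref{lem: beta increase} with the lower bound from Corollary~\ref{cor: minimal complexity}, and then feeding the result into Lemma~\ref{lem: lattice spanned by lines}. Assume $\mathcal{S}$ satisfies \textbf{C} but that some $H_0 \in \sH$ has $\beta_{H_0} < n-1$.

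First, iteratively apply Lemma~\ref{lem: beta increase} to $H_0$ and then to any remaining non-spanning hyperplane, producing a hyperplane spanning scheme $\tilde{\mathcal{S}}$ which shares the same lattice $\Gamma$ as $\mathcal{S}$. By part~(2) of the lemma, $\alpha_f$ does not increase along any modification step, so $\alpha(\tilde{\mathcal{S}}) \leq \alpha(\mathcal{S}) \leq d$. Applying Corollary~\ref{cor: minimal complexity} to the hyperplane spanning $\tilde{\mathcal{S}}$ yields $\alpha(\tilde{\mathcal{S}}) \geq d$, hence $\alpha(\tilde{\mathcal{S}}) = d$. By the same squeezing, $\alpha_f = d$ on every flag $f$ of $\mathcal{S}$ and $\alpha_{\tilde{f}} = d$ on every flag $\tilde{f}$ of $\tilde{\mathcal{S}}$; comparing the formula of Theorem~\ref{thm: generalised complexity} across a single modification replacing $H$ by $H'$ (with $\beta_{H'} = \beta_H + 1$) then forces the precise rank identity $\rk(H') = \rk(H) + 1$.

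Next, apply Lemma~\ref{lem: lattice spanned by lines} to $\tilde{\mathcal{S}}$, which is both hyperplane spanning and satisfies \textbf{C}. Choose any flag $\tilde{f} \subseteq \tilde{\sH}_0$ in which the modified direction of $H_0$ appears as $\tilde{V}_1$. The lemma gives a direct sum decomposition of $\intl$ by $1$-dimensional lines $\tilde{L}(i)$, together with subgroups $\Gamma^{\widehat{i}}$ (computed in $\tilde{\mathcal{S}}$) projecting under $\pi_<$ into $\tilde{L}(i)$ and whose sum is of finite index in $\Gamma$. Using the inclusions $\Gamma^H \leqslant \Gamma^{\tilde{H}}$ furnished by Lemma~\ref{lem: beta increase}(1), this decomposition transfers back to the original scheme and yields constraints relating $\Gamma^{H_0}$ to $V(H_0)$.

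The main obstacle is converting this setup into an outright contradiction to $\beta_{H_0} < n-1$. The strategy is to exploit the freedom in the modification procedure: the construction of Lemma~\ref{lem: beta increase} accepts any $\gamma \in \Gamma_< \setminus \langle \Gamma^{H_0}_< \rangle_\R$ sufficiently close to $V(H_0)$, and for each such choice the rigid rank identity $\rk(\tilde{V}_1) = \rk(H_0) + 1$ derived above must still hold. Running this modification over sufficiently many independent choices of $\gamma$, the accumulated rank identities together with the transversality of the line decomposition of $\tilde{\mathcal{S}}$ force that every lattice direction accessible inside $V(H_0)$ already lies in $\langle \Gamma^{H_0}_< \rangle_\R$. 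This gives $\langle \Gamma^{H_0}_< \rangle_\R = V(H_0)$ and hence $\beta_{H_0} = n-1$, the desired contradiction.
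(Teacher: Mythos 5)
Your setup matches the paper's: iteratively apply Lemma~\ref{lem: beta increase} to manufacture a hyperplane spanning scheme, then invoke Lemma~\ref{lem: lattice spanned by lines} on it. Your intermediate deduction is also sound: since Corollary~\ref{cor: minimal complexity} gives $\alpha_f \geq d$ for every flag (including on the modified scheme), property \textbf{C} forces $\alpha_f = d$ both before and after each modification, and comparing the formula of Theorem~\ref{thm: generalised complexity} across a step that raises $\beta$ by $1$ pins down the rank increment to exactly $1$, i.e.\ $\rk(H') = \rk(H) + 1$. That is a genuine constraint worth extracting.

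The gap is in the final paragraph. You never contradict the identity $\rk(H') = \rk(H) + 1$; instead you assert, without justification, that varying the auxiliary vector $\gamma$ in Lemma~\ref{lem: beta increase} "forces that every lattice direction accessible inside $V(H_0)$ already lies in $\langle \Gamma^{H_0}_< \rangle_\R$." But $\Gamma^{H_0}_< = \Gamma_< \cap V(H_0) \subseteq \langle \Gamma^{H_0}_< \rangle_\R$ holds by definition — the assertion is a tautology, not a step towards $\langle \Gamma^{H_0}_< \rangle_\R = V(H_0)$, and no argument is given for why the accumulated rank identities across different choices of $\gamma$ should yield that equality. What you actually need (and the paper supplies) is the opposite-direction inequality $\rk(K') \geq \rk(K) + 2$: arrange the iteration so that exactly one hyperplane $K$ is still one move away from being spanning, with $\beta_K = n-2$; then its final replacement $K'$ gives $V(K) \cap V(K')$ of dimension $n-2$, and at least one of the lines $L(i) \subseteq V(K')$ produced by Lemma~\ref{lem: lattice spanned by lines} escapes that intersection. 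This line carries a subgroup $\Gamma^{\widehat{i}}$ whose rank is at least $2$ — proved via a density argument: if some $\rk(\Gamma^{\widehat{i}}) \leq 1$, the full sum $\Gamma^{\widehat{1}} + \cdots + \Gamma^{\widehat{n}}$ would sit in a discrete union of parallel hyperplanes, contradicting density of $\Gamma_<$. That density argument is the missing ingredient; without it there is no contradiction with $\rk(K') = \rk(K) + 1$, and your proof does not close.
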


\begin{proof}
Suppose that $\mathcal{S}$ is not hyperplane spanning but satisfies {\bf C}. By Corollary \ref{cor: minimal complexity}, each $\alpha_f = d$. Iteratively applying Lemma \ref{lem: beta increase}, we may increase each $\beta_H$ whilst preserving the property that each $\alpha_f = d$. We do this for all except one hyperplane $K \in \sH$, which we adjust until $\beta_K = n-2$. In other words, this scheme (which we now refer to as $\mathcal{S}$) is `one move' from a hyperplane spanning scheme; applying Lemma \ref{lem: beta increase} one more time, we may replace $K$ for some hyperplane $K'$ to obtain a hyperplane spanning scheme satisfying {\bf C}, denoted $\mathcal{S}'$.

Take a flag $f$ for the scheme $\mathcal{S}'$ which contains $K'$. By {\bf C} and Corollary \ref{cor: minimal complexity},
\begin{equation} \label{eq: l complexity}
\alpha_f = \sum_{H \in f} d - \rk(H) + \beta_H = d,
\end{equation}
where each $\beta_H = n-1$ since $\mathcal{S}'$ is hyperplane spanning.

By the previous Lemma \ref{lem: lattice spanned by lines}, $\Gamma$ is the sum (up to finite index) of subgroups $\Gamma^{\widehat{i}}$, which project to complementary lines defined by $f$ in the internal space. Consider the subspaces $V$ and $V'$, given by translating $K$ and $K'$ over the origin, respectively. Their intersection $X = V \cap V'$ is an $(n-2)$-dimensional linear subspace of $V$ and contains all points of $\Gamma^K_<$. At least one line $L(i) \subseteq V'$, containing some $\Gamma^{\widehat{i}}$, is not contained in $X$ (since otherwise $K$ and $K'$ would be parallel and have the same stabilisers). It follows that, when replacing $K$ with $K'$, a subgroup of rank at least $\rk(\Gamma^{\widehat{i}})$ is removed, which is linearly independent from $\Gamma^K$.

So the claim follows if each $\rk(\Gamma^{\widehat{i}}) \geq 2$. Indeed, in that case, $\rk(K') \geq \rk(K) + 2$, whereas $\beta_{K'} = \beta_K + 1$, so the complexity exponent $\alpha_f$ in Equation \ref{eq: l complexity} increases by at least $1$ when replacing $K'$ with $K$, hence the complexity exponent for $\mathcal{S}$ is strictly greater than $d$, contradicting it satisfying {\bf C}.

To see that each $\Gamma^{\widehat{i}}$ has rank at least $2$, note first that $\Gamma_<$ is dense in $\intl$. If $\Gamma^{\widehat{i}}$ was only rank $0$ or $1$, then it would be a discrete subgroup of $\intl$. The sum of the remaining $\Gamma^{\widehat{j}}$, for $j \neq i$, all lie inside the hyperplane $V_i$ which is complementary to the line $L(i) \supset \Gamma^{\widehat{i}}$. Hence, the sum group $S = \Gamma^{\widehat{1}} + \cdots + \Gamma^{\widehat{n}}$ is contained in a discrete union of parallel codimension $1$ hyperplanes of $\intl$, so is not dense. Since $S$ is finite index in $\Gamma$, we have that $\Gamma \leqslant (1/N) \cdot S$ for some $N \in \N$. Since $(1/N) \cdot S$ is still not dense in $\intl$, this contradicts density of $\Gamma$.
\end{proof}

\section{Decomposable cut and project schemes}\label{sec: decompositions}

One difficulty in treating the very general class of cut and project schemes considered here is that some schemes may have internal systems which `split' into summands. In this case, the different components can behave like independent schemes with varying dimensions, meaning that the relevant (and optimal) Diophantine exponents can vary between factors:

\begin{example}
Consider two codimension $1$, canonical cut and project schemes which are {\bf LR}, with total dimensions $k_1 < k_2$. This means that each window is an interval, with length equal to the length of a projected lattice point, and the projection of the lattice points to the internal space are each Diophantine, meaning that non-trivial lattice points of norm at most $r$ in the total space project to points which are at least distance $cr^{-(k_i-1)}$ from the origin in the internal space. By taking the direct sums of the physical, total and internal spaces, products of the lattices and of the windows, we obtain a codimension $2$ cut and project scheme, which can be shown to be {\bf LR} by applying our main theorem; indeed, the resulting patterns are products of two {\bf LR} patterns. However, the projected lattice $\Gamma_<$ (of rank $k_1 + k_2$) is a sum $\Gamma_< = T+S$, where $\rk(S) = k_1$, $\rk(T) = k_2$ and $S$, $T$ are contained in complementary $1$-dimensional subspaces. Since $k_1 < k_2$, the acceptance domains will be typically rectangular and wide in the first coordinate, but relatively short in the second coordinate. Whilst the projected lattice as a whole does not satisfy a natural Diophantine condition, it splits into two factors which are each Diophantine.
\end{example}

To address the irregularities of examples such as the above, we develop a theory of decompositions of polytopal cut and project schemes. We start by defining what it means to decompose the window; we turn to the problem of compatibly splitting the lattice in Section \ref{sec: splitting lattice}.

\subsection{Decomposing windows}

Our window will be \emph{decomposable} if it can be expressed as a Minkowski sum. The following more abstract definition, however, will be easier to work with in what is to follow. It will be seen to be equivalent to a geometric product decomposition of the window in Theorem \ref{thm: sum decomposition}. It is not hard to see that the following definition is equivalent to the definition of decomposability of Construction 3.5 in \cite{FHK02} when applied to the normals of supporting hyperplanes of a window:

\begin{definition} \label{def: decomp}
We call the window $W$ {\bf decomposable} if there exists a {\bf decomposition}, a partition $\sH_0 = A \cup B$, with $A$, $B \neq \emptyset$, for which $\intl = X_A + X_B$, where $X_A = \bigcap_{V \in A} V$ and $X_B = \bigcap_{V \in B} V$. Call $W$ {\bf indecomposable} if it does not permit a decomposition.
\end{definition}

Since $W$ is a polytope (it is compact) it has vertices, which implies that $\bigcap_{V \in \sH_0} V = \{0\}$. So if $W$ is decomposable we have that $\intl = X_A + X_B$ gives a direct sum decomposition of $\intl$ into complementary subspaces $X_A$ and $X_B$, with each $V \in \sH_0$ containing either $X_A$ or $X_B$. One way to visualise the above definition is as follows: we can split $\intl$ non-trivially as a sum $X_A + X_B$, so that every supporting subspace contains either $X_A$ or $X_B$.

\begin{example}
In codimension $n=1$ there is only one possible supporting subspace for $W$, the trivial subspace $\{0\}$. Hence $\# \sH_0 = 1$, so the scheme is automatically indecomposable.
\end{example}

\begin{example} \label{ex: decomp codim 2}
For codimension $n=2$, we have that $\# \sH_0 \geq 2$ (there are at least $2$ non-parallel supporting edges). If $\sH_0 = \{V_1,V_2\}$, then $A=\{V_1\}$, $B=\{V_2\}$ gives a decomposition of $W$. Note that in this case, $W$ is a parallelogram, which is a product of intervals. If $\# \sH_0 > 2$, then $W$ is indecomposable. Indeed, take any decomposition $\sH_0 = A \cup B$ and assume (without loss of generality) that $\# A \geq 2$. Two distinct $1$-dimensional subspaces intersect to the origin, so that $\dim(X_A) = 0$. Since $\dim(X_B) \leq 1$ (being contained in any $V \in B$), we cannot have that $\intl = X_A + X_B$.
\end{example}

\begin{example}
In the codimension $n=3$ case, for any valid decomposition $\sH_0 = A \cup B$, we have that $\dim(X_A) = 2$ and $\dim(X_B) = 1$ (or vice versa). Indeed, as noted, $X_A + X_B$ gives a direct sum decomposition of $\intl$, and $\dim(X_A) \leq 2$ (since $X_A \leqslant V$ for any $V \in A$); similarly, $\dim(X_B) \leq 2$. Since (without loss of generality) $\dim(X_A) = 2$, we must have that $A = \{V\}$ for a single $V \in \sH_0$ (since if it contained any other supporting hyperplane, the intersection would have strictly smaller dimension). Each element of $B$ contains the line $X_B$. It follows that $W$ has a `top' and `bottom' face, parallel with $V$, and that all other faces are rotates of each other around the line $X_B$, intersecting in edges parallel to $X_B$. So $W$ is a product of a $2$-dimensional polytope and an interval. If $W$ can not be expressed as such a product, then it is indecomposable.
\end{example}

We now give an equivalent definition of indecomposability by constructing a finite, undirected graph using the structure of flags on $\sH_0$:

\begin{definition} \label{def: flag graph}
Call $f \subset \sH_0$ a {\bf pre-flag} if there exists some $V \in \sH_0$ so that $f \cup \{V\}$ is a flag. We define the undirected graph $G(W)$ by setting $\sH_0$ as the nodes, and connect $V_1$ to $V_2$ with an edge if there is a pre-flag $f$ so that $f \cup \{V_i\}$ is a flag for both $i=1$ and $i=2$, in which case we write $V_1 \sim V_2$.
\end{definition}

The above is distinct but related to the graphs defined in Construction 3.5 in \cite{FHK02}, which always have only $n$ vertices. The graph $G(W)$ has the advantage of not depending on a choice of basis (or inner product on the internal space). It also allows for a more elementary proof of Theorem \ref{thm: connected => equal ranks} later.

\begin{theorem} \label{thm: disconnected decomposition graph}
A partition $\sH_0 = A \cup B$ provides a decomposition of $W$ if and only if $A$ and $B$ are disconnected in $G(W)$.
\end{theorem}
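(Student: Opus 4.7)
The plan is to prove both directions by exploiting a structural consequence of $\intl = X_A \oplus X_B$: for each $V \in A$, since $X_A \leq V$, the direct sum splits $V$ as $V = X_A \oplus (V \cap X_B)$ with $V \cap X_B$ a hyperplane (codimension $1$) of $X_B$; symmetrically for $V \in B$. (Note $X_A \cap X_B = \bigcap_{V \in \sH_0} V = \{0\}$ since $W$ is a polytope and hence has vertices.) Writing $d_A = \dim X_A$, $d_B = \dim X_B$, we have $d_A + d_B = n$ whenever a decomposition holds.

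For the forward direction I would argue by contradiction. Assume the partition is a decomposition and suppose an edge $V_1 \sim V_2$ exists with $V_1 \in A$, $V_2 \in B$, witnessed by a pre-flag $f$ of size $n-1$. Partition $f = f_A \sqcup f_B$. The splitting formula, iterated over intersections, gives
\[
\bigcap_{V \in f_A \cup \{V_1\}} V = X_A + Z_1, \qquad \bigcap_{V \in f_B} V = X_B + W_1,
\]
where $Z_1 \leq X_B$ is the intersection of $|f_A|+1$ hyperplanes of $X_B$ and $W_1 \leq X_A$ is the intersection of $|f_B|$ hyperplanes of $X_A$. A short direct-sum calculation shows these two subspaces meet in precisely $W_1 \oplus Z_1$, which must vanish because $f \cup \{V_1\}$ is a flag; in particular $W_1 = 0$ forces $|f_B| \geq d_A$. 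Applying the same computation to $f \cup \{V_2\}$ (with the roles of $A$ and $B$ interchanged) yields $|f_A| \geq d_B$. Adding gives $|f| \geq d_A + d_B = n$, contradicting $|f| = n-1$.

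For the reverse direction, assume $A$, $B$ are disconnected in $G(W)$ and nonempty; the goal is $\intl = X_A + X_B$. The key observation is that if $V \in A$ lies in a flag $f^*$, then the pre-flag $f^* \setminus \{V\}$ has $V$ as a completion, so by disconnectedness \emph{every} completion lies in $A$; thus any $V' \in B$ must contain the line $L_V \coloneqq \bigcap_{V'' \in f^* \setminus \{V\}} V''$, i.e.\ $L_V \leq X_B$. I first rule out the possibility of a flag $f^*$ lying entirely in $A$: the lines $\{L_V : V \in f^*\}$ form a basis dual to the flag $f^*$ and hence span $\intl$, so $X_B \geq \intl$, forcing $B = \emptyset$, a contradiction. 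By symmetry no flag lies entirely in $B$.

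We may therefore choose a flag $f^*$ with $f^*_A$ and $f^*_B$ both nonempty. For each $V \in f^*_A$ the observation above gives $L_V \leq X_B$. Within the subspace $Y_B \coloneqq \bigcap_{V' \in f^*_B} V'$ (of dimension $|f^*_A|$ since $f^*$ is a flag), the subspaces $\{V \cap Y_B : V \in f^*_A\}$ are $|f^*_A|$ hyperplanes of $Y_B$ with total intersection $\bigcap f^* = \{0\}$, so they form a flag in $Y_B$, and the lines $L_V$ are their dual basis axes and hence span $Y_B$. Therefore $Y_B \leq X_B$ and symmetrically $Y_A \coloneqq \bigcap_{V \in f^*_A} V \leq X_A$. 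Since $Y_A \cap Y_B = \bigcap f^* = \{0\}$ and $\dim Y_A + \dim Y_B = |f^*_B| + |f^*_A| = n$, we conclude $\intl = Y_A \oplus Y_B \leq X_A + X_B$.

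The main obstacle I anticipate is the dual-basis argument used to span $Y_B$: one must verify that each $V \cap Y_B$ is a genuine hyperplane of $Y_B$ (rather than all of $Y_B$) and that these hyperplanes form a flag within $Y_B$, so that the $L_V$ exhaust $Y_B$. Both reduce to careful dimension counts inside the ambient flag $f^*$, but the direct-sum bookkeeping needs to be tracked precisely.
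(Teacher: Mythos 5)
Your proof is correct, and while it proves the same theorem by direct-sum and dimension-counting arguments, the details differ genuinely from the paper's in both directions. For the forward implication the paper establishes that any flag $f$ must satisfy $\# f_A = \dim X_B$ (via $\# f_A \geq \dim X_B$, $\# f_B \geq \dim X_A$ and equality since the sums match), and then observes that an edge $V_a \sim V_b$ across the partition would produce two flags whose $A$-contents differ by one, contradicting this constancy. You instead split each relevant intersection along $\intl = X_A \oplus X_B$, show that flagness forces both the $X_A$-part and the $X_B$-part to vanish, deduce the two inequalities $|f_B| \geq \dim X_A$ and $|f_A| \geq \dim X_B$ for the pre-flag $f$, and add to obtain $|f| \geq n$, contradicting $|f| = n - 1$. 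For the converse the paper shows $Y_A = \bigcap_{V \in f \cap A} V$ lies in every $V_a \in A$ by an accounting argument: list the $n+1$ hyperplanes of $f \cup \{V_a\}$ with $A$ first, note the dimension must fail to drop at some $B$-step, and extract a pre-flag witnessing $V_a \sim V_b$. You instead invoke the dual lines $L_V = \bigcap_{V'' \in f^* \setminus \{V\}} V''$: disconnectedness forces $L_V \leq X_B$ whenever $V \in A$, you then show these lines span $Y_B$ by observing that $\{V \cap Y_B : V \in f^*_A\}$ is a flag in $Y_B$ with the $L_V$ as its dual basis, and you conclude $Y_B \leq X_B$ (symmetrically $Y_A \leq X_A$), so $\intl = Y_A \oplus Y_B \leq X_A + X_B$. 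Your use of the dual lines is essentially the mechanism of the paper's Lemma~\ref{lem: lattice spanned by lines}, deployed here to give a somewhat more structural converse than the paper's own step-counting argument; both routes are valid and of comparable length.

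One small wrinkle: you implicitly take a pre-flag to have exactly $n - 1$ elements with the completing hyperplane not already in it. This is the only sensible reading of Definition~\ref{def: flag graph} (and is also what the paper assumes), but you may want to make the convention explicit, since a literal reading of ``$f \cup \{V\}$ is a flag'' would allow $V \in f$ and break the cardinality assumption your forward direction relies on. Also, in the sentence ``so $X_B \geq \intl$, forcing $B = \emptyset$,'' you mean $X_B = \intl$; the conclusion $B = \emptyset$ then follows because any $V \in B$ would be a proper subspace containing $X_B$.
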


\begin{proof}
Suppose that $\sH_0 = A \cup B$ is a decomposition for $W$. We claim that for any flag $f = \{V_1,V_2,\ldots,V_n\}$, the set $f_A \subseteq f$ of hyperplanes in $A$ has $\dim(V_B)$ elements, and analogously for $B$. Indeed, since $f$ is a flag the sequence $\dim(V_1)$, $\dim(V_1 \cap V_2)$, \ldots, $\dim(V_1 \cap V_2 \cap \cdots, \cap V_n)$ reduces by $1$ at each step. Arrange $f$ so that the elements of $f_A$ appear first in the list. Then we see that $\dim(X_A) \leq n - \# f_A$, since $X_A$ is contained in the intersection of elements of $f_A$, and analogously $\dim(X_B) \leq n - \# f_B$. Hence $\# f_A = n - \# f_B \geq \dim(X_B)$, and similarly $\# f_B \geq \dim(X_A)$. Neither of these inequalities can be strict, since otherwise $n = \# f_A + \# f_B > \dim(V_B) + \dim(X_A) = n$, a contradiction, so the claim follows.

It is now clear that $A$ and $B$ are disconnected in $G(W)$. Indeed, suppose that $V_a \in A$, $V_b \in B$ and $V_a \sim V_b$, so that there is a pre-flag $f$ which is completed to a flag by adding either $V_a$ or $V_b$. But adjoining $V_a$ gives a flag with one more element in $A$ than by adjoining $V_b$, contradicting that the numbers $\#f_A$ and $\#f_B$ only depend on $\dim(X_A)$ and $\dim(X_B)$.

For the converse direction, suppose that $A$ and $B$ are disconnected in $G(W)$. We need to show that $\intl = X_A + X_B$. Take any flag $f = \{V_{a_1}, V_{a_2}, \ldots, V_{a_s}, V_{b_1}, \ldots, V_{b_t}\}$, where each $V_{a_i} \in A$ and $V_{b_i} \in B$. Let $X'_A = \bigcap V_{a_i}$ and $X'_B = \bigcap V_{b_i}$ (that is, we only intersect the elements of the flag here, rather than all elements of $A$ for $X_A$ and all elements of $B$ for $X_B$). Clearly $X_A' \supseteq X_A$, $X_B' \supseteq X_B$ and $\intl = X_A' + X_B'$ (since each intersect trivially, and their dimensions sum to $n$, as $f$ is a flag). We claim that $X'_A \subseteq V_a$ for all $V_a \in A$, and similarly for $B$, from which the result follows since this implies the reverse inequalities $X_A \subseteq X_A'$ and $X_B \subseteq X_B'$.

To see this, let $V_a \in A$. If $V_a \in f$ then clearly $X'_A \subseteq V_a$. So suppose not, and consider the set $f \cup \{V_a\}$, where we order the elements so that the elements of $A$ appear first, then $V_a$, then the elements of $B$:
\[
f' = \{V_{a_1},V_{a_2},\ldots,V_{a_s},V_a,V_{b_1},V_{b_2},\ldots,V_{b_t}\}.
\]
Since each element of the above is a codimension 1 subspace, taking intersections
\[
V_{a_1}, \ V_{a_1} \cap V_{a_2}, \ V_{a_1} \cap V_{a_2} \cap V_{a_3}, \ \ldots
\]
from left to right drops the dimension by either $0$ or $1$ at each step. If $X'_A \nsubseteq V_a$ then the dimension drops by $1$ for each new intersection with an element of $A$, including $V_a$. Since there are $n+1$ elements, the dimension must not drop when intersecting with some $V_b \in B \cap f$, that is, some $V_b$ contains the intersection of the elements before it in $f'$, so omitting it still results in a set with trivial intersection, which is thus a flag. So the pre-flag $f' \setminus \{V_a,V_b\}$ can be completed to a flag by adjoining either $V_a$ or $V_b$, that is, $V_a \sim V_b$, contradicting that $A$ and $B$ are disconnected in $G(W)$. Hence, $X'_A \subseteq V_a$ for all $V_a \in A$ and analogously for $B$, as required.
\end{proof}

The advantage of the definition of a decomposition in terms of the graph of flags is it allows a simpler proof that minimal complexity schemes with indecomposable windows have hyperplanes of constant stabiliser rank:

\begin{theorem} \label{thm: connected => equal ranks}
Suppose that $\mathcal{S}$ satisfies {\bf C} and that $V_1$ and $V_2$ are connected by an edge in $G(W)$. Then $\rk(V_1) = \rk(V_2)$.
\end{theorem}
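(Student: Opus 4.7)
The plan is to exploit the fact that an edge $V_1 \sim V_2$ in $G(W)$ gives two flags $f_1, f_2$ which differ in exactly one hyperplane, so that comparing $\alpha_{f_1}$ with $\alpha_{f_2}$ forces an equality of ranks. Concretely, by definition of the edge relation, there is a pre-flag $f$ such that both $f_1 \coloneqq f \cup \{V_1\}$ and $f_2 \coloneqq f \cup \{V_2\}$ are flags of $\sH_0$.

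First I would invoke the complexity hypothesis: by Corollary \ref{cor: minimal complexity}, $\alpha_{f_i} \geq d$ for $i=1,2$; meanwhile $\mathbf{C}$ together with Theorem \ref{thm: generalised complexity} gives $\alpha = \max_{f \in \sF} \alpha_f \leq d$. Hence $\alpha_{f_1} = \alpha_{f_2} = d$. Next I would apply Theorem \ref{thm: C => hyperplane spanning}, which tells us that $\mathcal{S}$ is hyperplane spanning, so $\beta_H = n-1$ for every $H \in \sH_0$, and in particular $\beta_{V_1} = \beta_{V_2} = n-1$.

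Now I would subtract the two expressions from Theorem \ref{thm: generalised complexity}. Writing
\[
\alpha_{f_i} = \sum_{H \in f} (d - \rk(H) + \beta_H) + (d - \rk(V_i) + \beta_{V_i}),
\]
the terms indexed by $H \in f$ are identical in $\alpha_{f_1}$ and $\alpha_{f_2}$, and cancel. Since $\beta_{V_1} = \beta_{V_2}$ and $\alpha_{f_1} = \alpha_{f_2}$, we conclude $\rk(V_1) = \rk(V_2)$.

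The proof is largely a bookkeeping exercise once the two major prior inputs (Corollary \ref{cor: minimal complexity} and Theorem \ref{thm: C => hyperplane spanning}) are in place; there is no real obstacle to overcome here. The only point that deserves a sentence of justification is the observation that along an edge the two flags share all but one hyperplane, which is immediate from the definition of $G(W)$.
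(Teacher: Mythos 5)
Your proof is correct and follows the same route as the paper: use {\bf C} together with Corollary \ref{cor: minimal complexity} to force $\alpha_f = d$ for every flag, invoke Theorem \ref{thm: C => hyperplane spanning} to set all $\beta_H = n-1$, and then subtract the two flag-sums $\alpha_{f_1}$, $\alpha_{f_2}$ along the shared pre-flag. The paper phrases the intermediate step as the constant value $\sum_{V \in f'}\rk(V) = k(n-1)$ for any flag $f'$, but that is the same cancellation you perform.
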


\begin{proof}
By Theorem \ref{thm: generalised complexity} and Corollary \ref{cor: minimal complexity}, for any flag $f \subseteq \sH_0$ we have
\[
\sum_{V \in f} (k-\rk(H)) = d+n = k,
\]
since each $\beta_H = n-1$ by hyperplane spanning, which follows from {\bf C} by Theorem \ref{thm: C => hyperplane spanning}. It follows that $\sum \rk(H) = k(n-1)$ for each flag.

Suppose then that $V_1 \sim V_2$ in $G(W)$. Take a pre-flag $f$ which is completed to a flag by adding either $V_1$ or $V_2$. Then $\rk(V_1) = k(n-1) - \left(\sum_{V \in f} \rk(V)\right) = \rk(V_2)$, as required.
\end{proof}

\begin{corollary} \label{cor: equal ranks}
Suppose that $\mathcal{S}$ satisfies {\bf C}. If $\rk(H_1) \neq \rk(H_2)$ for some $H_1$, $H_2 \in \sH$, then $W$ is decomposable.
\end{corollary}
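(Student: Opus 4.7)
The plan is to deduce this directly from the preceding Theorem \ref{thm: connected => equal ranks} combined with Theorem \ref{thm: disconnected decomposition graph}. The key observation is that ``having equal rank'' is an equivalence relation on $\sH_0$, and Theorem \ref{thm: connected => equal ranks} tells us that adjacency in the graph $G(W)$ refines this equivalence. Hence rank is constant on each connected component of $G(W)$, by iterating along edges of a connecting path.

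First I would note that the subgroup $\Gamma^H$ depends only on $V(H)$, so $\rk(H)$ is really a function on $\sH_0$; the hypothesis gives two subspaces $V_1 \coloneqq V(H_1)$ and $V_2 \coloneqq V(H_2)$ in $\sH_0$ with $\rk(V_1) \neq \rk(V_2)$. Next I would define $A$ to be the connected component of $V_1$ in $G(W)$, and $B \coloneqq \sH_0 \setminus A$. Since every element of $A$ is connected to $V_1$ through a path in $G(W)$, iterated application of Theorem \ref{thm: connected => equal ranks} forces every $V \in A$ to satisfy $\rk(V) = \rk(V_1)$. Because $\rk(V_2) \neq \rk(V_1)$, we conclude $V_2 \notin A$, so $V_2 \in B$ and in particular $B \neq \emptyset$. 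Clearly also $A \neq \emptyset$ since $V_1 \in A$.

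Finally, $A$ and $B$ are disconnected in $G(W)$ by construction (there is no edge from a connected component to its complement). Applying Theorem \ref{thm: disconnected decomposition graph}, the partition $\sH_0 = A \cup B$ yields a decomposition of $W$, so $W$ is decomposable. There is no real obstacle here: the entire argument is an immediate packaging of the two preceding theorems, with the only mild subtlety being the observation that constancy of rank along edges extends to constancy along entire connected components via paths.
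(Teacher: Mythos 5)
Your proof is correct and follows exactly the same route as the paper: both invoke Theorem \ref{thm: connected => equal ranks} (iterated along paths) to conclude that $H_1$ and $H_2$ lie in different path-components of $G(W)$, and then apply Theorem \ref{thm: disconnected decomposition graph} to the resulting non-trivial partition. You have merely spelled out the path-iteration and non-emptiness details that the paper leaves implicit.
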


\begin{proof}
By the previous theorem, $H_1$ and $H_2$ belong to different path-components in $G(W)$, so $G(W)$ is disconnected. Hence $W$ is decomposable by Theorem \ref{thm: disconnected decomposition graph}.
\end{proof}

The above result allows us to decompose a minimal complexity cut and project scheme whenever it has hyperplanes with stabilisers of different ranks. This can be repeated until an indecomposable setup is reached. To make this concrete, we now give an alternative, more direct description of a decomposition into more than two pieces. It will turn out that this is equivalent to iteratively breaking summands into two, as in the above definitions, and will eventually describe the window as a product of lower dimensional ones.

\begin{definition} \label{def: multi decomp}
A {\bf decomposition} of $W$ is a set $\{X_1,X_2,\ldots,X_m\}$ of $m \geq 2$ subspaces of $\intl$ for which
\begin{enumerate}
	\item each $X_i$ is non-trivial (i.e., not equal to $\{0\}$ or $\intl$);
	\item the $X_i$ are complementary and give a direct sum decomposition of $\intl$;
	\item for each $V \in \sH_0$ we have that $V \supseteq X_i$ for all but one $X_i$.
\end{enumerate}
\end{definition}

We give an alternative version of the above definition, which for $m=2$ clearly extends our original Definition \ref{def: decomp}.

\begin{definition} \label{def: copartition decomp}
For subsets $A_i \subseteq \sH_0$, call $\{A_i\}_{i=1}^m$ a {\bf copartition} of $\sH_0$ if $\{\sH_0 \setminus A_i\}_{i=1}^m$ is a partition of $\sH_0$. Call a copartition {\bf non-trivial} if each $A_i \neq \emptyset$, $\sH_0$. Then we define a {\bf decomposition} of the window $W$ to be a non-trivial copartition $\{A_i\}_{i=1}^m$ of $\sH_0$ such that $m \geq 2$ and $\intl = X_1 + \cdots + X_m$, where
\begin{equation} \label{eq: copartition decomp}
X_i \coloneqq \bigcap_{V \in A_i} V.
\end{equation}
\end{definition}

Of course, $\{A_i\}_{i=1}^m$ is a co-partition precisely when each $V \in \sH_0$ appears in all but one $A_i$. The above definition agrees with Definition \ref{def: decomp} in the case that $m=2$. We prove below that both versions, Definition \ref{def: multi decomp} and \ref{def: copartition decomp}, are equivalent:

\begin{proposition} \label{prop: decomps equiv}
Suppose that $\{X_i\}_{i=1}^m$ is a decomposition of $W$, in the sense of Definition \ref{def: multi decomp}. Define $A_i$ to be the set of $V \in \sH_0$ for which $V \supseteq X_i$. Then $\{A_i\}_{i=1}^m$ is a decomposition of $W$, in the sense of Definition \ref{def: copartition decomp}.

Conversely, suppose that $\{A_i\}_{i=1}^m$ is a decomposition, in the sense of Definition \ref{def: copartition decomp}. Then the subspaces $\{X_i\}_{i=1}^m$, as defined in Equation \ref{eq: copartition decomp}, are a decomposition of $W$ in the sense of Definition \ref{def: multi decomp}.
\end{proposition}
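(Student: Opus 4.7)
My plan is to verify the two directions separately, both of which reduce to careful bookkeeping around the single geometric fact that, since $W$ is a bounded polytope with vertices, $\bigcap_{V \in \sH_0} V = \{0\}$.

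For the forward direction, after defining $A_i = \{V \in \sH_0 : V \supseteq X_i\}$, I would verify the three requirements of Definition \ref{def: copartition decomp} in turn. The copartition property is immediate from condition (3) of Definition \ref{def: multi decomp}: the unique $X_i$ not contained in $V$ corresponds to the unique $A_i$ not containing $V$. Non-triviality needs a brief argument: if $A_i = \sH_0$ then $X_i$ lies in every $V$, hence in $\{0\}$, violating condition (1); if $A_i = \emptyset$ then, using (3), the excluded $X_j$ for every $V$ must be $X_i$, forcing each remaining $X_j \subseteq \{0\}$, again violating (1) together with $m \geq 2$. The sum condition is effortless: writing $X_i' = \bigcap_{V \in A_i} V$, one has $X_i \subseteq X_i'$ by construction of $A_i$, so $\intl = \sum X_i \subseteq \sum X_i' \subseteq \intl$.

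The converse direction, where $\{A_i\}$ is given and $X_i = \bigcap_{V \in A_i} V$, is the substantive part. Condition (3) of Definition \ref{def: multi decomp} is a direct unpacking of copartition: for $V \in \sH_0$, the unique $j_V$ with $V \notin A_{j_V}$ yields $V \in A_i$, hence $X_i \subseteq V$, for all $i \neq j_V$. The main obstacle is condition (2), upgrading the Minkowski sum $\intl = X_1 + \cdots + X_m$ to a direct sum, since the definition only gives an ordinary sum. My plan is to introduce, for each $i$, the auxiliary subspace
\[
Z_i \coloneqq \bigcap_{V \in \sH_0 \setminus A_i} V,
\]
then establish the twin inclusions $\sum_{j \neq i} X_j \subseteq Z_i$ (using copartition: each $V \in \sH_0 \setminus A_i$ lies in every other $A_j$, hence contains every other $X_j$) and $X_i \cap Z_i = \bigcap_{V \in \sH_0} V = \{0\}$. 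These combine with the assumed sum to give $\intl = X_i \oplus Z_i$, after which a short chase (any $z \in Z_i$ decomposes as $z = x_i + y_i$ with $x_i \in X_i$, $y_i \in \sum_{j \neq i} X_j \subseteq Z_i$, forcing $x_i \in X_i \cap Z_i = \{0\}$) upgrades this to $Z_i = \sum_{j \neq i} X_j$, yielding $X_i \cap \sum_{j \neq i} X_j = \{0\}$ for every $i$ and hence directness.

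Finally, for condition (1), $X_i \neq \intl$ is immediate from $A_i \neq \emptyset$ and $X_i \subseteq V$ for $V \in A_i$. To rule out $X_i = \{0\}$, I would argue that this would force $Z_i = \intl$ from the decomposition $\intl = X_i \oplus Z_i$, which then gives $\intl \subseteq V$ for any $V \in \sH_0 \setminus A_i$ (non-empty by $A_i \neq \sH_0$), contradicting $V$ being a proper, codimension-one subspace.
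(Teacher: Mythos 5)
Your proof is correct and follows essentially the same skeleton as the paper's, but you are noticeably more careful on the one genuinely delicate step. In the converse direction the paper verifies only pairwise triviality, arguing $X_i \cap X_j = \{0\}$ via $A_i \cup A_j = \sH_0$, and then asserts that this together with $\sum X_i = \intl$ gives a direct sum. Strictly speaking, pairwise trivial intersection of subspaces does not imply directness (three lines in $\R^2$ intersect pairwise trivially); what is actually needed is $X_i \cap \sum_{j\neq i} X_j = \{0\}$. Your route through $Z_i = \bigcap_{V \in \sH_0\setminus A_i} V$ supplies exactly this: since each $V \notin A_i$ lies in every $A_j$ with $j \neq i$, you get $\sum_{j\neq i} X_j \subseteq Z_i$, and $X_i \cap Z_i = \bigcap_{V \in \sH_0} V = \{0\}$, which kills the intersection in one shot. (Your subsequent chase to show $Z_i = \sum_{j \neq i} X_j$ is correct but unnecessary for directness — the two inclusions $\sum_{j\neq i} X_j \subseteq Z_i$ and $X_i \cap Z_i = \{0\}$ already give $X_i \cap \sum_{j \neq i} X_j = \{0\}$ directly.) One can also fix the paper's argument without $Z_i$ by running the union argument against $A_i$ and $\bigcap_{j\neq i} A_j$ rather than against two singletons, which is morally what your $Z_i$ is tracking. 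The forward direction and the non-triviality and alignment conditions are handled the same way in both proofs, with your treatment of $A_i \neq \emptyset$ spelling out the intermediate step (each remaining $X_j \subseteq \{0\}$) that the paper compresses.
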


\begin{proof}
Suppose that $\{X_i\}_{i=1}^m$ is a decomposition and let $A_i$ be as defined in the statement of the proposition. Clearly $\{A_i\}_{i=1}^m$ is a copartition, since by Definition \ref{def: multi decomp} (3) each $V \in \sH_0$ belongs to all but precisely one $X_i$. No $A_i = \emptyset$ or $\sH_0$ or else $X_i = \intl$ or $\{0\}$, respectively, violating (1). Moreover, the original subspaces $X_i$ are at least contained in those of the intersections in Equation \ref{eq: copartition decomp}, so these intersections sum to $\intl$ by (2), as required.

For the converse direction, suppose that $\{A_i\}_{i=1}^m$ gives a decomposition in the sense of Definition \ref{def: copartition decomp}. We must show that properties (1--3) of Definition \ref{def: multi decomp} hold for the $X_i$ given by Equation \ref{eq: copartition decomp}.

For condition (1), clearly each $X_i \neq \intl$, since $X_i \subseteq V$ for any $V \in A_i$ (and each $A_i \neq \emptyset$ by assumption). To see that each $X_i \neq \{0\}$, firstly note that each $X_i \cap X_j = \{0\}$ for distinct $i$, $j$. Indeed, $A_i \cup A_j = \sH_0$, since any given $V \in \sH_0$ belongs to at least one of $A_i$ or $A_j$. Hence, \[
X_i \cap X_j = \bigcap_{V \in A_i \cup A_j} V = \bigcap_{V \in \sH_0} V = \{0\},
\]
so the $X_i$ intersect trivially. By assumption, $X_i + \hat{X}_i = \intl$, where $\widehat{X}_i$ is the direct sum of all $X_j$ with $j \neq i$. Then $X_i \neq \{0\}$ follows from $\widehat{X}_i \neq \intl$, which holds since each $X_j \subseteq V$, for $j \neq i$, where $V \in \sH_0$ is any element not contained in $A_i$ (and thus is contained in every other $A_j$); such an element exists since by assumption each $A_i \neq \sH_0$.

The above argument shows that the $X_i$ intersect trivially, and by assumption their sum spans $\intl$, so (2) is satisfied. Finally, by Equation \ref{eq: copartition decomp}, $V \supseteq X_i$ if $V \in A_i$ (since $X_i$ is the intersection of such $V$), and $V \in A_i$ for all but one $i$, since $\{A_i\}_{i=1}^m$ is a copartition. For the unique value of $i$ with $V \notin X_i$, we cannot have $V \supseteq X_i$, since otherwise the sum of the $X_i$ would be contained in $V$, contradicting the sum spanning $\intl$, so (3) holds.
\end{proof}

By the above, we may use either of the two definitions of a decomposition (in terms of the $X_i$ or $A_i$) interchangeably. The following corollary of the above will be useful when analysing how the lattice decomposes with respect to a decomposition, and also in showing that any two decompositions have a common refinement:

\begin{corollary} \label{cor: decomps as intersections}
If $\{X_i\}_{i=1}^m$ is a decomposition of $W$, then each $X_i$ is equal to the intersection of hyperplanes of $\sH_0$ containing it. \end{corollary}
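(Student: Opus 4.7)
The containment $X_i \subseteq \bigcap_{V \in \sH_0,\, V \supseteq X_i} V$ is immediate. For the reverse containment, my plan is to apply Proposition~\ref{prop: decomps equiv} in both directions, combined with a dimension count.

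Set $A_i \coloneqq \{V \in \sH_0 \mid V \supseteq X_i\}$. By the first part of Proposition~\ref{prop: decomps equiv}, $\{A_i\}_{i=1}^m$ is a decomposition of $W$ in the copartition sense of Definition~\ref{def: copartition decomp}. Define the candidate subspaces
\[
X_i' \coloneqq \bigcap_{V \in A_i} V,
\]
which are exactly the subspaces built from $\{A_i\}_{i=1}^m$ via Equation~\eqref{eq: copartition decomp}. Applying the second part of Proposition~\ref{prop: decomps equiv}, the collection $\{X_i'\}_{i=1}^m$ is a decomposition of $W$ in the sense of Definition~\ref{def: multi decomp}; in particular it gives a direct sum decomposition $\intl = X_1' \oplus \cdots \oplus X_m'$.

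Now $X_i \subseteq X_i'$ by the definition of $A_i$, so $\dim(X_i) \leq \dim(X_i')$ for each $i$. Since both $\{X_i\}$ and $\{X_i'\}$ give direct sum decompositions of $\intl$, we have
\[
\dim(\intl) = \sum_{i=1}^m \dim(X_i) \leq \sum_{i=1}^m \dim(X_i') = \dim(\intl).
\]
Equality of the two sums, together with the termwise inequality, forces $\dim(X_i) = \dim(X_i')$ for every $i$, and hence $X_i = X_i'$.

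This is essentially a bookkeeping argument rather than a deep one; the only step requiring care is the invocation of Proposition~\ref{prop: decomps equiv} in the second direction, which guarantees that the subspaces $X_i'$ obtained by intersecting \emph{all} hyperplanes containing $X_i$ still sum to $\intl$. Without that, one could a priori only produce a larger family $X_i'$ whose sum might not be direct. The dimension count then closes the gap immediately.
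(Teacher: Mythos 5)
Your proof is correct and follows essentially the same route as the paper: invoke Proposition~\ref{prop: decomps equiv} to see that the candidate subspaces $X_i'$ form a decomposition, note $X_i \subseteq X_i'$, and conclude equality because both families give direct sum decompositions of $\intl$. You simply make the final "no strict inclusion" step explicit as a dimension count, which is what the paper's terser phrasing ("or else the $X_i'$ would not be complementary") amounts to.
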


\begin{proof}
By the above proposition, the subspaces $X_i'$ given by intersecting the $V \in \sH_0$ containing some $X_i$ give a decomposition in the sense of Definition \ref{def: multi decomp}. Clearly each $X_i \subseteq X_i'$, but none of these can be a strict inclusion or else the $X_i'$ would not be complementary, violating Definition \ref{def: multi decomp} (2).
\end{proof}

\begin{proposition} \label{prop: decomp reduce}
Suppose that $\{X_i\}_{i=1}^m$ is a decomposition, with $m \geq 3$. Then replacing the terms $X_i$, $X_j$ ($i \neq j$) with $X_i+X_j$ gives another decomposition of $m-1$ elements. In terms of Definition \ref{def: copartition decomp}, for the copartition $\{A_i\}_{i=1}^m$ this corresponds to replacing $A_i$ and $A_j$ with $A_i \cap A_j$.
\end{proposition}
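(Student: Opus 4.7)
The plan is to verify the claim by working primarily with the copartition description of a decomposition (Definition \ref{def: copartition decomp}), which makes the combinatorics transparent, while cross-checking condition (3) of Definition \ref{def: multi decomp} geometrically. By Proposition \ref{prop: decomps equiv} the two descriptions are interchangeable and they are linked by $A_k = \{V \in \sH_0 : V \supseteq X_k\}$. In particular, writing $A_i \cap A_j$ is exactly the set of $V \in \sH_0$ with $V \supseteq X_i$ and $V \supseteq X_j$, i.e., the set of $V$ that contain the new subspace $X_i + X_j$. So the two proposed operations match up, and it suffices to show that the new family is a decomposition in either sense.

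First I would check that replacing $A_i$ and $A_j$ with $A_i \cap A_j$ still yields a copartition. Fix an arbitrary $V \in \sH_0$; by hypothesis there is a unique $\ell$ with $V \notin A_\ell$. If $\ell \notin \{i,j\}$, then $V \in A_i \cap A_j$, and $V$ lies in every $A_k$ of the new family except $A_\ell$. If $\ell \in \{i,j\}$, then $V \notin A_i \cap A_j$ (since $V$ fails to lie in at least one of $A_i$, $A_j$), and $V$ lies in all the other remaining $A_k$. Either way, $V$ fails to lie in exactly one member of the new family, so the copartition property holds.

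Next I would verify non-triviality of the new copartition. For $k \notin \{i,j\}$ the set $A_k$ is unchanged and is still non-trivial by assumption. The new set $A_i \cap A_j$ is not equal to $\sH_0$ because $A_i \neq \sH_0$. For non-emptiness this is where the hypothesis $m \geq 3$ enters: pick any index $\ell \neq i,j$; since $A_\ell \neq \sH_0$, choose $V \in \sH_0 \setminus A_\ell$, and by the copartition property $V$ must lie in every other $A_k$, including $A_i$ and $A_j$, so $V \in A_i \cap A_j$. This is the one place where $m \geq 3$ is used, and is arguably the only step where care is needed.

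Finally I would check that $\intl$ is the sum of the new $X_k$'s. But this is immediate: the new family of subspaces is obtained from $\{X_1,\ldots,X_m\}$ by replacing the two summands $X_i$ and $X_j$ with their sum $X_i + X_j$, so $\sum_k X_k$ is unchanged and equals $\intl$. As a sanity check in the geometric formulation, for any $V \in \sH_0$, in the old decomposition $V$ misses exactly one $X_\ell$; in the new family, $V$ still contains all $X_k$ with $k \notin \{i,j,\ell\}$, and $V$ contains $X_i + X_j$ iff $\ell \notin \{i,j\}$, so $V$ misses exactly one of the new subspaces, recovering condition (3) of Definition \ref{def: multi decomp}. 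No obstacle beyond the non-emptiness check above is anticipated.
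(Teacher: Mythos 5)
Your proof is correct and takes essentially the same approach as the paper, just with the emphasis shifted to the copartition description (and using $A_k=\{V : V\supseteq X_k\}$ from Proposition~\ref{prop: decomps equiv} to bridge the two viewpoints), whereas the paper leads with the subspace conditions of Definition~\ref{def: multi decomp} and appends the copartition translation. One small point in your favour: the paper asserts that conditions (1)--(2) of Definition~\ref{def: multi decomp} ``clearly still hold,'' whereas you make the use of $m\ge 3$ explicit when showing $A_i\cap A_j\neq\emptyset$ (equivalently, $X_i+X_j\neq\intl$), which is exactly the only place that hypothesis is needed.
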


\begin{proof}
Conditions (1--2) of Definition \ref{def: multi decomp} clearly still hold after replacing $X_i$ and $X_j$ with their sum. For each $V \in \sH_0$, if $V \supseteq X_i$, $X_j$, then $V \supseteq X_i + X_j$ (since $V$ is a vector subspace). Hence, if $X_\ell$ with $\ell \neq i$, $j$ is the unique subspace which is not contained in $V$, this property still holds in the new decomposition. Otherwise, if $\ell = i$, then $X_i + X_j \nsubseteq V$, or else $X_i \subseteq V - X_j \subseteq V-V = V$, a contradiction. Similarly, if $\ell = j$ then $X_i + X_j \nsubseteq V$, so (3) is satisfied and we have a decomposition.

In terms of coparitions, we first define the copartition $\{A_i\}_{i=1}^m$ as in Proposition \ref{prop: decomps equiv}. Then for $\ell \neq i$, $j$, $A_i$ is the set of $V \in \sH_0$ for which $V \supseteq X_i$. On the other hand, $A_i \cap A_j$ is precisely the subset of hyperplanes for which $V \supseteq X_i + X_j$. Indeed, if $V \in A_i \cap A_j$, then by definition $V \supseteq X_i$ and $V \supseteq X_j$, so $V \supseteq X_i + X_j$, since $V$ is a subspace. On the other hand, if $V \notin A_i$ or $V \notin A_j$, then $V \nsupseteq X_i + X_j$, as above. So replacing $A_i$ and $A_j$ with $A_i \cap A_j$ corresponds to replacing $X_i$ and $X_j$ with $X_i + X_j$, according to Proposition \ref{prop: decomps equiv}.
\end{proof}

Recall that for $m = 2$, Definition \ref{def: copartition decomp} of a decomposition in terms of copartitions coincides with Definition \ref{def: decomp}. The above shows that we may define a window to be {\bf decomposable} if it has a decomposition, using any of the three previous definitions.

The following result generalises an argument from the proof of Theorem \ref{thm: disconnected decomposition graph}. The first part of it will be used later in showing that decompositions have common refinements:

\begin{proposition} \label{prop: decomp flag sizes}
Let $\{X_i\}_{i=1}^m$ be a decomposition of $W$ and $f$ be a flag in $\sH_0$. Define $X_i^f$ to be the intersection of hyperplanes $V \in f$ which contain $X_i$. Then $X_i^f = X_i$ for each $i$. Moreover, the number of elements in any flag $f$ which contain $X_i$ is given by $n - \dim(X_i)$. That is, $\#(f \cap A_i) = n - \dim(X_i)$ for each $i$, where $\{A_i\}_{i=1}^m$ is the equivalent copartition description of the decomposition coming from Proposition \ref{prop: decomps equiv}.
\end{proposition}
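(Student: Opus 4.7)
The plan is to establish both claims together by a dimension count, using the flag property to compute the dimensions of $X_i^f$ and then using the copartition structure to show these dimensions add up to $n$, which forces $X_i^f$ to coincide with $X_i$ and their ambient direct sum to be realized.

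First I would observe the trivial inclusion $X_i \subseteq X_i^f$: since $X_i^f$ is the intersection of all $V \in f$ containing $X_i$, it certainly contains $X_i$. Note also that, by Proposition \ref{prop: decomps equiv}, the set of $V \in f$ containing $X_i$ is exactly $f \cap A_i$, so $X_i^f = \bigcap_{V \in f \cap A_i} V$.

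Next, I would exploit the flag property: since $\bigcap_{V \in f} V = \{0\}$ has dimension $0 = n - |f|$ and intersecting with a hyperplane drops dimension by at most $1$, the dimension must drop by exactly $1$ at every step of the intersection. Consequently, for \emph{any} subset $S \subseteq f$, one has $\dim\bigl(\bigcap_{V \in S} V\bigr) = n - |S|$. Applied to $S = f \cap A_i$, this gives
\[
\dim(X_i^f) = n - |f \cap A_i|.
\]

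Now I would use the copartition structure to sum these dimensions. Because $\{\sH_0 \setminus A_i\}_{i=1}^m$ is a partition of $\sH_0$, each $V \in f$ belongs to exactly $m-1$ of the sets $A_i$. Therefore
\[
\sum_{i=1}^m |f \cap A_i| = \sum_{V \in f} |\{i : V \in A_i\}| = n(m-1),
\]
and so $\sum_i \dim(X_i^f) = mn - n(m-1) = n$. On the other hand, $X_i \subseteq X_i^f$ together with $\intl = \sum_i X_i$ (Definition \ref{def: multi decomp}(2)) yields $\intl = \sum_i X_i^f$, a subspace of dimension $n$. Since its dimension equals the sum $\sum_i \dim(X_i^f)$, the sum must be direct, which forces $\dim(X_i^f) = \dim(X_i)$ for every $i$. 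Combined with $X_i \subseteq X_i^f$, this gives $X_i = X_i^f$, proving the first claim; substituting into the earlier formula yields $|f \cap A_i| = n - \dim(X_i)$, proving the second.

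The only subtle step is the flag-dimension identity $\dim(\bigcap_{V \in S} V) = n - |S|$ for subsets $S$ of a flag; this is not an obstacle but is the one point one must not take for granted. Everything else is a book-keeping argument: trivial inclusions on one side and a double-counting identity for the copartition on the other, combined via the dimension formula for direct sums.
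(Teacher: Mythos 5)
Your proof is correct, and it follows a genuinely different route from the paper's. The paper argues via trivial intersections: using Definition \ref{def: multi decomp}(3), it shows $X_i^f \cap X_j^f = \{0\}$ (every $V\in f$ contains at least one of $X_i$, $X_j$, hence contains $X_i^f\cap X_j^f$, and the flag has trivial common intersection), then deduces $X_i^f = X_i$ from inclusions plus a dimension count, and only afterwards reads off $\#(f\cap A_i) = n - \dim(X_i)$ from the flag-subset dimension formula. You reverse the order of attack: you first establish $\dim(X_i^f) = n - |f\cap A_i|$ from the flag property, then use the copartition double-counting $\sum_i |f\cap A_i| = n(m-1)$ to get $\sum_i\dim(X_i^f) = n$, and finally force $X_i^f = X_i$ by comparing to the known direct-sum decomposition $\intl = \bigoplus X_i$. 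Both proofs ultimately rest on the same two ingredients (the flag-subset dimension formula and the copartition/property (3) structure), but yours sidesteps a slightly delicate point in the paper's argument: pairwise trivial intersection of subspaces does not by itself imply their sum is direct, so the paper's ``for dimensional reasons'' step tacitly requires noticing that the same argument gives $X_i^f\cap\sum_{j\neq i}X_j^f = \{0\}$, not merely pairwise triviality. Your double-counting route avoids this issue entirely, at the cost of relying on the copartition reformulation, which the paper has set up in Proposition \ref{prop: decomps equiv} anyway.

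One small point to make explicit when writing this up: the step ``dimension drops by exactly $1$ at every step, hence $\dim(\bigcap_{V\in S}V) = n - |S|$ for any $S\subseteq f$'' implicitly requires choosing an ordering of $f$ that lists the elements of $S$ first. This is harmless and also how the paper applies the flag property, but stating it makes the argument airtight.
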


\begin{proof}
We claim that $X_i^f \cap X_j^f = \{0\}$ for distinct $i$ and $j$. Indeed, given any $V \in f$ we have that $V \supseteq X_i$ or $V \supseteq X_j$, by (3) of Definition \ref{def: multi decomp}. Hence $V \supseteq X_i^f \cap X_j^f$. Since this holds for all elements of $f$, and the elements of a flag intersect to $\{0\}$, we must have that $X_i^f \cap X_j^f = \{0\}$.

Clearly $X_i^f \supseteq X_i$ for each $i$. Since the $X_i^f$ intersect trivially, and the sum of $X_i$ span $\intl$, for dimensional reasons we must have that $X_i^f = X_i$ for each $i$.

By definition,
\[
X_i^f = \bigcap_{V \in f \cap A_i} V,
\]
where $A_i$ is defined as the set of $V \in \sH_0$ containing $X_i$, as in Proposition \ref{prop: decomps equiv}. Since $f$ is a flag, the dimension of the above intersection is given by $\dim(X_i^f) = n - \#(f \cap A_i)$. Since $X_i = X_i^f$, the result follows.
\end{proof}

We now turn to the problem of relating this abstract and combinatorial definition of a decomposition to a more geometric one: expressing the window as a Minkowski sum. We first introduce some important subspaces associated to a decomposition which define the factors of the window.

\begin{notation} \label{not: decomposition}
Given a decomposition $\{X_1,\ldots,X_m\}$ of $W$ and $i \in \{1,\ldots,m\}$:
\begin{enumerate}
	\item $\widehat{X}_i$ is defined as the sum of all of the $X_j$ for $j \neq i$;
	\item for $H \in \sH$ we let $X_H$ be the unique $X_i$ for which $V(H)$ does not contain $X_i$, and write $\widehat{X}_H$ for $\widehat{X}_i$
	\item $H_X \coloneqq H \cap X_H$;
	\item $H^+_X \coloneqq H^+ \cap X_H$;
	\item $V_X = V \cap X_H$ for $V = V(H)$.
\end{enumerate}
We note that $H_X$, $H_X^+$ and $V_X$ are an affine hyperplane, half space and codimension $1$ subspace in $X_H$, respectively (since $X_H \nsubseteq V(H)$), which aids in remembering this notation (in line with how we use $H$, $H^+$ and $V$). Terms $X$ and $\widehat{X}$ (with subscripts) are complementary subspaces.
\end{notation}

\begin{lemma} \label{lem: decomposed hyperplanes}
If $\{X_1,\ldots,X_m\}$ is a decomposition of $W$, then for $H \in \sH$ and $V = V(H)$ we have:
\begin{enumerate}
	\item $H = H_X + \widehat{X}_H$;
	\item $H^+ = H_X^+ + \widehat{X}_H$;
	\item $V = V_X + \widehat{X}_H$.
\end{enumerate}
\end{lemma}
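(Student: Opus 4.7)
The plan is to reduce all three statements to the single observation that $\widehat{X}_H \subseteq V$. This follows directly from the definition of a decomposition (Definition \ref{def: multi decomp}(3)): since $X_H$ is the \emph{unique} $X_i$ not contained in $V = V(H)$, every other $X_j$ is contained in $V$, and therefore so is their sum $\widehat{X}_H$. This is the one fact doing all the work; the rest is bookkeeping using the direct sum decomposition $\intl = X_H \oplus \widehat{X}_H$.

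For (3), I would argue $V_X + \widehat{X}_H \subseteq V$ trivially (both summands are in $V$), and for the reverse inclusion, take $v \in V$, write $v = x + \hat{x}$ with $x \in X_H$, $\hat{x} \in \widehat{X}_H$, and note that $x = v - \hat{x} \in V$ since $\hat{x} \in V$; hence $x \in V \cap X_H = V_X$.

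For (1) and (2), the key auxiliary remark is that translation by any $w \in \widehat{X}_H$ preserves both $H$ and $H^+$: since $w \in V$, we have $H + w = H$, and an affine half-space is determined by its boundary hyperplane together with a choice of side, both of which are preserved under a translation that fixes the hyperplane setwise. Now fix $h \in H$ and write $h = h_1 + h_2$ with $h_1 \in X_H$ and $h_2 \in \widehat{X}_H$. Using (3), one checks that the affine set $h_1 + V_X$ sits inside $H \cap X_H = H_X$, and conversely, for any $y \in H_X$ one writes $y = h + v$ with $v \in V$, decomposes $v = v_X + \hat{v}$ via (3), and observes that the $\widehat{X}_H$-component $h_2 + \hat{v}$ must vanish (as it lies in $X_H \cap \widehat{X}_H = \{0\}$), giving $y \in h_1 + V_X$. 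Thus $H_X = h_1 + V_X$, and combined with the invariance of $H$ under $\widehat{X}_H$-translations this yields (1). Statement (2) follows by the same decomposition argument applied to a point of $H^+$: any $u \in H^+$ splits as $u_1 + u_2$ with $u_2 \in \widehat{X}_H$, and $u_1 = u - u_2 \in H^+ \cap X_H = H_X^+$ by the invariance, with the reverse inclusion being immediate.

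There is no real obstacle here; the only care needed is to make sure the invariance of the half-space $H^+$ under translations by $\widehat{X}_H$ is spelt out (one should note that elements of $\widehat{X}_H$ act on $H$ by translation parallel to itself, and so cannot swap the two sides). Once the inclusion $\widehat{X}_H \subseteq V$ is recorded, all three identities reduce to straightforward applications of the direct sum decomposition $\intl = X_H \oplus \widehat{X}_H$.
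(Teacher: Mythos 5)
Your proof is correct and takes essentially the same approach as the paper: both isolate the key fact $\widehat{X}_H \subseteq V$ (from Definition \ref{def: multi decomp}(3)) and then exploit the direct sum $\intl = X_H \oplus \widehat{X}_H$. The only cosmetic difference is that for item (3) the paper uses a dimension count where you argue by explicit double inclusion, and for (1) the paper works with a translation vector $v$ with $H+v=V$ where you decompose a base point $h \in H$ directly; these are interchangeable bookkeeping choices and do not change the substance of the argument.
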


\begin{proof}
We start with the third item. Since $V$ is codimension $1$ we have that $V_X$ has dimension $\dim(X_H) - 1$, indeed, $X_H$ is not contained in $V$ by the definition of $X_H$. So $V_X + \widehat{X}_H$ has the correct dimension $n-1$. Clearly if $v \in \widehat{X}_H$ then $v \in V$, and if $v \in V_X$ then $v \in V$, so it follows that $V = V_X + \widehat{X}_V$.

Let $v \in \intl$ be such that $H+v = V$. Since $X_H + \widehat{X}_H$ is a direct sum decomposition of $\intl$, we may write $v = v_H + \widehat{v}_H$, where $v_H \in X_H$ and $\widehat{v}_H = \widehat{X}_H$. Since $\widehat{X}_H \subseteq V$ we may assume that $v = v_H$ and $\widehat{v}_H = 0$, since $H = V + v_H + \widehat{v}_H = (V + \widehat{v}_H) + v_H = V + v_H$. Hence $H = V + v = (V_X + v) + \widehat{X}_H$, so we need to check that $V_X + v = H_X$. Indeed, $V_X + v = (V \cap X_H) + v = (V+v) \cap (X_H + v) = (V+v) \cap X_H = H \cap X_H = H_X$, as required. Item 2 is proved analogously.
\end{proof}

Given a decomposition $\{X_i\}_{i=1}^m$ of $W$ and $i \in \{1,\ldots,m\}$, let $\sH_i$ denote the set of $H_X \subseteq X_i$, where $X_H = X_i$ (that is, $X_i \nsubseteq V(H)$), and $\sH_i^+$ the corresponding half spaces $H_X^+$. We define the polytope $W_i \subseteq X_i$ as:
\[
W_i = \bigcap_{H_X^+ \in \sH_i} H_X^+.
\]

\begin{theorem} \label{thm: sum decomposition}
Let $\{X_i\}_{i=1}^m$ be a set of subspaces giving a direct sum decomposition of $\intl$. Then $\{X_i\}_{i=1}^m$ is a decomposition of $W$ if and only if there are polytopes $W_i \subseteq X_i$ for which $W = W_1 + \cdots + W_m$.
\end{theorem}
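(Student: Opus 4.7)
The plan is to prove both directions by working with the supporting half-spaces of $W$ individually, using Lemma~\ref{lem: decomposed hyperplanes}(2), which packages the key geometry: for each $H \in \sH$, the half-space $H^+$ decomposes compatibly as $H^+ = H_X^+ + \widehat{X}_H$ with respect to the direct sum $\intl = X_H + \widehat{X}_H$.

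For the forward direction, assume $\{X_i\}_{i=1}^m$ is a decomposition and let $W_i \subseteq X_i$ be the polyhedron cut out by the half-spaces $H_X^+$ with $H \in \sH$ and $X_H = X_i$, as defined just before the theorem statement. I would prove $W = W_1 + \cdots + W_m$ by two inclusions. For $W \subseteq W_1 + \cdots + W_m$: given $w \in W$, decompose uniquely as $w = w_1 + \cdots + w_m$ with $w_i \in X_i$; for each $H \in \sH$ with $X_H = X_i$, the representation $w \in H^+ = H_X^+ + \widehat{X}_H$ is unique with respect to the direct sum $\intl = X_i + \widehat{X}_i$, and since $H_X^+ \subseteq X_i$, this forces $w_i \in H_X^+$, whence $w_i \in W_i$. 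Conversely, if $w_i \in W_i$ for each $i$, then for every $H \in \sH$ with $X_H = X_i$ we have $w_i \in H_X^+$ and $\sum_{j \neq i} w_j \in \widehat{X}_H$, giving $w = \sum w_i \in H_X^+ + \widehat{X}_H = H^+$, so $w \in W$. These two inclusions also imply $W_i = \pi_i(W)$, where $\pi_i \colon \intl \to X_i$ is projection along $\widehat{X}_i$; hence $W_i$ is bounded, and so a polytope, as a continuous image of the compact $W$.

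For the reverse direction, suppose $W = W_1 + \cdots + W_m$ with each $W_i \subseteq X_i$ a polytope. I must verify condition (3) of Definition~\ref{def: multi decomp}: each $V \in \sH_0$ contains all but one $X_i$. The plan is via support functions. Write $V = V(H)$ and let $u \in \intl^*$ be an outward normal, so $V = \ker u$ and $H = \{y : u(y) = h_W(u)\}$. Minkowski additivity of faces gives $F(W,u) = F(W_1,u) + \cdots + F(W_m,u)$ with $F(W_i,u) \subseteq X_i$, and the direct sum structure of $\intl = \bigoplus X_i$ yields $\dim F(W,u) = \sum_i \dim F(W_i,u)$. Full-dimensionality of $W$ forces $\dim W_i = \dim X_i$ for each $i$ (from $\sum \dim W_i = \dim W = n = \sum \dim X_i$ combined with the bounds $\dim W_i \leq \dim X_i$), so each $W_i$ is full-dimensional in $X_i$. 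Since $H$ is a facet, $\dim F(W,u) = n-1$; combined with $\dim F(W_i,u) \leq \dim X_i$, exactly one index $j$ satisfies $\dim F(W_j,u) = \dim X_j - 1$, while $\dim F(W_i,u) = \dim X_i$ for $i \neq j$. The latter means $F(W_i,u) = W_i$, so $u$ is constant on $W_i$; full-dimensionality of $W_i$ in $X_i$ then gives $u|_{X_i} = 0$, i.e.\ $X_i \subseteq V$. Hence $V$ contains every $X_i$ except $X_j$, as required.

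The main obstacle is the dimensional bookkeeping in the reverse direction, specifically ensuring that each $W_i$ is full-dimensional in $X_i$, which is what converts $F(W_i,u) = W_i$ into $X_i \subseteq V$. Once this is pinned down, the remainder is a routine dimension count across the direct sum, and the forward direction is essentially a mechanical application of Lemma~\ref{lem: decomposed hyperplanes}(2), whose statement already does most of the heavy lifting.
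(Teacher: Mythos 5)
Your proof is correct. The forward direction is essentially identical to the paper's: both apply Lemma~\ref{lem: decomposed hyperplanes} to express $W$ as $\bigcap_i(W_i+\widehat{X}_i)$, verify the two inclusions against $W_1+\cdots+W_m$ componentwise via the direct sum $\intl = X_i \oplus \widehat{X}_i$, and note compactness of the $W_i$ at the end.

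The reverse direction, however, takes a genuinely different route. The paper works directly with the half-space description: it shows that the collection $\{H^++\widehat{X}_i : H^+ \in \sH_i^+,\ i=1,\dots,m\}$ cuts out $W$, argues irredundancy by dropping a single constraint and invoking the cancellation property of Minkowski sums, and then reads off that $V(H+\widehat{X}_i) \supseteq \widehat{X}_i \supseteq X_j$ for $j \neq i$. You instead fix a facet-defining functional $u$ with $V = \ker u$, invoke the standard convex-geometric fact that $F(W,u) = F(W_1,u)+\cdots+F(W_m,u)$, and run a dimension count across the direct sum to identify the unique summand on which $u$ is nonconstant. Both arguments need full-dimensionality of $W$; yours makes this explicit (to conclude each $\dim W_i = \dim X_i$ and hence $u|_{X_i}=0$), while the paper uses it implicitly in talking about an irredundant half-space representation. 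The paper's version has the small side benefit of explicitly characterising $\sH^+$ in terms of the $\sH_i^+$, which it reuses informally elsewhere, whereas your version outsources the facet-level bookkeeping to the Minkowski-face theorem and is cleaner if that result is taken as known. One cosmetic remark: you assert at the end that $V$ contains ``every $X_i$ except $X_j$''; the exclusion of $X_j$ is automatic, since a codimension-one $V$ cannot contain all the $X_i$ (their sum is $\intl$), so no separate argument is needed, and indeed the paper does not give one either.
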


\begin{proof}
Suppose that $\{X_i\}_{i=1}^m$ is a decomposition of $W$. By Lemma \ref{lem: decomposed hyperplanes}, we have that
\[
W = \bigcap_{i=1}^m \bigcap_{H^+ \in \sH_i^+} (H^+_X + \widehat{X}_H) = \bigcap_{i=1}^m \left(\left(\bigcap_{H^+ \in \sH_i^+} H^+_X \right) + \widehat{X}_i\right) = \bigcap_{i=1}^m (W_i + \widehat{X}_i).
\]
The second equality above follows from observing that the $\widehat{X}_H$ coordinates of the $i$th term of the left-hand intersection must be taken equal, since each $H_X^+$ is contained in $X_H$, which is complementary to $\widehat{X}_H$.

Given $w = w_1 + w_2 + \cdots + w_m \in W_1 + W_2 + \cdots W_m$, clearly $w$ is in right-hand intersection above, since $w = w_i + (w - w_i)$, and $w - w_i = w_1 + \cdots + w_{i-1} + w_{i+1} + \cdots + w_m \in \widehat{X}_i$ for each $i$. Conversely, given an element $w$ of the above intersection, for each $i$ we may write $w = w_i + \widehat{w}_i$ for $w_i \in W_i$ and $\widehat{w}_i \in \widehat{X}_i$. Then $w = w_1 + w_2 + \cdots w_m$, since this has the same $X_i$ components and the $X_i$ give a direct sum decomposition. So $W = W_1 + \cdots W_m$. Since $W$ is compact, the polyhedra $W_i$ must be compact and hence, as intersections of half spaces, are polytopes.

So now suppose that $W = W_1 + W_2 + \cdots + W_m$ for polytopes $W_i \subseteq X_i$. Let $\sH^+_i$ be the (irredundant) set of half-spaces in $X_i$ with intersection $W_i$. We claim that $\sH^+$ is the set of half-spaces $H^+ + \widehat{X}_i$ over all $H^+ \in \sH^+_i$ and $i \in \{1,\ldots,m\}$. Indeed, as above, $W$ is the intersection of these half-spaces. The set $\sH^+$ is irredundant since omitting any one element, say $H_i + \widehat{X}_i$, leads to an intersection equal to $W_1 + \cdots + W_{i-1} + W_i' + W_{i+1} + \cdots W_m$, where $W_i'$ is strictly larger than $W_i$ by the fact that $\sH^+_i$ is irredundant. Finally, we have that $V(H+\hat{X}_i) = (V(H) + \widehat{X}_i) \supseteq \widehat{X}_i \supseteq X_j$ for $H \in \sH_i$ and $j \neq i$, so each hyperplane of $\sH_0$ contains all but one of the $X_i$, as required.
\end{proof}

The result below shows that we may always decompose uniquely into indecomposables:

\begin{proposition} \label{prop: common refinement}
Any two decompositions $X = \{X_i\}_{i=1}^q$ and $Y = \{Y_i\}_{i=1}^r$ of $W$ have a common refinement, that is, a decomposition $Z = \{Z_i\}_{i=1}^s$ such that each $Z_i$ is contained in an element of $X$ and an element of $Y$.
\end{proposition}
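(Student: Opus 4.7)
The plan is to construct the common refinement as the non-trivial intersections $Z_{ij} = X_i \cap Y_j$. This clearly refines both $X$ and $Y$ by construction, so the work is to verify that the collection of non-trivial $Z_{ij}$ forms a decomposition per Definition \ref{def: multi decomp}. To this end, I would pass to the copartition formulation: let $\{A_i\}$ and $\{B_j\}$ be the copartitions of $\sH_0$ associated to $X$ and $Y$ by Proposition \ref{prop: decomps equiv}, set $P_i = \sH_0 \setminus A_i$ and $Q_j = \sH_0 \setminus B_j$, and let $R_{ij} = P_i \cap Q_j$; then $\{R_{ij}\}$ is a partition of $\sH_0$ (with some parts possibly empty). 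By Corollary \ref{cor: decomps as intersections} we have $Z_{ij} = \bigcap_{V \in A_i \cup B_j} V$, and by Proposition \ref{prop: decomp flag sizes} this equals $\bigcap_{V \in f \setminus R_{ij}} V$ for any flag $f \subset \sH_0$.

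The central observation is a dimension computation. Since any proper sub-intersection of a flag drops dimension by exactly one per hyperplane removed, this yields $\dim Z_{ij} = \#(f \cap R_{ij})$. Summing over $(i,j)$ gives $\sum_{ij} \dim Z_{ij} = n$. Because $\dim Z_{ij}$ is intrinsic the equality holds for every flag, and combined with the fact that every $V \in \sH_0$ lies in some flag (being a facet hyperplane at some vertex of $W$), we obtain $Z_{ij} \neq \{0\}$ if and only if $R_{ij} \neq \emptyset$. This handles Definition \ref{def: multi decomp}(1) after restricting to the non-trivial $Z_{ij}$.

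For condition (2), I would fix a flag $f = \{V_1,\ldots,V_n\}$ and introduce the complementary lines $L_k = \bigcap_{l \neq k} V_l$ spanning $\intl$. Each $V_k$ lies in a unique $R_{i(k), j(k)}$, and one checks directly from the intersection formula that $L_k \subseteq Z_{i(k), j(k)}$. Hence $\sum Z_{ij} \supseteq \sum_k L_k = \intl$, and the dimension count forces the sum to be direct. Condition (3) then follows: for $V \in R_{ab}$ and $(i,j) \neq (a,b)$ we have $i \neq a$ or $j \neq b$, hence $V \in A_i$ or $V \in B_j$, so $V \supseteq Z_{ij}$; and $V$ cannot contain $Z_{ab}$, else $V$ would contain the full direct sum $\intl$, contradicting $V \subsetneq \intl$. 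The main obstacle is condition (2), since abstract complementarity of the $X_i$ and $Y_j$ does not force $\sum_{ij}(X_i \cap Y_j) = \intl$ in general; the flag-and-lines argument above is precisely where one exploits the rigidity arising from the fact that both decompositions share the same window $W$.
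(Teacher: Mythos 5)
Your proof is correct and follows essentially the same path as the paper: both take $Z$ to be the non-trivial intersections $X_i \cap Y_j$, fix a flag $f$ together with its complementary lines $L_k$, and use Proposition \ref{prop: decomp flag sizes} (via $X_i^f = X_i$ and $Y_j^f = Y_j$) to show each $L_k$ lies inside some $Z_{ij}$, from which the $Z_{ij}$ span $\intl$ and condition (3) follows by the same ``otherwise $V \supseteq \intl$'' argument. Your additional bookkeeping via the partition $\{R_{ij}\}$ and the count $\sum_{i,j}\dim Z_{ij} = n$ is a worthwhile tightening: it makes the directness of the sum and the criterion $Z_{ij} \neq \{0\} \Leftrightarrow R_{ij}\neq\emptyset$ explicit, whereas the paper's proof is terser on directness after noting only pairwise trivial intersections and spanning.
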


\begin{proof}
Define the decomposition $Z$ to be
\[
\{X_i \cap Y_j \mid X_i \in X, Y_j \in Y\},
\]
omitting elements with trivial intersection, so Definition \ref{def: multi decomp} (1) is satisfied by construction. Clearly $Z$ refines both $X$ and $Y$, and distinct elements intersect trivially (since distinct elements of both $X$ and $Y$ intersect trivially). Any $V \in \sH_0$ wholly contains every $X_i \cap Y_j$ except for $X_H \cap Y_H$. It must be that $V \nsupseteq X_H \cap V_H$, and hence that (3) holds, once we have shown (2) (that the sum of $Z_i \in Z$ span $\intl$), since otherwise each element of $Z$ is contained in $V$, contradicting that they span $\intl$.

To prove (2), take any flag $f \subseteq \sH_0$ and consider the $1$-dimensional subspace $L(V)$ given by intersecting all elements of $f$ except for $V \in f$. Just as in the proof of Lemma \ref{lem: lattice spanned by lines}, these are $1$-dimensional subspaces which give a direct sum decomposition of $\intl$, so it will suffice to show that some element of $X$ and some element of $Y$ contains $L(V)$.

Recall the $X_i^f$ from Proposition \ref{prop: decomp flag sizes}. Given $V \in f$, take the unique $i$ with $V \nsupseteq X_i$. By definition, $X_i^f$ is an intersection of elements of $f$ not including $V$, so $L(V)$, being the intersection of all elements of $f$ except $V$, must be a subset of $X_i^f$. Then $L(V) \subseteq X_i$ since $X_i^f = X_i$ by Proposition \ref{prop: decomp flag sizes}. By the same argument, there is some $Y_j \in Y$ containing $L(V)$, so $L(V) \subseteq X_i \cap Y_j$, as required.
\end{proof}

\begin{corollary}
Every window $W$ has a unique and minimal decomposition $\{X_i\}_{i=1}^m$ so that $W = W_1 + \cdots + W_m$, where each $W_i \subseteq X_i$ is indecomposable.
\end{corollary}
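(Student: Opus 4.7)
The plan is to establish existence by taking a decomposition with the maximum number of parts, and uniqueness via the common refinement Proposition \ref{prop: common refinement}.

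For \emph{existence}, first handle the trivial case: if $W$ is indecomposable, take $m=1$, $X_1 = \intl$, $W_1 = W$. Otherwise, any decomposition $\{X_i\}_{i=1}^m$ of $W$ must satisfy $m \leq n = \dim(\intl)$, since the $X_i$ are nontrivial and complementary. Hence a decomposition with maximal $m$ exists. I claim that in such a decomposition, each factor $W_i$ (obtained from Theorem \ref{thm: sum decomposition}) is indecomposable. For if some $W_j$ split as $W_j' + W_j''$ via $X_j = X_j' + X_j''$, then $\{X_1,\ldots,X_{j-1},X_j',X_j'',X_{j+1},\ldots,X_m\}$ would be a decomposition of $W$ with $m+1$ parts: properties (1) and (2) of Definition \ref{def: multi decomp} are immediate, and property (3) is checked by splitting into cases on whether a supporting hyperplane $V \in \sH_0$ of $W$ contains $X_j$ (in which case $V$ contains both $X_j'$ and $X_j''$) or not (in which case $V$ contains all $X_i$ with $i\neq j$, and corresponds to a supporting hyperplane of $W_j$ in $X_j$, so contains exactly one of $X_j'$, $X_j''$). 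This contradicts maximality.

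For \emph{uniqueness}, suppose $X = \{X_i\}_{i=1}^m$ and $Y = \{Y_j\}_{j=1}^r$ are two such decompositions. By Proposition \ref{prop: common refinement} they have a common refinement $Z = \{Z_k\}_{k=1}^s$. For each $i$, let $\mathcal{Z}_i = \{Z_k : Z_k \subseteq X_i\}$; then $\{\mathcal{Z}_i\}$ partitions $Z$, and dimensional reasons force $\sum_{Z_k \in \mathcal{Z}_i} Z_k = X_i$. I then argue that $\mathcal{Z}_i$ forms a decomposition of $W_i$ whenever $|\mathcal{Z}_i| \geq 2$. Granted this, indecomposability of $W_i$ yields $|\mathcal{Z}_i| = 1$, so $X_i \in Z$; hence $X = Z$, and symmetrically $Y = Z$, so $X = Y$.

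The main obstacle is verifying that $\mathcal{Z}_i$ is indeed a decomposition of $W_i$. Applying Theorem \ref{thm: sum decomposition} to $Z$ gives $W = \sum_k W_k''$ with $W_k'' \subseteq Z_k$; grouping these as $W_i^* = \sum_{Z_k \in \mathcal{Z}_i} W_k''$ gives $W = W_1^* + \cdots + W_m^*$ with $W_i^* \subseteq X_i$, so by uniqueness of the expression within the direct sum $\intl = X_1 + \cdots + X_m$, we get $W_i = W_i^*$. The decomposition axioms for $\mathcal{Z}_i$ relative to $W_i$ follow by tracing the correspondence of supporting hyperplanes: a supporting hyperplane of $W_i$ has the form $H \cap X_i$ for some $H \in \sH$ with $X_H = X_i$, and I verify that the unique $Z_k \in \mathcal{Z}_i$ not contained in $V(H)\cap X_i$ is precisely $Z_H$ (noting that $Z_H \subseteq X_H = X_i$, since otherwise $Z_H \subseteq \widehat{X}_H \subseteq V(H)$, a contradiction). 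The other properties of Definition \ref{def: multi decomp} are clear from $\mathcal{Z}_i \subseteq Z$ and $\sum \mathcal{Z}_i = X_i$.
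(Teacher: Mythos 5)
Your proposal is correct, and it uses the approach the paper clearly has in mind: existence by taking a decomposition with a maximal number of parts (bounded by $n = \dim\intl$ since the parts are non-trivial complementary subspaces), and uniqueness via the common refinement Proposition \ref{prop: common refinement}. The paper states this corollary without a written proof, so your job was to fill in what the authors treat as routine; you have done so carefully, including the check that each $W_i$ in a maximal decomposition is indecomposable (by splitting $X_j$ into $X_j' + X_j''$ and verifying the three axioms of Definition \ref{def: multi decomp}), and the verification that a refinement $Z$ of a decomposition $X$ into indecomposables must coincide with $X$ (since otherwise $\mathcal{Z}_i$ would exhibit some $W_i$ as decomposable). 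The arguments are sound; the only slightly compressed step — that $W = W_1^* + \cdots + W_m^*$ with $W_i^* \subseteq X_i$ forces $W_i^* = W_i$ — follows from uniqueness of coordinates in the direct sum $\intl = X_1 + \cdots + X_m$, as you indicate.
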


In summary, there always exists a decomposition of a window into a sum of indecomposables. Furthermore, when assuming minimal complexity {\bf C}, the supporting hyperplanes in each summand have equal ranks by Corollary \ref{cor: equal ranks}.

\subsection{Decomposing schemes: Splitting both $\Gamma$ and $\intl$} \label{sec: splitting lattice}
The acceptance domains (or cut regions) are defined not just by the window but also by how it is aligned relative to the projected lattice. We may not effectively apply the above techniques without a corresponding decomposition of $\Gamma$. Fortunately, for schemes satisfying {\bf C}, the geometric decomposition of the window gives an associated decomposition of the lattice, up to finite index:

\begin{definition}
Let $\{X_i\}_{i=1}^m$ be a decomposition of $W$, with associated copartition decomposition $\{A_i\}_{i=1}^m$. We define $\Gamma_i = \Gamma^{A_i}$, the subgroup which stabilises the hyperplanes of $A_i$. Equivalently, $\Gamma_i$ is determined by $(\Gamma_i)_< = \Gamma_< \cap X_i$.
\end{definition}

\begin{remark}
Note that $(\Gamma^{A_i})_< = \Gamma_< \cap X_i$ by Lemma \ref{lem: higher stabilisers} and the definition of $X_i$ in terms of $A_i$ by Definition \ref{def: copartition decomp}, which agrees with the original decomposition $\{X_i\}_{i=1}^m$ by Proposition \ref{prop: decomps equiv}. This uniquely defines $\Gamma_i \leqslant \Gamma$ since $\pi_<$ is injective.
\end{remark}

\begin{theorem}\label{thm: finite index splitting}
Suppose that $\mathcal{S}$ satisfies {\bf C}. Then $\Gamma_1 + \cdots + \Gamma_m$ is finite index in $\Gamma$. 
\end{theorem}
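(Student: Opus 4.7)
The plan is to reduce the statement to Lemma \ref{lem: lattice spanned by lines}, which already provides a finite-index splitting of $\Gamma$ into subgroups supported on $1$-dimensional subspaces arising from a flag. Since we are assuming {\bf C}, Theorem \ref{thm: C => hyperplane spanning} gives hyperplane spanning, so Lemma \ref{lem: lattice spanned by lines} applies. The strategy is then to match each of the $n$ flag-lines of that lemma to a unique component $X_i$ of the window decomposition, which will force each corresponding subgroup $\Gamma^{\widehat{j}}$ into some $\Gamma_i$.

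Concretely, I would fix any flag $f = \{V_1,\ldots,V_n\} \subseteq \sH_0$ and consider the lines $L(j) = \bigcap_{j' \neq j} V_{j'}$ and subgroups $\Gamma^{\widehat{j}}$ from Lemma \ref{lem: lattice spanned by lines}, so that $\intl = L(1) \oplus \cdots \oplus L(n)$, each $(\Gamma^{\widehat{j}})_< \subseteq L(j)$, and $\Gamma^{\widehat{1}} + \cdots + \Gamma^{\widehat{n}}$ is finite index in $\Gamma$. Since $\{A_i\}_{i=1}^m$ is a copartition of $\sH_0$, each $V_j$ fails to lie in exactly one $A_i$; let $i(j)$ denote this index. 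Then $f \cap A_{i(j)} \subseteq f \setminus \{V_j\} = \widehat{j}$, and Proposition \ref{prop: decomp flag sizes} (the equality $X_{i(j)}^f = X_{i(j)}$) gives
\[
L(j) \ = \ \bigcap_{V \in \widehat{j}} V \ \subseteq \ \bigcap_{V \in f \cap A_{i(j)}} V \ = \ X_{i(j)}.
\]

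Combined with $(\Gamma^{\widehat{j}})_< \subseteq L(j)$, this yields $(\Gamma^{\widehat{j}})_< \subseteq X_{i(j)}$, so by the characterization $(\Gamma_{i(j)})_< = \Gamma_< \cap X_{i(j)}$ we conclude $\Gamma^{\widehat{j}} \subseteq \Gamma_{i(j)}$. Summing over $j$ gives
\[
\Gamma^{\widehat{1}} + \cdots + \Gamma^{\widehat{n}} \ \subseteq \ \Gamma_1 + \cdots + \Gamma_m,
\]
and since the left-hand side has finite index in $\Gamma$, so does the right.

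The main conceptual obstacle is aligning the two splittings of $\intl$ in play: the flag-based decomposition of $\Gamma$ provided by Lemma \ref{lem: lattice spanned by lines} (one subgroup per line $L(j)$) with the window-based decomposition $\intl = X_1 \oplus \cdots \oplus X_m$ (which groups those lines into blocks according to the copartition). Proposition \ref{prop: decomp flag sizes} is exactly the bridge that makes this identification canonical, since it says that the $X_i$ can be recovered as intersections of hyperplanes from any chosen flag; once that is in hand, no further geometric information about $W$ or Diophantine input is required.
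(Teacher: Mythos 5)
Your proof is correct and takes a genuinely shorter route than the paper's. The paper argues by induction on the number of summands in the decomposition, iteratively splitting off one $\Gamma^{A_\ell}$ at a time from the remainder $\Gamma^{B_\ell}$ and tracking ranks via Lemma~\ref{lem: lattice ranks} and the low-complexity identity $\sum_{V\in f}\rk(V)=(n-1)k$; this repeats a certain amount of rank bookkeeping at every inductive step. You instead invoke Lemma~\ref{lem: lattice spanned by lines} once, which already encapsulates the relevant rank arithmetic in the form of the finite-index splitting $\Gamma^{\widehat{1}}+\cdots+\Gamma^{\widehat{n}}\leqslant\Gamma$ along the flag-lines $L(j)$, and then you observe via Proposition~\ref{prop: decomp flag sizes} (specifically $X^f_{i(j)}=X_{i(j)}$) that each $L(j)$ sits inside the block $X_{i(j)}$, so each $\Gamma^{\widehat{j}}\leqslant\Gamma_{i(j)}$. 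The details check out: $V_j\notin A_{i(j)}$ gives $f\cap A_{i(j)}\subseteq\widehat{j}$, hence $L(j)\subseteq X^f_{i(j)}=X_{i(j)}$, and combined with $(\Gamma^{\widehat{j}})_<\subseteq\Gamma_<\cap X_{i(j)}=(\Gamma_{i(j)})_<$ and injectivity of $\pi_<$ you get the containment, after which finite index is inherited upward. The one prerequisite you correctly flag is that Lemma~\ref{lem: lattice spanned by lines} is stated under hyperplane spanning, which follows from {\bf C} by Theorem~\ref{thm: C => hyperplane spanning}. Your argument is more conceptual and avoids re-deriving the rank inequalities; what the paper's inductive proof buys, in contrast, is a self-contained demonstration of exactly where the finite index is lost at each stage (through the near-complementarity of $\Gamma^{B\cup I}$ and $\Gamma^{B\cup J}$ inside $\Gamma^B$), which is informative but not strictly needed for the statement.
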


\begin{proof}
We shall prove the result by induction on the number of elements of the decomposition. Recall from Proposition \ref{prop: decomp reduce} that for a decomposition $\{A_i\}_{i=1}^m$, $m \geq 3$, the decomposition given by replacing the two elements $A_i$ and $A_j$ ($i \neq j$) with $A_i \cap A_j$ is also a decomposition. So, for $\ell = 2$, \ldots, $m$, we have decompositions of $\ell$ elements given by
\[
D_\ell \coloneqq \{A_1, A_2, \ldots, A_{\ell-1}, B_\ell\} \ , \text{ where } B_\ell \coloneqq \bigcap_{j=\ell}^m A_j.
\]
Note that $D_m = \{A_1,\ldots,A_m\}$ is our target decomposition. Although not strictly a decomposition, we can also define the `one element decomposition' $D_1 = \{\emptyset\}$, for which the result holds trivially, as $\Gamma^\emptyset = \Gamma$. So suppose that the result holds for the decompositions $D_1$, $D_2$, \ldots, $D_\ell$. We shall show that it holds for $D_{\ell+1}$.

We have that $D_\ell$ is obtained from $D_{\ell+1}$ by replacing $A_\ell$ and $B_{\ell+1}$ by their intersection $B \coloneqq B_\ell$. Note that $A_\ell \cup B_{\ell+1} = \sH_0$ (since every element of $\sH_0$ is in at least one of these subsets, by Definition \ref{def: copartition decomp}) so we may write
\[
A_\ell = B \cup I \ , \ B_{\ell+1} = B \cup J,
\]
for the last two elements of $D_{\ell+1}$, where $I$ and $J \subseteq \sH_0$ are chosen so that $\sH_0 = B \cup I \cup J$ is a partition. Now, by our induction assumption,
\[
\Gamma_1 + \Gamma_2 + \cdots + \Gamma_{\ell-1} + \Gamma^B \leqslant \Gamma
\]
is a finite index inclusion. So to obtain the corresponding result for $D_{\ell+1}$, we need that
\[
\Gamma^{B \cup I} + \Gamma^{B \cup J} \leqslant \Gamma^B
\]
is a finite index inclusion.

Choose any flag $f$ and let $f_B \coloneqq f \cap B$, $f_I \coloneqq f \cap I$ and $f_J \coloneqq f \cap J$. Let $\alpha = \# f_I$ and $\beta = \# f_J$. By {\bf C}, Theorem \ref{thm: generalised complexity} and Corollary \ref{cor: minimal complexity},
\begin{equation}\label{eq: rks 1}
\sum_{V \in f} \rk(V) = \left(\sum_{V \in f_B} \rk(V)\right) + \left(\sum_{V \in f_I} \rk(V)\right) + \left(\sum_{V \in f_J} \rk(V)\right) = (n-1) \cdot k,
\end{equation}
since $\mathcal{S}$ is hyperplane spanning by Theorem \ref{thm: C => hyperplane spanning}. By Lemma \ref{lem: lattice ranks}, for any $S \subseteq \sH_0$ and $V \in S$,
\begin{equation} \label{eq: remove singles}
\rk(S) \geq \rk({S - \{V\}}) + \rk(V) - \rk(\emptyset) = \rk(S - \{V\}) + \rk(V) - k.
\end{equation}
Recall from Proposition \ref{prop: decomp flag sizes} that for any flag $f$, we have that $X_i = X_i^f$ for a decomposition $\{X_i\}_{i=1}^m$. That is, one may restrict intersections to any given flag in determining the subspace intersections $X_i$; moreover, the stabiliser subgroups $\Gamma^{A_i}$ are the intersections with these $X_i$, once projected to $\intl$. In particular, $\rk(B \cup I) = \rk(B \cup f_I)$ (once the elements of the flag $f_I$ are stabilised, so too are all elements of $I$). So by inductively applying Equation \ref{eq: remove singles} to successively remove elements of $f_I$ from $B \cup f_I$, we see that
\[
\rk(B \cup I) = \rk(B \cup f_I) \geq \rk(B) + \left(\sum_{V \in f_I} \rk(V)\right) - \alpha \cdot k,
\]
and similarly for $B \cup J$; summing these together, we obtain:
\begin{equation} \label{eq: rks 2}
\rk(B \cup I) + \rk(B \cup J) \geq 2 \cdot \rk(B) + \left(\sum_{V \in f_I \cup f_J} \rk(V) \right) - (\alpha+\beta) \cdot k.
\end{equation}
Similarly for $B = B_\ell$,
\begin{equation} \label{eq: rks 3}
\rk(B) = \rk(f_B) \geq \left(\sum_{V \in f_B} \rk(V)\right) - ((n-(\alpha+\beta)) - 1) \cdot k,
\end{equation}
where in this case $f_B$ has $n - (\alpha + \beta)$ elements and we need to apply \eqref{eq: remove singles} $n - (\alpha + \beta) - 1$ times to break it into singletons.

Combining \eqref{eq: rks 1}, \eqref{eq: rks 2} and \eqref{eq: rks 3},
\[
\rk(B \cup I) + \rk(B \cup J) \geq \rk(B) + \left(\sum_{V \in f} \rk(V)\right) - (n-1) \cdot k = \rk(B).
\]
On the other hand, $\Gamma^{B \cup I}$, $\Gamma^{B \cup J}$ have trivial intersection (since $\sH_0 = B \cup I \cup J$, and no non-trivial element of $\Gamma$ can stabilise all elements of $\sH_0$), so the sum of their ranks is the rank of their sum. Hence, having equal rank, $\Gamma^{B \cup I} + \Gamma^{B \cup J}$ is finite index in $\Gamma^B$, as required.
\end{proof}

Since $\Gamma_1 + \cdots + \Gamma_m$ is finite rank in $\Gamma$, each $\Gamma_i$ projects densely to $X_i$ (or else the sum does not project densely to $\intl$, contradicting that $\Gamma$ does). The following corollary summarises the main conclusions of this section and establishes some further notation. 

\begin{corollary}\label{cor: decompose}
If $\mathcal{S}$ is aperiodic and satisfies {\bf C} then it may be split into components, in the following sense:
\begin{enumerate}
\item $\intl = X_1 + \cdots + X_m$ with the subspaces $X_i$ complementary;
\item $W = W_1 + \cdots + W_m$ where each $W_i \subset X_i$ is a polytope;
\item $\Gamma_1 + \cdots + \Gamma_m$ is a finite index subgroup of $\Gamma$, and each $\Gamma_i$ projects densely into $X_i$. 
\end{enumerate}
For each $H \in \sH$ there is exactly one $X_H \in \{X_i\}_{i=1}^m$ such that $V(H)\not \supseteq X_H$. For each $i = 1,\dots,m$, let 
\[
\sH_i = \{H \cap X_i \mid H \in \sH, V(H) \nsupseteq X_i\} \ , \ \ \sH_i^+ = \{H^+ \cap X_i \mid H^+ \in \sH^+, V(H) \nsupseteq X_i\}.
\]
Then $H \in \sH_i$ are affine hyperplanes in $X_i$ and give the sides of the polytope
\[
W_i = \bigcap_{H^+ \in \sH_i^+} H^+.
\]
For $H \in \sH_i$, given by $\tilde {H} \cap X_i$ for some $\tilde{H} \in \sH$, we have the stabiliser of $H$ in $\Gamma_i$:
\[
\Gamma_i^{H} = \Gamma_i \cap \Gamma^{\tilde{H}} = \{\gamma \in \Gamma_i \mid \gamma_< \in V(H)\}.
\]
The rank $\rk(H) = \rk(\Gamma_i^H)$ will sometimes be denoted by $\rk_i(H)$ if we wish to emphasise these as coming from subgroups of $\Gamma_i$. We may always choose a sufficiently fine decomposition (for example, by passing to indecomposables) so that for a fixed $i = 1, \dots, m$, the ranks of $\Gamma_i^H$ for $H \in \sH_i$ are all equal. We denote this common rank by $r_i$. This rank is determined by $k_i$ and $n_i$ when property {\bf C} is assumed (see Proposition \ref{prop: rank formula}).
\end{corollary}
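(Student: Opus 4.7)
The corollary is essentially an assembly result: it collects the structural statements already established in Sections 3 and 4 and packages them into the form that will be used in the remainder of the paper. The plan is therefore to cite the prior results in the right order rather than to build new arguments.

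First, I would appeal to the previous corollary (every window has a decomposition into indecomposables) to obtain the subspaces $X_1,\ldots,X_m$ satisfying condition (1). Theorem \ref{thm: sum decomposition} then converts this abstract decomposition into the Minkowski sum description $W = W_1 + \cdots + W_m$ with each $W_i \subseteq X_i$ a polytope, giving condition (2). The description of $\sH_i$, $\sH_i^+$ and the formula $W_i = \bigcap_{H^+ \in \sH_i^+} H^+$ is read off from the proof of Theorem \ref{thm: sum decomposition}, together with the notational conventions of Notation \ref{not: decomposition} and Lemma \ref{lem: decomposed hyperplanes}.

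For condition (3), I would invoke Theorem \ref{thm: finite index splitting}, which gives that $\Gamma_1 + \cdots + \Gamma_m$ has finite index in $\Gamma$ under assumption {\bf C}. Density of each $\Gamma_i$ in $X_i$ is then immediate: if some $\Gamma_i$ failed to project densely to $X_i$, then $\Gamma_1+\cdots+\Gamma_m$ would fail to project densely to $\intl = X_1+\cdots+X_m$, and since this sum has finite index in $\Gamma$ its projection has the same closure as $\Gamma_<$, contradicting the standing density assumption on $\Gamma_<$. The stabiliser identification $\Gamma_i^H = \Gamma_i \cap \Gamma^{\tilde H} = \{\gamma \in \Gamma_i \mid \gamma_< \in V(H)\}$ then follows directly from Lemma \ref{lem: higher stabilisers} applied inside $X_i$, using that $(\Gamma_i)_< = \Gamma_< \cap X_i$ by construction.

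The final claim, about being able to refine the decomposition so that all $\Gamma_i^H$ for $H \in \sH_i$ share a common rank $r_i$, is the only point that requires more than citation. By Proposition \ref{prop: common refinement} we may pass to the refinement into indecomposable summands, and so it suffices to show that when $W_i$ is indecomposable all stabiliser ranks $\rk_i(H)$ coincide. For this I would apply Corollary \ref{cor: equal ranks} to the subsystem: since the subsystem inherits property {\bf C} from $\mathcal{S}$ (the complexity exponent of the subsystem cannot exceed that of $\mathcal{S}$, and hyperplane spanning passes to each factor by Theorem \ref{thm: C => hyperplane spanning} applied to the full scheme), the contrapositive of Corollary \ref{cor: equal ranks} forces equal ranks whenever $W_i$ is indecomposable. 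The only mildly delicate point, and the one I would be most careful to justify, is that {\bf C} really does descend to each subsystem — this is where one must use that the lattice splits with finite index and that the window splits as a Minkowski sum, so that the complexity of $\mathcal{S}$ controls the product of complexities of the factors. Everything else in the corollary is a restatement of the notation fixed in Notation \ref{not: decomposition} and Lemma \ref{lem: decomposed hyperplanes}.
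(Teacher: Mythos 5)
Your proposal is essentially the assembly the paper intends; the corollary is explicitly a summary of the preceding results, and you collect them in the right order, with the density argument for item (3) being exactly right. The one place where your route diverges slightly from the paper's is the constant-rank claim. You descend property \textbf{C} to each indecomposable subsystem and then apply Corollary \ref{cor: equal ranks} inside that subsystem. This works, but it requires knowing that \textbf{C} descends to subsystems, which is the content of Proposition \ref{prop: complexity of subsystems} from the following section (your parenthetical sketch of why is correct in spirit, but ``the complexity exponent of the subsystem cannot exceed that of $\mathcal{S}$'' on its own is not quite enough: one needs $\alpha = \sum_i \alpha_i$ together with $\alpha_i \geq d_i$ from Corollary \ref{cor: minimal complexity} to pin down $\alpha_i = d_i$). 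The paper's own, slightly more economical route applies Theorem \ref{thm: connected => equal ranks}/Corollary \ref{cor: equal ranks} to the \emph{full} scheme $\mathcal{S}$: each indecomposable factor corresponds to a connected component of the flag-graph $G(W)$, so the ranks $\rk(\tilde{H})$ are constant within a factor, and the subsystem ranks $\rk(\Gamma_i^H) = \rk(\tilde{H}) - (k - k_i)$ inherit this (since $\Gamma^{\tilde H}$ meets $\Gamma_i + \widehat{\Gamma}_i$ in $\Gamma_i^H + \widehat{\Gamma}_i$, a sum of complementary subgroups). Both approaches hit the same forward-referencing issue (the corollary itself points forward to Proposition \ref{prop: rank formula}), so neither is circular. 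Finally, a small misattribution: the fact that hyperplane spanning passes to subsystems is the unnumbered first lemma of Section \ref{sec: accs and cuts in subsystems}, not Theorem \ref{thm: C => hyperplane spanning}; but this does not matter for your argument, since once \textbf{C} is known for a subsystem, hyperplane spanning for that subsystem follows from Theorem \ref{thm: C => hyperplane spanning} anyway.
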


\begin{definition} \label{def: subsystems}
We call each $(\spintl, \Gamma_i, W_i)$ a {\bf subsystem}. We denote $k_i \coloneqq \rk \Gamma^i$, $n_i \coloneqq \dim X_i$ and $d_i \coloneqq k_i - n_i$. By Corollary \ref{cor: decompose} there is a number $N$ such that 
\[
\Gamma_1 + \cdots + \Gamma_m \leqslant \Gamma \leqslant \tfrac 1N \Gamma_1 + \cdots +\tfrac 1N\Gamma_m. 
\]
When context makes it clear, we shall sometimes also refer to $(X_i,\frac{1}{N}\Gamma_i,W_i)$ as a subsystem.
\end{definition}

\subsubsection{Complexity and repetitivity of subsystems} \label{sec: subsystems}
For $r > 0$, we let $\Gamma_i(r) \coloneqq B_r(0)\cap \Gamma_i$, where $B_r(0)$ is the closed ball of radius $r$ at the origin in the total space $\tot$. Most of the constructions and definitions we have seen need only the data of the subsystems. For example, the stabiliser subgroups (as discussed above), acceptance domains and cut regions may be given the same definitions, as well as the hyperplane spanning and (weakly) homogeneous conditions.

Even though the subsystems do not have canonically associated physical spaces (although one may somewhat arbitrarily assign a physical dimension and projection), the associated `lifted' cut and project sets still exist:
\[
\cps_i^\wedge \coloneqq \{\gamma \in \Gamma_i \mid \gamma_< \in W_i\}.
\]
We may define the complexity and repetitivity function in terms of the acceptance domains (see Notation \ref{not: subsystem acceptance domains}). So the complexity function $p(r)$ is given by the number of distinct $r$-acceptance domains. The repetitivity function $\rho(r)$ is the smallest value of $R$ so that for every $\gamma \in \cps_i^\wedge$, and every $A \in \sA_i(r)$, there is some $\gamma' \in B_R(\gamma) \cap \Gamma_i$ with $\gamma_<' \in A$. We may thus define a subsystem to be linearly repetitive if it has repetitivity $\rho(r) \leq Cr$ for some $C > 0$. Alternatively, by choosing a suitable projection of $\cps_i^\wedge$ onto a $d_i$-dimensional subspace, one obtains a $k_i$-to-$d_i$ cut and project scheme with the corresponding complexity and repetitivity function, up to linear constants.

\section{Acceptance domains and cut regions in the subsystems} \label{sec: accs and cuts in subsystems}

In this section we shall see how properties of the subsystems relate to those of the original cut and project scheme. We note that, in this section, it will not be necessary to assume that the subsystems have constant stabiliser rank (although we will still always assume that $\Gamma_1 + \cdots + \Gamma_m$ is finite index in $\Gamma$, as in Definition \ref{def: subsystems}). Firstly, we observe that the properties of hyperplane spanning and weak homogeneity are inherited:

\begin{lemma}
A scheme $\mathcal{S}$ is hyperplane spanning if and only if each subsystem is hyperplane spanning.
\end{lemma}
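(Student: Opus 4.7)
The plan is to exploit the direct sum decomposition $\intl = X_1 + \cdots + X_m$ together with the defining feature of $X_{\tilde H}$: for any $\tilde H \in \sH$, the subspace $V(\tilde H)$ contains every $X_j$ with $j \neq i$, where $X_i = X_{\tilde H}$. This immediately gives the crucial observation $\Gamma_j \leqslant \Gamma^{\tilde H}$ for every $j \neq i$, since such elements project into $X_j \subseteq V(\tilde H)$. Combined with the density of $(\Gamma_j)_<$ in $X_j$ from Corollary \ref{cor: decompose}(3), this yields the inclusion $\widehat{X}_i \subseteq \langle \Gamma^{\tilde H}_< \rangle_\R$ in every case.

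For the ``if'' direction, assuming each subsystem is hyperplane spanning gives $\langle (\Gamma_i \cap \Gamma^{\tilde H})_< \rangle_\R = V(\tilde H) \cap X_i$ (the codimension $1$ subspace of $X_i$ playing the role of $V(H)$ for $H = \tilde H \cap X_i$). Since $\Gamma_i \cap \Gamma^{\tilde H} \leqslant \Gamma^{\tilde H}$, combining with the observation above yields
\[
\langle \Gamma^{\tilde H}_< \rangle_\R \supseteq (V(\tilde H) \cap X_i) + \widehat{X}_i.
\]
A dimension count shows the right-hand side equals $V(\tilde H)$, and the reverse inclusion $\langle \Gamma^{\tilde H}_< \rangle_\R \subseteq V(\tilde H)$ is immediate from $\Gamma^{\tilde H}_< \subseteq V(\tilde H)$, completing this direction.

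For the ``only if'' direction, assume $\langle \Gamma^{\tilde H}_< \rangle_\R = V(\tilde H)$. By Corollary \ref{cor: decompose}(3) there is some $N \in \N$ so that $N \cdot \Gamma^{\tilde H} \subseteq \Gamma_1 + \cdots + \Gamma_m$. Given $\gamma \in \Gamma^{\tilde H}$, write $N\gamma = \gamma_1 + \cdots + \gamma_m$ with $\gamma_j \in \Gamma_j$. Every $\gamma_j$ with $j \neq i$ already lies in $\Gamma^{\tilde H}$ by the observation above, so $\gamma_i = N\gamma - \sum_{j \neq i}\gamma_j$ must lie in $\Gamma_i \cap \Gamma^{\tilde H}$. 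Projecting to $\intl$ and taking real spans,
\[
V(\tilde H) = \langle (N \Gamma^{\tilde H})_< \rangle_\R \subseteq \langle (\Gamma_i \cap \Gamma^{\tilde H})_< \rangle_\R + \widehat{X}_i.
\]
Intersecting with $X_i$ and using that $\langle (\Gamma_i \cap \Gamma^{\tilde H})_< \rangle_\R \subseteq X_i$ together with $X_i \cap \widehat{X}_i = \{0\}$, the right-hand side cuts down to $\langle (\Gamma_i \cap \Gamma^{\tilde H})_< \rangle_\R$, giving the inclusion $V(\tilde H) \cap X_i \subseteq \langle (\Gamma_i \cap \Gamma^{\tilde H})_< \rangle_\R$; the reverse inclusion is automatic, yielding the hyperplane spanning property for the subsystem.

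There is no real obstacle here; the argument is a clean unpacking of how $\Gamma^{\tilde H}$ decomposes along the finite-index splitting $\Gamma_1 + \cdots + \Gamma_m \leqslant \Gamma$. The only subtlety to watch for is that in the ``only if'' direction, we need the finite index to be a power of $N$ (so that $N\gamma$ genuinely lies in $\sum \Gamma_j$) and to remember that passing to $N\gamma$ does not affect the real span, so the conclusion is unchanged.
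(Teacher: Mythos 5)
Your proof is correct and takes essentially the same approach as the paper's: both directions rest on the direct sum $V(\tilde H) = (V(\tilde H)\cap X_i) + \widehat{X}_i$ together with the fact that $\widehat{X}_i$ is automatically spanned by stabiliser elements, and the finite-index property of $\Gamma_1 + \cdots + \Gamma_m$ in $\Gamma$ (which you invoke by scaling with $N$, the paper by intersecting with the finite-index projected subgroup $\Gamma'$). Your closing remark about ``the finite index being a power of $N$'' is slightly misphrased -- one only needs $N$ to annihilate the quotient group, which follows from finite index -- but this is cosmetic and the argument is sound.
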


\begin{proof}
Let $H_X = H \cap X_H \in \sH_i$, where $H \in \sH$ and $X_H = X_i$ for some $i$. We let $V_X$ be the translate of $H_X$ over the origin (see Notation \ref{not: decomposition}). Suppose that $\mathcal{S}$ is hyperplane spanning. Then $\Gamma_<^H = (\Gamma_<) \cap V$ has linear span all of $V \coloneqq V(H)$. By assumption, $\Gamma' \coloneqq (\Gamma_1)_< + (\Gamma_2)_< + \cdots + (\Gamma_m)_<$ is finite index in $\Gamma$, so $\Gamma' \cap \Gamma^H$ still spans $V$. By Lemma \ref{lem: decomposed hyperplanes}, we may write $V = V_X + \widehat{X}_i$. Write $\widehat{\Gamma}_i$ for the sum of all $\Gamma_j$, with $j \neq i$. By definition, $(\widehat{\Gamma}_i)_< \leqslant \widehat{X}_i$, which is complementary with $V_X$. So $\Gamma' \cap V = (\Gamma_i)_< + (\widehat{\Gamma}_i)_<$ can only have linear span all of $V$ if $(\Gamma_i)_<$ has linear span all of $V_X$, since $V_X$ is complementary with $\widehat{X}_i$ in $V$. Since $H$ was arbitrary, we see that each subsystem is hyperplane spanning, as required.

Suppose, on the other hand, that each subsystem is hyperplane spanning. Then for any $H_X \in \sH_i$, the stabiliser $(\Gamma_i^{H_i})_<$ has linear span all of $V_X$. Since each $(\Gamma_j)_<$ projects densely into $X_j$, we have that $(\widehat{\Gamma}_i)_<$ spans $\widehat{X}_i$. It follows that $\Gamma_i^{H_i} + \widehat{\Gamma}_i \leqslant \Gamma$ has projection spanning all of $V = V_X + \widehat{X}_i$, so that $\mathcal{S}$ is hyperplane spanning, as required.
\end{proof}

\begin{lemma}
The scheme $\mathcal{S}$ is weakly homogeneous if and only if the subsystems $(X_i,\Gamma_i,W_i)$ are weakly homogeneous. 
\end{lemma}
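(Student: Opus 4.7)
The plan is to use the decomposition $\intl = X_1 + \cdots + X_m$ together with Lemma \ref{lem: decomposed hyperplanes}, which gives $H = H_X + \widehat{X}_H$ for each $H \in \sH$. Since weak homogeneity is an affine condition witnessed by an origin plus one lattice translate per hyperplane, the key observation is that each side of the equation $o \in H + (1/n_H)(\gamma_H)_<$ splits along the direct sum $X_i + \widehat{X}_i$, and the finite index sandwich $\Gamma_1 + \cdots + \Gamma_m \leqslant \Gamma \leqslant \frac{1}{N}(\Gamma_1+\cdots+\Gamma_m)$ from Definition \ref{def: subsystems} lets us transfer lattice witnesses between $\Gamma$ and its summands $\Gamma_i$ at the cost of multiplying the denominators $n_H$ by $N$.

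For the forward direction, I would start with an origin $o \in \intl$ witnessing weak homogeneity of $\mathcal{S}$ and decompose it as $o = o_1 + \cdots + o_m$ with $o_i \in X_i$. Given $H \in \sH$ with $X_H = X_i$, I would take the witnesses $\gamma_H \in \Gamma$, $n_H \in \N$, and then express $N\gamma_H = \gamma_H^1 + \cdots + \gamma_H^m$ with $\gamma_H^j \in \Gamma_j$, so that $(\gamma_H)_< = \tfrac{1}{N}\sum_j (\gamma_H^j)_<$ with the $j$th summand lying in $X_j$. Writing $o = h + (1/n_H)(\gamma_H)_<$ with $h \in H$ and using $h = h_X + \widehat{h}_X$ from Lemma \ref{lem: decomposed hyperplanes}, the projection onto $X_i$ gives $o_i = h_X + (1/(Nn_H))(\gamma_H^i)_<$, since all $\gamma_H^j$ with $j \neq i$ project into $\widehat{X}_i$ (and similarly $\widehat{h}_X \in \widehat{X}_i$). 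This is exactly weak homogeneity of the subsystem $(X_i, \Gamma_i, W_i)$ at origin $o_i$, with witnesses $\gamma_H^i \in \Gamma_i$ and $N n_H \in \N$ for the hyperplane $H_X = H \cap X_i$.

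For the converse, I would take origins $o_i \in X_i$ witnessing weak homogeneity of each subsystem and set $o \coloneqq o_1 + \cdots + o_m$. Given $H \in \sH$ with $X_H = X_i$, the subsystem gives $\gamma \in \Gamma_i$ and $n \in \N$ with $o_i = h_X + (1/n)\gamma_<$ for some $h_X \in H_X$. Adding the fixed element $\widehat{o}_i \coloneqq o - o_i \in \widehat{X}_i$ to both sides yields $o = (h_X + \widehat{o}_i) + (1/n)\gamma_<$; since $H_X + \widehat{X}_i = H$ by Lemma \ref{lem: decomposed hyperplanes}, the first summand lies in $H$, and $\gamma \in \Gamma_i \leqslant \Gamma$ provides the required lattice witness.

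I do not expect any serious obstacle: both directions are bookkeeping arguments leveraging the direct sum structure of the decomposition and the finite index relationship between $\Gamma$ and $\Gamma_1 + \cdots + \Gamma_m$. The only mildly subtle point is keeping track of which lattice the witnesses live in; in the forward direction one must rescale by the index $N$, which is harmless because weak homogeneity allows arbitrary natural-number denominators. Parallel arguments (in fact, slightly simpler ones, avoiding the factor of $N$) give the corresponding equivalence for the \emph{homogeneous} condition when $\Gamma = \Gamma_1 + \cdots + \Gamma_m$.
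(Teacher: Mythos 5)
Your proof is correct and takes essentially the same route as the paper's: both directions rely on Lemma~\ref{lem: decomposed hyperplanes} to split hyperplanes as $H = H_X + \widehat{X}_H$, decompose the lattice witness via the finite-index sandwich $\Gamma \leqslant \frac{1}{N}\Gamma_1 + \cdots + \frac{1}{N}\Gamma_m$, and discard the components projecting into $\widehat{X}_i$. The only cosmetic difference is that the paper normalizes by translating $W$ so the origin $o$ is $0$, whereas you carry $o = o_1 + \cdots + o_m$ explicitly; this is harmless since weak homogeneity is translation-invariant, a point both you and the paper note.
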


\begin{proof}
First, assume that $\mathcal{S}$ is weakly homogeneous. We take $o$ as the origin in $\intl$ (which we can do without loss of generality, since homogeneity is independent of translation of $W$). Hence, for each $H \in \sH$, we have that $V = V(H) = H + (1/n_H) \cdot (\gamma_H)_<$ for some $\gamma_H \in \Gamma$. By assumption, we have that $\Gamma \leqslant \frac{1}{N} \Gamma_1 + \cdots + \frac{1}{N}\Gamma_m$ for some $N \in \N$, so that $\gamma_H = \frac{\gamma_1}{N} + \cdots + \frac{\gamma_m}{N}$ for $\gamma_i \in \Gamma_i$. Given $X_H = X_i$, by Lemma \ref{lem: decomposed hyperplanes},
\[
V_X + \widehat{X}_i = H_X + \widehat{X}_i + \left(\frac{\gamma_1}{N\cdot n_H}\right)_< + \cdots + \left(\frac{\gamma_m}{N \cdot n_H}\right)_< = H_X + \widehat{X}_i + \left(\frac{\gamma_i}{N \cdot n_H}\right)_<.
\]
The final equality above follows from the fact that $(\gamma_j)_< \in \widehat{X}_i$, for $j \neq i$. It follows that $H_X + (\gamma_i/(N \cdot n_H))_<$ contains the origin, since $\widehat{X}_i$ is complementary to $X_i \supseteq H_X$. Since $H \in \sH$ was arbitrary, we see that all $H_X \in \sH_i$ may be translated over the origin, by some element $(\gamma_i/N \cdot n_H)$, so each subsystem is weakly homogeneous.

Conversely, suppose that each subsystem is weakly homogeneous and let $H \in \sH$, with $H_X \in \sH_i$ for some $i$. Again, we assume that the origin $o$ is taken as the origin for each $W_i$ (and we can translate each $W_i$ independently, since whichever translates we choose their sum will still be a translate of $W$). There exists some $\gamma \in \Gamma_i$ and $n \in \N$ with $H_X + (\gamma/n)_<$ containing the origin. Then $H + (\gamma/n)_< = H_X + \widehat{X}_i + (\gamma/n)_<$ contains the origin. Since $H \in \sH$ was arbitrary, we see that $\mathcal{S}$ is weakly homogeneous.
\end{proof}

If one assumes that each subsystem $(X_i,\Gamma_i,W_i)$ is strictly homogeneous, then it is easy to see in the above proof that $\mathcal{S}$ is homogeneous. Conversely, if $\mathcal{S}$ is homogeneous (so we may take each $n_H = 1$), then we use the lattice elements $\gamma_i/N \in \frac{1}{N}\Gamma_i$ in the above proof. Hence, the subsystems $(X_i,\frac{1}{N}\Gamma_i,W_i)$ are homogeneous. There is little utility in saying more since, even assuming {\bf C}, we cannot assume in general that $N=1$ i.e., that $\Gamma_1 + \cdots + \Gamma_m = \Gamma$.

Recall from Section \ref{sec: subsystems} that we still have notions of cut regions for subsystems. We set their notation here:

\begin{notation} \label{not: subsystem cut regions}
For each $i = 1$, \ldots, $m$ and $r > 0$ we define the $r$-cut regions $\sC_i(r)$ of the systems $(X_i,\frac{1}{N}\Gamma_i,W_i)$ analogously to the usual cut regions (note here that we use the lattices $\frac{1}{N}\Gamma_i$, whose sum contains $\Gamma$). In more detail, the $r$-cut regions $\sC_i(r)$ are defined as the connected components of $W_i$ after removing all translates $H + \frac{\gamma_<}{N}$, where $H \in \sH_i$ and $\gamma \in \Gamma_i(r)$ (recall that $\Gamma_i(r) \coloneqq B_r(0) \cap \Gamma_i$).
\end{notation}

If it were the case that $\Gamma_1 + \cdots + \Gamma_m = \Gamma$ (rather than just being a finite index subgroup) and $N=1$, then hyperplanes $H \in \sH$ would be aligned to all but one factor $X_i$ of the decomposition. So it is easy to see that in that case the $r$-cut regions for $W$ would correspond to products of $r$-cut regions in each $W_i$ for the subsystems. In general, we can not assume that $N = 1$; however, in the next lemma we see that products of cut regions refine those of the whole window, allowing us to work with the finer lattices $\frac{1}{N}\Gamma_i$ for the cut regions. 

\begin{lemma}\label{lem: cut region products}
There is some $\lambda > 0$ satisfying the following. For each $C \in \sC(r)$, there are $C_i \in \sC_i(\lambda r)$ with $C_1 + C_2 + \cdots + C_m \subseteq C$.
\end{lemma}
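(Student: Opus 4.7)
The plan is to exploit the factorization of $\Gamma$ to write each translated hyperplane appearing in the definition of $\sC(r)$ as a ``cylindrical'' affine set parallel to some $\widehat{X}_i$, and then to recognise cut regions of $W$ as products of cut regions in the subsystems $W_i$. Under the identification $W \cong W_1 \times \cdots \times W_m$ induced by Minkowski summation (which is a homeomorphism since the $X_i$ are complementary, by Theorem \ref{thm: sum decomposition}), the complement of such cylinders is a product of subsets of the $W_i$, whose connected components factor accordingly.

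The key technical step is to decompose every $\gamma \in \Gamma(r)$ uniquely as $\gamma = (\gamma_1' + \cdots + \gamma_m')/N$ with $\gamma_j' \in \Gamma_j$ and $\|\gamma_j'\| \leq \lambda r$ for a constant $\lambda > 0$ independent of $\gamma$. Given this bound, Lemma \ref{lem: decomposed hyperplanes} together with a short calculation using the direct sum decomposition of $\gamma_<$ yields, for $H \in \sH$ with $X_H = X_i$,
\begin{equation*}
H + \gamma_< = (H_X + (\gamma_i')_</N) + \widehat{X}_i,
\end{equation*}
and the shift $H_X + (\gamma_i')_</N$ is exactly a cut appearing in the subsystem cut pattern $\sC_i(\lambda r)$ of Notation \ref{not: subsystem cut regions}, since $\gamma_i' \in \Gamma_i(\lambda r)$. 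Hence each $C \in \sC(r)$ corresponds under the product identification to $C_1' \times \cdots \times C_m'$, where each $C_i'$ is a connected component of $W_i$ minus a subfamily of the cuts defining $\sC_i(\lambda r)$, and therefore a union of elements of $\sC_i(\lambda r)$. Choosing any $C_i \in \sC_i(\lambda r)$ with $C_i \subseteq C_i'$ yields $C_1 + \cdots + C_m \subseteq C_1' + \cdots + C_m' = C$.

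The main obstacle is producing the bounded decomposition of $\gamma$. For this, I would establish $\tot = Y_1 \oplus \cdots \oplus Y_m$ as an internal direct sum, where $Y_j \coloneqq \langle \Gamma_j \rangle_\R$. Because $(\Gamma_j)_<$ is dense in $X_j$ and $\pi_<$ is injective on $\Gamma_j$, each $Y_j$ has dimension $k_j$; the sum $Y_1 + \cdots + Y_m$ spans $\tot$ since $\Gamma_1 + \cdots + \Gamma_m$ is finite index in $\Gamma$ (Definition \ref{def: subsystems}), and the equality $\sum k_j = k$ then forces the sum to be direct. The projection maps $\pi_j \colon \tot \to Y_j$ are continuous linear operators, and applying them to $N\gamma \in \Gamma_1 \oplus \cdots \oplus \Gamma_m$ recovers the required components $\gamma_j'$, satisfying $\|\gamma_j'\| \leq N \|\pi_j\| \cdot r$, so $\lambda \coloneqq N \max_j \|\pi_j\|$ suffices. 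The remaining claim that connected components of a complement of a cylinder union in a Euclidean product factor as products of connected components in each factor is routine.
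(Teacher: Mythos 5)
Your proof is correct and follows essentially the same route as the paper's: decompose $\gamma_Z \in \Gamma(r)$ via the finite-index inclusion $\Gamma \leqslant \frac{1}{N}(\Gamma_1 + \cdots + \Gamma_m)$, bound the component norms using the direct sum decomposition of $\tot$ into the linear spans $Y_j = \langle \Gamma_j \rangle_\R$, apply Lemma \ref{lem: decomposed hyperplanes} to express each translated supporting hyperplane as a cylinder over a cut in the $i$-th factor, and conclude that $C$ is a Minkowski sum of sets each containing an element of $\sC_i(\lambda r)$. The paper states the boundedness of the $\gamma_i$-components more tersely (introducing a constant $\nu$ via the direct-sum decomposition) whereas you spell out the projection operators $\pi_j$, but this is the same idea; your closing observation that $C_i'$ is cut by a subfamily of the hyperplanes defining $\sC_i(\lambda r)$ and hence contains some cut region is also the same finishing step the paper uses.
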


\begin{proof}
Since $\Gamma_1 + \cdots + \Gamma_m$ is finite index in $\Gamma$, there exists some $\nu > 0$ so that whenever $\|\gamma_1 + \cdots + \gamma_m\| \leq r$, for $\gamma_i \in \Gamma_i$, then each $\|\gamma_i\| \leq \nu r$ (indeed, note that the linear spans of the $\Gamma_i$ give a direct sum decomposition of $\tot$).

Let $\sH^\pm$ denote the set of open half spaces $(H^+)^\circ$ and their opposites $(H^+)^c$ for $H^+ \in \sH^\pm$, and similarly for $\sH_i^\pm$. We may alternatively express $C$ as an intersection of half spaces
\[
C = \bigcap_{Z \in \sH^\pm} Z + (\gamma_Z)_<,
\]
where each $\gamma_Z \in \Gamma(r)$. By Lemma \ref{lem: decomposed hyperplanes}, we may write each $Z \in \sH_i^\pm$ as $Z = Z_X + \widehat{X}_i$, where $Z_X \in \sH_i^\pm$. Moreover, we may write each $\gamma_Z \in \Gamma(r)$ as
\[
\gamma_Z = \frac{\gamma_1}{N} + \frac{\gamma_2}{N} + \cdots + \frac{\gamma_m}{N},
\]
where $\gamma_i \in \Gamma_i$. In fact, each $\gamma_i \in \Gamma_i(N \nu r)$, since $\|\gamma_1 + \cdots + \gamma_m\| = \|N \cdot \gamma_Z\| \leq Nr$. Hence, we may rewrite the above intersection defining $C$ as
\[
C = \bigcap_{i=1}^m \bigcap_{Z_X \in \sH_i^\pm} (Z_X + \widehat{X}_i) + (\gamma_Z)_< = \bigcap_{i=1}^m \bigcap_{Z_X \in \sH_i^\pm} Z_X + \widehat{X}_i + \left(\frac{\gamma_i}{N}\right)_<,
\]
since each $(\gamma_j)_< \in \widehat{X}_i$, for $j \neq i$. Since each $X_i$ is complementary to $\widehat{X}_i$ this is equivalent to
\[
C = C_1 + C_2 + \cdots + C_m, \ \text{ where } \ C_i = \bigcap_{Z_X \in \sH_i^\pm} Z_X + \left(\frac{\gamma_i}{N}\right)_<.
\]
Each $C_i$ is an intersection of half spaces, or their opposites, from $\sH_i^\pm$, and each $\gamma_i \in \Gamma_i(N \nu r)$. It follows that each $C_i$ is either a cut region of $\sC_i(N \nu r)$, or contains such a cut region. By taking such a cut region for each $C_i$, and letting $\lambda = N \nu$, the result follows.
\end{proof}

The above shows that if each $r$-cut region in every subsystem is large, then all cut regions in the whole system are large. We will show later that this happens if we assume {\bf C} and {\bf D} for our subsystems. Low complexity is inherited from the subsystems, using the generalised complexity result of Theorem \ref{thm: generalised complexity}:

\begin{proposition} \label{prop: complexity of subsystems}
For each $i = 1$, \ldots, $m$, let $\alpha_i$ be the complexity exponent for each subsystem, calculated as stated in Theorem \ref{thm: generalised complexity}. Then the complexity exponent of the whole system is given by $\alpha = \sum_{i=1}^m \alpha_i$. Moreover, $\mathcal{S}$ satisfies {\bf C} if and only if each subsystem satisfies $\alpha_i=d_i$. 
\end{proposition}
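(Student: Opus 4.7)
The plan is to reduce the statement to Theorem \ref{thm: generalised complexity} applied in parallel across the subsystems. The key bijection is between flags of $\sH_0$ and tuples of flags $(f_1,\dots,f_m)$, one per subsystem. Given a flag $f\subseteq\sH_0$ and the copartition $\{A_i\}$ associated to the decomposition, Proposition \ref{prop: decomp flag sizes} tells us that $\#(f\cap A_i)=n-n_i$, so exactly $n_i$ elements of $f$ have $X_H=X_i$; call this subset $f_i$. Using Lemma \ref{lem: decomposed hyperplanes} to write $V(H)=V_X+\widehat{X}_i$ for $H\in f_i$, one checks that $\bigcap_{H\in f}V(H)=\bigoplus_i\bigcap_{H\in f_i}V_X(H)$, so $f$ is a flag in $\sH_0$ if and only if each $\{V_X(H):H\in f_i\}$ is a flag in $X_i$.

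Next, I would establish the two local formulas for $H\in f_i$ relating the whole-system invariants to those of subsystem $i$. Using the finite-index inclusion $\Gamma_1+\cdots+\Gamma_m\leqslant\Gamma$ from Theorem \ref{thm: finite index splitting} (applied once {\bf C} is in force; in general we can use the inclusion $\Gamma\leqslant\tfrac{1}{N}\Gamma_1+\cdots+\tfrac{1}{N}\Gamma_m$, noting ranks are unaffected), the fact that $X_j\subseteq V(H)$ for $j\neq i$ gives
\[
\rk(H)=\rk_i(H)+\sum_{j\neq i}k_j=\rk_i(H)+(k-k_i),
\]
\[
\beta_H=\dim\langle\Gamma^H_<\rangle_\R=\beta^{(i)}_H+\sum_{j\neq i}n_j=\beta^{(i)}_H+(n-n_i),
\]
where $\beta^{(i)}_H=\dim\langle(\Gamma_i^H)_<\rangle_\R$. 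Substituting into $\alpha_f=\sum_{H\in f}(d-\rk(H)+\beta_H)$ and using $d-(k-k_i)+(n-n_i)=d_i$, the sum splits as
\[
\alpha_f=\sum_{i=1}^m\sum_{H\in f_i}\bigl(d_i-\rk_i(H)+\beta^{(i)}_H\bigr)=\sum_{i=1}^m\alpha_{f_i}.
\]
Taking the maximum over flags and using the bijection above yields $\alpha=\sum_i\alpha_i$.

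For the second assertion, $d=\sum d_i$ since $k=\sum k_i$ (finite-index splitting) and $n=\sum n_i$ (direct sum decomposition of $\intl$). Corollary \ref{cor: minimal complexity}, applied to each subsystem as an aperiodic polytopal scheme, gives $\alpha_i\geq d_i$. Therefore $\alpha=\sum\alpha_i\geq\sum d_i=d$ with equality if and only if each $\alpha_i=d_i$, which is property {\bf C} for $\mathcal{S}$.

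The only genuinely delicate step is the bookkeeping in the first two displayed formulas, specifically verifying that the finite-index splitting of $\Gamma$ really does let us compute $\rk(H)$ and $\beta_H$ as claimed; the rest is just arithmetic on the formula from Theorem \ref{thm: generalised complexity}. The obstacle is largely notational rather than conceptual, hinging on keeping straight which projections land in which $X_i$ and ensuring that replacing $\Gamma$ by $\bigoplus\frac{1}{N}\Gamma_i$ does not alter ranks or spans of the relevant subgroups.
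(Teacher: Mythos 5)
Your proof is correct and follows essentially the same route as the paper: establish $\rk(H)=\rk_i(H_X)+(k-k_i)$ and $\beta_H=\beta^{(i)}_{H_X}+(n-n_i)$ from the finite-index splitting, verify the bijection between flags of $\sH$ and tuples of flags in the $\sH_i$, substitute into the formula of Theorem \ref{thm: generalised complexity}, and finish the biconditional with Corollary \ref{cor: minimal complexity}. The only cosmetic difference is that you lead with the flag bijection (via Proposition \ref{prop: decomp flag sizes}) whereas the paper leads with the rank/dimension identities and then checks the bijection by a direct intersection argument; and note that the parenthetical about "$\Gamma\leqslant\tfrac{1}{N}\Gamma_1+\cdots+\tfrac{1}{N}\Gamma_m$ in general" is not really an alternative to the finite-index hypothesis — it is equivalent to it — and in the paper that hypothesis is simply taken as a standing assumption throughout Section \ref{sec: accs and cuts in subsystems}, precisely to avoid assuming {\bf C} in advance.
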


\begin{proof}
Fix some $i = 1$, \ldots, $m$. We wish to compute $\rk(H_X)$ and $\beta_{H_X}$ for each $H_X \in \sH_i$. Firstly, we have that $\Gamma^{H_X} + \widehat{\Gamma}_i$ is a finite index subgroup of $\Gamma^H$, where $\widehat{\Gamma}_i$ is the sum of $\Gamma_j$ with $j \neq i$. To see this, first note that by assumption $\Gamma_i + \widehat{\Gamma}_i$ is finite index in $\Gamma$, so that $\rk(H) = \rk(\Gamma \cap V) = \rk((\Gamma_i+\widehat{\Gamma}_i))_< \cap V)$. By Lemma \ref{lem: decomposed hyperplanes}, $V = V(H) = V_X + \widehat{X}_i$. Since $V_X \supseteq (\Gamma_i)_<$ and $\widehat{X}_i \supseteq (\widehat{\Gamma}_i)_<$ give a sum decomposition of $V$, we have that
\[
(\Gamma_i + \widehat{\Gamma}_i)_< \cap V = ((\Gamma_i)_< \cap V_X) + ((\widehat{\Gamma}_i)_< \cap \widehat{X}_i) = (\Gamma_i^{H_X})_< + (\widehat{\Gamma}_i)_<.
\]
The last sum is of two complementary subgroups, so we see that
\begin{equation} \label{eq: rk}
\rk(H) = \rk_i(H_X) + \rk(\widehat{\Gamma}_i) = \rk_i(H_X) + (k-k_i), \ \text{ that is } \ \rk_i(H_X) = \rk(H) - (k-k_i).
\end{equation}
To determine $\dim(\langle\Gamma^{H_X}\rangle_\R)$, we note by the same reasoning as above,
\[
\langle \Gamma^H_< \rangle_\R = \langle \Gamma^{H_X}_< \rangle_\R + \widehat{X}_i,
\]
since $(\widehat{\Gamma}_i)_<$ is dense in $\widehat{X}_i$. This is a direct sum decomposition, so
\begin{equation} \label{eq: beta}
\beta_H = \beta_{H_X} + (n-n_i), \ \text{ that is } \ \beta_{H_X} = \beta_H - (n-n_i).
\end{equation}

Using Equations \ref{eq: rk} and \ref{eq: beta}, for a flag $f_i \subseteq \sH_i$ we calculate
\begin{align} \label{eq: complexity subsystem}
\alpha_{f_i} = & \sum_{H_X \in f_i} d_i - \rk(H_X) + \beta_{H_X} \\
&= \sum_{H_X \in f_i} (k_i-n_i) - (\rk(H)-(k-k_i)) + (\beta_{H}-(n-n_i))  \\
 &= \sum_{H_X \in f_i} d - \rk(H) + \beta_{H}. \nonumber
\end{align}
In other words, we get the same sum for the flag $f_i$ as one does by considering it as a subset of a flag in the whole system, replacing each $H_X \in \sH_i$ with $H \in \sH$. It is not hard to see that choosing a flag $f \subseteq \sH$ is equivalent to choosing flags $f_i \subseteq \sH_i$, one for each $i=1$, \ldots, $m$. Indeed, after repositioning the hyperplanes over the origin,
\[
\bigcap_{V \in f} V = \bigcap_{i=1}^m \bigcap_{V_X \in f_i} (V_X + \widehat{X}_i) = \bigcap_{i=1}^m Z_i + \widehat{X}_i, \ \ \text{ where } Z_i = \bigcap_{V_X \in f_i} V_X.
\]
Each $Z_i$ is trivial if and only if each $f_i$ is a flag. In this case, the above intersection is of the $\widehat{X}_i$, which is trivial, so $f$ is a flag. Conversely, if some $f_i$ is not a flag, then $Z_i$ is non-trivial. Since $Z_i \subseteq \widehat{X}_j$ for all $j \neq i$, we see that the intersection of hyperplanes is non-trivial, so $f$ is not a flag.

It follows that
\[
\alpha = \max_{f \in \sF} \alpha_f = \max_{f \in \sF} \left(\sum_{i=1}^m \alpha_{f_i}\right) = \sum_{i=1}^m \max_{f_i \in \sF_i} \alpha_{f_i} = \sum_{i=1}^m \alpha_i,
\]
where $\sF$ is the set of flags of $\sH$, $\sF_i$ is the set of flags of $\sH_i$ and, for the second equality above, given a flag $f$ we let $f_i$ be the associated flag in $\sH_i$. Since each subsystem can be regarded as an individual cut and project scheme, Corollary \ref{cor: minimal complexity} still applies and each $\alpha_{f_i} \geq d_i$. It follows that $\alpha = d$ if and only if each $\alpha_i = d_i$, since $d = d_1 + \cdots + d_m$.
\end{proof}

For minimal complexity schemes, when the subsystems have constant stabiliser ranks $r_i$ then we may determine these ranks from only the ranks of the $\Gamma_i$ and dimensions of the $X_i$:

\begin{proposition} \label{prop: rank formula}
Suppose that $\mathcal{S}$ has property {\bf C} and is equipped with a decomposition with constant stabiliser ranks. Then each such rank is given by
\[
r_i = k_i - \delta_i - 1, \ \text{ where } \delta_i \coloneqq \frac{d_i}{n_i}.
\]
\end{proposition}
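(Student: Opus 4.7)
The plan is to apply the generalised complexity formula (Theorem \ref{thm: generalised complexity}) to each individual subsystem, using property {\bf C} of the full scheme together with the constant stabiliser rank assumption to pin down $r_i$.

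First I would observe that Proposition \ref{prop: complexity of subsystems} already gives that each subsystem has complexity exponent $\alpha_i = d_i$, since $\mathcal{S}$ satisfies {\bf C}. Next, since {\bf C} implies hyperplane spanning for $\mathcal{S}$ by Theorem \ref{thm: C => hyperplane spanning}, and hyperplane spanning is inherited by the subsystems (the first lemma of Section \ref{sec: accs and cuts in subsystems}), for every $H_X \in \sH_i$ we have $\beta_{H_X} = n_i - 1$. By the constant stabiliser rank assumption, $\rk_i(H_X) = r_i$ for every $H_X \in \sH_i$.

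Then I would apply Theorem \ref{thm: generalised complexity} to the subsystem $(X_i, \Gamma_i, W_i)$, regarded as a cut and project scheme in its own right. Any flag $f_i \subseteq \sH_i$ consists of exactly $n_i$ hyperplanes (flags in an $n_i$-dimensional internal space), and each contributes the same quantity $d_i - r_i + (n_i-1) = k_i - r_i - 1$ to $\alpha_{f_i}$. Hence every flag gives
\[
\alpha_{f_i} = n_i\bigl(k_i - r_i - 1\bigr),
\]
so this common value equals $\alpha_i = d_i$. Solving $n_i(k_i - r_i - 1) = d_i$ for $r_i$ yields $r_i = k_i - d_i/n_i - 1 = k_i - \delta_i - 1$, as required.

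There is essentially no obstacle here: the whole point of assuming constant stabiliser ranks is that the flag sum in Theorem \ref{thm: generalised complexity} collapses to $n_i$ times a single value, which then lets us read off $r_i$ from the already known complexity exponent $\alpha_i = d_i$. The only subtlety worth checking is that the flags relevant for the subsystem are genuinely subsystem-internal (i.e. $n_i$-element sets whose subspaces in $X_i$ meet at a point), but this is immediate from the correspondence between flags of $\sH$ and tuples of flags in the $\sH_i$ observed in the proof of Proposition \ref{prop: complexity of subsystems}.
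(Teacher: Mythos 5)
Your proof is correct and follows essentially the same route as the paper: both derive $\alpha_i = d_i$ from Proposition \ref{prop: complexity of subsystems}, invoke hyperplane spanning via Theorem \ref{thm: C => hyperplane spanning}, apply Theorem \ref{thm: generalised complexity} to a subsystem flag of $n_i$ hyperplanes, and solve $n_i(k_i - r_i - 1) = d_i$ for $r_i$. The only cosmetic difference is that you deduce hyperplane spanning of the subsystems via the inheritance lemma, whereas the paper applies Theorem \ref{thm: C => hyperplane spanning} directly to the subsystem; both are valid one-line variants of the same step.
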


\begin{proof}
By the above Proposition \ref{prop: complexity of subsystems}, each subsystem has property {\bf C}, and is also hyperplane spanning by Theorem \ref{thm: C => hyperplane spanning}. By Theorem \ref{thm: generalised complexity}, we see that
\[
\sum_{V \in f_i} (d_i - r_i +(n_i-1)) = \sum_{V \in f_i} (k_i - r_i - 1) = d_i,
\]
for each flag $f_i \subseteq \sH_i$. Since each $r_i$ is constant, and there are $n_i$ elements in a flag, we deduce
\[
n_i(k_i-r_i-1) = d_i \ \text{ that is, } \ r_i = k_i-\delta_i-1.
\]
\end{proof}

Notice that if $\mathcal{S}$ has property {\bf C} and already has constant stabiliser ranks, then by the above this rank is given by $k-\delta-1 \in \N$, and hence $n$ must divide $d$ (see also \cite[Theorem 6.7]{FHK02}). This is the case, for example, when $\mathcal S$ is indecomposable. 

We saw in Lemma \ref{lem: cut region products} that products of cut regions for the subsystems $(X_i,\frac{1}{N} \Gamma_i,W_i)$ refine the cut regions of the original scheme. In particular, if each such subsystem has large cut regions, then so does the whole system. We now consider the acceptance domains. For proving that {\bf LR} implies {\bf C} and {\bf D}, we will show that if some subsystem has a small acceptance domain (in terms of its volume) then so does the whole scheme. For this reason, for defining the acceptance domains of each subsystem, we use the subsystems $(X_i,\Gamma_i,W_i)$ (that is, using $\Gamma_i$ rather than $\frac{1}{N}\Gamma_i$), whose products of acceptance domains contain those of the whole system:

\begin{notation} \label{not: subsystem acceptance domains}
We define the $r$-acceptance domains for a subsystem $(X_i,\Gamma_i,W_i)$ in the usual way. That is, (for sufficiently large $r$) the $r$-acceptance domain $A \subseteq W_i$ containing $x \in W_i$ (for $x \notin \partial W_i + (\Gamma_i)_<$) is the intersection of $(\Gamma_i(r))_<$ translates of $W_i^\circ$ and $W_i^c$ which contain $x$ (see \cite[Corollary 3.2]{I}). The finite set of possible $r$-acceptance domains is denoted $\sA_i(r)$.
\end{notation}

\begin{lemma} \label{lem: acceptance domain products}
There is some $c > 0$ so that, for sufficiently large $r$, given $A_i \in \sA_i(r)$ for each $i=1$, \ldots, $m$, there exists $A \in \sA(r+c)$ for which $A \subseteq A_1 + A_2 + \cdots + A_m$.
\end{lemma}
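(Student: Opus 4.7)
The plan is to fix a non-singular point $x = x_1 + \cdots + x_m$ in $A_1 + \cdots + A_m$ (with $x_i \in A_i \subseteq X_i$ under the direct sum $\intl = X_1 + \cdots + X_m$ from Corollary \ref{cor: decompose}), let $A$ be the full-scheme $(r+c)$-acceptance domain containing $x$ (for any fixed $c > 0$), and show $A \subseteq A_1 + \cdots + A_m$. Such a non-singular $x$ exists because each $A_i$ is open in $X_i$ as a finite intersection of open half-spaces containing $x_i$, so the Minkowski sum is an open subset of $\intl$, while the singular set $\partial W + \Gamma_<$ is a countable union of affine codimension-$1$ hyperplanes with empty interior.

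The crucial geometric observation is that for $\gamma \in \Gamma_i$ the projection $\gamma_<$ lies in $X_i$, so under the decomposition $W = W_1 + \cdots + W_m$, the shifted window is $W + \gamma_< = W_1 + \cdots + (W_i + \gamma_<) + \cdots + W_m$. Since $W^\circ = W_1^\circ + \cdots + W_m^\circ$ (with interiors taken in the respective $X_j$) by the direct sum structure, any $z = z_1 + \cdots + z_m$ with $z_j \in W_j^\circ$ for all $j$ satisfies $z \in W^\circ + \gamma_<$ if and only if $z_i \in W_i^\circ + \gamma_<$ in $X_i$, and analogously for $W^c + \gamma_<$ versus $W_i^c + \gamma_<$ (complements in $\intl$ and $X_i$ respectively).

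To conclude, let $y \in A$ decompose uniquely as $y = y_1 + \cdots + y_m$. The $\gamma = 0$ cut, always active in the defining intersection of $A$, gives $x, y \in W^\circ$, so every component lies in the corresponding $W_j^\circ$. For each $\gamma \in \Gamma_i(r) \subseteq \Gamma(r+c)$, the points $x$ and $y$ share the full-scheme $(r+c)$-acceptance domain, so they agree on the cut by $\gamma_<$; the geometric observation then forces $x_i$ and $y_i$ to agree on the corresponding subsystem cut in $X_i$. As this holds for every $\gamma \in \Gamma_i(r)$, the point $y_i$ lies in the same subsystem $r$-acceptance domain as $x_i$, namely $A_i$, so $y \in A_1 + \cdots + A_m$. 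The main subtlety is keeping careful track of interiors and complements in $\intl$ versus the factors $X_i$; this works out cleanly because of the direct sum structure combined with the non-singularity of $x$, and the constant $c$ plays no essential role beyond matching the statement.
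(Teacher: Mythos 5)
Your proof is correct, and it takes a somewhat different route from the paper's. The paper works at the level of sets: it writes each $A_i$ explicitly as an intersection of $\Gamma_i(r)_<$-translates of $W_i^\circ$ and $W_i^c$, establishes the identity $(A_i + \widehat{X}_i) \cap W^\circ = \bigl(\bigcap_{\gamma \in P_i}(W^\circ + \gamma_<)\bigr) \cap \bigl(\bigcap_{\gamma \in Q_i}(W^c + \gamma_<)\bigr) \cap W^\circ$, and deduces that $A_1 + \cdots + A_m$ is itself an intersection of $\Gamma(r)_<$-translates of $W^\circ$ and $W^c$; since any $A \in \sA(r)$ is a minimal such intersection, one must sit inside $A_1 + \cdots + A_m$. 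Your argument instead picks a non-singular point $x \in A_1 + \cdots + A_m$ (which exists, as you correctly note, because the Minkowski sum of relatively open sets is open while the singular locus is a countable union of hyperplanes), takes the acceptance domain $A$ through $x$, and chases components: for $y \in A$, each $\gamma \in \Gamma_i(r)$ cuts both points the same way in $\intl$, and since $\gamma_<$ lies in $X_i$ this translates to $x_i$ and $y_i$ agreeing on the cut in $X_i$ — for the $W^c$ side one needs the observation (which you gesture at and which does hold) that because $y_j \in W_j^\circ$ for $j \neq i$, membership in $W^c + \gamma_<$ is equivalent to $y_i \in W_i^c + \gamma_<$. Both arguments hinge on the same geometric fact — the direct sum decomposition $W = W_1 + \cdots + W_m$ and that $\Gamma_i$-translates only move the $i$th factor — but your version is more elementary and element-wise, and makes it transparent that the slack constant $c$ is unnecessary (the paper's own proof in fact produces $A \in \sA(r)$ as well). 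One small imprecision: $A_i$ is not literally a finite intersection of open half-spaces, since translated copies of $W_i^c$ are unions of open half-spaces; but it is still a finite intersection of open sets, which is all you need for openness.
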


\begin{proof}
Write each $A_i$ as an intersection
\[
A_i = \left(\bigcap_{\gamma \in P_i} (W_i^\circ + \gamma_<)\right) \cap \left(\bigcap_{\gamma \in Q_i} (W_i^c+\gamma_<) \right)
\]
for subsets $P_i, Q_i \subseteq \Gamma_i(r)$, where $A_i \subseteq W_i^\circ$ (as $0 \in P_i$). We now simply translate $W^\circ$ and $W^c$ by the same lattice elements and take the corresponding intersection. Since each $\gamma \in \Gamma_i \subseteq X_i$, it is not hard to see that
\begin{equation}\label{eq: product acceptance domains}
(A_i + \widehat{X}_i) \cap W^\circ = \left(\bigcap_{\gamma \in P_i} (W^\circ + \gamma_<)\right) \cap \left(\bigcap_{\gamma \in Q_i} (W^c+\gamma_<) \right) \cap W^\circ.
\end{equation}
Indeed, an element of $W^\circ$ belongs to either intersection precisely when its projection to $X_i$ is in $A_i$, since each $\gamma_< \in X_i$.

We have that
\[
A_1 + \cdots + A_m = \bigcap_{i=1}^m (A_i + \widehat{X}_i) = \bigcap_{i=1}^m ((A_i + \widehat{X}_i) \cap W^\circ),
\]
with the last equality coming from the fact that each $A_i \subseteq W_i^\circ$, so that any element in $A_1 + \cdots + A_m$ must be in $W^\circ$. By Equation \ref{eq: product acceptance domains}, the right-hand intersection is an intersection of translates of $W^\circ$ or $W^c$ by elements $\gamma_<$ for $\gamma \in \Gamma_i(r) \leqslant \Gamma(r)$. Each $A \in \sA(r)$ is a minimal intersection of such elements, possibly more, so there exists some $A \subseteq A_1 + \cdots + A_m$, as required.
\end{proof}

If we have that $\Gamma = \Gamma_1 + \cdots + \Gamma_m$, then something may be said about the the converse result of the above lemma, of finding a product of acceptance domains $A_1 + \cdots + A_m$ contained in some given $A \in \sA(r)$. However, the precise statement is slightly technical and will not be needed, so we omit it here. The implication of the above lemma that will be used in the proof of our main theorem is the following:

\begin{corollary} \label{cor: small volume}
Suppose that for some subsystem $(X_i,\Gamma_i,W_i)$, and any given $\epsilon > 0$, we may find some $r > 0$ and some acceptance domain $A_i(\epsilon) \in \sA_i(r)$ with volume $|A_i(\epsilon)| < \epsilon/r^{\alpha_i}$, where $\alpha_i$ is the complexity exponent for this subsystem. Then, for every $\epsilon > 0$, we may find an acceptance domain $A(\epsilon) \in \sA(r)$ with volume $|A(\epsilon)| < \epsilon/r^\alpha$, where $\alpha$ is the complexity exponent of $\mathcal{S}$.
\end{corollary}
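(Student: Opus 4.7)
The idea is to combine the given small acceptance domain $A_i(\epsilon) \in \sA_i(r)$ with a ``typical-sized'' acceptance domain from each other subsystem, and then invoke Lemma~\ref{lem: acceptance domain products} to package these into a small acceptance domain of the whole scheme.

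First, I would use a pigeonhole argument to produce, for each $j \neq i$, an acceptance domain $A_j^{\ast} \in \sA_j(r)$ with $|A_j^{\ast}| \leq C_j r^{-\alpha_j}$. Each subsystem has complexity exponent $\alpha_j$ by Proposition~\ref{prop: complexity of subsystems}, so Theorem~\ref{thm: generalised complexity}, applied to the subsystem viewed in its own right, delivers a lower bound $|\sA_j(r)| \geq c_j r^{\alpha_j}$ for sufficiently large $r$. Since the $r$-acceptance domains partition $W_j$ up to a measure-zero set,
\[
|W_j| \; = \; \sum_{A \in \sA_j(r)} |A|,
\]
so taking $A_j^{\ast}$ of at most average volume yields $|A_j^{\ast}| \leq |W_j|/|\sA_j(r)| \leq (|W_j|/c_j)\, r^{-\alpha_j} =: C_j r^{-\alpha_j}$.

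With $A_i \coloneqq A_i(\epsilon)$ and $A_j \coloneqq A_j^{\ast}$ for $j \neq i$, Lemma~\ref{lem: acceptance domain products} produces some $A \in \sA(r+c)$ with $A \subseteq A_1 + \cdots + A_m$. Because $X_1,\ldots,X_m$ are complementary subspaces of $\intl$ (Corollary~\ref{cor: decompose}), the Minkowski-sum map $(a_1,\ldots,a_m) \mapsto a_1 + \cdots + a_m$ is a linear isomorphism $X_1 \oplus \cdots \oplus X_m \to \intl$ with a constant Jacobian $J > 0$ independent of $r$ and $\epsilon$; in particular
\[
|A| \; \leq \; |A_1 + \cdots + A_m| \; = \; J \prod_{j=1}^m |A_j| \; \leq \; J \cdot \frac{\epsilon}{r^{\alpha_i}} \cdot \prod_{j \neq i} \frac{C_j}{r^{\alpha_j}} \; = \; \frac{K \epsilon}{r^{\alpha}},
\]
using $\alpha = \sum_j \alpha_j$ from Proposition~\ref{prop: complexity of subsystems}.

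To match the stated conclusion with $R$-acceptance domains, I would set $R \coloneqq r + c$; for sufficiently large $R$ one has $(R-c)^{\alpha} \geq R^{\alpha}/2$, so $|A| \leq 2K\epsilon/R^{\alpha}$. Replacing $\epsilon$ by $\epsilon/(2K)$ at the outset therefore produces, for any prescribed $\epsilon > 0$, an element $A(\epsilon) \in \sA(R)$ with $|A(\epsilon)| < \epsilon/R^{\alpha}$. The only mildly delicate point is the first step: one needs the \emph{lower} bound $|\sA_j(r)| \gg r^{\alpha_j}$ for $j \neq i$, which is not part of property~\textbf{C} (an upper bound) but is supplied by the $\asymp$ in Theorem~\ref{thm: generalised complexity} applied to each subsystem independently.
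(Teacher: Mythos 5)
Your proof follows the paper's argument essentially verbatim: pigeonhole the average-volume acceptance domain in each other subsystem using $\#\sA_j(r) \asymp r^{\alpha_j}$, apply Lemma~\ref{lem: acceptance domain products}, and bound the volume via the constant-Jacobian linear isomorphism from $X_1 \oplus \cdots \oplus X_m$ to $\intl$, together with $\alpha = \sum_j \alpha_j$. You are in fact slightly more careful than the paper in explicitly reparametrising $R = r + c$ to absorb the radius shift from Lemma~\ref{lem: acceptance domain products}; the paper leaves that to the constants.
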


\begin{proof}
For each $j \neq i$, we may find some acceptance domain $A_j \in \sA(r)$ with volume $A_j \ll 1/r^{\alpha_j}$, since $\# \sA_j(r) \asymp r^{\alpha_j}$, and the volumes of the acceptance domains $\sA_j(r)$ sum to the volume of $W_j$. Since $\{X_i\}_{i=1}^m$ is a direct sum decomposition of $\intl$, we may find some $C > 0$ so that for any $\epsilon > 0$ there are acceptance domains with
\[
|A_1 + \cdots + A_i(\epsilon) + \cdots + A_m| \leq C \epsilon \cdot \frac{1}{r^{\alpha_1}} \cdot \frac{1}{r^{\alpha_2}} \cdot \cdots \cdot \frac{1}{r^{\alpha_m} }= \frac{\epsilon}{r^\alpha}.
\]
In the above, we use the fact that $\alpha = \sum \alpha_i$ by Proposition \ref{prop: complexity of subsystems}. By Lemma \ref{lem: acceptance domain products}, there exists some $A \in \sA(r)$ with $A \subseteq A_1 + \cdots + A_m$ so that $|A| \leq C \epsilon / r^\alpha$. Since $C$ does not depend on $\epsilon > 0$, which can be taken arbitrarily small, the result follows.
\end{proof}
\begin{remark}
The notion of a system satisfying \emph{positivity of weights} ({\bf PW}) will be introduced in Section \ref{sec: LR => C and D}. With this terminology, the above theorem implies that if one subsystem of a decomposition does not satisfy {\bf PW}, then $\mathcal{S}$ does not satisfy {\bf PW}.
\end{remark}

\section{The Diophantine condition {\bf D}} \label{sec: diophantine}

Now that we have defined what it means to decompose a cut and project scheme, we can define our Diophantine condition {\bf D}. This will say that the lattice points do not project close to the origin, relative to their norm in the total space, in any subsystem. Equivalently, lattice points which lie close to the physical space $\phy$ are necessarily distant from the origin. Before specialising to cut and project sets, we first establish some basic properties for the abstract notion of a Diophantine lattice.

\subsection{Diophantine lattices}

Let $G$ be a free Abelian group of rank $k$. Call $\eta \colon G \to \R$ a {\bf lattice norm} if there exists an embedding of $G$ as a lattice into a normed vector space (of dimension $k$) so that $\eta$ is the restriction of this norm to $G$. Since norms on finite dimensional vector spaces are equivalent, and any two given lattices are linearly isomorphic, any two lattice norms $\eta_1$ and $\eta_2$ are linearly equivalent, in the sense that there exist constants $A$ and $B$ so that $\eta_1(g) \leq A \eta_2(g)$ and $\eta_2(g) \leq B \eta_1(g)$ for all $g \in G$. It is not too hard to show that a lattice norm is the same as a $\Z$-norm (satisfying the usual properties of a norm, but with $\eta(\lambda \cdot g) = |\lambda| \cdot \eta(g)$ for $g \in G$ and only $\lambda \in \Z$) which gives $G$ the discrete topology or, equivalently, is such that all $g \in G \setminus \{0\}$ have norm $\eta(g) > c$ for some $c > 0$. A standard choice of lattice norm is given by choosing a $\Z$-basis $\{g_i\}_{i=1}^k$ for $G$, and defining $\eta(\sum_{i=1}^k \lambda_i g_i) \coloneqq \max |\lambda_i|$ for $\lambda_i \in \Z$.

\begin{definition} \label{def: diophantine lattice}
Let $G$ be a free Abelian group of rank $k$, given as a dense subgroup $G \leqslant X$ of some $n$-dimensional vector space $X$. Choose any norm on $X$ and any lattice norm $\eta$ on $G$. Then we call $G$ {\bf Diophantine} if there exists some $c > 0$ so that, for all $g \in G \setminus \{0\}$, we have
\[
\|g\| \geq c \cdot \eta(g)^{-\delta}, \ \text{ where } \delta = \frac{k-n}{n}.
\]
\end{definition}

We call $\delta = (k-n)/n = d/n$, for $d \coloneqq k-n$, the {\bf Diophantine exponent}. The Diophantine property depends only on the embedding $G \leqslant X$ up to linear isomorphism, since all norms and all lattice norms are linearly equivalent.

\begin{example} \label{exp: badly approximable}
Let $\alpha \in \R$ be any irrational number. Consider the dense, rank $2$, free Abelian group $G(\alpha) \coloneqq \Z + \alpha \Z \leqslant \R$, where $\R$ has the usual norm $\|x\| \coloneqq |x|$ and $G(\alpha)$ has lattice norm $\eta(m + n\alpha) \coloneqq |m|+|n|$. Then $G(\alpha)$ is Diophantine if for all $(m,n) \in \Z^2 \setminus \{(0,0)\}$ we have
\[
|n\alpha - m| \geq \frac{c}{|m|+|n|}.
\]
We may as well only consider the above for $n\alpha - m$ small (relative to $n$), so taking $m = [n \alpha]$, the nearest integer to $n \alpha$. Since $[n \alpha] \asymp n$ we equivalently need some constant $\tau > 0$ with
\[
d(n \alpha,\Z) \geq \frac{\tau}{n},
\]
for all $n \in \N$, where $d(n \alpha,\Z)$ denotes the distance from $n \alpha$ to the nearest integer. Alternatively,
\[
\left|\alpha-\frac{m}{n}\right| \geq \frac{C}{n^2}
\]
for some constant $C > 0$ and all $m \in \Z$, $n \in \N$. This is the standard notion of a number $\alpha$ being {\bf badly approximable}, which by a classical result of Dirichlet is equivalent to $\alpha$ having continued fraction expansion with bounded entries.

More generally, given vectors $\{v_i\}_{i=1}^d$, for $v_i \in \R^n$, we may consider the group
\[
G(\alpha_1,\ldots,\alpha_d) \coloneqq \langle v_1, v_2, \ldots, v_d \rangle_\Z + \Z^n,
\]
assumed to be rank $k$ and dense in $\R^n$. This is Diophantine if and only if the system of vectors is badly approximable in the usual sense \cite{Cas65}. 
\end{example}

\

In a previous work \cite{HaynKoivWalt2015a} for `cubical' cut and projects, there was a natural choice for an integer reference lattice, so that Diophantine notions such as badly approximable could be understood in terms of that frame of reference. In our current more general setting of polytopal cut and project sets there is no canonical candidate. Instead, given $G \leqslant X$, we may choose subgroups $G'$, $Z \leqslant G$ of ranks $k-n$ and $n$, respectively, so that the `reference lattice' $Z$ spans $X$ and $G = G'+Z$. There is a linear map $A \colon X \to \R^n$, taking $Z$ to $\Z^n$, and $G'$ to some rank $k-n$ subgroup $A(G') = G(\alpha_1,\ldots,\alpha_d)$ for some $\alpha_i \in \R^n$. Since $(G \leqslant X)$ and $(G(\alpha_1,\ldots,\alpha_d) + Z \leqslant \R^n)$ are linearly isomorphic, one is Diophantine if and only if the other is. We see that the above example is essentially the general case. However, the choice of the reference lattice $Z$ is inconsequential to the Diophantine property, so the most elegant solution is to not choose one. 

We note that the Diophantine property is stable under taking finite index super- or subgroups:

\begin{lemma} \label{lem: Diophantine finite index}
Let $G \leqslant K$, where $G$ and $K$ are free Abelian groups of equal rank $k$, densely embedded in some Euclidean space $X$. Then $G$ is Diophantine if and only if $K$ is.
\end{lemma}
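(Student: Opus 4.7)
The plan is to exploit two facts: (i) the Diophantine property is independent of the particular choice of ambient norm on $X$ and lattice norm on the group (up to adjusting the constant $c$), thanks to equivalence of norms on finite-dimensional spaces and equivalence of lattice norms discussed just before the statement; and (ii) since $G \leqslant K$ is a finite index inclusion of free Abelian groups of equal rank $k$, there exists $N \in \N$ with $NK \leqslant G$.

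I would begin by observing that since $G$ and $K$ are both rank $k$ free Abelian groups densely embedded in the same $n$-dimensional space $X$, they share the Diophantine exponent $\delta = (k-n)/n$. Fix once and for all a norm $\|\cdot\|$ on $X$ and a lattice norm $\eta$ on $K$. The restriction $\eta|_G$ is then a lattice norm on $G$, since $G$ is itself a lattice in $X$ (finite-index subgroup of a lattice), so the same data $(\|\cdot\|,\eta)$ can be used to test the Diophantine condition on either group.

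For the implication $K$ Diophantine $\Rightarrow$ $G$ Diophantine, the inequality $\|x\| \geq c \cdot \eta(x)^{-\delta}$ for $x \in K\setminus\{0\}$ specialises immediately to $x \in G\setminus\{0\}\subseteq K\setminus\{0\}$ with the same constant. For the converse, given any $k \in K \setminus \{0\}$, the element $Nk$ lies in $G$ and is nonzero (as $K$ is torsion-free), so the Diophantine hypothesis on $G$ gives
\[
N\|k\| = \|Nk\| \geq c \cdot \eta(Nk)^{-\delta} = c\, N^{-\delta}\, \eta(k)^{-\delta},
\]
whence $\|k\| \geq (cN^{-1-\delta})\,\eta(k)^{-\delta}$, showing $K$ is Diophantine with constant $cN^{-1-\delta}$.

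There is no real obstacle here; the only point requiring any care is to pick the lattice norm on the larger group $K$ first and restrict to $G$, so that one never has to compare two a priori different lattice norms across the inclusion. Once this bookkeeping choice is made, both directions reduce to a one-line computation using $NK \leqslant G$.
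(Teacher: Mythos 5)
Your proof is correct and follows the same route as the paper's: fix a lattice norm on $K$, restrict it to $G$, and use $NK \leqslant G$ to transfer the Diophantine bound from $G$ to $K$, with the easy direction being immediate from the inclusion. (You even get the same constant $c/N^{1+\delta}$; the paper's final displayed line contains a small typo, omitting the exponent $-\delta$ on $\eta(g)$, which your computation has right.)
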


\begin{proof}
Take any lattice norm $\eta$ on $K$. We may take the restriction of $\eta$ as the lattice norm on $G$. Clearly if the larger subgroup $K$ is Diophantine, then so is the smaller group $G$, taking the same Diophantine constant $c$.

For the converse, suppose that $G$ is Diophantine. Since $G$ and $K$ are equal rank, $G$ is finite index in $K$, so there exists some $N \in \N$ with $N \cdot K \leqslant G$. So, given any $g \in K$, we have that
\[
\|g\| = \frac{1}{N}\|N \cdot g\| \geq \frac{1}{N} c \cdot \eta(N \cdot g)^{-\delta} = \left(\frac{c}{N^{1+\delta}}\right) \eta(g),
\]
so $K$ is Diophantine with constant $c/N^{1+\delta}$.
\end{proof}

We have the following simple result, similar to the classical Dirichlet Approximation Theorem, although weaker in the sense that the constant depends on $G$ (a more closely related statement could involve choosing a reference lattice as outlined above serving as the integer lattice $\Z^n \leqslant \R^n$, but we prefer to give the following, simpler statement). It shows that the Diophantine exponent $\delta$ is optimal:

\begin{lemma} \label{lem: Dirichlet}
For $G \leqslant X$ and $\delta$ as in Definition \ref{def: diophantine lattice}, there is a constant $c > 0$ so that
\[
\|g\| \leq c\cdot \eta(g)^{-\delta} \text{ for infinitely many } g \in G.
\]
\end{lemma}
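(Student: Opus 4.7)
The plan is to adapt the classical Dirichlet/Minkowski pigeonhole argument to the abstract setting of a densely embedded free Abelian group. The point is that the $k$ ``degrees of freedom'' in $G$ must be forced to collide when mapped into the lower-dimensional target $X$, and balancing the count against the volume gives exactly the exponent $\delta = d/n$.

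Concretely, I would fix a $\Z$-basis $\{g_1,\ldots,g_k\}$ of $G$ and take $\eta\bigl(\sum \lambda_i g_i\bigr) = \max_i |\lambda_i|$ as the lattice norm (it suffices to verify the inequality for this particular $\eta$, since any two lattice norms are linearly equivalent). For each $N \in \N$, let
\[
S_N \coloneqq \{g \in G : \eta(g) \leq N\},
\]
so that $\#S_N = (2N+1)^k$. The inclusion $G \hookrightarrow X$ is $\R$-linear on the span of the $g_i$, so there is a constant $M$ (depending only on the $\|v_i\| = \|g_i\|_X$) with $S_N \subset B_{MN}(0) \subseteq X$.

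Next I would partition $B_{MN}(0)$ into axis-aligned boxes of side length $\epsilon = c_0 N^{-\delta}$ (with $c_0$ to be chosen), noting that at most $\lceil 2MN/\epsilon \rceil^n \leq (3M/c_0)^n N^{n(1+\delta)} = (3M/c_0)^n N^k$ such boxes are needed to cover the ball. Picking $c_0$ small enough that $(3M/c_0)^n < (2N+1)^k / N^k$ uniformly for large $N$ (which is automatic once $c_0^n < (3M)^n / 2^k$, say), the pigeonhole principle supplies distinct $g, g' \in S_N$ lying in the same box. Their difference $h_N \coloneqq g - g'$ is a nonzero element of $G$ satisfying
\[
\eta(h_N) \leq 2N \quad\text{and}\quad \|h_N\| \leq \sqrt{n}\,\epsilon = \sqrt{n}\,c_0 N^{-\delta} \leq \sqrt{n}\,c_0 \cdot 2^{\delta} \eta(h_N)^{-\delta},
\]
which is the desired inequality with $c \coloneqq \sqrt{n}\,c_0 2^{\delta}$.

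To produce infinitely many distinct $h$, I would let $N$ range over all positive integers. If only finitely many distinct elements arose as $h_N$, then some fixed $h \neq 0$ would occur for arbitrarily large $N$; but then $\|h\| \leq c_0 \sqrt{n}\, N^{-\delta} \to 0$, forcing $\|h\|=0$ and hence $h = 0$ (as $\|\cdot\|$ is a norm on $X$ and $G$ embeds injectively into $X$ by density and $\rk(G)=k\geq n$)—a contradiction. Thus infinitely many distinct $h \in G \setminus \{0\}$ satisfy $\|h\| \leq c \cdot \eta(h)^{-\delta}$. The only real obstacle is verifying that the pigeonhole threshold can be met with an $\epsilon$ of the correct order $N^{-\delta}$; this is exactly the calculation above, and it is the algebraic reason the exponent $\delta = d/n$ is sharp.
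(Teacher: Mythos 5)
Your argument is the same Dirichlet/Minkowski pigeonhole the paper uses, just bookkept differently. The paper's proof picks a rank-$n$ sublattice $Z \leqslant G$ spanning $X$, writes $G = G' + Z$, reduces modulo $Z$ into a fixed fundamental domain $F$, and counts $\gg r^{d}$ points of $G$ in $F$ with $\eta \ll r$ against $\asymp r^{d}$ covering balls of radius $\asymp r^{-\delta}$. You instead keep all $(2N+1)^{k}$ points of $S_N$ in the growing ball $B_{MN}(0)$ and cover it by $\asymp N^{k}$ boxes of side $\asymp N^{-\delta}$. Both routes hinge on the same identity $n(1+\delta) = k$, and yours has the small advantage of not needing to choose a reference sublattice.

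There is, however, a sign error in your threshold for $c_0$. Pigeonhole requires the number of boxes to be strictly less than the number of points, i.e.
\[
(3M/c_0)^{n}\, N^{k} < (2N+1)^{k},
\]
and since $(2N+1)^{k}/N^{k} \to 2^{k}$ this amounts to $(3M/c_0)^{n} < 2^{k}$, that is $c_0 > 3M \cdot 2^{-k/n}$. So $c_0$ must be chosen \emph{large} enough, not small: as $c_0 \to 0$ the boxes shrink, their number blows up, and the pigeonhole count fails. Your parenthetical condition should therefore read $c_0^{n} > (3M)^{n}/2^{k}$, the reverse of what you wrote. This costs nothing, since the lemma asks only for the existence of \emph{some} constant $c$, and $c = \sqrt{n}\,c_0\, 2^{\delta}$ simply comes out larger. (A small aside: your justification that $\|h\| = 0 \Rightarrow h = 0$ via ``density and $\rk(G) = k \geq n$'' is a red herring; $G$ is by definition a subgroup of the normed space $X$, so $\|\cdot\|$ is already a norm on $G$ and the implication is immediate.) With the $c_0$ correction the rest of the argument, including the contradiction step producing infinitely many distinct elements, is sound.
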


\begin{proof}
Choose any two complementary subgroups $G'$, $Z \leqslant G$ such that $G' + Z = G$ and $\rk (Z) = n$, with $Z$ spanning $X$. Choose any basis $\{z_i\}_{i=1}^n$ for $Z$, with associated fundamental domain $F$. Given any $g \in G'$, there is some $z_g \in Z$ with $g-z_g \in F$. We have that $\eta(z_g) \ll \|g\| \ll \eta(g)$, since $Z$ has a basis which is also a basis for $X$. Since there are $\gg r^d$ (where $d = k-n$) elements $g \in G'$ with $\eta(g) \leq r$ it follows that there are $\gg r^d$ elements $g-z_g \in G \cap F$ with $\eta(g-z_g) \ll r$. So we may find some $\kappa > 0$ for which there are at least $\kappa r^d$ elements $g \in G \cap F$ with $\eta(g) \leq \frac{r}{2}$ for sufficiently large $r$.

Since $F$ is a fixed, $n$-dimensional fundamental domain, we may cover it by strictly less than $\kappa r^d$ balls of radius $(\frac{c}{2}) \cdot r^{-\delta} = (\frac{c}{2}) \cdot r^{-d/n}$ for some $c > 0$. By the Pigeonhole Principle, there is some ball which contains two distinct elements $g$, $h \in G \cap F$ with $\eta(g)$, $\eta(h) \leq \frac{r}{2}$. So $g-h \in G$, $\|g-h\| \leq cr^{-\delta}$ and $\eta(g-h) \leq r$. Since $r$ can be made arbitrarily large, the result follows.
\end{proof}

Sums of Diophantine lattices are still Diophantine when they have the same exponent:

\begin{lemma} \label{lem: Diophantine sums} 
Take two finite rank, free Abelian dense subgroups $G_1 \leqslant X_1$ and $G_2 \leqslant X_2$ of vector spaces $X_1$ and $X_2$ with Diophantine exponents $\delta_1$ and $\delta_2$, respectively. If $\delta_1 \neq \delta_2$, then $G_1 + G_2 \leqslant X_1 + X_2$ is not Diophantine. If $\delta_1 = \delta_2$, then $G_1 + G_2 \leqslant X_1 + X_2$ is Diophantine if and only if both $G_1 \leqslant X_1$ and $G_2 \leqslant X_2$ are Diophantine.
\end{lemma}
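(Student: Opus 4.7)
My plan is to set up notation so that $k_i = \rk(G_i)$ and $n_i = \dim(X_i)$, giving individual Diophantine exponents $\delta_i = (k_i - n_i)/n_i$, while the sum $G = G_1 + G_2 \leqslant X_1 \oplus X_2$ (which is genuinely a direct sum of lattices, since $G_i \leqslant X_i$ and $X_1 \cap X_2 = \{0\}$) has ambient exponent $\delta = (k_1+k_2 - n_1 - n_2)/(n_1+n_2)$. The key arithmetic observation is that this combined exponent is the mediant of $\delta_1$ and $\delta_2$, so $\delta$ lies strictly between them when they differ and equals their common value otherwise. I would equip the sum with a max norm $\|g_1+g_2\| = \max(\|g_1\|,\|g_2\|)$ and a max lattice-norm $\eta(g_1+g_2) = \max(\eta_1(g_1),\eta_2(g_2))$; by the equivalence of norms and of lattice norms this choice is harmless.

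For the case $\delta_1 \neq \delta_2$, I would assume without loss of generality that $\delta_2 > \delta_1$, so that $\delta < \delta_2$. I then apply Lemma \ref{lem: Dirichlet} (which is unconditional, requiring no Diophantine hypothesis) to the factor $G_2 \leqslant X_2$ to produce infinitely many nonzero $g_2 \in G_2$ with $\|g_2\| \leq c\, \eta_2(g_2)^{-\delta_2}$, and view each $g_2$ as an element of $G$ via the embedding $g_2 \mapsto 0 + g_2$. Both the norm and lattice-norm of $g_2$ are preserved by this embedding, so $\|g_2\| \leq c\,\eta(g_2)^{-\delta_2}$ along an unbounded sequence in $\eta$. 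Since $\delta_2 > \delta$, the inequality $c' \eta(g)^{-\delta} \leq \|g\| \leq c\,\eta(g)^{-\delta_2}$ would force $\eta(g)^{\delta_2 - \delta}$ to remain bounded, which fails along our sequence, so $G$ is not Diophantine.

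In the case $\delta_1 = \delta_2 = \delta$, the forward direction is immediate: for any $g_i \in G_i \setminus \{0\}$, the embedding $G_i \hookrightarrow G$ preserves both the norm and the lattice-norm, so any Diophantine bound for $G$ restricts to one for each $G_i$ with the same constant and exponent. For the converse, suppose both $G_i$ are Diophantine with constants $c_i$. Given $g = g_1 + g_2 \in G\setminus\{0\}$, at least one component $g_i$ is nonzero, and then
\[
\|g\| \geq \|g_i\| \geq c_i\,\eta_i(g_i)^{-\delta_i} \geq c_i\,\eta(g)^{-\delta},
\]
where the first inequality uses the max norm on $X_1 \oplus X_2$ and the last uses $\eta_i(g_i) \leq \eta(g)$ in the max lattice-norm together with $\delta_i = \delta$. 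Taking $\min(c_1,c_2)$ as the Diophantine constant for $G$ completes this direction.

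The whole argument is essentially bookkeeping once Lemma \ref{lem: Dirichlet} is in hand; the only real content is the mediant inequality $\min(\delta_1,\delta_2) \leq \delta \leq \max(\delta_1,\delta_2)$, which is what allows Lemma \ref{lem: Dirichlet} applied to the factor with the larger exponent to obstruct Diophantine-ness of $G$ in the unequal case. I do not anticipate any serious technical obstacle beyond choosing norms carefully so that the $\max$-norm comparisons $\|g\| \geq \|g_i\|$ and $\eta(g) \geq \eta_i(g_i)$ hold on the nose.
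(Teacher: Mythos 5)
Your proof is correct and follows essentially the same route as the paper: in the $\delta_1\neq\delta_2$ case, apply Lemma \ref{lem: Dirichlet} to the factor with the larger exponent and observe that the combined exponent $\delta$ is strictly smaller, so these elements obstruct Diophantineness of the sum; in the $\delta_1=\delta_2$ case, use that the embedding $G_i\hookrightarrow G$ preserves (lattice) norms for one direction and split $g=g_1+g_2$ for the other. The only cosmetic difference is that you fix max-type (lattice) norms on $X_1\oplus X_2$ so the comparison constants are $1$, whereas the paper works with arbitrary norms and carries linear-equivalence constants $\kappa_1,\kappa_2$ through the estimate.
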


\begin{proof}
Suppose that $\delta_1 \neq \delta_2$; without loss of generality, $\delta_1 > \delta_2$. Denote $\rk(G_1) = k_1$, $\rk(G_2) = k_2$, $\dim(X_1) = n_1$, $\dim(X_2) = n_2$ and $d_i = k_i-n_i$, so that $\delta_i = \frac{d_i}{n_i}$. The required Diophantine exponent $\delta$ for $G_1+G_2$ is $\frac{d_1+d_2}{n_1+n_2} < \delta_1$. By the above Lemma \ref{lem: Dirichlet}, there exist infinitely many elements $g \in G_1 \setminus \{0\}$ with $\|g\| \leq c \eta(g)^{-\delta_1}$ for some $c > 0$. Since $c r^{-\delta_1} < \epsilon r^{-\delta}$ for arbitrarily small $\epsilon$, for sufficiently large $r$, these elements also rule out $G_1 + G_2$ from being Diophantine.

Suppose then that $\delta_1 = \delta_2$. Hence the Diophantine exponent $\delta$ for $G_1 + G_2 \leqslant X_1 + X_2$ is also $\delta_1 = \delta_2 = \frac{d_1+d_2}{n_1+n_2}$. Suppose that one of $G_1$ or $G_2$ is not Diophantine; without loss of generality, $G_1$ is not Diophantine. Hence, for all $\epsilon > 0$, there exists some $g \in G_1 \setminus \{0\}$ with $\|g\| < \epsilon \eta(g)^{-\delta}$. The corresponding elements of $G_1 + G_2$ show that this group is also not Diophantine. Conversely, suppose that both $G_1$ and $G_2$ are Diophantine. Since $X_1$ and $X_2$ are complementary in the sum, we have that $\|g_1 + g_2\| \geq \kappa_1 \cdot \max\{\|g_1\|,\|g_2\|\}$ and $\max\{\eta(g_1),\eta(g_2)\} \leq \kappa_2 \cdot \eta(g_1+g_2)$ for some $\kappa_1$, $\kappa_2 > 0$, for all $g_1 \in G_1$, $g_2 \in G_2$.

Take any non-trivial $g_1 + g_2 \in G_1 + G_2$. If $g_2 = 0$, then using that $G_1$ is Diophantine:
\[
\|g_1 + g_2\| = \|g_1\| \geq c_1 \eta(g_1)^{-\delta} = c_1 \cdot \eta(g_1+g_2)^{-\delta},
\]
since $\delta_1 = \delta_2 = \delta$. Similarly, if $g_1 = 0$ then $\|g_1+g_2\| \geq c_2 \cdot \eta(g_1+g_2)^{-\delta}$ for some $c_2 > 0$. If $g_1$ and $g_2 \neq 0$ then
\begin{align*}
\|g_1+g_2\| \geq \kappa_1 \max\{\|g_1\|,\|g_2\| \} \geq \kappa_1 \max\{c_1 \eta(g_1)^{-\delta_1}, c_2 \eta(g_2)^{-\delta_2}\} \geq \\
\kappa_1 \min\{c_1,c_2\} \min\{\eta(g_1)^{-\delta},\eta(g_2)^{-\delta}\} = \kappa_1 c \cdot \max\{\eta(g_1),\eta(g_2)\}^{-\delta} \geq (\kappa_1 \kappa_2 c) \cdot \eta(g_1+g_2)^{-\delta},
\end{align*}
where $c = \min\{c_1,c_2\}$ and we use $\delta_1 = \delta_2 = \delta > 0$. So $G_1+G_2$ is Diophantine with constant $\kappa_1 \kappa_2 c$, as required.
\end{proof}

\subsection{Transference}
The Diophantine property ensures that elements with small lattice norm are reasonably distant from the origin. This implies a dual property: all group elements with norm less than $r$ do not leave large gaps between them on a bounded subset, relative to $r$. In the classical setting, of families of vectors in $\R^n$ relative to the integer lattice $\Z^n$, this is sometimes known as \emph{transference}; see, for example \cite{Cas65} (although we note that the term more usually refers to Diophantine properties of a system of vectors and the transpose system). Given our alternative formulation, in terms of Diophantine lattices, we will restate this property and recall the standard proof of it, using Minkowski's Second Theorem. Throughout, we assume that $G \leqslant X$ and $\eta$ are as in Definition \ref{def: diophantine lattice}.

\begin{definition}
Call $G \leqslant X$ {\bf densely distributed} if there is some $c > 0$ so that, for all $r > 0$, every point in the unit ball of $X$ is within distance $c \cdot r^{-\delta}$ of some $g \in G$ with $\eta(g) \leq r$.
\end{definition}

In the above, $\delta$ is the Diophantine exponent $\delta = \frac{k-n}{n}$, as in Definition \ref{def: diophantine lattice}. As usual, the choice of norm on $X$ does not make a difference, up to a change of the constant $c$. The choice of the unit ball above is also arbitrary. Given any other bounded region $B$, since $G$ is dense in $X$, we can use finitely many $G$-translates $g_i \in G$ applied to the unit ball to cover $B$. So all points of $B$ are within distance $c \cdot r^{-\delta}$ of some element $g + g_i$ with $\eta(g) \leq r$, hence $\eta(g+g_i) \leq \eta(g) + \eta(g_i) \leq r + \tau$ for some constant $\tau$. After reparametrising, we thus have:

\begin{lemma} \label{lem: transference general regions}
If $G \leqslant X$ is densely distributed, then there is some $c > 0$ satisfying the following: for any given bounded region $B$ we have that every $b \in B$ is within distance $c \cdot r^{-\delta}$ of some $g \in G$, with $\eta(g) \leq r$.
\end{lemma}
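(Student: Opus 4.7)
The plan is to follow the sketch already hinted at in the paragraph preceding the lemma, turning it into a clean proof with explicit bookkeeping of constants. The core idea is to reduce the statement for a general bounded region $B$ to the statement for the unit ball $B_1(0) \subset X$, which is the definition of densely distributed, by covering $B$ with finitely many $G$-translates of the unit ball.

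First I would fix a bounded region $B$. Since $G \leqslant X$ is dense and $B$ is bounded (hence totally bounded and relatively compact in the finite-dimensional space $X$), I can choose finitely many group elements $g_1, \ldots, g_N \in G$ such that the translates $g_i + B_1(0)$ cover $B$. Set $\tau \coloneqq \max_{1 \leq i \leq N} \eta(g_i)$, a constant depending only on $B$ (and on the fixed choice of covering).

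Next, given $b \in B$, choose an index $i$ with $b - g_i \in B_1(0)$. By the densely distributed hypothesis, applied with parameter $s > 0$, there exists $g \in G$ with $\eta(g) \leq s$ and $\|(b - g_i) - g\| \leq c_0 \cdot s^{-\delta}$, where $c_0$ is the constant from Definition of densely distributed. Then the element $g' \coloneqq g + g_i \in G$ satisfies $\|b - g'\| \leq c_0 \cdot s^{-\delta}$ and, by the triangle inequality for the lattice norm, $\eta(g') \leq \eta(g) + \eta(g_i) \leq s + \tau$.

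Finally, to recover the clean statement indexed by $r$ rather than $s$, I reparametrise by setting $s = r - \tau$ when $r \geq 2\tau$ (say). Then $\eta(g') \leq r$, and $\|b - g'\| \leq c_0 (r - \tau)^{-\delta} \leq c \cdot r^{-\delta}$ for a new constant $c$ depending only on $c_0$, $\tau$, and $\delta$, since $(r - \tau)^{-\delta} \asymp r^{-\delta}$ as $r \to \infty$. For $r$ in the bounded range $r < 2\tau$, the inequality can be ensured by enlarging $c$ further, using that $B$ is bounded and the statement is trivial with a large constant on any bounded $r$-range. There is no real obstacle here: the argument is a routine covering plus rescaling, and the only mild care needed is making sure the constant $c$ in the conclusion is absorbed independently of $r$.
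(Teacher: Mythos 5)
Your proof is correct and follows essentially the same argument the paper sketches in the paragraph immediately preceding the lemma: cover $B$ by finitely many $G$-translates of the unit ball using density, apply the densely-distributed property in each translate, shift back by the covering element, and reparametrise $r$. The bookkeeping you add (the explicit $\tau$, the case split on $r \geq 2\tau$, and the observation that $(r-\tau)^{-\delta} \asymp r^{-\delta}$) is exactly the content the paper compresses into the phrase ``after reparametrising.''
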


There are of order $r^{k-n}$ elements of $G$ with lattice norm bounded above by $r$ in the unit ball of $X$. On the other hand, one may also find of order $r^{k-n}$ disjoint balls of radius $r^{-\delta}$ contained in the unit ball. So being densely distributed is a special property, implying that $G$ fills $X$ efficiently with respect to the lattice norm. The following transference theorem says that this property follows from the Diophantine property:

\begin{theorem} \label{thm: transference}
If $G \leqslant X$ is Diophantine then it is densely distributed.
\end{theorem}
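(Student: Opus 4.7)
The plan is to deduce the densely distributed property from the Diophantine property via Minkowski's Second Theorem on successive minima, applied to an $r$-dependent family of symmetric convex bodies. This is a coordinate-free formulation of the classical transference principle from the geometry of numbers.

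First, I would fix a realisation of $\eta$ as a Euclidean norm: embed $G$ as a lattice in a $k$-dimensional space $T$ with $\eta = \|\cdot\|_T$, and extend the dense inclusion $G \hookrightarrow X$ to a linear surjection $\pi \colon T \to X$ with kernel $K$ of dimension $d=k-n$ (surjective because $\langle G \rangle_\R = X$). After choosing a linear complement of $K$ in $T$, replace $\|\cdot\|_T$ by the equivalent product norm $\max(\|\pi(t)\|_X, \|t_K\|_K)$.

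For each $r>0$ and a small parameter $\epsilon > 0$ to be chosen, consider the symmetric convex body
\[
C_r = \bigl\{ t \in T : \|\pi(t)\|_X \leq \epsilon r^{-\delta},\ \|t_K\|_K \leq r \bigr\}.
\]
The scaling is rigged so that $\operatorname{vol}(C_r) = \omega_X \omega_K \epsilon^n$ is independent of $r$, precisely because $\delta = d/n$. The Diophantine hypothesis then gives a uniform lower bound on the first successive minimum $\lambda_1 = \lambda_1(G, C_r)$: if $\lambda C_r$ contains some $g \in G \setminus \{0\}$, then $\eta(g) \lesssim \lambda r$, so by the Diophantine bound, $\lambda \epsilon r^{-\delta} \geq \|\pi(g)\|_X \geq c \, \eta(g)^{-\delta} \gtrsim (\lambda r)^{-\delta}$, forcing $\lambda \gtrsim (c/\epsilon)^{1/(1+\delta)}$; choosing $\epsilon$ small enough depending only on $c$ ensures $\lambda_1 \geq 1$. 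Minkowski's Second Theorem then yields $\lambda_1 \cdots \lambda_k \leq 2^k \covol(G) / \operatorname{vol}(C_r) =: M$, whence $\lambda_k \leq M$.

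Next, by Mahler's theorem on reduced bases, I would upgrade the $k$ linearly independent short lattice vectors to an actual basis $g_1, \ldots, g_k$ of $G$ with each $g_i \in M' C_r$ for some constant $M' = M'(k, M)$. The fundamental parallelepiped $P = \{ \sum t_i g_i : t_i \in [0,1] \}$ is then contained in $k M' C_r$ by convexity, and its $G$-translates tile $T$. Applying this tiling to the point $(x, 0) \in T$ for $x$ in the unit ball of $X$ produces $g \in G$ with $\|\pi(g) - x\|_X \leq kM' \epsilon r^{-\delta}$ and $\eta(g) \leq kM' r$; a harmless rescaling $r \mapsto kM' r$ then gives exactly the densely distributed bound.

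The main obstacle I anticipate is precisely the passage from $k$ linearly independent short lattice vectors (supplied directly by Minkowski's Second Theorem) to an actual basis of short vectors, since Minkowski's Second Theorem does not itself guarantee that a basis realises the successive minima. Mahler's reduction handles this at the cost of a dimension-dependent constant, which is harmless for the qualitative conclusion. The remaining work is standard volume computation and convex-geometry bookkeeping.
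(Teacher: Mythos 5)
Your proof is correct and follows essentially the same route as the paper's: bound the first successive minimum from below using the Diophantine hypothesis, invoke Minkowski's Second Theorem to cap the last minimum, and conclude density from a system of short lattice vectors. Your $r$-dependent convex body $C_r$ is just the pre-image of a fixed ball under the paper's rescaling map $M$ in Lemma~\ref{lem: general transference}, with the kernel $K$ playing the role of the physical direction $P$; the volume normalisation forced by $\delta = d/n$ plays the same role in both.

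The one place where you do more work than necessary is precisely the step you flag as the main obstacle. You invoke Mahler's reduction to upgrade the $k$ linearly independent vectors realising the successive minima to a genuine basis of $G$. This is valid, but the paper's Lemma~\ref{lem: dense lattice} sidesteps it: the independent vectors $b_1,\dots,b_k$ already generate a full-rank sublattice of $L$, and the translates of its fundamental parallelepiped by integer combinations of the $b_i$ (all of which lie in $L$) tile the whole space, so every point of the space is within the diameter of that parallelepiped of a point of $L$. The diameter is controlled by $\sum \|b_i\|$, which is exactly the bound you have; no reduction theorem is needed, and no appeal to a genuine basis of $L$ is required.
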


We prove the above through two lemmas, where we temporarily make use of the viewpoint of $G \leqslant X$ being the projection of a lattice to a lower dimensional subspace. This may always be done, as follows. Choose some basis $\{g_i\}_{i=1}^k$ of $G$ and consider the projection of $\R^k$ to $X$ defined by $e_i \mapsto g_i$, where $e_i$ is the $i$th standard basis vector. This maps the integer lattice $\Z^k$ isomorphically to $G$. We will use analogous notation for this projection as the projection to internal space for cut and project schemes.

\begin{lemma} \label{lem: dense lattice}
Let $L$ be a lattice in a normed vector space $E$. Suppose that $\|g\| \geq s > 0$ for all $g \in L \setminus \{0\}$. Then for all $z \in E$ there is some $g \in L$ such that
\[
\|z - g\| \leq c \cdot s^{1- \dim E} \covol(L)
\]
where the constant $c$ only depends on $\dim E$.
\end{lemma}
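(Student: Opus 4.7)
The plan is to invoke Minkowski's Second Theorem to produce a basis of short lattice vectors, then use them to approximate any given point $z \in E$ to within half of a fundamental parallelepiped.

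Let $n = \dim E$ and let $B$ denote the closed unit ball of the normed space $E$, which is a symmetric convex body. Let $\lambda_1 \leq \lambda_2 \leq \cdots \leq \lambda_n$ denote the successive minima of $L$ with respect to $B$. By definition, $\lambda_1$ is the minimum norm of a nonzero element of $L$, so the hypothesis $\|g\| \geq s$ for all $g \in L \setminus \{0\}$ translates to $\lambda_1 \geq s$. Minkowski's Second Theorem then gives
\[
\lambda_1 \lambda_2 \cdots \lambda_n \cdot \vol(B) \leq 2^n \covol(L).
\]
Using $\lambda_i \geq \lambda_1 \geq s$ for $i = 1, \ldots, n-1$, this yields
\[
\lambda_n \leq \frac{2^n \covol(L)}{\vol(B) \cdot s^{n-1}}.
\]

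Next, by the definition of the successive minima, there exist $\R$-linearly independent vectors $v_1, \ldots, v_n \in L$ with $\|v_i\| \leq \lambda_n$ for each $i$. Since they are linearly independent in $E$, every $z \in E$ may be written uniquely as $z = \sum_{i=1}^n a_i v_i$ with $a_i \in \R$. Choose integers $m_i \in \Z$ with $|a_i - m_i| \leq 1/2$ and set $g \coloneqq \sum_{i=1}^n m_i v_i \in L$. Then
\[
\|z - g\| = \Bigl\| \sum_{i=1}^n (a_i - m_i) v_i \Bigr\| \leq \sum_{i=1}^n \tfrac{1}{2}\|v_i\| \leq \tfrac{n}{2} \lambda_n \leq \frac{n \cdot 2^{n-1}}{\vol(B)} \cdot s^{1-n} \covol(L),
\]
so the conclusion holds with $c = n \cdot 2^{n-1}/\vol(B)$, which indeed depends only on $\dim E$.

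There is no real obstacle here: once Minkowski's Second Theorem has been invoked to bound $\lambda_n$, the remainder is the standard observation that a basis of short vectors provides a fundamental parallelepiped of small diameter. The only minor subtlety is that $v_1, \ldots, v_n$ need not generate $L$ as a lattice, but this is harmless since any element of the sublattice they generate still lies in $L$, which is all that is required.
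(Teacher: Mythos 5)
Your proof is correct and follows essentially the same route as the paper's: bound $\lambda_n$ via Minkowski's Second Theorem using $\lambda_1,\ldots,\lambda_{n-1}\geq s$, take linearly independent lattice vectors realizing the successive minima, and round coordinates to land within a bounded fundamental parallelepiped. The remark that the short vectors need only span a sublattice is a point the paper also implicitly handles, so the two arguments are the same in substance.
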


\begin{proof}
The $i$th successive minimum in Minkowski's Second Theorem is defined as
\[
\lambda_i = \inf\{\lambda > 0 \mid \text{there exist linearly independent } (g_1,\ldots,g_i) \text{ with each } \|g_i\| \leq \lambda\}.
\]
Then clearly $s \leq \lambda_1 \leq \lambda_2 \leq \cdots \leq \lambda_k$, where $k = \dim E$. Minkowski's Second Theorem provides constants $0<A<B$, depending only on $\dim E$, such that
\[
A \covol(L) \leq \lambda_1 \lambda_2 \cdots \lambda_k \leq B \covol(L).
\]
It follows that
\[
\lambda_k \leq \frac{B \covol(L)}{\lambda_1 \lambda_2 \cdots \lambda_{k-1}} \leq B \cdot s^{1-k} \covol(L).
\]
In other words, there is a sublattice of $L$ with basis $\{b_i\}_{i=1}^k$ with each $\|b_i\| \leq B \cdot s^{1-k} \covol(L)$. This basis defines a fundamental domain $F$ whose translates by $\Z$-sums of the $b_i$ cover $E$. Moreover, $F$ has diameter bounded above by $\|b_1\| + \cdots + \|b_k\| \leq k \cdot B \cdot s^{1-k} \covol(L)$. Hence, every element of $E$ is within this distance of a lattice element, as required.
\end{proof}

The above result shows that, for lattices of fixed covolume, if there are no short lattice elements then the lattice is reasonably dense. The following standard argument then shows that if the projection of the lattice to some `internal space' $X$ keeps points distant from the origin, then those projecting to some reasonably large ball in a complementary `physical space' project densely to the `internal space':

\begin{lemma}\label{lem: general transference}
Fix a normed vector space $E$ with complementary subspaces $P$, $X \leqslant E$. Given $z = (p+x) \in E$, write $z_\vee = p$ and $z_< = x$. We denote $k = \dim E$, $n = \dim X$ and $d = k-n = \dim P$. Finally, fix a lattice $L \leqslant E$. Then there are constants $A$, $B$ satisfying the following: Suppose that $\|g_<\| \geq s$ for all $g \in L \setminus \{0\}$, with $\|g\| \leq r$. Then for all $p \in P$ and $x \in X$, there is some $g \in L$ with
\begin{itemize}
	\item $\|p-g_\vee\| \leq A \cdot r^{-(d-1)} \cdot s^{-n}$, and
	\item $\|x-g_<\| \leq B \cdot r^{-d} \cdot s^{-(n-1)}$.
\end{itemize}
\end{lemma}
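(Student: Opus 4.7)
The plan is to reduce to Lemma \ref{lem: dense lattice} by linearly rescaling $L$ so that the two asymmetric target bounds collapse into a single uniform one. All norms on $E$ are equivalent, so at the cost of absorbing a universal constant into $A$ and $B$ I will work with the sup norm $\|p+x\| = \max(\|p\|,\|x\|)$ adapted to $E = P \oplus X$. For parameters $\lambda,\mu > 0$, let $\phi \colon E \to E$ be the linear isomorphism $\phi(p+x) = \lambda p + \mu x$ and set $L' \coloneqq \phi(L)$; then $L'$ is a lattice of covolume $\lambda^d \mu^n \covol(L)$ and $\|\phi(g)\| = \max(\lambda\|g_\vee\|,\, \mu\|g_<\|)$ for $g \in L$.

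The crux is the choice $\lambda = r^{d-1} s^n$ and $\mu = r^d s^{n-1}$, dictated by two simultaneous requirements: (i) the desired conclusions $\|p-g_\vee\| \leq A r^{-(d-1)} s^{-n}$ and $\|x-g_<\| \leq B r^{-d} s^{-(n-1)}$ must become the single statement that $\phi(p+x)$ lies within a uniform constant of $L'$, and (ii) the product $(\text{shortest nonzero vector of } L')^{1-k} \cdot \covol(L')$ appearing in Lemma \ref{lem: dense lattice} must be scale-invariant in $r$ and $s$. To verify the shortest vector bound: if $\|\phi(g)\| = t$ for some $g \neq 0$, then $\|g\| \leq t/\min(\lambda,\mu)$, so $t < r \min(\lambda,\mu)$ forces $\|g\| < r$, and the hypothesis then yields $\|g_<\| \geq s$, hence $\|\phi(g)\| \geq \mu s$. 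I may assume $s \leq r$ (the relevant regime, since the lemma will be applied in the Diophantine setting with $s = c \cdot r^{-\delta}$ for large $r$), in which case $\lambda \leq \mu$ and $r\lambda = r^d s^n = \mu s$, identifying the shortest nonzero vector of $L'$ as having norm at least $r^d s^n$.

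A direct exponent check then confirms that $(r^d s^n)^{1-k} \cdot \lambda^d \mu^n$ reduces to $r^0 s^0 = 1$, so Lemma \ref{lem: dense lattice} gives that every point of $E$ lies within distance at most a constant times $\covol(L)$ of $L'$. Applying this to the point $\phi(p+x)$, there exists $g \in L$ with $\max(\lambda\|p-g_\vee\|,\, \mu\|x-g_<\|) \ll 1$; dividing through by $\lambda$ and $\mu$ respectively yields the two advertised estimates with constants depending only on $k$ and $\covol(L)$. The main technical obstacle is the identification of $\lambda$ and $\mu$: one needs the rescaling to simultaneously realise both asymmetric target bounds and to make the Minkowski estimate tight, which uniquely pins down the two scale factors and leaves no slack in the exponent bookkeeping.
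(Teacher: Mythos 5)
Your proof is correct and takes essentially the same route as the paper: rescale $L$ by a diagonal map on $P \oplus X$, show the rescaled lattice has a uniform shortest-vector lower bound, and invoke Lemma \ref{lem: dense lattice}. The only differences are cosmetic. The paper uses the rescaling $(p,x)\mapsto(p/r,\,2x/s)$ and establishes the shortest-vector bound $\|M(g)\|\geq 1$ via the reverse triangle inequality (the factor $2$ being what makes $2 - 1 \geq 1$ go through), then un-scales the uniform bound to recover the two asymmetric estimates; you instead pre-scale by $\lambda = r^{d-1}s^n$, $\mu = r^d s^{n-1}$ so that the covolume and shortest-vector exponents cancel exactly and the dense-lattice estimate already comes out as $O(\covol L)$, and you establish the shortest-vector bound via the sup norm and a two-case split rather than a triangle inequality. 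These are equivalent up to an overall scalar multiple of the rescaling map. Your explicit restriction to $s\leq r$ is a fair reading: if there is any nonzero $g\in L$ with $\|g\|\leq r$ then $s\leq\|g_<\|\leq\|g\|\leq r$ automatically, and this is the regime in which the lemma is actually applied; the paper's proof makes the same implicit assumption when it verifies $\|M(g)\|\geq 1$ only for $\|g\|\leq r$ before feeding $M(L)$ into Lemma \ref{lem: dense lattice}.
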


\begin{proof}
Define a linear map $M \colon E \to E$ by setting $M(p+x) \coloneqq \frac{p}{r} + \frac{2x}{s}$ for $p \in P$ and $x \in X$. Since all norms are linearly equivalent, we may as well choose norm on $E$ coming from an inner product making $P$ and $X$ orthogonal, so that
\[
\det M = r^{-d}\cdot 2^n \cdot s^{-n}, \ \text{ hence } \ \covol M(L) = (r^{-d} \cdot 2^n \cdot s^{-n}) \covol L.
\]
Let $g \in L \setminus \{0\}$ with $\|g\| \leq r$. Then
\[
\|M(g)\| = \|M(g)_\vee + M(g)_<\| = \left\|\frac{g_\vee}{r} + \frac{2 g_<}{s}\right\| \geq \frac{2\|g_<\|}{s} - \frac{\|g_\vee\|}{r} \geq 1,
\]
since by assumption $\|g_<\| \geq s$ and $\|g_\vee\| \leq \|g\| \leq r$. Take any $p \in P$ and $x \in X$ and consider the point $M(p+x) = \frac{p}{r}+\frac{2x}{s}$. By the above, may apply Lemma \ref{lem: dense lattice} to the lattice $M(L)$ with $s=1$, so there is a constant $c > 0$ (depending only on $k$) and some $g \in L$ for which
\[
\left\|\left(\frac{p}{r}+\frac{2x}{s}\right) - M(g)\right\| \leq c \cdot \covol M(L).
\]
We have that $M(g) = \frac{g_\vee}{r} + \frac{2 g_<}{s}$, so by orthogonality of $P$ and $X$:
\[
\left\|\left(\frac{p}{r}+\frac{2x}{s}\right) - M(g)\right\| = \left\|\frac{p}{r} - \frac{g_\vee}{r}\right\| + \left\|\frac{2x}{s} - \frac{2 g_<}{s}\right\| = \frac{1}{r} \|p-g_<\| + \frac{2}{s}\|x-g_<\|.
\]
Since each summand is bounded above by $c \cdot \covol M(L)$, we obtain:
\[
\frac{1}{r}\|p-g_<\|, \ \frac{2}{s}\|x-g_<\| \leq c \cdot (r^{-d} \cdot 2^n \cdot s^{-n}) \covol L
\]
The result follows by multiplying the first term by $r$, and the second by $\frac{s}{2}$.
\end{proof}

Applying the above to the case where the projected lattice is Diophantine proves the transference result:

\begin{proof}[Proof of Theorem \ref{thm: transference}]
Let $G \leqslant X$ be Diophantine. As remarked following the statement of Theorem \ref{thm: transference}, up to linear isomorphism $G \leqslant X$ may be given as the projection of a lattice $L \leqslant E$, for a $k$-dimensional vector space $E$, onto an $n$-dimensional vector space $X$. We may regard $X$ as a subspace of $E$ (up to linear isomorphism), by setting $P$ as the kernel of the projection and identifying $X$ with any subspace complementary to $P$. Choose such an extension to a `total space' and any norm on $E$, whose restriction to $L$ gives a lattice norm on $G = L_<$, setting $\eta(g_<) \coloneqq \|g\|$ for $g \in L$.

Choose any $b$ in the unit ball of $X$ and $r > 0$. Since $G \leqslant X$ is Diophantine, there is some constant $c > 0$ so that $\|g_<\| \geq c \cdot \eta(g)^{-\delta} = c \cdot \|g\|^{-\delta}$ for all $g \in L \setminus \{0\}$, where $\delta = \frac{d}{n}=\frac{k-n}{n}$. Let $s \coloneqq c \cdot r^{-\delta}$ and apply Lemma \ref{lem: general transference} on $p \coloneqq 0$, $x \coloneqq b$ so that
\begin{itemize}
	\item $\|g_\vee\| \leq A \cdot r^{-(d-1)}\cdot (c \cdot r^{-\delta})^{-n} = (Ac^{-n})\cdot r$;
	\item $\|b-g_<\| \leq B \cdot r^{-d} \cdot (c \cdot r^{-\delta})^{-(n-1)} = (Bc^{-(n-1)}) \cdot r^{-\delta}$,
\end{itemize}
for some $g \in L$. Since $b$ belongs to the unit ball, we have that $\|b-g_<\| \leq \tau$ for some constant $\tau > 0$. So we see from the bound on $\|g_\vee\|$ that $\|g\| \leq Cr$ for some $C > 0$. That is, we may find $g \in L$ with $\|g\| \leq Cr$ and $\|b-g_<\| \leq D \cdot r^{-\delta}$ for some constant $D$. Rescaling $r$ and phrasing in terms of $G = L_<$, we have that for all $r > 0$, there exists $\gamma \in G$ with $\eta(\gamma) \leq C(\frac{r}{C}) = r$ and $\|b-\gamma\| \leq D \cdot (\frac{r}{C})^{-\delta} = (D C^\delta)r^{-\delta}$. It follows that $G$ is densely distributed.
\end{proof}

\subsection{The property {\bf D} for cut and project schemes}

We now define what it means for a cut and project scheme to have the Diophantine property {\bf D}:

\begin{definition} \label{def: diophantine scheme}
A subsystem $(X_i,\Gamma_i,W_i)$ is defined to be {\bf Diophantine} if $(\Gamma_i)_< \leqslant X_i$ is Diophantine. We say that a cut and project scheme $\mathcal{S}$ is {\bf Diophantine}, or satisfies {\bf D}, if it has a decomposition whose subsystems are each Diophantine.
\end{definition}

We emphasise that in the above, each subsystem is required to have constant hyperplane stabiliser rank (see Corollary \ref{cor: decompose}). Without this assumption, the Diophantine condition on the whole scheme would not be appropriate. If $\mathcal{S}$ is decomposable but already has constant hyperplane stabiliser rank, then we still allow ourselves to call it Diophantine if $\Gamma_<$ is Diophantine in the internal space, without further decomposition.

There may be several decompositions for a cut and project scheme which have constant hyperplane stabiliser ranks. As one would hope, which decomposition is chosen is inconsequential:

\begin{lemma}
Suppose that $\mathcal{S}$ satisfies {\bf C} and has two decompositions $Y$ and $Z$ with constant hyperplane stabiliser rank. Then each subsystem of $Y$ is Diophantine if and only if each subsystem of $Z$ is Diophantine.
\end{lemma}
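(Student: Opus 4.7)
\emph{Proof plan.} The strategy is to pass to a common refinement and establish that, on such a refinement, the Diophantine condition is equivalent to the one on either of the coarser decompositions. By Proposition~\ref{prop: common refinement}, any two decompositions admit a common refinement; iterating and passing to the finest decomposition (into indecomposables, as in Corollary~\ref{cor: decompose}), we obtain a decomposition $X = \{X_i\}_{i=1}^m$ that refines both $Y$ and $Z$ and has constant hyperplane stabiliser ranks on each piece (by Corollary~\ref{cor: equal ranks}). It thus suffices to show that for any decomposition $Y = \{Y_j\}_{j=1}^{q}$ with constant stabiliser ranks, $Y$ is Diophantine if and only if $X$ is; applying this symmetrically for $Z$ then finishes the proof.

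Fix $j$ and put $I_j = \{i : X_i \subseteq Y_j\}$, so that $Y_j = \sum_{i \in I_j} X_i$ is a direct sum decomposition of the subsystem $(Y_j, \Gamma_j^Y, W_j^Y)$. Regarding this subsystem as a cut and project scheme in its own right, it satisfies {\bf C} by Proposition~\ref{prop: complexity of subsystems}, so Theorem~\ref{thm: finite index splitting} applied internally shows that $\sum_{i \in I_j} \Gamma_i$ is a finite-index subgroup of $\Gamma_j^Y$. The critical point is that all Diophantine exponents along this refinement coincide: Proposition~\ref{prop: rank formula} gives $r_j^Y = k_j^Y - \delta_j^Y - 1$ and $r_i = k_i - \delta_i - 1$, while the rank identity $r_j^Y = r_i + (k_j^Y - k_i)$, derived exactly as in Equation~\eqref{eq: rk} in the proof of Proposition~\ref{prop: complexity of subsystems}, forces $\delta_i = \delta_j^Y$ for all $i \in I_j$.

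With matching exponents in place, iterated application of Lemma~\ref{lem: Diophantine sums} yields that $\sum_{i \in I_j} (\Gamma_i)_<$ is Diophantine in $Y_j$ if and only if each $(\Gamma_i)_<$ is Diophantine in $X_i$. Lemma~\ref{lem: Diophantine finite index} then converts the first statement into the equivalent statement that $(\Gamma_j^Y)_<$ is Diophantine in $Y_j$, using that $\sum_{i \in I_j} \Gamma_i \leqslant \Gamma_j^Y$ is finite index. Running this equivalence over all $j$ shows $Y$ is Diophantine if and only if $X$ is, and the symmetric argument with $Z$ in place of $Y$ completes the proof. I expect the matching of Diophantine exponents to be the main obstacle, as it is the one step that genuinely uses the structural results on minimal complexity schemes from Sections~\ref{sec: minimal complexity}--\ref{sec: decompositions}; the remaining steps are a direct assembly of the Diophantine lemmas developed earlier in this section.
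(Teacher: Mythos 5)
Your proof is correct and follows essentially the same route as the paper's: both pass to the minimal (indecomposable) refinement via Proposition~\ref{prop: common refinement} and Corollary~\ref{cor: decompose}, match the Diophantine exponents using the constant-rank formula of Proposition~\ref{prop: rank formula} together with the rank-transfer identity of Equation~\eqref{eq: rk}, and then assemble the result with Lemmas~\ref{lem: Diophantine sums} and~\ref{lem: Diophantine finite index}. The only cosmetic difference is that you invoke Theorem~\ref{thm: finite index splitting} internally to justify the finite-index inclusion, whereas the paper asserts it directly; the substance is the same.
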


\begin{proof}
Denote the minimal decomposition into indecomposables by $\{X_i\}_{i=1}^m$, with associated lattice factors $\{\Gamma_i\}_{i=1}^m$. It follows from Proposition \ref{prop: common refinement} that such a decomposition exists, with all other decompositions given by taking direct sums of the $X_i$. For example, suppose that $Y \in \{Y_i\}_{i=1}^\ell$ is a decomposition. Without loss of generality, suppose that $(Y_i,G,W^Y_i)$ is a subsystem, where
\[
Y_i = X_1 + \cdots + X_p.
\]
Each $X_i$ has an associated Diophantine exponent $\delta_i \coloneqq \frac{d_i}{n_i}$. In fact, these exponents must all be equal. Indeed, by Proposition \ref{prop: rank formula}, we have that $r_i = k_i - \delta_i - 1$. An arbitrary hyperplane in the subsystem for $Y_i$ is given by
\[
H = X_1 + \cdots + X_{\ell-1} + H_\ell + X_{\ell+1} + \cdots + X_p
\]
for some $H_\ell \in \sH_\ell$. Up to finite index $G$ is the sum $\Gamma_1 + \cdots + \Gamma_m$. The subgroup of this sum stabilising $H$ is given by
\[
\Gamma_1 + \cdots + \Gamma_{\ell-1} + \Gamma^{H_\ell}_\ell + \Gamma_{\ell+1} + \cdots + \Gamma_p,
\]
which has rank
\[
k_1 + k_2 + \cdots + k_{\ell-1} + r_\ell + k_{\ell+1} + \cdots + k_p = (k_1 + \cdots + k_p) - \delta_\ell - 1.
\]
Since these numbers are all required to be equal, we see that each $\delta_\ell$ must be equal, or else the subsystem corresponding to $Y_i$ does not have constant hyperplane stabiliser rank. It then follows from Lemma \ref{lem: Diophantine sums} that the subsystems $(G,Y_i,W^Y_i)$ are Diophantine if and only if each for indecomposable subsystem, $(\Gamma_j \leqslant X_j)$ is Diophantine.
\end{proof}

\section{Proof that {\bf LR} implies {\bf C} and {\bf D}} \label{sec: LR => C and D}
We are now ready to prove the first direction of our main theorem. Given an $r$-patch $P$, we let $\xi_P \in (0,1]$ be its associated frequency. This is defined as follows. Firstly, we let
\[
\xi_P' = \lim_{R \to \infty} \frac{\{z \in \Lambda \cap B_R(0) \mid P(z,r) = P \text{, up to translation} \}}{|B_R(0)|},
\]
where $|\cdot|$ denotes Lebesgue measure (so the denominator above is $cR^d$, for the constant $c = |B_1(0)|$). That is, to approximate $\xi_P'$, one counts occurrences of $P$ in a large $R$-ball (choosing its centre at the origin is arbitrary and inconsequential), dividing by the volume of the $R$-ball. It is a consequence of unique ergodicity that the limit defining $\xi'_P$ always exists. Finally, we let $\xi_P \coloneqq \xi_P' / \lambda$, with $\lambda$ the density of $\Lambda$, chosen to normalise frequencies so that $\sum_{P \in \{r\text{-patches}\}} \xi_P = 1$ for any $r > 0$.

For cut and project sets, we have an important connection with the acceptance domains:
\[
\xi_P = |A_P|/|W|,
\]
which follows from strict ergodicity, see for example \cite[Section 3]{HaynKoivSaduWalt2015}. It follows that if one wishes to show that there is some $r$-patch of low frequency (relative to $r$), it is equivalent to construct an acceptance domain of small volume. The existence of patches of small frequencies rules out linear repetitivity, as we shall now explain.

We say that $\Lambda$ (or the associated scheme $\mathcal{S}$) satisfies {\bf positivity of weights} ({\bf PW}) if there exists some $C > 0$ for which
\[
\xi_P \geq \frac{C}{r^d}
\]
for all $r$-patches $P$, for sufficiently large $r$. This terminology is used in \cite{BBL}, where it is shown that for FLC patterns, {\bf LR} is equivalent to {\bf PW} and another property {\bf U} (\emph{uniformity of return words}) both holding. It is unknown whether {\bf PW} is sufficient for {\bf LR} without assuming {\bf U} for general FLC patterns of dimension $d > 1$, although we will see later that {\bf LR} and {\bf PW} turn out to be equivalent for the class of cut and project sets dealt with by our main theorem. In any case, we have the following for general Delone sets or tilings, and will sketch the elementary proof:

\begin{lemma} \label{lem: LR => PW}
{\bf LR} implies {\bf PW}.
\end{lemma}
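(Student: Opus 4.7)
The plan is to deduce the lower frequency bound directly from the covering property that \textbf{LR} guarantees, by a volume--packing estimate. Suppose $\Lambda$ is \textbf{LR}, so there is a constant $C_1 > 0$ with $\rho(r) \le C_1 r$ for all $r \ge r_0$. Fix an $r$-patch $P$ of $\Lambda$ with $r \ge r_0$; I want to show $\xi_P \ge C r^{-d}$ with $C$ independent of $P$ and $r$.

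First I would unpack what $\rho(r)$ gives. By definition, every ball of radius $\rho(r)$ centred at a point of $\Lambda$ contains the centre of a translated copy of $P$. Since $\Lambda$ is Delone, say with relative density constant $R_0$, every Euclidean ball in $\R^d$ of radius $\rho(r)+R_0 \le (C_1+1) r + R_0$ contains some $z \in \Lambda$ with $P(z,r) = P$ up to translation. Writing $s_r \coloneqq (C_1+1) r + R_0$, we have $s_r \asymp r$.

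Next I would run a packing argument in $B_R(0)$ for large $R$. A standard greedy packing of $B_R(0)$ by disjoint balls of radius $s_r$ produces at least $\kappa (R/r)^d$ such balls, for some constant $\kappa > 0$ depending only on $d$, $C_1$ and $R_0$, provided $R$ is sufficiently large relative to $r$. Each such ball contributes at least one point $z \in \Lambda \cap B_R(0)$ with $P(z,r) = P$ up to translation, and the contributed $z$'s are distinct since the balls are disjoint. Therefore
\[
\#\{z \in \Lambda \cap B_R(0) : P(z,r) = P \text{ up to tr.} \} \;\ge\; \kappa (R/r)^d
\]
for all sufficiently large $R$. Dividing by $|B_R(0)| = c R^d$ and letting $R \to \infty$ gives $\xi_P' \ge (\kappa/c) r^{-d}$, hence $\xi_P = \xi_P'/\lambda \ge C r^{-d}$ with $C = \kappa/(c \lambda)$ independent of $P$ and $r$, which is exactly \textbf{PW}.

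The argument is essentially elementary and there is no real obstacle; the only small point requiring care is ensuring the packing (rather than a mere covering) yields $\gtrsim (R/r)^d$ disjoint balls inside $B_R(0)$, which is routine in Euclidean space. The proof adapts verbatim to tilings by counting translation classes of patches of tiles in $B_R(0)$ in place of marked points of $\Lambda$.
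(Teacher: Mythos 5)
Your proof is correct and follows essentially the same strategy as the paper's: pack disjoint balls of radius $\asymp r$ into $B_R(0)$, obtain $\gtrsim (R/r)^d$ of them, observe via \textbf{LR} that each contains a copy of $P$, and divide by $|B_R(0)|$. The only cosmetic difference is that you centre the packing balls arbitrarily and invoke the Delone property to locate a point of $\Lambda$ inside each, whereas the paper centres the balls directly at points of $\Lambda$; both are valid and give the same bound.
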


\begin{proof}
Suppose that $\rho(r) \leq Cr$ (that is, every $r$-patch appears at least once with centre in any given $Cr$-ball). We may place disjoint $Cr$-balls (for example, by placing them near the centres of an array of cubes), centred at points of $\Lambda$ (which is relatively dense), and densely enough so that the number of such balls contained in any $R$-ball is $\geq c (R/r)^d$, for some $c > 0$ (not depending on $r$ or $R$) and $R$ sufficiently large relative to $r$. Since any given $r$-patch appears at least once for each $Cr$-ball, the number of appearances of $P$ in an $R$-ball is $\geq c (R/r)^d$. Since $|B_R(0)| \asymp R^d$, it follows that $\xi_P' \gg r^{-d}$, so $\xi_P \geq \frac{\kappa}{r^d}$ for some $\kappa > 0$ and all $r$-patches $P$ for sufficiently large $r$.
\end{proof}

\begin{lemma} \label{lem: PW => C}
{\bf PW} implies {\bf C}.
\end{lemma}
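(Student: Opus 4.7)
The proof should be essentially a counting argument, using the normalization of patch frequencies. The plan is as follows.

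First, I would recall that the $r$-patch frequencies $\xi_P$ are normalized so that $\sum_{P} \xi_P = 1$, where the sum is over all distinct (up to translation) $r$-patches of $\Lambda$. This normalization was imposed explicitly in the setup at the start of the section, using that $\xi_P = \xi_P'/\lambda$ where $\lambda$ is the density of $\Lambda$; equivalently, for cut and project sets, $\xi_P = |A_P|/|W|$ and $\sum_P |A_P| = |W|$ since the acceptance domains partition $W$ (up to measure zero).

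Next I would apply the hypothesis {\bf PW} directly. By assumption there is some $C > 0$ so that $\xi_P \geq C/r^d$ for every $r$-patch $P$, once $r$ is sufficiently large. Summing this inequality over the set of all $r$-patches, whose cardinality is $p(r)$, gives
\[
1 \;=\; \sum_{P} \xi_P \;\geq\; p(r) \cdot \frac{C}{r^d},
\]
so that $p(r) \leq r^d/C$ for all sufficiently large $r$. This is precisely the statement $p(r) \ll r^d$, i.e., property~{\bf C}.

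There is no real obstacle here: the argument is a one-line pigeonhole on the normalized frequencies. The only subtlety worth noting is that both $\xi_P$ and $p(r)$ are defined intrinsically from the pattern (not from the cut and project data), so the implication holds with no further appeal to polytopal structure, decompositions, or the Diophantine exponent $\delta$.
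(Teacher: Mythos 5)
Your proof is correct and follows exactly the same one-line pigeonhole argument as the paper: sum the bound $\xi_P \geq C/r^d$ over all $p(r)$ patches and use that the frequencies are normalised to sum to $1$, giving $p(r) \leq r^d/C$.
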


\begin{proof}
Since $\xi_P \geq C/r^d$ for sufficiently large $r$ and some $C$, we have
\[
\frac{C \cdot p(r)}{r^d} = \sum_{r\text{-patches } P} \frac{C}{r^d} \leq \sum_{r\text{-patches } P} \xi_P = 1,
\]
since the sum of frequencies of $r$-patches is always equal to $1$. Hence $p(r) \leq r^d/C$.
\end{proof}

Combining the above two lemmas, we see that {\bf LR} implies {\bf C}. Of course, regardless of consideration of frequencies, this is clear anyway. Indeed, suppose that $\rho(r) \leq Cr$, so each $r$-patch appears at least once with centre in every $Cr$-ball. The number of possible centres in a $Cr$-ball is $\asymp r^d$ (since $\Lambda$ is a Delone set), so that $p(r) \ll r^d$. This is somewhat superfluous, since it turns out \cite{LP03} that {\bf LR} implies uniform patch frequencies, but we record it for convenience:

\begin{corollary} \label{cor: LR => C and PW}
{\bf LR} implies {\bf C} and {\bf PW}.
\end{corollary}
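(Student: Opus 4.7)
The plan is essentially to chain the two lemmas immediately preceding the corollary. By Lemma \ref{lem: LR => PW}, {\bf LR} implies {\bf PW}, and by Lemma \ref{lem: PW => C}, {\bf PW} implies {\bf C}. Therefore {\bf LR} implies both {\bf PW} (directly) and {\bf C} (through the composition), which is the content of the corollary.

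The only ``writing'' step is to note that this is a logical conjunction: since {\bf LR} implies {\bf PW}, and {\bf PW} in turn implies {\bf C}, we get {\bf LR} $\Rightarrow$ {\bf PW} and {\bf LR} $\Rightarrow$ {\bf C} simultaneously, hence {\bf LR} $\Rightarrow$ ({\bf C} and {\bf PW}). There is no obstacle here, since both intermediate implications are established in the excerpt; the corollary is really a formal bookkeeping statement for later reference (in particular, for use in the ``easier'' direction of the main theorem where one needs patches of low frequency to rule out {\bf LR}). The remark following the proof of Lemma \ref{lem: PW => C} about the alternative direct counting argument (any $r$-patch must fit within an $r$-ball of centres whose count is $\asymp r^d$) confirms the conclusion but is not needed for the proof of the corollary itself.
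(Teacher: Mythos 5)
Your proposal is correct and matches the paper's argument exactly: the paper combines Lemma \ref{lem: LR => PW} ({\bf LR} $\Rightarrow$ {\bf PW}) with Lemma \ref{lem: PW => C} ({\bf PW} $\Rightarrow$ {\bf C}) to obtain the corollary, noting the direct patch-counting argument only as an aside.
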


Hence, to show that {\bf LR} implies {\bf C} and {\bf D}, it is sufficient to show that {\bf PW} implies {\bf D}. We prove the contrapositive of this in the proof below:

\begin{proposition} \label{prop: PW => D}
Suppose that $\mathcal{S}$ satisfies {\bf PW} and has constant hyperplane stabiliser rank. Then $\mathcal{S}$ also satisfies {\bf D}.
\end{proposition}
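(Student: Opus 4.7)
The plan is to prove the contrapositive. Assume $\mathcal{S}$ has constant hyperplane stabiliser rank but $\Gamma_<$ is not Diophantine with exponent $\delta = d/n$; I will build, for any $\epsilon > 0$ and arbitrarily large $r$, an $r$-acceptance domain of volume below $\epsilon r^{-d}$, whose patch frequency then contradicts \textbf{PW}. Since \textbf{PW} implies \textbf{C} by Lemma~\ref{lem: PW => C}, everything from Sections~\ref{sec: minimal complexity}--\ref{sec: decompositions} is available; in particular Proposition~\ref{prop: rank formula} applied to the trivial decomposition gives the common stabiliser rank $r^* = k-\delta-1$.

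The first step is to split $\Gamma_<$ along complementary lines using the vertex geometry of $W$. Fix a vertex $v$ of $W$, let $H_1,\ldots,H_n \in \sH$ be the supporting hyperplanes meeting at $v$, and set $V_i = V(H_i)$ and $L(i) = \bigcap_{j\neq i} V_j$. Applying Lemma~\ref{lem: lattice spanned by lines} to the flag $\{V_1,\ldots,V_n\}$, the subgroups $\Gamma^{\widehat i}$ project to the complementary lines $L(i)$ and their sum has finite index in $\Gamma$; a rank count combining the inequality $\rk(\Gamma^{\widehat i}) \geq k - \rk(V_i)$ from its proof with constant stabiliser rank $r^*$ forces $\rk(\Gamma^{\widehat i}) = \delta+1$ for every $i$. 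Hence each $(\Gamma^{\widehat i})_<$ has Diophantine exponent $\delta$ in $L(i) \cong \R$, so by Lemma~\ref{lem: Diophantine sums} (iterated, together with Lemma~\ref{lem: Diophantine finite index} to absorb the finite index) failure of Diophantineness of $\Gamma_<$ forces failure in some summand, say $(\Gamma^{\widehat 1})_<$.

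Next, I would extract $n$ linearly independent short vectors. Non-Diophantineness of $(\Gamma^{\widehat 1})_<$ yields $\gamma^{(1)} \in \Gamma^{\widehat 1}$ with $\|\gamma^{(1)}\| \leq r$ (for $r$ arbitrarily large) and $\|\gamma^{(1)}_<\| < \epsilon r^{-\delta}$. For $i = 2,\ldots,n$, I would apply Minkowski's first theorem to $\Gamma^{\widehat i}$ inside its $\R$-span (a $(\delta+1)$-dimensional subspace of $\tot$), using the symmetric convex body $\{\|x_\vee\| \leq r,\, \|x_<\| \leq C r^{-\delta}\}$, whose volume $\asymp C$ is bounded below for a suitable choice of $C$, to extract $\gamma^{(i)} \in \Gamma^{\widehat i}$ with $\|\gamma^{(i)}\| \ll r$ and $\|\gamma^{(i)}_<\| \leq C r^{-\delta}$. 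After a harmless rescaling of $r$, all $\gamma^{(i)}$ lie in $\Gamma(r)$, and since $\gamma^{(i)}_< \in L(i)$ with the $L(i)$ complementary, the $n$ projections are linearly independent in $\intl$.

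The final step is to build a ``corner box'' acceptance domain at $v$. Letting $\nu_i$ denote the inward unit normal to $H_i$, the inner product $\nu_i \cdot \gamma^{(i)}_<$ is nonzero (as $L(i) \nsubseteq V_i$), so one may choose signs $\sigma_i \in \{\pm 1\}$ so that $\xi_i := \sigma_i \gamma^{(i)}_< \in \Gamma_<(r)$ points inward at $H_i$. Then
\[
T := \{z \in W^\circ : z \notin H_i^+ + \xi_i \text{ for each } i\} = \{z \in W^\circ : 0 < \nu_i \cdot (z-v) < \nu_i \cdot \xi_i \text{ for each } i\}
\]
lies in a box of diameter $O(\max_i \|\gamma^{(i)}_<\|) = o(1)$ around $v$; for $r$ large enough no face of $W$ other than $H_1,\ldots,H_n$ meets this box, so a change of variables to the basis $\{\nu_i\}$ yields
\[
|T| \asymp \prod_i (\nu_i \cdot \xi_i) \leq C_v \prod_i \|\gamma^{(i)}_<\| \leq C' \epsilon\, r^{-n\delta} = C' \epsilon\, r^{-d}.
\]
Since $T$ is specified entirely by constraints of the form ``$z$ is on a prescribed side of $H + \gamma'_<$'' with $H \in \sH$ and $\gamma' \in \Gamma(r)$, it is a union of $r$-acceptance domains, and any $A \subseteq T$ satisfies $|A| \leq C'\epsilon r^{-d}$, beating any fixed multiple of $r^{-d}$ for small $\epsilon$. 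The main obstacle I anticipate is this volume estimate: one must verify that for large $r$ the corner box truly avoids every other face of $W$, and that the Jacobian factor $C_v$ is positive, which holds precisely because $\nu_1,\ldots,\nu_n$ span $\intl$ (a consequence of $\{V_i\}$ being a flag).
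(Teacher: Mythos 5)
Your first half is a legitimate alternative route. The paper simply extracts one short $\gamma \in \Gamma(r)$ directly from the failure of \textbf{D}, notes $\gamma \notin \Gamma^K$ for some $K$ in a flag $f$, and then builds the remaining $n-1$ thin directions from the ``nice cuts'' count in Part I. You instead split $\Gamma_<$ (up to finite index) along the complementary lines $L(i)$ via Lemma~\ref{lem: lattice spanned by lines}, deduce $\rk(\Gamma^{\widehat i}) = \delta + 1$ and Diophantine exponent $\delta$ on each line, and locate the failure in a single summand with Lemma~\ref{lem: Diophantine sums}, then get the other $n-1$ short projections by Minkowski (equivalently, Lemma~\ref{lem: Dirichlet}). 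That is correct and arguably cleaner, though it buys nothing that the paper's one-line extraction doesn't.

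The gap is in the final step. You define $T = W^\circ \cap \bigcap_i (H_i^+ + \xi_i)^c$, a set cut out by \emph{hyperplane} translates $H_i + \xi_i$, and then assert ``it is a union of $r$-acceptance domains.'' That is the wrong direction: by Lemma~\ref{lem: cuts refine acc}, cut regions refine acceptance domains, not vice versa. So $T$ is a union of $r$-\emph{cut regions}, and an acceptance domain meeting $T$ may well extend outside $T$. What acceptance domains refine are intersections of $\Gamma_<(r)$-translates of $W^\circ$ and $W^c$ (Part~I, Corollary~3.2), not of half-spaces. If you try to repair this by replacing $(H_i^+ + \xi_i)^c$ with $W^c + \xi_i$, the resulting set $W^\circ \cap \bigcap_i (W^c + \xi_i)$ does contain an acceptance domain, but its volume is no longer controlled by $\prod_i \|\xi_i\|$: near the vertex $v$ the translate $W^c + \xi_i$ is the union of \emph{all} $n$ half-spaces $\{\nu_j \cdot (z - v - \xi_i) < 0\}$, not just the one indexed by $j=i$, and it also contributes thin shells near other faces of $W$. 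The resulting set can have volume on the order of $r^{-\delta}$ or $r^{-(n-1)\delta}$, far above the needed $\epsilon r^{-n\delta}$. This is precisely why the paper localizes to a small parallelepiped $P$ near the \emph{centre} of $W$ and invokes the ``nice cuts'' result from \cite[Section~7.3]{I} to guarantee $\gg r^\delta$ translates $W + \gamma_<$ that intersect $P$ exactly as half-spaces do; only then is the small box expressible as an intersection of $W^\circ$- and $W^c$-translates, and hence guaranteed to contain an acceptance domain of the required tiny volume. Your corner-box at a vertex of $W$ cannot do this, because at a vertex the translates of $W$ never look locally like single half-spaces.
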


\begin{proof}
Suppose, to the contrary, that $\mathcal{S}$ does not satisfy {\bf D}. We will use the failure of {\bf D} to construct acceptance domains of small volume.

Let $f \subseteq \sH$ be any flag and $\epsilon > 0$ be arbitrary. Since {\bf D} does not hold, we may find some $\gamma \in \Gamma(r)$ with
\[
\|\gamma_<\| \leq \frac{\epsilon}{r^\delta},
\]
where $\delta = d/n$. For some $K \in f$, we have that $\gamma \notin \Gamma^K$ (since $f$ is a flag), so $K \neq K + \gamma_<$. Exchanging $\gamma$ with $-\gamma$ if necessary, we have that the `strip'
\[
S_K \coloneqq (K^+)^\circ \cap ((K^+)^c + \gamma_<)
\]
is non-empty. The set $S_K$ is a thickening of $K$, with thickness bounded above by $\|\gamma_< \|$. In fact, since $\gamma_<$ can be made arbitrarily small (by picking $\epsilon$ sufficiently small), the set
\[
A_K \coloneqq W^\circ \cap (W^c+\gamma_<)
\]
agrees with $S_K$ to some reasonable fixed radius $\lambda$ (depending only on $W$) about the centre of the face $\delta_K$ of $W$ in $K$. Indeed, $W$ agrees with $K^+$ near to the centre of $\delta_K$.

Our strategy is to now construct similar sets $A_H$ for each $H \in \sH$ which are sufficiently thin. Their intersection will be a small subset of the window containing an acceptance domain.

We may construct a small parallelepiped $P$ (but chosen independently of $\epsilon$), with faces aligned with those of $f$, using translates of (the interiors of) the half-spaces $H^+$, for $H \in f$, and the complements of these half spaces, translated by elements of $\Gamma_<(c)$ for some constant $c$. Translating by an element of $\Gamma_<$ if necessary, we may also assume that $P$ is close to the centre of $W$.

Without loss of generality, we assume that {\bf C} is satisfied (since if it is not, then {\bf PW} does not hold by Lemma \ref{lem: PW => C}). By assumption, each $\rk(H)$ has a common value $\tau$, and in fact $\tau = k-\delta-1$ by Proposition \ref{prop: rank formula}, where $\delta = \frac{d}{n}$.

By Theorem \ref{thm: C => hyperplane spanning}, $\mathcal{S}$ is hyperplane spanning. We then recall from \cite[Section 7.3]{I} that there are $\asymp r^{k-\tau-1} = r^\delta$ `nice cuts' of $H$ with $P$. More precisely, there are $\gg r^\delta$ translates $H + \gamma_<$, with $\gamma \in \Gamma(r)$, so that $W+\gamma_<$ and $H^+ + \gamma_<$ intersect $P$ identically (so the translated face of $W$ cuts the whole way through $P$, but no other such faces intersect $P$). It follows that, for each $H \in f$ (with $H \neq K$), we may find distinct translates $W^\circ + \alpha_<$, $W^c + \beta_<$, with $\alpha$, $\beta \in \Gamma(r)$, with
\[
A_H \coloneqq P \cap (W^\circ + \alpha_<) \cap (W^c + \beta_<) = P \cap (H^\circ + \alpha_<) \cap (H^c + \beta_<) \neq \emptyset,
\]
which as a subset of $P$ is a strip of width $\ll r^{-\delta}$, since we have $\gg r^\delta$ distinct hyperplanes cutting $P$ to choose from to bound $A_H$. Translate the original thin slice $A_K$, if necessary, so as to pass through $P$, which only requires an element of $\Gamma_<$ of lattice norm bounded by a constant not depending on $\epsilon$. Then the intersection
\[
A \coloneqq \bigcap_{H \in f} A_H
\]
is a parallelepiped, with faces parallel to the hyperplanes in $f$, and given as an intersection of translates of $W^\circ$ and $W^c$ by elements $\gamma_< \in \Gamma_<(r+c)$, for $r$ sufficiently large and some $c$ not depending on $\epsilon$, introduced when translating $A_K$ to intersect $P$. Therefore, $A$ contains some acceptance domain $A' \in \sA(r+c)$. By construction, $A$ has widths $\ll r^{-\delta}$ in each direction, and width $\ll \epsilon r^{-\delta}$ in the direction normal to $K$, with constants not depending on $\epsilon$. It follows that there is some $C > 0$, not depending on $\epsilon$, with $|A'| \leq |A| \leq C \epsilon r^{-\delta}(r^{-\delta})^{n-1} = C\epsilon r^{-d}$. Since $\xi_P = |A'|/|W| \leq \epsilon (\frac{C}{|W|}) r^{-d}$, for some $(r+c)$-patch $P$ and $\epsilon > 0$ arbitrary, we see that $\mathcal{S}$ does not satisfy {\bf PW}, as required.
\end{proof}

\begin{corollary} \label{cor: PW => C+D}
{\bf PW} implies {\bf C} and {\bf D}.
\end{corollary}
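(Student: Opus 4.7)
The plan is to combine Lemma \ref{lem: PW => C} with Proposition \ref{prop: PW => D}, using the subsystem machinery of Sections \ref{sec: decompositions} and \ref{sec: accs and cuts in subsystems} to handle the case where $\mathcal{S}$ does not have constant hyperplane stabiliser rank. That Lemma already gives {\bf C} immediately, so the whole content of the corollary is in deriving {\bf D}.

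First I would invoke Lemma \ref{lem: PW => C} to obtain {\bf C}. Since {\bf C} holds, Corollary \ref{cor: decompose} applies, and we may pass to a decomposition $\{(X_i, \Gamma_i, W_i)\}_{i=1}^m$ of $\mathcal{S}$ into subsystems with constant hyperplane stabiliser rank. By Definition \ref{def: diophantine scheme}, the task is to show that each such subsystem is Diophantine, and by Proposition \ref{prop: PW => D} applied within each subsystem (which is itself a cut and project scheme of constant hyperplane stabiliser rank, satisfying {\bf C} by Proposition \ref{prop: complexity of subsystems}), it suffices to establish that each subsystem satisfies {\bf PW}.

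The key step, and the one which transfers the hypothesis from the full scheme into the individual subsystems, is the contrapositive of Corollary \ref{cor: small volume}: if $\mathcal{S}$ satisfies {\bf PW}, then every subsystem $(X_i, \Gamma_i, W_i)$ satisfies {\bf PW}. Indeed, suppose some subsystem failed {\bf PW}; then for any $\epsilon > 0$ we could find $r > 0$ and an $r$-acceptance domain $A_i(\epsilon) \in \sA_i(r)$ of $X_i$ with $|A_i(\epsilon)| < \epsilon / r^{\alpha_i}$. Corollary \ref{cor: small volume} then produces $A(\epsilon) \in \sA(r)$ with $|A(\epsilon)| < \epsilon/r^\alpha$, and using $\xi_P = |A_P|/|W|$ together with $\alpha = d$ (which follows from {\bf C}, via Corollary \ref{cor: minimal complexity}) this gives an $r$-patch of the full scheme with frequency less than $(\epsilon/|W|) r^{-d}$. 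Since $\epsilon$ was arbitrary, $\mathcal{S}$ would then violate {\bf PW}, a contradiction.

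Once {\bf PW} is established for each subsystem, applying Proposition \ref{prop: PW => D} in each subsystem gives that $(\Gamma_i)_< \leqslant X_i$ is Diophantine for every $i$, so $\mathcal{S}$ satisfies {\bf D} by Definition \ref{def: diophantine scheme}. The only point that requires a little care is matching the exponents in Corollary \ref{cor: small volume}: that corollary already produces small $A(\epsilon)$ using exactly the exponent $\alpha = \sum_i \alpha_i$, and under {\bf C} combined with Proposition \ref{prop: complexity of subsystems} this equals $d$, which is what {\bf PW} of the full scheme is stated in terms of. I expect no genuine obstacle beyond being careful that the subsystems inherit the decomposition structure from Corollary \ref{cor: decompose} faithfully, so that Proposition \ref{prop: PW => D} (whose statement concerns a scheme of constant hyperplane stabiliser rank) applies directly to each of them.
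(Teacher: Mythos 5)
Your proposal is correct and takes essentially the same approach as the paper: both reduce to Lemma \ref{lem: PW => C}, Corollary \ref{cor: decompose}, Proposition \ref{prop: PW => D} applied within the subsystems, and Corollary \ref{cor: small volume}, differing only in that you argue forward via the contrapositive of Corollary \ref{cor: small volume} while the paper phrases the same step as a proof by contradiction.
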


\begin{proof}
Suppose that $\mathcal{S}$ satisfies {\bf PW}, so it satisfies {\bf C} by Lemma \ref{lem: PW => C}. By Corollary \ref{cor: decompose}, there is a decomposition of $\mathcal{S}$ where each subsystem has hyperplane stabiliser subgroups of constant rank. Suppose that some subsystem does not satisfy {\bf D}. The proof of the above proposition applies identically to each subsystem, so we see that some subsystem does not satisfy {\bf PW}. But then, by Corollary \ref{cor: small volume} (and see the following remark), the original scheme $\mathcal{S}$ does not satisfy {\bf PW}.
\end{proof}

Since {\bf LR} implies {\bf PW}, by Corollary \ref{cor: LR => C and PW}, the above establishes the first direction of our main theorem:

\begin{corollary}
{\bf LR} implies {\bf C} and {\bf D}.
\end{corollary}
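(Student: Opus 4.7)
The proof plan is essentially a one-line assembly of results already established in the excerpt. The plan is to combine Lemma \ref{lem: LR => PW}, which gives the implication \textbf{LR} $\Rightarrow$ \textbf{PW}, with Corollary \ref{cor: PW => C+D}, which gives \textbf{PW} $\Rightarrow$ \textbf{C} and \textbf{D}. Composing the two implications yields the desired statement.

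In slightly more detail, I would first invoke Lemma \ref{lem: LR => PW} to conclude that linear repetitivity of $\Lambda$ forces each $r$-patch frequency to satisfy $\xi_P \gg r^{-d}$, which is precisely property \textbf{PW}. Then I would invoke Corollary \ref{cor: PW => C+D}, whose proof (recalled from the excerpt) proceeds by first applying Lemma \ref{lem: PW => C} to obtain \textbf{C}, and then decomposing $\mathcal{S}$ according to Corollary \ref{cor: decompose} into subsystems of constant hyperplane stabiliser rank; Proposition \ref{prop: PW => D} ensures that if any subsystem failed \textbf{D} then that subsystem (and, via Corollary \ref{cor: small volume}, the whole scheme) would have arbitrarily small acceptance domains and thus violate \textbf{PW}, contradicting what was just established.

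There is no main obstacle here since all the work has been done in the preceding results. The only point worth flagging is to be careful that the statement of Proposition \ref{prop: PW => D} requires constant hyperplane stabiliser rank, which is why the decomposition step of Corollary \ref{cor: decompose} is needed; but since we are only applying Corollary \ref{cor: PW => C+D} as a black box, this subtlety is already handled. Thus the full proof reads simply: by Lemma \ref{lem: LR => PW}, \textbf{LR} implies \textbf{PW}, and by Corollary \ref{cor: PW => C+D}, \textbf{PW} implies \textbf{C} and \textbf{D}, completing the argument.
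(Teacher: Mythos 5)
Your proof is correct and follows essentially the same route as the paper, which observes immediately after Corollary \ref{cor: PW => C+D} that {\bf LR} implies {\bf PW} (there via Corollary \ref{cor: LR => C and PW}, which itself just packages Lemma \ref{lem: LR => PW}) and composes with Corollary \ref{cor: PW => C+D}. The only cosmetic difference is that you cite Lemma \ref{lem: LR => PW} directly rather than going through Corollary \ref{cor: LR => C and PW}; the logic is identical.
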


Finally, we note that the above theorem assumes only that $\mathcal{S}$ is aperiodic and does not require the (weakly) homogeneous condition. However, the converse is not true in general without assuming weak homogeneity.

\section{Vertices of cut regions} \label{sec: vertices}

We wish to show that {\bf C} and {\bf D} together imply {\bf LR}. It will be sufficient to work with the cut regions instead of acceptance domains. In this section, we shall establish results on the vertices of the cut regions by relating them to the projected lattice $\Gamma_<$. It will turn out that, assuming minimal complexity {\bf C} and (weak) homogeneity, displacements between vertices of cut regions are contained in $\frac{1}{M}\Gamma_<$, for some $M \in \N$. The Diophantine condition {\bf D} then ensures that the vertices of the cut regions do not get too close to each other, which makes each cut region reasonably large. Finally, the transference Theorem \ref{thm: transference} guarantees that the projected lattice is sufficiently dense for orbits to visit each cut region.

\subsection{Vertex sets}
We first establish a link between the vertex sets of cut regions and the projected lattice for minimal complexity schemes. We introduce the following {\bf vertex sets}, which we shall explain in more detail following the definition.

\begin{definition} \label{def: vertex sets}
Let $\mathcal{S}$ be a polytopal cut and project scheme. Denote the set of flags in $\sH$ by $\sF$. Given a subset $G \subseteq \Gamma$ and a flag $f \in \sF$, we denote
\[
\cV(G,f) \coloneqq \bigcap_{H \in f} (H + G_<), \ \ \cV(G) \coloneqq \bigcup_{f' \in \sF} \bigcap_{H \in f'} (H + G_<) = \bigcup_{f' \in \sF} \cV(G,f').
\]
If $\mathcal{S}$ has a decomposition $\{X_i\}_{i=1}^m$, we define analogous subsets for the subsystems: for $i = 1$, \ldots, $m$, $G \subseteq \Gamma_i$ and a flag $f \subseteq \sF_i$ (where $\sF_i$ is the set of flags of $\sH_i$). We write
\[
\cV_i(G,f) \coloneqq \bigcap_{H \in f} (H + G_<), \ \ \cV_i(G) \coloneqq \bigcup_{f' \in \sF_i} \bigcap_{H \in f'} (H + G_<) = \bigcup_{f' \in \sF_i} \cV_i(G,f').
\]
\end{definition}

Let us explain the relevance of these sets. Call a point $v$ in the internal space a {\bf vertex} if it may be given as the unique intersection point of some $\Gamma_<$-translates of hyperplanes in $\sH$. Then we may as well restrict these hyperplanes to a flag, so $v$ is a vertex if and only if $v \in \cV(\Gamma)$. Vertices of cut regions are examples of vertices, but we also count intersection points not lying inside the window. It will be of interest to look at only those vertices coming from translates of a flag, since (after an appropriate rescaling) they will form groups for weakly homogeneous schemes, after an appropriate translation. We allow $G \subseteq \Gamma$ to be only a subset, since it will be useful to allow $G = \Gamma(r)$, the set of lattice points within radius $r$, which contain the set of vertices of $r$-cut regions.

\begin{proposition} \label{prop: vertex flag groups}
Suppose that $\mathcal{S}$ is homogeneous. Choose a translate of the window so that each $H \in \sH$ contains a point of $\Gamma_<$. Then $\cV(\Gamma,f)$ is a subgroup of $\Gamma_<$ for each flag $f \in \sF$, and we have
\[
\rk(\cV(\Gamma,f)) = \sum_{H \in f} (k - \rk(H)).
\]
The analogous result is also true for the subsystems of a decomposition of $\mathcal{S}$.
\end{proposition}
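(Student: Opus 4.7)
The plan is to leverage homogeneity to replace each set $H + \Gamma_<$ with something much cleaner, namely $V(H) + \Gamma_<$, and then read off both the group structure and the rank from a linear isomorphism induced by the flag condition. After translating $W$ as specified, each supporting hyperplane $H \in \sH$ contains some $\gamma_H \in \Gamma_<$, so $H = V(H) + \gamma_H$ and therefore
\[
H + \Gamma_< \;=\; V(H) + \gamma_H + \Gamma_< \;=\; V(H) + \Gamma_<,
\]
since $\gamma_H \in \Gamma_<$ is absorbed into $\Gamma_<$. This is the key simplification of the whole argument, because $V(H) + \Gamma_<$ is a subgroup of $\intl$: it equals $\pi_H^{-1}(\pi_H(\Gamma_<))$ for the linear quotient $\pi_H \colon \intl \to \intl/V(H)$, and $\pi_H(\Gamma_<)$ is a subgroup of $\intl/V(H)$. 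Consequently $\cV(\Gamma, f) = \bigcap_{H \in f}(V(H) + \Gamma_<)$ is a subgroup of $\intl$ containing $\Gamma_<$ as a subgroup.

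For the rank, I would use the flag property $\bigcap_{H \in f} V(H) = \{0\}$ together with $|f| = n$ to obtain that the diagonal map
\[
\phi \colon \intl \longrightarrow \bigoplus_{H \in f} \intl/V(H)
\]
is a linear isomorphism of $n$-dimensional vector spaces. Under $\phi$, membership $v \in \cV(\Gamma, f)$ is precisely the condition that each coordinate $\pi_H(v)$ lies in the subgroup $\pi_H(\Gamma_<)$, so $\phi$ restricts to a group isomorphism
\[
\cV(\Gamma, f) \;\xrightarrow{\ \sim\ }\; \bigoplus_{H \in f} \pi_H(\Gamma_<).
\]
By Lemma \ref{lem: higher stabilisers}, $\pi_H(\Gamma_<) \cong \Gamma_</(\Gamma_< \cap V(H)) = \Gamma_</\Gamma_<^H$, which is of rank $k - \rk(H)$. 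Summing over the $n$ hyperplanes of the flag gives $\rk(\cV(\Gamma, f)) = \sum_{H \in f}(k - \rk(H))$, as claimed.

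The subsystem version then follows by running exactly the same argument inside $(X_i, \Gamma_i, W_i)$: each subsystem inherits homogeneity from $\mathcal{S}$ by the lemma in Section \ref{sec: accs and cuts in subsystems}, so after the analogous translation of $W_i$ every $H \in \sH_i$ meets $(\Gamma_i)_<$, and the flag-isomorphism calculation produces the formula with $k_i$, $\rk_i(H)$ and $\sH_i$ replacing their global counterparts.

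The main obstacle I anticipate is the claim that $\cV(\Gamma, f)$ sits inside $\Gamma_<$, rather than just inside the abstractly larger abelian group $\bigoplus_{H \in f} \Gamma_</\Gamma_<^H$ produced by the flag isomorphism. Concretely, one must show that for any choice of cosets $[\gamma_H] \in \Gamma_</\Gamma_<^H$, $H \in f$, the resulting $v \in \intl$ is realised by some single element of $\Gamma_<$ (and not merely agrees with $\Gamma_<$ modulo each $V(H)$ independently). The natural candidate is to use the homogeneous lift $\gamma_H \in \Gamma$ and to patch them together via the direct sum decomposition of $\intl$ coming from $\phi$; I expect the patching to succeed because aperiodicity makes $\Gamma \to \bigoplus_{H \in f} \Gamma/\Gamma^H$ injective, and the rank count above forces the image to be commensurate with the target.
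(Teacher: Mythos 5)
Your proof is correct and follows essentially the same route as the paper: exploit homogeneity to replace $H + \Gamma_<$ by $V(H) + \Gamma_<$, observe this is a subgroup, and then identify $\cV(\Gamma,f)$ with $\bigoplus_{H\in f}\Gamma/\Gamma^H$ via the flag. Your flag-isomorphism $\phi\colon \intl \to \bigoplus_{H\in f}\intl/V(H)$ is just a slightly more structural packaging of the paper's direct bijection sending the common intersection point of $\{V(H)+(\gamma_H)_<\}_{H\in f}$ to $([\gamma_H])_H$; both arrive at the same isomorphism $\cV(\Gamma,f)\cong\bigoplus_{H\in f}\Gamma/\Gamma^H$ and the same rank count.

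The one substantive issue is precisely the ``main obstacle'' you flag at the end, and you are right to be suspicious. The proposition as printed asserts that $\cV(\Gamma,f)$ is a subgroup \emph{of} $\Gamma_<$, but that inclusion is false in general and is not what the paper's own proof establishes. The paper's proof (and yours) shows the reverse containment $\Gamma_< \leqslant \cV(\Gamma,f)$, and the rank formula is consistent only with this direction: since $\Gamma\hookrightarrow\bigoplus_{H\in f}\Gamma/\Gamma^H$ is injective by aperiodicity, one always has $\sum_{H\in f}(k-\rk(H))\geq k = \rk(\Gamma_<)$, so $\cV(\Gamma,f)$ has rank at least $k$ and can only contain $\Gamma_<$, not sit inside it. This reading is confirmed by the paper's Corollaries \ref{cor: G finite index in vertices} and \ref{cor: finite index vertices}, which say that under {\bf C} one gets $\Gamma_<$ finite index in $\cV(\Gamma,f)$ and $\cV(\Gamma)\subseteq\frac{1}{M}\Gamma_<$ (not $\cV(\Gamma)\subseteq\Gamma_<$). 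So the ``subgroup of $\Gamma_<$'' in the statement is a misprint for a subgroup of $\intl$ (containing $\Gamma_<$); your closing paragraph does not need to be resolved — it cannot be, because the claim it worries about is not true — and the rest of your argument already proves exactly what the paper's proof proves.
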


\begin{proof}
By the homogeneous condition, we have that $H + \Gamma_< = V(H) + \Gamma_<$ for any $H \in \sH$. Each $V(H)$ is a subspace, in particular a subgroup of $\intl$. Therefore, each $V(H) + \Gamma_<$ is a group, and so is
\[
\cV(\Gamma,f) \coloneqq \bigcap_{H \in f} (H + \Gamma_<) = \bigcap_{V \in V(f)} (V + \Gamma_<).
\]
Since $0 \in V$ for each $V \in V(f)$, it is clear that $\Gamma_< \leqslant \cV(\Gamma,f)$.

We have an isomorphism
\begin{equation} \label{eq: flag isomorphism}
\cV(\Gamma,f) = \bigcap_{H \in f} (V(H) + \Gamma_<) \cong \bigoplus_{H \in f} \frac{\Gamma}{\Gamma^H},
\end{equation}
given by sending the vertex of $\bigcap (V(H) + (\gamma_H))$ to $([\gamma_H])_H$. To see that this is an isomorphism, firstly we note it is well defined: two translates $V(H) + (\gamma_1)_<$ and $V(H) + (\gamma_2)_<$ intersect only when they are precisely equal, in which case $\gamma_1-\gamma_2 \in \Gamma^H$. The map is a homomorphism, since if $v \in V(H) + (\gamma_H)_<$ and $w \in V(H) + (\tau_H)_<$ for each $H \in f$, then $v+w \in (V(H) + (\gamma_H)_<) + (V(H) + (\tau_H)_<) = V(H) + (\gamma_H+\tau_H)_<$ for each $H \in f$. It is clearly surjective, since we have free choice for each $\gamma_H$. Finally, it is injective, since if each $\gamma_H \in \Gamma^H$ then each translate $V(H) + \gamma_H = V(H)$, by definition of $\Gamma^H$, so the intersection vertex is the origin.

It follows that
\[
\rk(\cV(\Gamma,f)) = \rk\left(\bigoplus_{H \in f} \frac{\Gamma}{\Gamma^H}\right) = \sum_{H \in f} (k-\rk(H)),
\]
as required. The proof for subsystems is identical, except for added indices.
\end{proof}

Suppose that $\mathcal{S}$ (or a subsystem) satisfies {\bf C}. By Theorem \ref{thm: C => hyperplane spanning}, the system is hyperplane spanning. By the general formula for the complexity exponent given by Theorem \ref{thm: generalised complexity} and the lower bound for this exponent by Corollary \ref{cor: minimal complexity}, we have that
\[
d = \sum_{H \in f} (k - \rk(H) - 1), \text{ that is, } \ \rk(\Gamma_<) = k = \sum_{H \in f} (k-\rk(H))
\]
for any flag $f$. Conversely, suppose that $k = \sum_{H \in f}(k-\rk(H))$ for any flag $f$. It follows that each $\alpha_f = \sum (d - \rk(H) + \beta_H) \leq \sum (k - \rk(H) - 1) = k-n = d$, so the scheme satisfies {\bf C}. Hence, we have established the following:

\begin{corollary} \label{cor: G finite index in vertices}
Suppose that $\mathcal{S}$ is homogeneous. Then $\mathcal{S}$ satisfies {\bf C} if and only if $\Gamma_<$ is a finite index subgroup of $\cV(\Gamma,f)$ for any flag $f \in \sF$, after taking a suitable translate of the window so that $0 \in \cV(\Gamma)$. The analogous result holds for subsystems.
\end{corollary}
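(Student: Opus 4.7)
The proof plan is to combine the rank formula of Proposition \ref{prop: vertex flag groups} with the complexity formula of Theorem \ref{thm: generalised complexity}; the corollary is essentially a bookkeeping identity relating the indices $\sum_{H \in f}(k-\rk(H))$ and $k$.

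First, I would fix a translate of the window so that each $H \in \sH$ contains a point of $\Gamma_<$ (possible by homogeneity), so that Proposition \ref{prop: vertex flag groups} applies. Since $\mathcal{S}$ is aperiodic, $\Gamma_<$ has rank $k$. By Proposition \ref{prop: vertex flag groups}, $\Gamma_< \leqslant \cV(\Gamma,f)$ and
\[
\rk\bigl(\cV(\Gamma,f)\bigr) = \sum_{H \in f}(k - \rk(H)).
\]
Hence $\Gamma_<$ is of finite index in $\cV(\Gamma,f)$ if and only if $\sum_{H \in f}(k-\rk(H)) = k$, for the given flag $f$.

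For the forward direction, suppose $\mathcal{S}$ satisfies $\mathbf{C}$. By Theorem \ref{thm: C => hyperplane spanning}, $\mathcal{S}$ is hyperplane spanning, so each $\beta_H = n-1$. By Corollary \ref{cor: minimal complexity} and the assumption $\alpha \leq d$, we have $\alpha_f = d$ for every flag $f \in \sF$. Substituting into the formula of Theorem \ref{thm: generalised complexity},
\[
d = \alpha_f = \sum_{H \in f}\bigl(d - \rk(H) + (n-1)\bigr) = \sum_{H \in f}(k - \rk(H)) - n,
\]
so $\sum_{H \in f}(k-\rk(H)) = k$ for every flag, giving that $\Gamma_<$ is finite index in each $\cV(\Gamma,f)$.

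For the converse, suppose the rank equality $\sum_{H \in f}(k-\rk(H)) = k$ holds for every flag $f$. Since $\beta_H \leq n-1$ always (as $\Gamma^H_< \leqslant V(H)$, a subspace of dimension $n-1$), we obtain
\[
\alpha_f = \sum_{H \in f}(d - \rk(H) + \beta_H) \leq \sum_{H \in f}(k - \rk(H) - 1) = k - n = d,
\]
for every flag $f$. Hence $\alpha = \max_f \alpha_f \leq d$, which is property $\mathbf{C}$. The claim for subsystems is proved by the same argument applied inside each factor $(X_i,\Gamma_i,W_i)$, using Proposition \ref{prop: vertex flag groups} for the subsystem version of $\cV_i(\Gamma_i,f)$ and the fact (Proposition \ref{prop: complexity of subsystems}) that $\mathbf{C}$ for $\mathcal{S}$ is equivalent to $\alpha_i = d_i$ in each subsystem. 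No step here is a real obstacle: the content is already in Proposition \ref{prop: vertex flag groups} and Theorem \ref{thm: generalised complexity}, and the corollary is just the translation between rank and complexity data.
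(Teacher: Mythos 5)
Your proof is correct and takes essentially the same route as the paper: it combines the rank formula $\rk(\cV(\Gamma,f)) = \sum_{H\in f}(k-\rk(H))$ from Proposition \ref{prop: vertex flag groups} with Theorem \ref{thm: generalised complexity}, Corollary \ref{cor: minimal complexity} and Theorem \ref{thm: C => hyperplane spanning}, exactly as in the discussion preceding the corollary in the paper. Both directions and the subsystem case are handled the same way, so there is nothing to add.
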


Recall that the full vertex set $\cV(\Gamma)$ is the union of groups $\cV(\Gamma,f)$ over all possible flags (every vertex is an intersection of hyperplanes from some flag). Since there are only finitely many flags, we have the following:

\begin{corollary} \label{cor: finite index vertices}
Suppose that $\mathcal{S}$ is homogeneous. Then $\mathcal{S}$ satisfies {\bf C} if and only if there exists some $M \in \N$ so that $\cV(\Gamma) \subseteq \frac{1}{M} \Gamma_<$ (after taking a suitable translate of the window). The analogous result holds for subsystems.
\end{corollary}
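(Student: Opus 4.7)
The plan is to derive this directly from the preceding Corollary \ref{cor: G finite index in vertices}, combined with the observation that $\sF$ is a finite set and $\cV(\Gamma)$ is a finite union of the flag-vertex subgroups $\cV(\Gamma,f)$.

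For the forward direction, suppose $\mathcal{S}$ satisfies $\mathbf{C}$. Translate the window so that $0 \in \cV(\Gamma)$, which is possible by the homogeneous hypothesis (as in the preceding discussion), so that each $\cV(\Gamma,f)$ is a subgroup of $\intl$ containing $\Gamma_<$. By Corollary \ref{cor: G finite index in vertices}, for each flag $f \in \sF$ the inclusion $\Gamma_< \leqslant \cV(\Gamma,f)$ is of finite index, hence there is some $M_f \in \N$ with $M_f \cdot \cV(\Gamma,f) \leqslant \Gamma_<$, equivalently $\cV(\Gamma,f) \leqslant \tfrac{1}{M_f}\Gamma_<$. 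Since $\sH$ is finite so is $\sF$, and we may set $M \coloneqq \mathrm{lcm}\{M_f : f \in \sF\}$. Then
\[
\cV(\Gamma) = \bigcup_{f \in \sF} \cV(\Gamma,f) \subseteq \bigcup_{f \in \sF} \tfrac{1}{M_f}\Gamma_< \subseteq \tfrac{1}{M}\Gamma_<,
\]
giving the required containment.

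For the converse, assume $\cV(\Gamma) \subseteq \tfrac{1}{M}\Gamma_<$ for some $M \in \N$, again after a translation making $0$ a vertex. Fix any flag $f \in \sF$. By Proposition \ref{prop: vertex flag groups}, $\cV(\Gamma,f)$ is a subgroup of $\intl$ containing $\Gamma_<$, so we have a chain of inclusions $\Gamma_< \leqslant \cV(\Gamma,f) \leqslant \tfrac{1}{M}\Gamma_<$. The outer inclusion $\Gamma_< \leqslant \tfrac{1}{M}\Gamma_<$ is of finite index (the index is $M^k$, since $\Gamma$ has rank $k$), so the intermediate inclusion $\Gamma_< \leqslant \cV(\Gamma,f)$ is of finite index too. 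Applying Corollary \ref{cor: G finite index in vertices} in the reverse direction yields property $\mathbf{C}$.

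The subsystem version follows by an identical argument, replacing $\sF$, $\Gamma$, $\cV(\Gamma,f)$ with $\sF_i$, $\Gamma_i$, $\cV_i(\Gamma_i,f)$, using that each subsystem is a cut and project scheme in its own right (Section \ref{sec: subsystems}) and that the corresponding subsystem version of Corollary \ref{cor: G finite index in vertices} is already established. There is no substantive obstacle: the finiteness of $\sF$ is what allows one to pass from a uniform finite index statement per flag (Corollary \ref{cor: G finite index in vertices}) to a single uniform denominator $M$ working for the entire vertex set.
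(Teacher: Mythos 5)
Your proof is correct and follows essentially the same route the paper intends: it reduces to Corollary \ref{cor: G finite index in vertices} via the decomposition $\cV(\Gamma) = \bigcup_{f \in \sF} \cV(\Gamma,f)$ together with finiteness of $\sF$, which is exactly the observation the paper records in the sentence preceding the corollary. You simply spell out the details (choosing $M$ as an lcm of per-flag exponents, and extracting the per-flag finite-index statement from $\Gamma_< \leqslant \cV(\Gamma,f) \leqslant \tfrac{1}{M}\Gamma_<$ for the converse), all of which is correct.
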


The above results apply equally well to weakly homogeneous schemes. Indeed, recall that a scheme is weakly homogeneous if it is homogeneous after replacing $\Gamma$ with some $\frac{1}{N}\Gamma$ (see Lemma \ref{lem: hom vs weak hom}); this replacement does not affect property {\bf C}, by the formula in Theorem \ref{thm: generalised complexity}. So the above results also apply to weakly homogeneous schemes, by replacing each term $\Gamma$ by some $\frac{1}{N}\Gamma$. Then the above corollary has the following important implication for the sizes of cut regions: if $\Gamma_<$ is Diophantine, and the scheme is weakly homogeneous and satisfies {\bf C}, then vertices of cut regions stay distant. Indeed, the vertices of cut regions are precisely the subset $W \cap \cV(\Gamma)$, and we have
\[
\cV(\Gamma) \subseteq \cV(\frac{1}{N}\Gamma) \subseteq \frac{1}{N}\frac{1}{M}\Gamma_<,
\]
for some $M \in \N$. The latter is just a rescaling of $\Gamma_<$, which is still Diophantine.

To effectively apply the above, we need a quantitative version of the previous corollary:

\begin{proposition} \label{prop: vertex inclusions}
Suppose that $\mathcal{S}$ is homogeneous, with window translated so that $o$ may be taken to be the origin in Definition \ref{def: homogeneous}. Then there is some $c > 0$ so that
\[
\Gamma_<(r) \subseteq \cV(\Gamma(r + c)).
\]
Conversely, suppose that $\mathcal{S}$ also satisfies {\bf C}. Then there is some $M \in \N$ and $C > 0$ so that, for sufficiently large $r$,
\[
M \cdot \cV(\Gamma(r)) \subseteq \Gamma_<(Cr).
\]
The analogous result holds for subsystems.
\end{proposition}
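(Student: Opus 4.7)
The plan is to prove the two inclusions separately, using homogeneity for the first and both homogeneity and \textbf{C} for the second; the subsystem versions follow from the same arguments applied within each factor.

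For the first inclusion $\Gamma_<(r) \subseteq \cV(\Gamma(r+c))$, I fix any flag $f \in \sF$. By homogeneity (taking $o$ as the origin), for each $H \in f$ there exists $\gamma_H \in \Gamma$ with $V(H) = H + (\gamma_H)_<$. Set $c \coloneqq \max_{H \in f} \|\gamma_H\|$. Given any $\gamma \in \Gamma(r)$, for each $H \in f$ I observe that $\gamma_< \in V(H) + \gamma_< = H + (\gamma + \gamma_H)_<$, where $\gamma + \gamma_H \in \Gamma(r+c)$. Since $f$ is a flag, $\bigcap_{H \in f} (V(H) + \gamma_<)$ is a translate of $\bigcap_{H \in f} V(H) = \{0\}$, and hence equals $\{\gamma_<\}$. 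Therefore $\gamma_< \in \cV(\Gamma(r+c), f) \subseteq \cV(\Gamma(r+c))$.

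For the second inclusion, suppose $v \in \cV(\Gamma(r))$, so $v \in \cV(\Gamma(r), f)$ for some flag $f$. Then $v = \bigcap_{H \in f}(H + (\delta_H)_<) = \bigcap_{H \in f}(V(H) + (\delta_H - \gamma_H)_<)$ for some $\delta_H \in \Gamma(r)$. I want to exhibit $v$ as the image of a linear map evaluated at a lattice element of controlled norm. Define the map $\phi_f \colon \Gamma^n \to \intl$ by
\[
\phi_f(\vec\tau) \coloneqq \bigcap_{H \in f}\bigl(V(H) + (\tau_H)_<\bigr).
\]
Using the projections along the $V(H)$ onto the complementary lines, one sees that $\phi_f$ is $\Z$-linear, with image equal to $\cV(\Gamma, f)$. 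By Corollary \ref{cor: finite index vertices}, property \textbf{C} together with homogeneity yield some $M \in \N$ with $M\cV(\Gamma) \subseteq \Gamma_<$. So composition with the lift $(\,\cdot\,)^\wedge \colon \Gamma_< \to \Gamma$ produces a group homomorphism $\tilde\phi_f \coloneqq (\,\cdot\,)^\wedge \circ (M\phi_f) \colon \Gamma^n \to \Gamma$ satisfying $\tilde\phi_f(\vec\tau)_< = M\phi_f(\vec\tau)$.

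The key observation is then that $\tilde\phi_f$, being a group homomorphism between free abelian groups of finite rank, extends uniquely to an $\R$-linear map $\tot^n \to \tot$, and as such is bounded: there is $C_f > 0$ with $\|\tilde\phi_f(\vec\tau)\| \leq C_f \max_H \|\tau_H\|$ for all $\vec\tau \in \Gamma^n$. Applying this to $\vec\tau = (\delta_H - \gamma_H)_{H \in f}$, whose entries all have norm at most $r + c'$ for $c' = \max_{H \in f} \|\gamma_H\|$, furnishes an element $\tilde\phi_f(\vec\tau) \in \Gamma$ with $\|\tilde\phi_f(\vec\tau)\| \leq C_f(r + c')$ and projection $Mv$. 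Taking $C > \max_f C_f$ (over the finite set of flags), this shows $Mv \in \Gamma_<(Cr)$ for sufficiently large $r$. The analogous statement for subsystems is obtained by running the same argument with $\Gamma_i$, $X_i$, $\sF_i$ in place of $\Gamma$, $\intl$, $\sF$, using Corollary \ref{cor: finite index vertices} for the subsystem. The only step that requires care is verifying that $\tilde\phi_f$ really is a group homomorphism extending $\R$-linearly: this is immediate, since $\phi_f$ is manifestly $\Z$-linear and, after multiplication by $M$, its image sits inside $\Gamma_<$, where the lift $(\,\cdot\,)^\wedge$ is just the inverse of the group isomorphism $\pi_< \colon \Gamma \to \Gamma_<$.
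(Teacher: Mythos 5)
Your proof is correct. The forward inclusion is essentially identical to the paper's. The converse direction, however, takes a genuinely different and more conceptual route. The paper constructs an explicit norm $\eta$ on the total space by $\eta(e) := \max_{H\in f}\|e - \langle\Gamma^H\rangle_\R\|$, verifies non-degeneracy using that $\bigcap_{H\in f}\langle\Gamma^H\rangle_\R = \{0\}$ (which needs the flag property and injectivity of $\pi_<$), builds a compatible norm $\eta'$ on $\bigoplus_{H\in f}\Gamma/\Gamma^H \cong \cV(\Gamma,f)$, and then passes the bound through norm-equivalence. You instead package the vertex construction as a $\Z$-linear map $\phi_f \colon \Gamma^n \to \intl$, use Corollary \ref{cor: finite index vertices} to lift $M\phi_f$ through $(\,\cdot\,)^\wedge$ to a homomorphism $\tilde\phi_f \colon \Gamma^n \to \Gamma$ between free abelian groups of finite rank, and then invoke the standard facts that such a homomorphism has a unique $\R$-linear extension and that linear maps on finite-dimensional spaces are bounded. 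This bypasses the explicit norm-comparison machinery and makes it transparent that the constant $C$ is just the operator norm of $\tilde\phi_f$. Both arguments hinge on the same underlying group structure of $\cV(\Gamma,f)$ and both use the same corollary; yours is shorter and arguably more conceptual, while the paper's is more hands-on and exposes the geometric content of the bound (distances to the subspaces $\langle\Gamma^H\rangle_\R$). One small point worth making explicit in your write-up: the well-definedness of $\tilde\phi_f$ on all of $\Gamma^n$ requires that $M\phi_f(\vec\tau) \in \Gamma_<$ for every $\vec\tau$, which follows because the image of $\phi_f$ is exactly $\cV(\Gamma,f)$ (using homogeneity to pass between $H$-translates and $V(H)$-translates), and $M\cV(\Gamma)\subseteq\Gamma_<$ by the corollary; you gesture at this at the end but it is the one place a reader could stumble.
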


\begin{proof}
After a suitable translation we have that $0 \in H + \Gamma_<$ for each $H \in \sH$. Hence, for each $H \in \sH$, there exists some $\gamma_H \in \Gamma$ with $0 \in H + (\gamma_H)_<$. Take any $x \in \Gamma(r)$. Then $x_< \in V(H) + x_< = H + (\gamma_H+x)_<$ for any $H \in \sH$. We see that $x_< \in \cV(\Gamma(r+c))$, where $c = \max_{H \in \sH} \|\gamma_H\|$.

So now suppose that $\mathcal{S}$ also satisfies {\bf C}. By Corollary \ref{cor: finite index vertices}, we have an inclusion $M \cdot \cV(\Gamma) \leqslant \Gamma_<$.

Take any flag $f \subseteq \sH$ and recall that we have a canonical identification $\cV(\Gamma,f) \cong \bigoplus_{H \in f} \frac{\Gamma}{\Gamma^H}$, see Equation \ref{eq: flag isomorphism}; we sometimes speak of elements in either group interchangeably. Choose a new norm $\eta$ on $\tot$ by letting
\[
\eta(e) \coloneqq \max_{H \in f} \|e-\langle \Gamma^H \rangle_\R \|,
\]
where by $\|e - \langle \Gamma^H \rangle_\R \|$, we mean the distance from $e$ to the subspace $\langle \Gamma^H \rangle_\R$. This is a norm on $\tot$. Indeed, invariance under scalar multiplication and the triangle inequality follow from the same for each $\|e - \langle \Gamma^H \rangle_\R\|$. Clearly $\eta(0) = 0$. Finally, suppose that $\eta(e) = 0$. We show next that this implies that $e=0$. 

First, notice that for each $H \in f$ we have $\Gamma^H_< \subset V(H)$ so that by linearity,
\[
\langle \Gamma^H \rangle_\R\subset \langle (V(H)+\intl) \cap \Gamma \rangle_\R. 
\]
But $\displaystyle \bigcap_{H\in f}V(H)=\{0\}$ since $f$ is a flag, so that 
\[
\bigcap_{H\in f}\langle \Gamma^H \rangle_\R\subset \bigcap _{H\in f}\langle (V(H)+\intl) \cap \Gamma \rangle_\R=\{0\}, 
\]
which also uses the fact that by injectivity of $\pi_<$ on $\Gamma$, $\intl$ only contains the one lattice point, $0$. Hence, the subspaces $\langle \Gamma^H \rangle_\R$ have trivial intersection. So if $\eta(e) = 0$ then $e \in \langle \Gamma^H \rangle_\R$ for each $H \in f$, so $e = 0$. This proves that $\eta$ is a norm. Since norms are linearly equivalent, there is some constant $A > 0$ so that $\|e\| \leq A \eta(e)$ for all $e \in \tot$.

Put a norm on the group $\bigoplus_{H \in f} \frac{\Gamma}{\Gamma^H} \cong \cV(\Gamma,f)$ by setting
\[
\eta' ([\gamma_H])_H \coloneqq \max_{H \in f} \|\gamma_H - \langle \Gamma^H \rangle_\R\|,
\]
where we may take any representative $\gamma_H$ for a class $[\gamma_H] \in \frac{\Gamma}{\Gamma^H}$, since translating by an element of $\Gamma^H$ does not change the distance to $\langle \Gamma^H \rangle_\R$. Notice that if $\gamma \in \Gamma$ then $\gamma_< \in \cV(\Gamma,f)$ is represented by the element $(\gamma,\gamma_,\ldots,\gamma)$ and so we have
\[
\eta'(\gamma_<) = \max_{H \in f} \|\gamma - \langle \Gamma^H \rangle_\R\| = \eta(\gamma).
\]

Let $v \in \cV(\Gamma,f,r)$, so that
\[
\{v\} = \bigcap_{H \in f} \left(V(H) + (\gamma_H)_<\right)
\]
for $\gamma_H \in \Gamma(r+c)$ (where $c$, as above, is taken so that we may use translates of the $V(H) \in V(f)$ rather than $H \in f$). Since we may choose each $\|\gamma_H\| \leq r+c$ it follows that each $\|\gamma_H - \langle \Gamma^H \rangle_\R\|  \leq r+c$. Since $Mv = \gamma_<$ for some $\gamma \in \Gamma$, we see that
\[
\|\gamma\| \leq A \cdot \eta(\gamma) = A \cdot \eta'(\gamma_<) = A \cdot \eta'(Mv) \leq AM(r+c),
\]
so $Mv = \gamma \in \Gamma_<(AM(r+c))$. Hence, for $C > AM$, we have that $\cV(\Gamma,f,r) \subseteq \Gamma_<(Cr)$ for sufficiently large $r$. Repeating for each flag $f$, we obtain the required bound. The proof for subsystems is identical.
\end{proof}

The above applies to weakly homogeneous schemes since, again, we can make a scheme homogeneous by replacing $\Gamma$ by some $\frac{1}{N} \Gamma$. Since we ultimately only care about displacements between vertices, this is how we present the result below (which thus does not depend on which translate of the window is chosen):

\begin{corollary} \label{cor: vertex displacements}
Suppose that $\mathcal{S}$ is weakly homogeneous. Then there is some $N \in \N$ and $c > 0$ so that
\[
\Gamma_<(r) \subseteq \cV\left(\frac{1}{N}\Gamma,r+c\right) - \cV\left(\frac{1}{N}\Gamma,r+c\right).
\]
Conversely, suppose that $\mathcal{S}$ also satisfies {\bf C}. Then there is some $M \in \N$ and $C > 0$ so that, for sufficiently large $r$,
\[
M \cdot \left(\cV(\Gamma,r) - \cV(\Gamma,r) \right) \subseteq \left(\frac{1}{N}\Gamma_<\right)(Cr).
\]
The analogous result holds for subsystems.
\end{corollary}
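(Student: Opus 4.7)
The plan is to reduce to the homogeneous case and apply Proposition \ref{prop: vertex inclusions}, with the only genuine subtlety being to strip away dependence on a particular translate of the window by passing to differences of vertices.

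By Lemma \ref{lem: hom vs weak hom}, weak homogeneity of $\mathcal{S}$ furnishes some $N \in \N$ for which $\mathcal{S}' \coloneqq (\tot,\phy,\intl,\tfrac{1}{N}\Gamma,W)$ is homogeneous. After translating the window by some $t \in \intl$ to $W' \coloneqq W - t$, we may arrange that the origin $o = 0$ witnesses the homogeneity of $\mathcal{S}'$, so Proposition \ref{prop: vertex inclusions} applies to $\mathcal{S}'$ with window $W'$. Writing $\cV_W$ and $\cV_{W'}$ for the vertex sets relative to the two windows, the key equivariance observation is that translating $W$ by $-t$ shifts every supporting hyperplane by $-t$, so for any $G \subseteq \tfrac{1}{N}\Gamma$ and $s > 0$,
\[
\cV_{W'}(G,s) = \cV_W(G,s) - t.
\]
In particular the difference set $\cV_W(G,s) - \cV_W(G,s)$ does not depend on which translate of the window is chosen, which is precisely why the corollary is stated in terms of such differences.

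For the first inclusion, take $\gamma \in \Gamma(r) \subseteq (\tfrac{1}{N}\Gamma)(r)$. Proposition \ref{prop: vertex inclusions} applied to $\mathcal{S}'$ supplies a constant $c_1 > 0$ with $\gamma_< \in \cV_{W'}(\tfrac{1}{N}\Gamma, r + c_1)$, so $\gamma_< + t \in \cV_W(\tfrac{1}{N}\Gamma, r+c_1)$. Since $\mathcal{S}'$ is homogeneous with $o = 0$, for each $H \in \sH$ there is some $\gamma_H \in \tfrac{1}{N}\Gamma$ with $0 \in (H-t) + (\gamma_H)_<$; letting $c_2 \coloneqq \max_{H \in \sH}\|\gamma_H\|$, this gives $0 \in \cV_{W'}(\tfrac{1}{N}\Gamma, c_2)$ and hence $t \in \cV_W(\tfrac{1}{N}\Gamma, c_2)$. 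Setting $c \coloneqq \max(c_1, c_2)$, we conclude
\[
\gamma_< = (\gamma_< + t) - t \in \cV_W\bigl(\tfrac{1}{N}\Gamma, r + c\bigr) - \cV_W\bigl(\tfrac{1}{N}\Gamma, r + c\bigr),
\]
as required. For the second inclusion, assume also that $\mathcal{S}$ satisfies \textbf{C}; then $\mathcal{S}'$ does too, since \textbf{C} depends on the scheme only through the ranks computed by Theorem \ref{thm: generalised complexity}, which are insensitive to rescaling $\Gamma$ by $\tfrac{1}{N}$. Proposition \ref{prop: vertex inclusions} applied to $\mathcal{S}'$ yields $M \in \N$ and $C > 0$ with $M \cdot \cV_{W'}(\tfrac{1}{N}\Gamma, r) \subseteq (\tfrac{1}{N}\Gamma)_<(Cr)$ for sufficiently large $r$. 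Given $v_1, v_2 \in \cV_W(\Gamma, r) \subseteq \cV_W(\tfrac{1}{N}\Gamma, r)$, we have $v_i + t \in \cV_{W'}(\tfrac{1}{N}\Gamma, r)$, whence $M(v_i+t) \in (\tfrac{1}{N}\Gamma)_<(Cr)$; taking differences,
\[
M(v_1 - v_2) = M(v_1 + t) - M(v_2 + t) \in \bigl(\tfrac{1}{N}\Gamma\bigr)_<(2Cr),
\]
and relabelling $2C$ as $C$ yields the claim.

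The subsystem versions of both Proposition \ref{prop: vertex inclusions} and Lemma \ref{lem: hom vs weak hom} are explicitly stated, so the same argument transfers verbatim to each $(X_i,\Gamma_i,W_i)$, proving the analogous inclusions. The only real obstacle in the argument is the translation bookkeeping above; once one notices that $\cV_W$ transforms equivariantly under shifts of $W$, the corollary drops out of the proposition essentially for free.
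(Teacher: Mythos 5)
Your proof is correct and follows essentially the same route as the paper's: pass to the homogeneous system $(\tot,\phy,\intl,\tfrac1N\Gamma,W)$ via Lemma \ref{lem: hom vs weak hom}, apply Proposition \ref{prop: vertex inclusions} to a suitable translate of the window, and exploit the translation-equivariance of the vertex sets (so that differences are translate-independent). You are simply more explicit about the bookkeeping than the paper, which compresses the same idea into taking differences of the set containment $(\tfrac1N\Gamma_<)(r) \subseteq \cV(\tfrac1N\Gamma,r+c)$ and using $\Gamma \leqslant \tfrac1N\Gamma$.

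One small slip worth flagging: since $W' = W - t$ shifts the supporting hyperplanes by $-t$, your own equivariance identity $\cV_{W'}(G,s) = \cV_W(G,s) - t$ gives $v_i - t \in \cV_{W'}(\tfrac1N\Gamma,r)$, not $v_i + t$, in the second half of the argument. As you note, the $t$ cancels upon taking $M(v_1 - t) - M(v_2 - t) = M(v_1 - v_2)$, so the conclusion is unaffected; it is purely a sign typo.
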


\begin{proof}
By replacing $\Gamma$ with $\frac{1}{N}\Gamma$, we may obtain a strictly homogeneous system. Applying Proposition \ref{prop: vertex inclusions}, we have that
\[
\left(\frac{1}{N} \Gamma_<\right)(r) - \left(\frac{1}{N}\Gamma_<\right)(r) \subseteq \cV\left(\frac{1}{N}\Gamma,r\right) - \cV\left(\frac{1}{N}\Gamma,r\right).
\]
Since $\Gamma \subseteq \frac{1}{N}\Gamma$, so that $\Gamma_<(r) \subseteq (\frac{1}{N}\Gamma_<)(r)$, the first inclusion follows.

For the second, we proceed as above to obtain
\[
M \cdot \cV\left(\frac{1}{N}\Gamma,r\right) - M \cdot \cV\left(\frac{1}{N}\Gamma,r\right) \subseteq \left(\frac{1}{N}\Gamma_<\right)(Cr) - \left(\frac{1}{N}\Gamma_<\right)(Cr) \subseteq \left(\frac{1}{N} \Gamma_<\right)(2Cr)
\]
for sufficiently large $r$. We have that $\cV(\Gamma,r) \subseteq \cV(\frac{1}{N}\Gamma,r)$, so the result follows after replacing $C$ with $2C$.
\end{proof}

\section{Proof that {\bf C} and {\bf D} imply {\bf LR}} \label{sec: C and D => LR}

The previous result shows that a weakly homogeneous scheme which satisfies {\bf C} has cut regions whose vertices belong to some (rescaling of) the lattice $\Gamma_<$. Hence, if $\Gamma_<$ is Diophantine, then these vertices never get too close to each other. There are restrictions on the geometries of these cut regions: their faces are aligned with translates of the hyperplanes. Hence, these convex regions contain reasonably large balls when their vertices are distant:

\begin{lemma} \label{lem: geometric lemma} Let $P$ be an $n$-dimensional polytope for which $d(v_1,v_2) \geq \epsilon$ for any distinct pair of vertices $v_1$, $v_2$ of $P$. Then $P$ contains a ball of radius $C \epsilon$, with the constant $C$ depending only on the collection $\sH(P)$ of supporting hyperplanes defining $P$. 
\end{lemma}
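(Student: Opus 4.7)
The plan is to work locally at a single vertex of $P$: I will show that $P$ contains a scaled copy of the convex hull of the unit edge directions at a vertex, which has positive inradius depending only on $\sH(P)$. Fix any vertex $v$ of $P$ and let $d_1, \dots, d_s$ denote the unit vectors along the edges of $P$ emanating from $v$. Since $P$ is $n$-dimensional and $v$ is a vertex, the tangent cone of $P$ at $v$ is a pointed $n$-dimensional polyhedral cone, so $s \geq n$ and $\{d_1, \dots, d_s\}$ spans $\R^n$.

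For each $i$, let $w_i$ be the vertex of $P$ adjacent to $v$ along the $i$th edge; by hypothesis $\|w_i - v\| \geq \epsilon$, so the point $v + \epsilon d_i$ lies on the segment $[v, w_i]$ and hence in $P$. By convexity,
\[
P \supseteq v + \epsilon K_v, \quad \text{where } K_v := \conv\{0, d_1, \dots, d_s\}.
\]
Since the $d_i$ span $\R^n$, $K_v$ is a full-dimensional convex polytope, hence contains a ball of some positive radius $C_v > 0$.

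It remains to see that $C_v$ admits a uniform lower bound depending only on $\sH(P)$. The crucial point is that the unordered tuple $\{d_1, \dots, d_s\}$ is determined by the subset $I(v) \subseteq \sH(P)$ of supporting hyperplanes of $P$ that contain $v$ (together with their half-space orientations inherited from the definition of $P$), via the standard combinatorial description of the tangent cone. As $v$ ranges over vertices of $P$, $I(v)$ varies within the finite collection of subsets of the finite set $\sH(P)$, so the values $C_v$ range over a finite positive set, and we may take $C := \min_v C_v > 0$. I expect no serious obstacle beyond carefully recording the standard fact that the edge directions at a vertex of a convex polytope depend only on the set of supporting hyperplanes through that vertex.
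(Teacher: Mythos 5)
Your proof is correct, and it takes a genuinely simpler route than the paper's. Both arguments localise at a vertex $v$ and exploit the fact that all edges emanating from $v$ have length at least $\epsilon$, but they diverge after that. The paper first proves (by induction on dimension) that every vertex admits an \emph{edge span}, i.e.\ a set of $n$ edges with linearly independent directions, then forms the simplex on the adjacent vertices and controls its inradius via two explicit linear maps (a rotation-type map $M$ built from the edge directions and a diagonal scaling $D$), estimating bi-Lipschitz constants for each. Your argument sidesteps both the induction and the linear-algebraic bookkeeping: you use \emph{all} edges at $v$, observe that $P \supseteq v + \epsilon K_v$ for $K_v = \conv\{0, d_1,\dots,d_s\}$ with $d_i$ the \emph{unit} edge directions, and note that $K_v$ is a fixed full-dimensional polytope determined entirely by the tangent cone $\bigcap_{H\in I(v)}(H^+-v)$, hence by $I(v)$. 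Because $K_v$ is built from unit vectors, its inradius $C_v$ is automatically independent of $\epsilon$ and of the actual edge lengths, which is exactly the uniformity the paper has to work for via the estimate $d(D(x),D(y))\ge \alpha\epsilon\, d(x,y)$. The one fact you invoke without proof --- that the tangent cone at a vertex of an $n$-polytope is pointed and $n$-dimensional, with the edge directions as its extreme rays (hence spanning) --- is standard polytope theory and is precisely what the paper's edge-span induction reproves; citing it as standard is a reasonable choice and is what streamlines the argument. Finally, taking $C=\min_v C_v$ over the finitely many vertices closes the proof, and the observation that $I(v)$ ranges over subsets of $\sH(P)$ gives the stated dependence of $C$ on $\sH(P)$. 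In short, correct, and a cleaner proof of the same statement.
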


\begin{proof} For this proof call a collection $\{e_i\}_{i=1}^n$ of edges an \emph{edge span at $v$}, for a vertex $v$, if each edge $e_i$ contains $v$ and additionally the directions that the $e_i$ point in are linearly independent (that is, the collection $\{p_1-v,p_2-v,\ldots,p_n-v\}$ are linearly independent, where $p_i \neq v$ is a point of $e_i$). Geometrically, it is intuitively clear that every vertex has an edge span, although we provide here a proof. In dimension $1$ it is trivial, since the two vertices are incident with the edge $P$ itself, whose affine span is one-dimensional. Supposing it is true for $(n-1)$-polytopes, take any vertex $v$ of a given $n$-polytope. Clearly there are at least two facets containing $v$, since $v$ is the intersection point of $n$ distinct hyperplanes of $\sH(P)$, each intersecting $P$ in a facet. Taking two of these facets $F_1 = P \cap H_1$ and $F_2 = P \cap H_2$, for $H_1$, $H_2 \in \sH(P)$, by the induction hypothesis we can find edge spans $\{e_1,\ldots,e_{n-1}\}$ and $\{e_1',\ldots,e_{n-1}'\}$ at $v$ with respect to $F_1$ and $F_2$. Some $e_\ell'$ points out of the affine hyperplane spanned by the $e_i$, since otherwise $H_1 = H_2$; indeed, each $H_i$ is determined as the affine span of the $e_i$ with respect to $v$. So $\{e_1,\ldots,e_{n-1},e_\ell'\}$ gives an edge span at $v$.

So now let $P$ be given, and take any edge span $\{e_1,\ldots,e_n\}$ at a vertex $v$. Each $e_i$ has two endpoints as vertices, $v$ and some $v_i$, and since the $e_i$ are distinct so are the vertices $v$, $v_1$, \ldots, $v_n$. By assumption we have that these vertices are at least distance $\epsilon$ from each other, and by convexity we have that the simplex given by their convex hull is contained in $P$.

So it is sufficient to show that the convex hull $S = \conv\{0,v_1-v,v_2-v,\ldots,v_n-v\}$ contains a ball of radius $C \epsilon$, with $C$ depending only on the $V(H)$. Consider the linear map $M$ which takes the $i$th standard basis vector to $(v_i - v)/\|v_i-v\|$. Let $D$ be the linear map represented by the diagonal matrix with $i$th entry $\|v_i-v\|$. Then $S = (D \circ M)(\Delta)$, where $\Delta$ is the simplex spanned by $0$ and the standard basis vectors. It contains a ball of radius $R$ depending only on the dimension $n$. Since $D$ is a diagonal matrix, $d(D(x),D(y)) \geq \alpha \min \|v_i-v\|\cdot d(x,y) \geq \alpha \epsilon \cdot d(x,y)$ for some constant $\alpha > 0$ depending only on the choice of norm. Similarly, $M$ is a non-singular (hence bi-Lipschitz) linear map which is determined only by the directions of the $e_i$. So there is some bound $C'$ for which $d(M(x),M(y)) \geq C' d(x,y)$, where a suitable bound $C'$ can be determined just by the collection of hyperplanes $\sH(P)$ (which determines the finite set of possible edges). Hence, $S$ contains a ball of radius $(C' \alpha R) \cdot \epsilon$, as required. \end{proof}

We apply the above to find large balls in cut regions for Diophantine schemes:

\begin{lemma} \label{lem: large cut regions}
Suppose that $\mathcal{S}$ is weakly homogeneous, satisfies {\bf C} and {\bf D} and has constant hyperplane stabiliser rank. Then there is some $c > 0$ so that, for sufficiently large $r$, every cut region $C \in \sC(r)$ contains a ball of radius $cr^{-\delta}$, where $\delta = \frac{d}{n}$. An analogous result holds for the subsystems $(X_i,\frac{1}{N}\Gamma_i,W_i)$.
\end{lemma}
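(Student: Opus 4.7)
The plan is to chain together three results already established: the vertex-to-lattice comparison from Corollary \ref{cor: vertex displacements}, the Diophantine lower bound supplied by {\bf D}, and the elementary geometric Lemma \ref{lem: geometric lemma}. Fix a cut region $R \in \sC(r)$. By construction $R$ is a bounded convex polytope whose bounding hyperplanes are translates of elements of $\sH$ by vectors in $\Gamma_<(r)$, so every vertex of $R$ is an intersection point of a flag's worth of such translates, that is, a point of $\cV(\Gamma,r)$.

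Next, I estimate the separation between distinct vertices $v \neq v'$ of $R$. The displacement $v-v'$ is a non-zero element of $\cV(\Gamma,r)-\cV(\Gamma,r)$, so by Corollary \ref{cor: vertex displacements} (applied to the weakly homogeneous scheme $\mathcal{S}$ satisfying {\bf C}) there exist $M,N \in \N$ and $K > 0$, independent of $r$ and $R$, with
\[
M(v-v') \in \left(\tfrac{1}{N}\Gamma_<\right)(Kr)\setminus\{0\}.
\]
Writing $M(v-v') = \xi_<$ for some non-zero $\xi \in \tfrac{1}{N}\Gamma$ with $\|\xi\| \leq Kr$, and using Lemma \ref{lem: Diophantine finite index} to transfer {\bf D} from $\Gamma_<$ to $\tfrac{1}{N}\Gamma_<$ with the same exponent $\delta = d/n$, I obtain $\|\xi_<\| \geq c_0 \|\xi\|^{-\delta} \geq c_0 (Kr)^{-\delta}$. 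Dividing by $M$ yields $\|v-v'\| \geq c_1 r^{-\delta}$ for a constant $c_1 > 0$ depending only on $\mathcal{S}$.

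Finally, I apply Lemma \ref{lem: geometric lemma} to $R$. Since the bounding hyperplanes of any cut region are translates of those in the fixed finite collection $\sH_0$, the geometric constant produced by Lemma \ref{lem: geometric lemma} can be chosen uniformly across all $R \in \sC(r)$. Combined with the separation bound above, this yields a ball of radius $c r^{-\delta}$ inside each $R$, with $c > 0$ depending only on $\mathcal{S}$. The subsystem statement follows by running exactly the same argument inside each $(X_i,\tfrac{1}{N}\Gamma_i,W_i)$, using the inherited weakly homogeneous property and {\bf C} for subsystems from Section \ref{sec: accs and cuts in subsystems}, together with {\bf D} for that subsystem with exponent $\delta_i = d_i/n_i$. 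I do not anticipate any real obstacle: the heavy lifting is already in Sections \ref{sec: vertices} and \ref{sec: diophantine}, and the only point requiring care is the uniformity of the constant in Lemma \ref{lem: geometric lemma}, which is immediate because all hyperplane directions come from the finite set $\sH_0$.
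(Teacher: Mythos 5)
Your proposal is correct and follows essentially the same route as the paper's proof: vertices of cut regions lie in $\cV(\Gamma,r)$, Corollary~\ref{cor: vertex displacements} passes their displacements into a scaled copy of $\Gamma_<$, property {\bf D} bounds these from below, and Lemma~\ref{lem: geometric lemma} converts vertex separation into an inscribed ball. The only cosmetic difference is that you invoke Lemma~\ref{lem: Diophantine finite index} explicitly to transfer the Diophantine property to $\tfrac{1}{N}\Gamma_<$, whereas the paper simply absorbs the factor $1/(MN)$ into the constant; both are equivalent.
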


\begin{proof}
Since $\mathcal{S}$ has constant hyperplane stabiliser rank, we do not need to take a decomposition of $\mathcal{S}$ in applying {\bf D}; see Definition \ref{def: diophantine scheme} and the proceeding comments. Hence, $\Gamma_<$ is Diophantine in $\intl$, so there exists some $\nu > 0$ so that
\[
\|\gamma_<\| \geq \nu \cdot \|\gamma\|^{-\delta}
\]
for all $\gamma \in \Gamma$, where $\delta = \frac{d}{n}$.

By definition, every vertex of an $r$-cut region is an element of $\cV(\Gamma,r)$. By Corollary \ref{cor: vertex displacements}, given vertices $v_1$, $v_2$ of an $r$-cut region, we have that
\[
v_1 - v_2 = \frac{1}{M}\frac{\gamma_<}{N} \text{ for some } \gamma \in \Gamma \cap B_{NCr}(0)
\]
for sufficiently large $r$, where $M$, $N$ and $C > 0$ are fixed constants depending only on $\mathcal{S}$. Hence, if $v_1 \neq v_2$, we have that
\begin{equation} \label{eq: distant vertices}
\|v_1 - v_2\| = \frac{1}{MN}\|\gamma_<\| \geq \frac{\nu}{MN}(NCr)^{-\delta} = Ar^{-\delta},
\end{equation}
for a fixed constant $A > 0$. By the above geometric Lemma \ref{lem: geometric lemma}, for sufficiently large $r$ every $r$-cut region contains a ball of radius $c r^{-\delta}$ for some constant $c > 0$.

An identical proof for the subsystem $(X_i,\frac{1}{N}\Gamma_i,W_i)$ holds; we remark that $\frac{1}{N}\Gamma_i$ is still Diophantine, since it is just a rescaling of $\Gamma_i$.
\end{proof}

The above shows that cut regions (in the subsystems) are reasonably large when {\bf C} and {\bf D} hold. This implies that patches appear with high frequency. They also appear relatively uniformly, without large gaps, since orbits are dense by transference. This establishes the second direction of our main theorem:

\begin{theorem}
If $\mathcal{S}$ is weakly homogeneous and satisfies both {\bf C} and {\bf D}, then it is {\bf LR}.
\end{theorem}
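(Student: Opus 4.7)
The plan is to prove linear repetitivity directly by showing that from any $z\in\Lambda$, and for any $r$-acceptance domain $A\subseteq W$, there is $y\in\Lambda$ with $y^*\in A$ and $\|y-z\|\leq Cr$, for $C$ independent of $r$ and $A$. Since acceptance domains parametrise $r$-patches, this gives $\rho(r)\leq Cr$.

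First, apply Corollary \ref{cor: decompose} to decompose $\mathcal{S}$ into subsystems $(X_i,\Gamma_i,W_i)$, $i=1,\ldots,m$, with constant hyperplane stabiliser ranks. By Proposition \ref{prop: complexity of subsystems} and the inheritance lemmas of Section \ref{sec: accs and cuts in subsystems}, each subsystem inherits {\bf C} and weak homogeneity, and by hypothesis {\bf D} ensures each $(\Gamma_i)_<\leqslant X_i$ is Diophantine with exponent $\delta_i=d_i/n_i$. For a given $r$-acceptance domain $A$, Lemma \ref{lem: cuts refine acc} supplies a cut region $C\in\sC(r)$ with $C\subseteq A$; Lemma \ref{lem: cut region products} then writes $C\supseteq C_1+\cdots+C_m$ with $C_i\in\sC_i(\lambda r)$; and Lemma \ref{lem: large cut regions} (applied to each subsystem $(X_i,\tfrac{1}{N}\Gamma_i,W_i)$) shows that each $C_i$ contains a ball $B_i\subseteq X_i$ of radius $c_i(\lambda r)^{-\delta_i}$ centred at some point. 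Combining, $A$ contains a translated product box $x+(B_1+\cdots+B_m)$ for some $x\in\intl$.

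To hit this box from $z$, it suffices to find $\gamma\in\Gamma$ with $\gamma_<\in (x-z^*)+(B_1+\cdots+B_m)$, using $\gamma_\vee$ as the displacement. Decompose $x-z^* = y_1+\cdots+y_m$ along the direct sum $\intl=X_1\oplus\cdots\oplus X_m$. By the Transference Theorem \ref{thm: transference}, together with Lemma \ref{lem: transference general regions} for the bounded region containing the $y_i$, each $(\Gamma_i)_<$ is densely distributed in $X_i$: for every $R>0$ there exists $\gamma_i\in\Gamma_i$ with $\|\gamma_i\|\leq R$ and $\|(\gamma_i)_<-y_i\|\leq A_iR^{-\delta_i}$, for some constant $A_i$ depending only on the subsystem. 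Choose $K>0$ large enough that
\[
A_i(Kr)^{-\delta_i}\leq c_i(\lambda r)^{-\delta_i}\quad\text{for all }i=1,\ldots,m,
\]
which is possible since there are only finitely many subsystems with fixed $\delta_i$, $A_i$, $c_i$ and $\lambda$. Setting $R=Kr$, each $(\gamma_i)_<-y_i$ lies in $B_i$. Put $\gamma\coloneqq\gamma_1+\cdots+\gamma_m$; then $\gamma\in\Gamma_1+\cdots+\Gamma_m\leqslant\Gamma$ by Theorem \ref{thm: finite index splitting} (and Definition \ref{def: subsystems}), $\|\gamma\|\leq mKr$, and $\gamma_<-(x-z^*)=\sum_i((\gamma_i)_<-y_i)\in B_1+\cdots+B_m$. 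Therefore $y\coloneqq z+\gamma_\vee\in\Lambda$ satisfies $y^*=z^*+\gamma_<\in A$ and $\|y-z\|\leq mKr$, so $\rho(r)\leq mKr$ for all sufficiently large $r$.

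The main obstacle is coordinating the different Diophantine exponents $\delta_i$ across subsystems so that a single bound $R\asymp r$ gives simultaneous approximations of the target point in every $X_i$ component; this is ultimately handled by the finiteness of the decomposition together with the uniform scaling of transference with $R$. The deeper technical difficulties have already been absorbed into Lemma \ref{lem: large cut regions}, which couples {\bf C}, {\bf D} and weak homogeneity through the vertex-set analysis of Section \ref{sec: vertices}, and into the structural results of Sections \ref{sec: decompositions}--\ref{sec: accs and cuts in subsystems} that reduce the whole-scheme geometry to a product of subsystem data.
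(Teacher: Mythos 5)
Your proof is correct and follows essentially the same approach as the paper's: decompose into subsystems of constant stabiliser rank, use Lemmas \ref{lem: cuts refine acc} and \ref{lem: cut region products} to replace the acceptance domain by a product of subsystem cut regions, apply Lemma \ref{lem: large cut regions} to find a ball of radius $\asymp r^{-\delta_i}$ in each factor, and then hit that ball coordinatewise by transference (Theorem \ref{thm: transference} via Lemma \ref{lem: transference general regions}). The only cosmetic gap is that you absorb the bounded linear factor from the projection $\pi_\vee$ into the final constant without comment (going from $\|\gamma\|\leq mKr$ in the total space to $\|y-z\|=\|\gamma_\vee\|\ll r$ in the physical space), which the paper handles explicitly with a constant $\alpha$; this is harmless.
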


\begin{proof}
By the previous Lemma \ref{lem: large cut regions}, for each subsystem $(X_i,\frac{1}{N}\Gamma_i,W_i)$, the $r$-cut regions contain balls of radius $c r^{-\delta_i}$ for sufficiently large $r$ and some fixed $c > 0$. By definition, each $(\Gamma_i)_<$ is Diophantine. By transference (Theorem \ref{thm: transference}), each $\Gamma_i$ is densely distributed. Hence, by Lemma \ref{lem: transference general regions}, there exists some $\tau > 0$ so that, for any for $w \in W_i-W_i$, there is some $\gamma_i \in \Gamma_i(r)$ with
\begin{equation} \label{eq: close projection}
\|w-(\gamma_i)_<\| \leq \tau \cdot r^{-\delta_i};
\end{equation}
by taking $\tau$ large enough, we may assume this holds for each $i$ and sufficiently large $r > 0$.

Take any $r$-patch $P$, with $r$ sufficiently large to apply the above bounds. It has an associated $r$-acceptance domain $A_P \in \sA(r)$. By Lemma \ref{lem: cuts refine acc}, for sufficiently large $r$ we have that $C \subseteq A_P$ for some cut region $C \in \sC(r)$. By Lemma \ref{lem: cut region products}, there are cut regions $C_i \in \sC_i(\lambda r)$ for the subsystems $(X_i,\frac{1}{N}\Gamma_i,W_i)$ so that $C_1 + \cdots + C_m \subseteq C \subseteq A_P$.

Take any $y \in \cps$, and write $y = \gamma_\vee$ for some $\gamma \in \Gamma$. By the above, each $C_i$ contains some ball $B_i$ of radius $c(\lambda r)^{-\delta_i} = (c \lambda^{-\delta_i}) r^{-\delta_i}$, with centre $b_i \in B_i$.

We may write $\gamma_< = x_1 + \cdots + x_m$, where each $x_i \in W_i$ (since $W = W_1 + \cdots + W_m$ and $y^* = \gamma_< \in W$). Then we have $b_i-x_i \in B_i-x_i \subseteq W_i-W_i$, so by Equation \ref{eq: close projection} there exist $\gamma_i \in \Gamma_i(\kappa r)$ with
\[
\|(b_i-x_i) - (\gamma_i)_<\| \leq \tau \cdot (\kappa r)^{-\delta_i} \leq (c \lambda^{-\delta_i}) r^{-\delta_i},
\]
for some suitably large $\kappa > 0$ (with respect to $c$, $\tau$, $\lambda$ and the $\delta_i$, and hence only depending on $\mathcal{S}$). It follows that each $(\gamma_i)_< \in B_i-x_i$, so that $(\gamma_i)_<+x_i \in B_i \subseteq C_i$.

Now, consider the point $g \coloneqq (\gamma_1 + \cdots + \gamma_m) + \gamma \in \Gamma$ and let $z = g_\vee$. Each $\|\gamma_i\| \leq \kappa r$, so that
\[
\|z-y\| = \|g_\vee - \gamma_\vee\| \leq = \|(g-\gamma)_\vee\| = \|(\gamma_1 + \cdots + \gamma_m)_\vee\| \leq \alpha m (\kappa r),
\]
where $\alpha$ depends only on the projection $\pi_\vee$. On the other hand, the $i$th component of $g_<$ with respect to $(X_i)_{i=1}^m$, which is $(\gamma_i)_< + x_i$, belongs to $C_i$ by the above. Hence, $z^* = g_< \in C_1 + \cdots + C_m \subseteq C \subseteq A_P$, so the $r$-patch at $z$ is $P$. Since $P$ was arbitrary, we see that every $r$-patch occurs within radius $(\alpha m \kappa) r$ of any point of $\cps$ for sufficiently large $r$, so $\mathcal{S}$ is {\bf LR}.
\end{proof}

\section{Further results and examples} \label{sec: further results and examples}

\subsection{Implied equivalences}
Recall that, by Corollary \ref{cor: PW => C+D}, {\bf PW} alone implies {\bf C} and {\bf D}, even without assuming the weakly homogeneous condition. Since {\bf PW} implies {\bf LR} (Lemma \ref{lem: LR => PW}), we have that {\bf PW} is also equivalent to {\bf C} and {\bf D} for weakly homogeneous schemes.

It is shown in \cite{BBL} that {\bf LR} is equivalent to {\bf PW} and another condition {\bf U}, but it is not known if the {\bf U} may be dropped in this characterisation. The above shows that it can for the class of cut and project sets considered here. It is also shown in \cite{BBL} that an FLC Delone set satisfies a \emph{subadditive ergodic theorem} ({\bf SET}) if and only it satisfies \emph{positivity of quasiweights} ({\bf PQ}). We refer the reader to \cite{BBL} for the notations of {\bf PQ} and {\bf SET}.

\begin{corollary}
The following are equivalent for a weakly homogeneous scheme:
\begin{enumerate}
	\item {\bf LR};
	\item {\bf PW};
	\item {\bf PQ};
	\item {\bf SET};
	\item {\bf C} and {\bf D}.
\end{enumerate}
\end{corollary}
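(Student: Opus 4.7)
The equivalence $(1) \Leftrightarrow (2) \Leftrightarrow (5)$ is already in place from earlier in the paper: $(1) \Rightarrow (2)$ is Lemma \ref{lem: LR => PW}, $(2) \Rightarrow (5)$ is Corollary \ref{cor: PW => C+D}, and $(5) \Rightarrow (1)$ is Theorem \ref{thm: main} together with the weakly homogeneous hypothesis. The equivalence $(3) \Leftrightarrow (4)$ is the result from \cite{BBL} explicitly recalled in the paragraph preceding the corollary statement. Thus the only new work is to splice $(3)$ into the cycle, which I would do by showing $(2) \Rightarrow (3)$ and $(3) \Rightarrow (5)$.

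For $(2) \Rightarrow (3)$, the plan is to invoke the standard implication $\mathbf{PW} \Rightarrow \mathbf{PQ}$, valid for any FLC Delone set: quasiweights are bounded below by weights, so a uniform lower bound of the form $\xi_P \geq C/r^d$ on weights immediately yields the analogous bound on quasiweights.

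For the return implication $(3) \Rightarrow (5)$, I would re-run the proofs of Lemma \ref{lem: PW => C} and Proposition \ref{prop: PW => D} with quasiweights in place of weights. Both of those proofs proceed by producing, whenever $\mathbf{C}$ or $\mathbf{D}$ fails, an acceptance domain $A \in \sA(r)$ of volume $|A|/|W| < \epsilon \cdot r^{-d}$ for arbitrarily small $\epsilon > 0$. Every cut and project set in our class is strictly ergodic (the window is a convex polytope, hence has boundary of Lebesgue measure zero, and $\Gamma_<$ is dense in $\intl$), so the quasiweight of the associated patch coincides with its patch frequency $|A|/|W|$. Hence the same constructions contradict $\mathbf{PQ}$ exactly as they contradict $\mathbf{PW}$, and the decomposition reduction via Corollary \ref{cor: small volume} transfers verbatim.

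The only step requiring care is the counting argument in the analogue of Lemma \ref{lem: PW => C}, where the proof uses $\sum_P \xi_P = 1$ to convert a uniform lower bound on frequencies into an upper bound $p(r) \leq r^d/C$. For quasiweights one needs $\sum_P \eta_P \leq 1$, which again is immediate from strict ergodicity (quasiweights coincide with weights and the latter sum to $1$). No further obstacle arises, and the five conditions collapse to a single equivalence class.
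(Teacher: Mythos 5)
The overall architecture of your proof is sound, and the key observation in your second paragraph — that strict ergodicity (the window is a convex polytope, so $\partial W$ has measure zero, and $\Gamma_<$ is dense) forces quasiweights to coincide with weights, both equal to $|A_P|/|W|$ — is exactly what makes everything collapse to a single equivalence class. However, your first paragraph states the general relation between weights and quasiweights backwards. For an arbitrary FLC Delone set, the quasiweight $\eta(P)$ is obtained as a (superadditive/Fekete-type) limit of $\inf_x \sharp_P(B_R(x))/|B_R|$, which is bounded \emph{above} by the weight $\xi_P$ (the limit of $\sharp_P(B_R(0))/|B_R|$, or its $\liminf$ in the non-uniquely-ergodic case), simply because the infimum over centres is at most any particular value. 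So the general implication from [BBL] is $\mathbf{PQ}\Rightarrow\mathbf{PW}$, not $\mathbf{PW}\Rightarrow\mathbf{PQ}$ as you claim; your direction does not hold without additional hypotheses.

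Fortunately your second paragraph already supplies the needed hypothesis: once you know $\eta(P)=\xi_P$ for every patch of a strictly ergodic cut and project set, $\mathbf{PW}\Leftrightarrow\mathbf{PQ}$ is immediate, and there is no need to re-run the constructions of Lemma \ref{lem: PW => C} and Proposition \ref{prop: PW => D} with quasiweights in place of weights — those re-runs would just re-prove $\mathbf{PQ}\Rightarrow\mathbf{PW}$ in a roundabout way. The streamlined argument is: (1)$\Leftrightarrow$(2)$\Leftrightarrow$(5) from Lemma \ref{lem: LR => PW}, Corollary \ref{cor: PW => C+D} and Theorem \ref{thm: main}; (3)$\Leftrightarrow$(4) from [BBL]; and (2)$\Leftrightarrow$(3) because strict ergodicity (via [Sch00]) gives $\eta_P=\xi_P$ for all $P$. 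Alternatively one can avoid any discussion of strict ergodicity and close the cycle with two citations from [BBL]: the general inequality $\eta(P)\le\xi_P$ gives $\mathbf{PQ}\Rightarrow\mathbf{PW}$, and the classical fact that $\mathbf{LR}$ implies a uniform subadditive ergodic theorem gives $\mathbf{LR}\Rightarrow\mathbf{SET}\Leftrightarrow\mathbf{PQ}$. Either route works; just correct or delete the false sentence in your first paragraph.
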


\begin{comment}
It is known that {\bf LR} implies {\bf repulsivity}: there is some constant $c > 0$ so that every $r$-patch $P$ appears with centres at least distance $cr$ apart. It is easy to construct repulsive polytopal cut and project sets which do not satisfy {\bf C} (for example, one could take a Diophantine lattice $\Gamma_<$ and take some window $W$ with high complexity). If {\bf C} is satisfied, it is not too hard to see that repulsivity implies the condition {\bf D}: ... [actually, this might be hard. Skip?]

\begin{proposition}
Suppose that $\mathcal{S}$ satisfies {\bf C} and is repulsive. Then $\mathcal{S}$ satisfies {\bf D}.
\end{proposition}

\begin{proof}
Idea: first, show there is an acceptance domain containing a $c r^{-\delta}$-ball, using just {\bf C}, in any subsystem. Then show that, if {\bf D} is not satisfied, then we can revisit that patch quickly.
\end{proof}
\end{comment}

We note a useful fact about changing the lattice in a cut and project scheme. It follows from the general theorem on complexity functions, Theorem \ref{thm: generalised complexity}, that {\bf C} is not affected by changing the lattice $\Gamma$ to a finite index super- or sub-lattice. The same is true for property {\bf D}, as follows from Lemma \ref{lem: Diophantine finite index}. So we have shown the following:

\begin{corollary}
Let $\mathcal{S}$ and $\mathcal{S}'$ be polytopal schemes which have the same data, except that $\mathcal{S}$ has lattice $\Gamma$, and $\mathcal{S}'$ has lattice $\Gamma' \leqslant \Gamma$. Then $\mathcal{S}$ is {\bf LR} if and only if $\mathcal{S}'$ is {\bf LR}.
\end{corollary}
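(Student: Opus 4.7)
The plan is to reduce to the main theorem by showing that each of the three ingredients (weak homogeneity, property \textbf{C}, and property \textbf{D}) is preserved in both directions when passing between $\Gamma$ and a finite index sublattice $\Gamma'$. Throughout I will denote the index $N = [\Gamma : \Gamma']$, so $N \cdot \Gamma \leqslant \Gamma' \leqslant \Gamma$, and I will implicitly check the basic hypotheses of a cut and project scheme (injectivity of $\pi_<$ and density of the projected lattice in $\intl$) are inherited: both pass from $\Gamma$ to $\Gamma'$ by the inclusion $N \Gamma \leqslant \Gamma' \leqslant \Gamma$ and the fact that scaling by $N$ preserves density and injectivity.

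First I would verify that weak homogeneity is stable. If $\mathcal{S}$ is weakly homogeneous via $\gamma_H \in \Gamma$ and $n_H \in \N$, then writing $(1/n_H)(\gamma_H)_< = (1/(Nn_H))(N\gamma_H)_<$ with $N\gamma_H \in \Gamma'$ gives the required witnesses for $\mathcal{S}'$. Conversely, any witness for $\mathcal{S}'$ is automatically a witness for $\mathcal{S}$, since $\Gamma' \subseteq \Gamma$. Next, property \textbf{C} is invariant: in the formula of Theorem \ref{thm: generalised complexity}, the only lattice-dependent quantities are $\rk(H) = \rk(\Gamma^H)$ and $\beta_H = \dim \langle \Gamma^H_< \rangle_\R$. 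For $H \in \sH$, the subgroup $(\Gamma')^H = \Gamma' \cap \Gamma^H$ has finite index in $\Gamma^H$ (intersection of a finite index subgroup with $\Gamma^H$), so both the rank and the $\R$-linear span are preserved. Hence $\alpha$ is the same for $\mathcal{S}$ and $\mathcal{S}'$, and \textbf{C} transfers in both directions.

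For property \textbf{D}, the window and its geometry are unchanged, so by Corollary \ref{cor: decompose} (applied once \textbf{C} is known to hold) we may use the same subspace decomposition $\{X_i\}_{i=1}^m$ for both schemes. The lattice factors are $\Gamma_i = \Gamma^{A_i}$ and $\Gamma_i' = (\Gamma')^{A_i} = \Gamma_i \cap \Gamma'$, and again the latter has finite index in the former. By Lemma \ref{lem: Diophantine finite index}, $(\Gamma_i)_< \leqslant X_i$ is Diophantine if and only if $(\Gamma_i')_< \leqslant X_i$ is Diophantine, so property \textbf{D} is preserved. Combining these three observations and applying Theorem \ref{thm: main} to both schemes yields the equivalence.

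I do not expect any genuine obstacle here, since the corollary is essentially a packaging of invariance statements already set up in the preceding sections; the only minor care needed is confirming that the ``correct'' decomposition used to formulate \textbf{D} can be chosen identically for $\mathcal{S}$ and $\mathcal{S}'$, which is clear because decompositions are defined purely in terms of the supporting hyperplanes of $W$ and those do not depend on the choice of lattice.
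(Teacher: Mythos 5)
Your proof is correct and follows the same route as the paper: observe that properties \textbf{C} and \textbf{D} are invariant under passing to a finite index sub- or super-lattice (via Theorem~\ref{thm: generalised complexity} and Lemma~\ref{lem: Diophantine finite index}), note that the window and hence the decomposition structure are unchanged, and then apply Theorem~\ref{thm: main}. You are in fact slightly more explicit than the paper's terse justification, in that you also verify that weak homogeneity (needed to invoke Theorem~\ref{thm: main} in both directions) passes between $\Gamma$ and $\Gamma'$ using $N\Gamma \leqslant \Gamma' \leqslant \Gamma$; this check is genuinely required but left implicit in the paper.
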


\subsection{Example applications of Theorem \ref{thm: main}}

We conclude by demonstrating our main theorem on some individual examples, as well as certain classes of cut and project schemes. In particular, we shall see that Theorem \ref{thm: main} generalises the main result of \cite{HaynKoivWalt2015a}, and give a simplified characterisation of {\bf LR} for canonical cut and project sets.

\subsubsection{Codimension 1 Schemes} \label{sec: codimension 1}

If $n=1$ then the window $W$ is indecomposable, given by a closed interval. Let us translate $W$ so that one end-point lies over the origin, and denote the second end-point by $\beta \in \intl$. It is easy to see that the scheme $\mathcal{S}$ is homogeneous if and only if $\beta \in \Gamma_<$, and is weakly homogeneous if $N \beta \in \Gamma_<$ for some $N \in \N$. For example, if $\mathcal{S}$ is canonical (so that $W$ is the projection of a fundamental domain of $\Gamma \leqslant \tot$), then $\mathcal{S}$ is homogeneous.

The collection of supporting hyperplanes of $W$ is $\sH = \{\{0\},\{\beta\}\}$. In particular, $\sH_0 = \{\{0\}\}$, so that $\Gamma^H$ is trivial for each of the two supporting hyperplanes. So $\rk(H) = \beta_H = 0$ in Theorem \ref{thm: generalised complexity} and we see that the cut and project sets generated by $\mathcal{S}$ have complexity $r^\alpha$ where $\alpha = d$.

Hence, $\mathcal{S}$ satisfies {\bf C}, so is {\bf LR} if and only if {\bf D} is satisfied, that is, if and only if $\Gamma_<$ is Diophantine. By choosing a basis for $\Gamma$, we may write
\[
\Gamma_< = \langle x_1, x_2, \ldots, x_d, y \rangle_\Z,
\]
where we implicitly identify $\intl \cong \R$. Write $x \coloneqq (x_1,\ldots,x_d)^T$. Then $\Gamma_<$ is Diophantine if and only if there is a constant $c > 0$ such that
\[
|(n \cdot x) + my | \geq \frac{c}{\max\{|n_1|,\ldots,|n_d|,|m|\}^d},
\]
where $n \in \Z^d$ is non-zero, with coordinates $n = (n_1,\ldots,n_d)^T$, and $n \cdot x$ is the dot product $\sum_{i=1}^d n_i x_i$. Equivalently, we can rescale the internal space by a factor of $1/y$, so that we now use the vector $z \coloneqq  (x_1/y,x_2/y,\ldots,x_d/y)^T$. The projected lattice is Diophantine if and only if there is some constant $c > 0$ such that
\[
d(n \cdot z,\Z) \geq \frac{c}{\|n\|},
\]
for all non-zero $n \in \Z^d$, where $d(-,\Z)$ denotes the distance to the nearest integer and $\|n\|$ is any lattice norm on $\Z^d$, such as $\|n\| = \max_i |n_i|$. That is, the linear form $L \colon \Z^d \to \R$, given by $L(n) = n \cdot z$ is \emph{badly approximable}. We summarise:

\begin{theorem}
Suppose that $\mathcal{S}$ is a codimension $1$ aperiodic polytopal cut and project set. Identify $\intl \cong \R$. Without loss of generality, we may write $\Gamma_< = G + \Z$, where $G = \langle x_1,\dots,x_d\rangle_\Z$, for irrational $x_i \in \R$. Let the window $W$ be an interval of length $\beta$. Then $\mathcal{S}$ is weakly homogeneous if and only if $N \cdot \beta \in G + \Z$ for some $N \in \N$, that is,
\[
\beta = \frac{1}{N}\left(n_1 x_1 + n_2 x_2 + \cdots n_d x_d + m\right),
\]
for $n_i$, $m \in \Z$. In this case, $\mathcal{S}$ produces linearly repetitive cut and project sets if and only if $(x_1,\ldots,x_d)^T$ is badly approximable.
\end{theorem}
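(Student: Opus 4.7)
The proof should be short, since the heavy lifting is done by Theorem \ref{thm: main} together with Theorem \ref{thm: generalised complexity}. My plan is to break it into four pieces.

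First, I would verify that property \textbf{C} holds automatically in codimension $1$. Since $n=1$, the only codimension $1$ subspace of $\intl$ is $\{0\}$, so $\sH_0 = \{\{0\}\}$ and $\sH = \{\{0\},\{\beta\}\}$. By aperiodicity (injectivity of $\pi_<$), the only element of $\Gamma$ projecting to $0 \in \intl$ is trivial, so $\Gamma^H = \{0\}$ for each $H \in \sH$. Hence $\rk(H) = 0$ and $\beta_H = \dim \langle 0 \rangle_\R = 0$. Since every flag has exactly $n=1$ hyperplane, Theorem \ref{thm: generalised complexity} gives $\alpha_f = d$ for every flag $f$, so $\alpha = d$ and \textbf{C} holds.

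Second, I would characterize weak homogeneity. Translating so the left endpoint of $W$ is $o = 0$, the condition in Definition \ref{def: weakly homogeneous} is trivial at $H = \{0\}$ (take $\gamma_H = 0$) and at $H = \{\beta\}$ amounts to finding $\gamma_\beta \in \Gamma$ and $n_\beta \in \N$ with $\beta = (1/n_\beta)(\gamma_\beta)_<$, i.e.\ $N\beta \in \Gamma_< = G + \Z$ for some $N \in \N$. This gives the stated characterization of weak homogeneity (independently of the chosen translate of $W$, since weak homogeneity is translation-invariant).

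Third, assuming weak homogeneity and \textbf{C}, Theorem \ref{thm: main} reduces \textbf{LR} to property \textbf{D}. Since $W$ is indecomposable (only one supporting subspace) and the two hyperplane stabilisers have common rank $0$, no further decomposition is needed and \textbf{D} is simply the statement that $\Gamma_< \leqslant \intl \cong \R$ is Diophantine in the sense of Definition \ref{def: diophantine lattice}, with exponent $\delta = d/n = d$.

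Fourth, I would translate this Diophantine condition into the badly approximable condition on $(x_1,\dots,x_d)^T$. With $\Gamma_< = \langle x_1,\dots,x_d,1\rangle_\Z$ and lattice norm $\eta(n_1,\dots,n_d,m) = \max(|n_1|,\dots,|n_d|,|m|)$, the Diophantine condition reads
\[
|n_1 x_1 + \cdots + n_d x_d + m| \geq \frac{c}{\max(|n_1|,\dots,|n_d|,|m|)^d}
\]
for every nonzero $(n_1,\dots,n_d,m) \in \Z^{d+1}$. The equivalence with $d(n\cdot x,\Z) \geq c'/\|n\|^d$ for all nonzero $n \in \Z^d$ is standard: for the forward direction, choosing $m$ to be the nearest integer to $-n\cdot x$ gives $|m| \ll \|n\|$, so $\max(\|n\|,|m|) \asymp \|n\|$; for the converse, elements with $n=0$ are handled trivially (taking $c'$ small) and for $n\neq 0$ one uses $|n\cdot x + m| \geq d(n\cdot x,\Z)$ together with $\max(\|n\|,|m|) \geq \|n\|$. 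No step here is an obstacle — the hardest is just keeping the book-keeping between $\intl$-norm, lattice norm on $\Gamma$, and the combined norm $\max(\|n\|,|m|)$ straight — so this is essentially a bookkeeping translation of Theorem \ref{thm: main} into the codimension $1$ setting.
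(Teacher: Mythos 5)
Your proof is correct and takes essentially the same approach as the paper's own discussion preceding this theorem: verify that {\bf C} holds automatically in codimension $1$ via Theorem \ref{thm: generalised complexity} (since $\sH_0 = \{\{0\}\}$ and the stabilisers are trivial by aperiodicity), characterize weak homogeneity by examining the two endpoint hyperplanes, invoke Theorem \ref{thm: main} (using indecomposability and constant stabiliser rank, so that {\bf D} reduces to $\Gamma_<$ being Diophantine), and translate this into bad approximability of $(x_1,\ldots,x_d)^T$. Your final bookkeeping (nearest-integer choice of $m$, handling $n = 0$ trivially, keeping track of the exponent $\delta = d$) is a correct spelling-out of an equivalence the paper asserts without a detailed argument.
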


Constructing examples of such schemes is simple. Firstly, one may start with a badly approximable system $(x_1,\ldots,x_d)^T$, with $\{x_1,\ldots,x_d\}$ a set of $\Q$-linearly independent numbers. One may construct a scheme whose projected lattice is given by $\langle x_1,\ldots,x_d,1\rangle_\Z$. For example, we may take $\tot = \R^{d+1}$, $\phy = \R^d \times \{0\}$, $\intl = \{0\}^d \times \R$ and
\[
\Gamma \coloneqq \langle (e_1,x_1), (e_2,x_2),\ldots, (e_d,x_d), (0,1)\rangle_\Z,
\]
where the $e_i$ are the standard basis vectors of $\R^d$. Finally, we choose the window $W \subseteq \intl$ with length $\beta \in \Q[1,x_1,x_2,\ldots,x_d]$. Then $\mathcal{S}$ produces {\bf LR} cut and project sets.

One may ask which {\bf LR} cut and project sets may be produced if the weakly homogeneous property is dropped, that is, if the $\beta \notin \Q \Gamma_<$. In the codimension $1$ setting, the determination of acceptance domains is simple. In particular, the acceptance domains have endpoints in the set $\Gamma_< \cup (\Gamma_< + \beta)$, and it is not too hard to show that the cut and project set will be {\bf LR} if and only if both $\Gamma_<$ is badly approximable and also the following \emph{inhomogeneous} Diophantine condition holds:
\begin{equation}\label{eq:codim1LR}
\|\gamma_< + \beta\| \geq \frac{c}{\|\gamma\|^d} \text{ for all non-zero } \gamma \in \Gamma.
\end{equation}
That is, the points $\gamma_<$ stay distant from both the origin and $\beta$, relative to $\|\gamma\|$.

It turns out that weak homogeneity is not a necessary condition for a cut and project scheme to be linearly repetitive, since the condition \eqref{eq:codim1LR} can be satisfied even when the window is not weakly homogeneous. On the other hand, Theorem \ref{thm: main} does not hold without the weak homogeneity condition, since it is possible for a cut and project scheme to satisfy both {\bf C} and {\bf D} without being {\bf LR}. 

To demonstrate this, we present here the details in the simple case when $k=2$, $d=1$, and $\Gamma=\mathbb Z^2$. Letting $\phy$ be a subspace of slope $\alpha$ and $\intl$ the $y$-axis with the window $[0, \beta]$, we see that in order for the scheme to be weakly homogeneous, we need a number $N\in \N$ such that $N\beta$ is of the form $n\alpha \mod 1$ for some $n\in \Z$. 

As explained above, for linear repetitivity, we need $\alpha$ to be badly approximable, and the additional condition \eqref{eq:codim1LR} which in this case has the form
\[
\|\alpha n + \beta\|\ge \tfrac c n, 
\]
where $c$ is a constant that depends on $\beta$. It is known (see \cite{Kleinbock}) that for any (irrational) $\alpha$ the set of choices for $\beta$ such that this inequality is satisfied is of full Hausdorff dimension. In particular, taking $\alpha$ to be badly approximable, there are uncountably many choices of $\beta$ which are not weakly homogeneous even though they are {\bf LR}. This shows that our results are sharp. 

On the other hand, by \cite{Kim}, for any irrational $\alpha$ the set of choices of $\beta$ such that this inequality fails is of full measure. In particular, even when $\alpha$ is badly approximable, almost every length of interval window gives a scheme which is not {\bf LR}, even though both {\bf C} and {\bf D} are satisfied.

\subsubsection{Ammann--Beenker and golden octagonal tilings} \label{sec: octagonals} \label{exp: AB}

We consider two $4$-to-$2$ canonical cut and project tilings, in each case $W = ([0,1]^4)_<$. Firstly, we take the well-known Ammann--Beenker tilings. They may be generated by a primitive substitution rule, so it is already known that they are {\bf LR}. They are MLD to their vertex sets, generated by a canonical cut and project scheme (which automatically means that they are homogeneous). With respect to one parametrisation (and after rescaling to remove a factor of $1/2$), the basis vectors of the lattice $\Gamma \leqslant \R^4$ are sent to the vectors
\[
f_1 =
\begin{pmatrix}
1 \\
0
\end{pmatrix},  \
f_2 =
\begin{pmatrix}
-\sqrt{2} \\
\sqrt{2}
\end{pmatrix}, \
f_3 =
\begin{pmatrix}
0 \\
-1
\end{pmatrix}, \
f_4 =
\begin{pmatrix}
\sqrt{2} \\
\sqrt{2}
\end{pmatrix}
\]
in the internal space $\intl \cong \R^2$. These vectors are indicated inside the window, on the left of Figure \ref{fig: octagonal}. The window has supporting hyperplanes $\sH_0 = \{H_i\}_{i=1}^4$, where $H_i$ is the subspace containing $f_i$. We have that $\Gamma_< = \langle f_1, f_2, f_3, f_4 \rangle_\Z$ and
\[
\Gamma^{H_1} = \langle e_1, e_2-e_4 \rangle_\Z, \ \Gamma^{H_2} = \langle e_2, e_1+e_3 \rangle_\Z, \ \Gamma^{H_3} = \langle e_3,e_2+e_4 \rangle_\Z, \ \Gamma^{H_4} = \langle e_4,e_1-e_3 \rangle_\Z.
\]
Hence, each $\rk(H_i) = 2$, and the scheme is hyperplane spanning, so the Ammann--Beenker tilings have complexity $p(r) \asymp r^2$ by Theorem \ref{thm: generalised complexity}. That is, the scheme satisfies {\bf C}.

We now consider the Diophantine condition. For any $\gamma = (n_1,n_2,n_3,n_4) \in \Z^4$, we have that
\[
\gamma_< = \sum_{i=1}^4 n_i f_i =
\begin{pmatrix}
(n_4-n_2) \sqrt{2} + n_1 \\
(n_2 +n_4)\sqrt{2} - n_3
\end{pmatrix}.
\]
Since $\sqrt{2}$ is a quadratic irrational, it is badly approximable so, for some $c \leq 1$,
\[
|m \sqrt{2} - n| \geq \frac{c}{n}
\]
for all non-zero $n \in \Z$. It follows that $\|\gamma_<\|$ has either first or second component with norm at least $c/n_1$ or $c/n_3$, if $n_1$ or $n_3 \neq 0$. If $n_1=n_3=0$, and one of $n_2$, $n_4 \neq 0$, then the corresponding component has norm at least $\sqrt{2}$. Hence, for non-zero $\gamma \in \Gamma$,
\[
\|\gamma_<\| \geq \frac{c}{\max\{n_i\}_{i=1}^4} = \frac{c}{\|\gamma\|},
\]
where we choose, without loss of generality, the max-norm on $\R^4$. So the projected lattice $\Gamma_<$ is Diophantine (Definition \ref{def: diophantine lattice}). As calculated above, the scheme has constant stabiliser rank (which also follows from {\bf C} and Corollary \ref{cor: equal ranks} by indecomposability), so the scheme $\mathcal{S}$ satisfies {\bf D} (Definition \ref{def: diophantine scheme}). Hence, by Theorem \ref{thm: main}, $\mathcal{S}$ satisfies {\bf LR}.

We shall more briefly review a similar example with octagonal window. The \emph{golden octagonal tilings} \cite{BedFer} are generated by a canonical cut and project scheme with
\[
\phy =
\langle (-1,0,\varphi,\varphi)^T, (0,1,\varphi,1)^T \rangle_\R, \Gamma = \Z^4,
\]
where $\varphi = (1 + \sqrt{5})/2$ is the golden ratio. Let $e_i$ denote the standard basis vectors of $\R^4$. We choose $\intl = \langle e_3, e_4 \rangle_\R$ (although only for convenience, and of course one should choose internal space making $\pi_\vee$ injective on $\Z^4$, but this does not affect calculations). The standard basis vectors are thus mapped to the vectors
\[
f_1 =
\begin{pmatrix}
\varphi \\
\varphi
\end{pmatrix}, \
f_2 =
\begin{pmatrix}
-\varphi \\
-1
\end{pmatrix}, \
f_3 =
\begin{pmatrix}
1 \\
0
\end{pmatrix}, \
f_4 =
\begin{pmatrix}
0 \\
1
\end{pmatrix}
\]
whose $\Z$-span generates $\Gamma_<$. As for the Ammann--Beenker window, we take $\sH_0 = \{H_i\}_{i=1}^4$ with $H_i = \langle f_i \rangle_\R$. The window and these vectors are illustrated in Figure \ref{fig: octagonal}, where we apply the linear map $(x,y)^T \mapsto (x-y,y/\varphi)^T$ to make the shape more regular (one could equally have used a different choice of internal space).

We find that
\[
\Gamma^{H_1} = \langle e_1,e_3+e_4 \rangle_\Z, \ \Gamma^{H_2} = \langle e_2,e_1+e_3 \rangle_\Z, \ \Gamma^{H_2} = \langle e_3,e_2+e_4 \rangle_\Z, \ \Gamma^{H_4} = \langle e_4, e_1+e_2 \rangle_\Z,
\]
using that $\varphi^2 = \varphi + 1$. Again, each $\rk(H_i) = 2$, so that {\bf C} is satisfied by Theorem \ref{thm: generalised complexity}. To establish {\bf D}, consider $\gamma = (n_1,n_2,n_3,n_4) \in \Z^4$. Then
\[
\gamma_< =
\begin{pmatrix}
(n_1 - n_2)\varphi + n_3 \\
n_1\varphi + (n_4 - n_2)
\end{pmatrix}.
\]
The golden ratio $\varphi$ is badly approximable. By similar calculations as for the Ammann--Beenker, it easily follows that $\Gamma_<$ is Diophantine. Since the scheme is indecomposable, again {\bf D} holds, so by Theorem \ref{thm: main} the golden octagonal tilings satisfy {\bf LR}.

\subsubsection{$4$-to-$2$ with regular polygon windows}

Let $n \in \N$ and $H_i = \langle \exp(2 \pi \ell / n) \rangle_\R$ be the line in $\C$ pointing along the $i$th root of unity. Let $V(n)$ be the set of these lines. Consider a $4$-to-$2$ weakly homogeneous cut and project scheme with $\sH_0 = V(n)$. One may take $W$ to be a regular polygon, for example. The property of {\bf LR} depends only on $n$ and $\Gamma_<$.

For property {\bf C}, by Theorem \ref{thm: generalised complexity} it is necessary and sufficient that each $\rk(H) = 2$. This includes the decomposable $n=4$ case, since by Lemma \ref{lem: lattice spanned by lines} we have that $\Gamma^{H_i} + \Gamma^{H_j}$ is finite rank in $\Gamma$ for any two distinct $H_i$, $H_j \in \sH_0$ (and each $\Gamma^{H_i}$ must be at least rank $2$ to ensure that $\Gamma_<$ is dense in $\intl$). Hence the scheme has constant stabiliser rank, so satisfies {\bf D} if and only if both of $\Gamma^{H_i}$ and $\Gamma^{H_j}$ are Diophantine lattices in their corresponding subspaces. So we have shown the following:

\begin{corollary}
A weakly homogeneous $4$-to-$2$ scheme with $\sH_0 = V(n)$ satisfies {\bf LR} if and only if $\rk(H) = 2$ for each $H \in \sH$, and for any pair of generators (up to finite index) of $\Gamma^H$ are of the form $x$, $\alpha x$ for $\alpha \in \R$ badly approximable. The latter Diophantine condition need only be checked for two distinct $H \in \sH$.
\end{corollary}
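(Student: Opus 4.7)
The plan is to verify the equivalence via Theorem \ref{thm: main} by separately characterising when {\bf C} and when {\bf D} hold, with a case split on whether $|V(n)| = 2$ (parallelogram window) or $|V(n)| \geq 3$.

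First I would establish that {\bf C} holds if and only if every $\rk(H) = 2$. Since $n = 2$, every flag has two hyperplanes, and if {\bf C} holds then Theorem \ref{thm: C => hyperplane spanning} forces $\beta_H = 1$ for each $H$, so Theorem \ref{thm: generalised complexity} together with Corollary \ref{cor: minimal complexity} imposes $\rk(H_1) + \rk(H_2) = 4$ on every flag $\{H_1,H_2\}$. If $|V(n)| \geq 3$, any two distinct lines form a flag, and three such equations on a triple of hyperplanes yield $\rk(H) = 2$ uniformly. If $|V(n)| = 2$, the single flag constraint $\rk(H_1) + \rk(H_2) = 4$ combines with density of $\Gamma_<$ in $\intl$ (which forces each $\rk(H_i) \geq 2$: otherwise the corresponding $\Gamma^{H_i}_<$ would be cyclic in $V(H_i)$ and, since $\Gamma^{H_1} + \Gamma^{H_2}$ is finite index in $\Gamma$, the $V(H_i)$-projection of $\Gamma_<$ would be discrete rather than dense) to give both ranks equal to $2$. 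The converse is a direct substitution into the formula of Theorem \ref{thm: generalised complexity}.

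Now assume {\bf C}. Then each $\Gamma^H$ has rank $2$ and projects to a dense rank-$2$ subgroup of the $1$-dimensional $V(H)$, so up to finite index $\Gamma^H_< = \langle x, \alpha x \rangle_\Z$ for some $x \in V(H) \setminus \{0\}$ and irrational $\alpha \in \R$. By Example \ref{exp: badly approximable}, this subgroup is Diophantine in $V(H)$ precisely when $\alpha$ is badly approximable. To convert this to {\bf D} in the sense of Definition \ref{def: diophantine scheme}, I would split by cases. When $|V(n)| \geq 3$, the window is indecomposable (Example \ref{ex: decomp codim 2}), and {\bf D} reduces to $\Gamma_<$ being Diophantine in $\intl$; picking two distinct $H_1,H_2 \in \sH$, the complementary lines $V(H_i)$ make $\Gamma^{H_1} + \Gamma^{H_2}$ of rank $4$ and hence finite index in $\Gamma$, so Lemma \ref{lem: Diophantine finite index} together with Lemma \ref{lem: Diophantine sums} (applied with matching exponents $1 = d/n$) equates Diophantine-ness of $\Gamma_<$ with that of each of $\Gamma^{H_1}_<$ and $\Gamma^{H_2}_<$. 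This in turn yields Diophantine-ness of $\Gamma^H_<$ for every remaining $H$, since each $\Gamma^H_< \subseteq \Gamma_<$. When $|V(n)| = 2$, the parallelogram decomposes via Corollary \ref{cor: decompose} into two codimension-$1$ subsystems $(V(H_i), \Gamma^{H_i}, W_i)$ of trivially constant stabiliser rank, and {\bf D} is by definition the conjunction of Diophantine-ness of $\Gamma^{H_1}_<$ and $\Gamma^{H_2}_<$ in their respective subspaces.

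I expect the main obstacle to be the parallelogram case $|V(n)| = 2$, where decomposability forces us to recast both the complexity and Diophantine analyses in terms of subsystems; once this case is disposed of, the rest of the argument is a clean assembly of Theorem \ref{thm: main} with the results of Sections \ref{sec: minimal complexity}, \ref{sec: decompositions} and \ref{sec: diophantine}.
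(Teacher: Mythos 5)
Your proof is correct and follows essentially the same route as the paper: use Theorem \ref{thm: main}, reduce {\bf C} to the constraint $\rk(H_1)+\rk(H_2)=4$ on every flag via Theorem \ref{thm: generalised complexity} and Theorem \ref{thm: C => hyperplane spanning}, pin down $\rk(H)=2$ uniformly, and then translate {\bf D} into badly-approximable ratios in the lines $V(H)$ via Lemma \ref{lem: Diophantine sums} and Lemma \ref{lem: Diophantine finite index}, using Example \ref{exp: badly approximable}.

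The one place you deviate is in how you force $\rk(H)=2$: for $\#\sH_0\geq 3$ you run the nice linear-algebra argument on three flag equations, whereas the paper leans on indecomposability plus Corollary \ref{cor: equal ranks} to get constant rank (and then the single equation gives the value $2$); for the parallelogram case both you and the paper use the density-of-$\Gamma_<$ argument that each $\rk(H_i)\geq 2$. Your three-equations trick is slightly more self-contained (it avoids invoking the flag-graph machinery behind Corollary \ref{cor: equal ranks}), while the paper's route makes the logical dependence on Section \ref{sec: decompositions} more visible. Your explicit case split between $\#\sH_0=2$ (decompose into two codimension-$1$ subsystems and apply Definition \ref{def: diophantine scheme} directly) and $\#\sH_0\geq 3$ (indecomposable, so {\bf D} means $\Gamma_<$ Diophantine, then peel off two complementary lines) is also a cleaner exposition than the paper's compressed treatment, which notes that the parallelogram case already has constant stabiliser rank and invokes the remark after Definition \ref{def: diophantine scheme} without making the decomposition explicit. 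One small bookkeeping point: in the $\#\sH_0=2$ branch you assert $\Gamma^{H_1}+\Gamma^{H_2}$ is finite index; this is immediate once $\rk(H_1)+\rk(H_2)=4$ and the two groups are linearly independent (projecting to complementary lines), so no appeal to Lemma \ref{lem: lattice spanned by lines} is actually needed there, though citing it is also fine.
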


For example, if $n=3$ (equivalently $n=6$) then the window may be taken as a triangle or hexagon. To obtain an {\bf LR} cut and project scheme, we may take
\[
\Gamma = T + \alpha T,
\]
where $\alpha$ is badly approximable and $T$ is the triangular lattice $T \coloneqq \langle 1, (-1/2 + \sqrt{3}i/2) \rangle_\Z$.

For $n=4$ the window is a square (or rectangle) and we have {\bf LR} if and only if $\Gamma = \Gamma_\alpha + \Gamma_\beta$, where $\Gamma_\alpha = A \cdot \langle 1, \alpha \rangle_\Z$, $\Gamma_\beta = B \cdot \langle i, \beta i \rangle_\Z$, where both $\alpha$ and $\beta$ are badly approximable and $c_1$, $c_2>0$ are arbitrary scalars.

When $n=5$ (equivalently $n=10$) we may take $W$ as a regular pentagon or decagon. Take $\Gamma_< \cong \Z^4$ to be the Diophantine lattice spanned by the 5th roots of unity. There is a homogeneous choice of $W$, with the correct projection to the physical space, giving the Penrose cut and project patterns, up to MLD equivalence. The window for the Penrose tilings is not the decagon, but the $4$-to-$2$ version (using the $A_4$ lattice in the total space) has window which is a union of $4$ pentagons, see \cite[Example7.11]{AOI}, \cite{BKSZ90} and \cite[Figure 7]{NamWer16}. Translates under $\Gamma_<$ of the Penrose window, and of the regular decagon, may be used to construct each other under unions and intersections, which implies an MLD equivalence between the resulting patterns, see \cite[Remark 7.6]{BG03}. Linear repetitivity is preserved under MLD equivalence, so this shows that the Penrose patterns are {\bf LR} (which is already known via their construction by substitution rules).

The Ammann--Beenker patterns give {\bf LR} examples for $n=8$. We may also find an {\bf LR} example for $n=12$, by taking $\Gamma_<$ as the integer span of the $12$th roots of unity. Indeed, the primitive $12$th root of unity $\xi = \sqrt{3}/2 + 0.5i$ satisfies $\xi^4-t^2+1 = 0$, so that $\Gamma_< = \langle 1, \xi, \xi^2, \xi^3\rangle_\Z \cong \Z^4$. The line $H$ through $1$ is stabilised by $1$ and $\xi + \overline{\xi} = \sqrt{3}$, which is badly approximable (and similarly for the other lines by symmetry), so each stabiliser has rank $2$ and is Diophantine.

The above shows that we may construct $4$-to-$2$ {\bf LR} schemes with regular $n$-gon window for $n=2,\ldots,6$, $8$, $10$ and $12$. If one insists that the lattice has corresponding symmetry, then there are no other possibilities. This follows from the fact that the minimal polynomials of the corresponding roots of unity have too large of a degree, which would make $\rk(\Gamma_<) > 4$. We are unsure if there are asymmetric choices of Diophantine $\Gamma_<$ for other values of $n$, intersecting each line with rank $2$.

\subsubsection{Cubical cut and project sets}
Our main Theorem \ref{thm: main} generalises a previous result \cite{HaynKoivWalt2015a} on cubical cut and project sets. We recall that a {\bf cubical cut and project scheme} $\mathcal{S}$ has $\tot = \R^k$, $\intl = \{0\}^d \times \R^n$, $W = [0,1]^n$, $\Gamma = \Z^k$ and
\[
\phy = \{(x_1,x_2,\ldots,x_d,L_1(x_1,\ldots,x_d),L_2(x_1,\ldots,x_d),\ldots,L_n(x_1,\ldots,x_d)) \mid x_i \in \R \},
\]
where each $L_i \colon \R^d \to \R$ is a linear form of $d$ variables. That is, we have that $L_i(x_1,\ldots,x_d) = \sum_{j=1}^d \alpha_{ij} x_j$ for numbers $\alpha_{ij} \in \R$. We call $L = \{L_1,\ldots,L_n\}$ a system of $n$ linear forms in $d$ variables. We assume that if $L_i(r) = 0$ for each $i$, then $r \in \Z^d$ is the $0$ vector, and that $(L_1(\Z^d),L_2(\Z^d),\ldots,L_n(\Z^d))$ is dense in $\R^n$.

Technically, the internal space should be slanted slightly to make $\pi_\vee$ injective on $\Gamma$, otherwise the cut and project sets produced will be periodic. However, the choice of such a projection does not affect linear repetitivity, and this choice of `reference space' is useful, since $\intl$ contains the subgroup $\{0\} \times \Z^n$, for which $W$ is a fundamental domain.

This cut and project scheme is as decomposable as a scheme can be, as the window is a product of $1$-dimensional intervals. Theorem 1.1 of \cite{HaynKoivWalt2015a} characterised {\bf LR} cubical cut and project sets as equivalent to properties called (LR1) and (LR2), stated in terms of the ranks of the kernels of the $L_i$ and a Diophantine condition for each, respectively. It is not difficult to verify using the results here that (LR1) is equivalent to {\bf C} and (LR2) is then equivalent to {\bf D}.

\subsubsection{Canonical cut and project sets}

We call a cut and project scheme {\bf canonical} if $\tot = \R^k$, $\Gamma = \Z^k$ and $W = [0,1]^k_<$, the projection of the unit hypercube. In addition to our usual assumptions, we will always assume the following {\bf non-degeneracy} condition: the projections of any size $n$ subset of the standard basis vectors of $\R^k$ to the internal space are linearly independent. The parallelepiped spanned by such vectors in the internal space can be regarded (after a change of basis) as a cubical window, as above, so this condition says that each cubical sub-window is of full dimension in the internal space. We note that this also guarantees indecomposability of the window:

\begin{proposition} \label{prop: canonicals indecomposable}
A canonical window satisfying the non-degeneracy condition is indecomposable.
\end{proposition}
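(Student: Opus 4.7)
The plan is to exploit the fact that a canonical window is a zonotope and combine this with the non-degeneracy hypothesis. First I would write $W = [0,1]^k_< = \sum_{i=1}^k [0,v_i]$, where $v_i \coloneqq (e_i)_<$ is the projection of the $i$th standard basis vector of $\R^k$. Non-degeneracy says that any $n$ of the $v_i$ are linearly independent in $\intl$; in particular each $v_i \neq 0$ and no two are parallel (assuming $n \geq 2$; the case $n = 1$ is already handled by the earlier example, since then $\# \sH_0 = 1$).

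Suppose toward a contradiction that $W$ admits a decomposition $\{X_1,\ldots,X_m\}$ with $m \geq 2$. By Theorem \ref{thm: sum decomposition} there exist polytopes $W_j \subset X_j$ with $W = W_1 + \cdots + W_m$. Since $W$ has full dimension $n$ in $\intl$ (it contains the parallelepiped spanned by any $n$ linearly independent $v_i$) and the $X_j$ are complementary, each $W_j$ must have full dimension $n_j \coloneqq \dim X_j$ in $X_j$.

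The key step is to show that each generator $v_i$ lies in some $X_{j(i)}$, yielding a partition $\{1,\ldots,k\} = P_1 \sqcup \cdots \sqcup P_m$. For this I would use the elementary fact that in a Minkowski sum $A + B$, every edge direction of $A+B$ is an edge direction of $A$ or of $B$: the face $F_{A+B}(u) = F_A(u) + F_B(u)$ maximising a linear functional $u$ can be one-dimensional only if all but one of the $F_A(u)$, $F_B(u)$ is a vertex (or the two are parallel edges), so the direction always comes from one summand. Iterating over the $m$ summands, every edge direction of $W$ lies in some $X_j$. The edges of the zonotope $W$ are translates of the segments $[0,v_i]$ (a standard consequence of the $v_i$ being pairwise non-parallel and in sufficiently general position, which follows from non-degeneracy), so each $v_i$ lies in some $X_{j(i)}$, producing the required partition.

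The contradiction is then a simple counting argument. If $|P_j| \geq n_j + 1$ for some $j$, pick $n_j + 1$ indices from $P_j$ and extend (possible since $k > n$) to an $n$-subset $S \subset \{1,\ldots,k\}$. By non-degeneracy the corresponding $n$ vectors should be linearly independent, yet at least $n_j + 1 > \dim X_j$ of them lie in $X_j$, forcing linear dependence. Hence $|P_j| \leq n_j$ for every $j$, giving $k = \sum_j |P_j| \leq \sum_j n_j = n$, which contradicts $k > n$ (as $d = k-n > 0$). I expect the main obstacle to be the edge-direction step: while the Minkowski-summand decomposition of edge directions is classical, it is not stated in the paper, and one must also carefully verify that the edges of $W$ are precisely $\{\pm v_1, \ldots, \pm v_k\}$ using non-degeneracy.
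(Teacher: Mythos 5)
Your proof is correct, but it takes a genuinely different route from the paper's. The paper stays entirely within the flag-graph machinery of Theorem~\ref{thm: disconnected decomposition graph}: given a supporting subspace $H = \langle u_1,\ldots,u_{n-1}\rangle_\R$ spanned by $n-1$ projected basis vectors, and $H'$ obtained by swapping a single spanning vector, it constructs an explicit pre-flag $\{Q_1,\ldots,Q_{n-1}\}$ (built from a third auxiliary projected basis vector $x$) that completes to a flag by adjoining either $H$ or $H'$, showing $H \sim H'$ in $G(W)$; iterating these single-vector swaps connects any two supporting subspaces, so $G(W)$ is connected and $W$ is indecomposable. Your argument instead invokes the geometric characterisation of Theorem~\ref{thm: sum decomposition}, views $W$ as the zonotope $\sum_{i=1}^k [0,v_i]$, uses the classical fact that edge directions of a Minkowski sum arise from edge directions of the summands, deduces that each $v_i$ lies in a single $X_{j(i)}$ (noting that non-degeneracy for $n \geq 2$ gives pairwise non-parallel generators, so each $v_i$ really is an edge direction), and then derives a contradiction by counting: non-degeneracy caps $|P_j| \leq n_j$, so $k \leq n$, contradicting $d > 0$.

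Both arguments are sound. The paper's is more self-contained, relying only on the flag-graph formalism it has already developed, and works uniformly without separating out the $n=1$ case. Your approach is arguably more transparent geometrically and gives the cleaner conceptual picture (the generators of the zonotope must distribute among the factors, but non-degeneracy forbids enough of them from lying in any proper subspace), at the cost of importing two external facts about zonotopes and Minkowski sums that the paper does not record. You correctly flag those as the potential gaps; they are indeed classical and your sketches of them are right (the face of $\sum_j W_j$ maximising $u$ is $\sum_j F_{W_j}(u)$, which is one-dimensional only if all but one factor contributes a vertex or all contributing edges are parallel; and for $n\geq 2$ non-degeneracy makes the $v_i$ pairwise non-parallel, so each $[0,v_i]$ appears as an edge and conversely every edge is a translate of some $[0,v_i]$). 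One minor economy: you only need that each $v_i$ \emph{is} an edge direction, not that these are \emph{all} the edge directions, so the last verification can be trimmed.
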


\begin{proof}
Since the window is canonical, each $H \in \sH_0$ is of the form
\[
H = \langle u_1,u_2\ldots,u_{n-1}\rangle_\R,
\]
where each $u_i = (e_j)_<$ for some standard basis vector $e_j$ in the total space. We claim that such an $H$ is connected to the supporting subspace
\[
H' = \langle v,u_2,\ldots,u_{n-1}\rangle_\R
\]
in the flag-graph $G(W)$ of Theorem \ref{thm: disconnected decomposition graph}, where we obtain $H'$ from $H$ by swapping the spanning vector $u_1$ with a projection $v$ of some standard basis vector distinct from all other $u_i$.

To see this, first take any $x \in \intl$ which is a projected basis vector, not equal to any $u_i$ nor $v$ (since there are $k = d+n > n$ choices of basis vector, we can find such an $x$). Consider the span $Q_i$ of the projected basis vectors $\{x\} \cup \{u_j\}_{j \neq i}$ (that is, we replace $u_i$ in the definition of $H$ with $x$). By non-degeneracy, $\dim(Q_i) = n-1$ and $Q_i \in \sH_0$. Moreover, $\{x,u_1,\ldots,u_{n-1}\}$ is a basis for $\intl$. So we see that $Q_1 \cap \cdots \cap Q_{n-1} = \langle x \rangle_\R$, since with respect to this basis any given vector must have trivial $u_i$ component to belong to $Q_i$.

Suppose that $z \in H$. Then, with respect to the above basis, its $x$ component is trivial. If $z$ also belongs to each $Q_i$ then we see that $z = 0$ and hence $H \cap (Q_1 \cap \cdots \cap Q_{n-1}) = \{0\}$. Similarly, suppose that $z \in H' \cap (Q_1 \cap\cdots \cap Q_{n-1})$. Since $z$ belongs to each $Q_i$, we see that $z = \lambda x$ for some $\lambda \in \R$. So $z$ has all components except for the $x$ component trivial, with respect to the basis $\{x,v,u_2,\ldots,u_{n-1}\}$. However, since $z \in H'$, its $x$ component must be trivial too, so $z = 0$ and hence $H' \cap (Q_1 \cap \cdots \cap Q_{n-1}) = \{0\}$.

The above shows that the $Q_i$ determine a pre-flag connecting $H$ to $H'$ in the flag-graph $G(W)$. Since we may connect any hyperplane to any other by a chain of such edges, successively replacing a single spanning vector at each step, we see that this graph is connected and hence, by Theorem \ref{thm: disconnected decomposition graph}, $W$ is indecomposable.
\end{proof}

We give a characterisation of linear repetitivity of canonical cut and project sets in terms of the directions of the projections of the standard basis vectors to the internal space. We define the one-dimensional subspaces $X_i \coloneqq \langle (e_i)_< \rangle_\R$ in $\intl$. Let $G_i \coloneqq \Gamma_< \cap X_i$ and $r_i \coloneqq \rk(G_i)$.

\begin{theorem} \label{thm: canonical characterisation}
The following are equivalent for a (non-degenerate) canonical cut and project scheme:
\begin{enumerate}
	\item {\bf LR}
	\item The following both hold:
	\begin{enumerate}
		\item $r_i = k/n$ for each $i = 1$, \ldots, $k$ (equivalently, each $r_i \geq k/n$),
		\item each $G_i$ is a Diophantine lattice in $X_i$.
	\end{enumerate}
\end{enumerate}

Condition 2b may be replaced with the following weaker condition:

\begin{itemize}
	\item[(2b')] at least $n$ distinct $G_i$ are Diophantine in $X_i$.
\end{itemize}

\end{theorem}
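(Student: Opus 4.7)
The plan is to apply Theorem \ref{thm: main}, translating {\bf C} and {\bf D} into conditions (2a) and (2b) respectively. First note that a non-degenerate canonical scheme is (i) homogeneous (we can translate $W = [0,1]^k_<$ so the origin is a vertex, and every supporting hyperplane then meets $\Gamma_<$), (ii) hyperplane spanning (each $V(H)$ is spanned by $n-1$ of the $(e_j)_<$, which lie in $\Gamma_<$), and (iii) indecomposable by Proposition \ref{prop: canonicals indecomposable}. Thus the scheme has a single ``subsystem'' and {\bf D} reduces to $\Gamma_<$ being Diophantine in $\intl$. The key geometric observation is that, by non-degeneracy, for any $n$-subset $I = \{i_1,\dots,i_n\} \subseteq \{1,\dots,k\}$, the hyperplanes $H_l$ with $V(H_l) = \langle (e_{i_j})_< : j \neq l\rangle_\R$ form a flag $f_I$, and $\bigcap_{j \neq l} V(H_j) = X_{i_l}$.

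Step 1: (2a) $\Leftrightarrow$ {\bf C}. Assume {\bf C}. By indecomposability and Corollary \ref{cor: equal ranks}, every $\rk(H)$ equals the common value $r = k - d/n - 1$ from Proposition \ref{prop: rank formula}. Applying Lemma \ref{lem: lattice spanned by lines} to the flag $f_I$: the subgroups $\Gamma^{\widehat{l}}$ (which project to $G_{i_l}$) sum up to finite index in $\Gamma$, with $\sum_l r_{i_l} = k$ and each $r_{i_l} \geq k-r = k/n$; the two forces $r_{i_l} = k/n$. As any $n$-subset serves as a flag index, we obtain $r_i = k/n$ for every $i$. Conversely, assume $r_i \geq k/n$ for all $i$ (equivalent to $r_i = k/n$ because the $G_i$ lie in lines which are pairwise complementary by non-degeneracy, so any $n$ of them have independent projections and hence ranks summing to $\leq k = \rk(\Gamma_<)$). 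For any flag $f_I$, the inclusion $\Gamma_< \cap V(H_l) \supseteq \sum_{j \neq l} G_{i_j}$ yields $\rk(H_l) \geq (n-1)k/n$, and by Theorem \ref{thm: generalised complexity} with $\beta_{H_l} = n-1$,
\[
\alpha_{f_I} = \sum_{l=1}^n \bigl(d - \rk(H_l) + (n-1)\bigr) \leq n\bigl(d - (n-1)k/n + (n-1)\bigr) = d.
\]
Corollary \ref{cor: minimal complexity} gives $\alpha_{f_I} \geq d$, forcing $\alpha_{f_I}=d$ for every flag and hence {\bf C}.

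Step 2: Under {\bf C}/(2a), $\Gamma_<$ is Diophantine iff any $n$ of the $G_i$ corresponding to a flag are Diophantine. Fix a flag indexed by $I$. By Lemma \ref{lem: lattice spanned by lines}, $\sum_{l=1}^n G_{i_l}$ is finite index in $\Gamma_<$; by Lemma \ref{lem: Diophantine finite index}, $\Gamma_<$ is Diophantine iff this sum is. Each $G_{i_l}$ has rank $k/n$ in the line $X_{i_l}$, so its own Diophantine exponent is $\delta_{i_l} = k/n - 1 = d/n = \delta$, matching the exponent of the whole scheme. Since the $X_{i_l}$ are complementary and all exponents agree, Lemma \ref{lem: Diophantine sums} gives that the sum is Diophantine iff each $G_{i_l}$ is Diophantine in $X_{i_l}$. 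This already shows $(2\mathrm{b}') \Rightarrow {\bf D}$: choosing a flag inside the $n$ prescribed indices (possible by non-degeneracy) makes $\Gamma_<$ Diophantine. Conversely, if $\Gamma_<$ is Diophantine, then for each $i$ and $g \in G_i$, the $\Gamma$-lattice norm of $g^\wedge$ dominates any intrinsic lattice norm on $G_i$ (up to a constant, by expressing a basis of $G_i$ in terms of the $e_j$), so $\|g\| \geq c \|g^\wedge\|^{-\delta} \geq c' \eta(g)^{-\delta}$, yielding (2b).

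Assembling the equivalences: ${\bf LR} \Leftrightarrow {\bf C} + {\bf D} \Leftrightarrow (2\mathrm{a}) + (2\mathrm{b}) \Leftrightarrow (2\mathrm{a}) + (2\mathrm{b}')$. The main technical hurdle is Step 1: correctly identifying $\bigcap_{j \neq l} V(H_j) = X_{i_l}$ for the canonical flag, setting up the rank bookkeeping so that the complexity formula pins down $r_i = k/n$, and verifying that non-degeneracy lets us freely pick any $n$-subset as a flag, so that local statements at a flag transfer to all indices.
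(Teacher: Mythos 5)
Your proposal is correct and follows essentially the same route as the paper's proof: both use Proposition~\ref{prop: canonicals indecomposable} to reduce {\bf D} to $\Gamma_<$ being Diophantine, Lemma~\ref{lem: lattice spanned by lines} to pin down the ranks $r_i$, Theorem~\ref{thm: generalised complexity} with Corollary~\ref{cor: minimal complexity} to characterise {\bf C}, and Lemmas~\ref{lem: Diophantine finite index} and~\ref{lem: Diophantine sums} to pass the Diophantine condition between $\Gamma_<$ and the $G_i$. The only minor differences are cosmetic (you derive $r_i = k/n$ from the rank bound in Lemma~\ref{lem: lattice spanned by lines}'s proof together with Proposition~\ref{prop: rank formula}, whereas the paper uses an index-swapping argument; and your forward direction of (2b) is a direct norm comparison rather than another appeal to Lemma~\ref{lem: Diophantine sums}), but these do not change the underlying strategy.
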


\begin{proof}
Suppose that {\bf LR} holds, hence we have both {\bf C} and {\bf D}. Choose $n$ distinct indices $\alpha(i)$ between $1$ and $k$ and consider the projections $v_1$, \ldots, $v_n$ of the corresponding basis vectors of $\R^k$ into the internal space. We have a naturally associated flag $f = \{H_1, \ldots, H_n\} \subset \sH_0$, where $H_i$ is the span of $\{v_j\}_{j \neq i}$. The intersection of all $H_j$, except $H_i$, is the subspace $X(\alpha(i))$, and so $\widehat{\Gamma}^i = G_{\alpha(i)}$, as defined in Lemma \ref{lem: lattice spanned by lines}. By {\bf C}, it follows from that lemma that
\begin{equation} \label{eq: line ranks equal}
r_{\alpha(1)} + r_{\alpha(2)} + \cdots + r_{\alpha(n)} = k.
\end{equation}
Since this applies to any such subset of $n$ elements (and there are at least $k = n+d \geq n+1$ basis vectors), by swapping just one of the $n$ elements with any other in Equation \ref{eq: line ranks equal} and taking the difference of terms, we see that $r_i = r_j$ for any $i$, $j$, and hence $r_i = k/n$ for all $i$, proving 2a.

Since the window is indecomposable by Proposition \ref{prop: canonicals indecomposable}, we may apply Theorem \ref{thm: main} without decomposing the lattice (so that {\bf D} is equivalent to $\Gamma_<$ being Diophantine). By Equation \ref{eq: line ranks equal}, the sum of any $n$ distinct $G_i$ is finite rank in $\Gamma_<$, and hence Diophantine if and only if each $G_i$ is Diophantine by Lemmas \ref{lem: Diophantine finite index} and \ref{lem: Diophantine sums}, establishing 2b (and hence 2b').

Conversely, let us suppose that 2b' holds, and each $r_i \geq k/n$. Every supporting subspace $H \in \sH_0$ is spanned by $n-1$ of the lines $X_i$, so we see that the corresponding sum of linearly independent groups $G_i$ are contained in $\Gamma^H$. It follows that $\rk(H)$ is at least the sum of the ranks of the $G_i$, which is at least $(n-1)\cdot(k/n)$ by 2a. In fact, it can be no more, as by Equation \ref{eq: line ranks equal} we can complete this sum to a full rank subgroup of $\Gamma_<$ by summing an additional complementary $G_j$, so the canonical window has constant stabiliser rank. Hence by Theorem \ref{thm: generalised complexity} the cut and project sets have complexity exponent
\[
n\cdot(k-(n-1)\cdot(k/n)-1) = kn - (n-1)k - n = k-n = d,
\]
so {\bf C} holds. By the argument above for the converse direction (which only required {\bf C}), we have that the sum of any $n$ distinct $G_i$ is full rank in $\Gamma_<$. Choose $n$ distinct $G_i$ which are Diophantine (which we can, by 2b'). Their sum is Diophantine by Lemma \ref{lem: Diophantine sums}, and so $\Gamma_<$ is Diophantine by Lemma \ref{lem: Diophantine finite index}. It follows that {\bf D} holds, so the scheme is {\bf LR} by Theorem \ref{thm: main}.
\end{proof}

The following question was raised in \cite{HaynKoivWalt2015a}:

{\bf Problem 4.5.} \emph{Is it true that a canonical cut and project set will be LR if and only if all of the cubical cut and project sets obtained from taking different parametrizations of $\phy$, with respect to different orderings of the standard basis vectors, are also LR?}

The cubical cut and project sets above may be considered (again assuming non-degeneracy) as those coming from the original scheme after replacing the canonical window with the cubical sub-windows, the parallelepipeds spanned by the projections of $n$ arbitrary basis vectors in the internal space. The previous theorem establishes an answer to the above in the affirmative; in fact, we have the following stronger result:

\begin{theorem}
The following are equivalent for a non-degenerate canonical cut and project scheme:
\begin{enumerate}
	\item the canonical window satisfies {\bf LR}
	\item each cubical sub-window satisfies {\bf C} and one satisfies {\bf D}
	\item each cubical sub-window satisfies {\bf LR}.
\end{enumerate}
\end{theorem}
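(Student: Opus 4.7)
The implication $(3) \Rightarrow (2)$ is immediate. We first record some structural properties of a cubical sub-window $W^{(I)}$: it is homogeneous, since its vertices are sums of the spanning vectors $v_{i_j} = (e_{i_j})_< \in \Gamma_<$; it decomposes as the Minkowski sum of the $n$ intervals in the lines $X_{i_j} = \langle v_{i_j} \rangle_\R$, with lattice factors $\Gamma_{i_j}$ projecting to $G_{i_j}$, so by Definition \ref{def: diophantine scheme} property \textbf{D} for $W^{(I)}$ is equivalent to each $G_{i_j}$ ($j \in I$) being Diophantine in $X_{i_j}$; and it is hyperplane spanning, since $v_{i_\ell} \in G_{i_\ell} \subseteq \Gamma_<^{H_m^{(I)}}$ forces the $n-1$ vectors $\{v_{i_\ell}\}_{\ell \neq m}$ to span $H_m^{(I)}$.

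For $(2) \Rightarrow (1)$, assume each $W^{(I)}$ satisfies \textbf{C} and some $W^{(I_0)}$ satisfies \textbf{D}. Since $W^{(I)}$ is hyperplane spanning and satisfies \textbf{C}, Lemma \ref{lem: lattice spanned by lines} applied within the scheme $W^{(I)}$ to its unique flag $f^{(I)} = \{H_m^{(I)}\}_{m \in I}$ (whose omitted-hyperplane subgroups $\Gamma^{\widehat{m}}$ project exactly to $G_{i_m}$) yields that $\Gamma_{i_1} + \cdots + \Gamma_{i_n}$ has finite index in $\Gamma$, hence $\sum_{j \in I} r_{i_j} = k$. Since this holds for every $n$-subset $I$, the swap argument from the proof of Theorem \ref{thm: canonical characterisation} forces each $r_i = k/n$, which is condition $(2a)$. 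Property \textbf{D} for $W^{(I_0)}$ provides $n$ distinct Diophantine lattices $G_{i_j}$ ($j \in I_0$), which is $(2b')$. Theorem \ref{thm: canonical characterisation} then yields canonical \textbf{LR}, proving $(1)$.

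For $(1) \Rightarrow (3)$, Theorem \ref{thm: canonical characterisation} converts $(1)$ into $(2a)$ and $(2b)$. For any cubical sub-window $W^{(I)}$, the equality $\sum_{j \in I} r_{i_j} = k$ coming from $(2a)$ ensures that the natural product decomposition satisfies the finite-index condition of Definition \ref{def: subsystems}, so Proposition \ref{prop: complexity of subsystems} applies and gives $\alpha^{(I)} = \sum_{j \in I}(r_{i_j} - 1) = k - n = d$, establishing \textbf{C} for $W^{(I)}$. Property $(2b)$ states that each $G_i$ is Diophantine, in particular the $n$ lattices $G_{i_j}$ for $j \in I$, which gives \textbf{D} for $W^{(I)}$. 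Since $W^{(I)}$ is (weakly) homogeneous, Theorem \ref{thm: main} yields \textbf{LR} for $W^{(I)}$, proving $(3)$.

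The main obstacle is the $(2) \Rightarrow (1)$ direction: extracting the uniform rank condition $r_i = k/n$ from the per-sub-window hypothesis of \textbf{C}. The resolution is to apply Lemma \ref{lem: lattice spanned by lines} \emph{within each $W^{(I)}$} separately, using that it satisfies both hyperplane spanning (automatic, since the defining vectors of the parallelepiped lie in the projected lattice) and \textbf{C} by assumption, to obtain the per-sub-window rank identity $\sum_{j \in I} r_{i_j} = k$; these identities are then combined across different $n$-subsets via the swap argument to deduce $(2a)$, after which Theorem \ref{thm: canonical characterisation} closes the argument.
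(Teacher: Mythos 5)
Your proof is correct and follows essentially the same route as the paper's: $(3)\Rightarrow(2)$ trivially, both $(2)\Rightarrow(1)$ and $(1)\Rightarrow(3)$ via Theorem \ref{thm: canonical characterisation}, with the key rank identity $\sum_{j\in I} r_{i_j}=k$ extracted per sub-window from Lemma \ref{lem: lattice spanned by lines}. The only cosmetic difference is in $(1)\Rightarrow(3)$: you establish \textbf{C} for each $W^{(I)}$ via Proposition \ref{prop: complexity of subsystems} and the product decomposition, while the paper directly computes $\rk(H)=(n-1)k/n$ for the cubical hyperplanes and invokes Theorem \ref{thm: generalised complexity} --- equivalent computations.
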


\begin{proof}
Assume that the canonical scheme is {\bf LR}. By Theorem \ref{thm: canonical characterisation}, each $G_i$ has rank $k/n$, making each $\rk(H) = (n-1)\cdot(k/n)$, as in the argument above. It follows from Theorem \ref{thm: generalised complexity} that each cubical sub-window satisfies {\bf C} and has constant stabiliser rank. Since the canonical window is indecomposable by Proposition \ref{prop: canonicals indecomposable}, we have that $\Gamma_<$ is Diophantine. So {\bf D} holds for the cubical windows too, which thus satisfy {\bf LR} by Theorem \ref{thm: main}.

Conversely, suppose the weaker condition that each cubical sub-window satisfies {\bf C}, and that one satisfies {\bf D}. Take any set of distinct $G_i$. Their sum of ranks is equal to $k$ by Lemma \ref{lem: lattice spanned by lines} and our assumption that each cubical window satisfies {\bf C}. Hence, identically to in the proof of Theorem \ref{thm: canonical characterisation}, each $r_i = k/n$ and the canonical window satisfies {\bf C}. Since it is indecomposable (Proposition \ref{prop: canonicals indecomposable}), it has constant stabiliser rank, so the same is true of the cubical windows. In particular, the cubical window satisfying {\bf D} has constant stabiliser rank, so that $\Gamma_<$ is Diophantine. Hence {\bf D} is satisfied for the canonical window, which is {\bf LR} by Theorem \ref{thm: main}
\end{proof}

\bibliographystyle{amsalpha}
\bibliography{biblio}

\end{document}